\documentclass[reqno,a4paper,twoside]{amsart}

\usepackage[a4paper,margin=0.65in,footskip=0.25in]{geometry}

\usepackage{graphicx}
\usepackage{enumitem}

\setenumerate{label=\textnormal{(\arabic*)}}

\allowdisplaybreaks[4]

\usepackage{amsmath,amssymb,dsfont,verbatim,bm,mathrsfs,stmaryrd,mathabx}

\usepackage{mathtools}
\usepackage[raggedright]{titlesec}
%\usepackage[final]{microtype}
%\usepackage[activate={true,nocompatibility},final,tracking=true,kerning=true,spacing=true,factor=1100,stretch=10,shrink=10]{microtype}
%\microtypecontext{spacing=nonfrench}

\usepackage[utf8]{inputenc}
\usepackage[T1]{fontenc}

\usepackage[x11names,dvipsnames]{xcolor}
	\definecolor{egyptianblue}{rgb}{0.06, 0.2, 0.65}
	\definecolor{green(ncs)}{rgb}{0.0, 0.62, 0.42}

\usepackage{hyperref}
\hypersetup{
	colorlinks=true,
	citecolor=green(ncs),
	linkcolor= egyptianblue,
	filecolor=magenta,
	urlcolor=cyan,
	unicode=true,
	pdfpagemode=FullScreen,
}

\titleformat{\chapter}[display]
{\normalfont\huge\bfseries}{\chaptertitlename\\thechapter}{20pt}{\Huge}
\titleformat{\section}
{\normalfont\Large\bfseries\center}{\thesection}{1em}{}
\titleformat{\subsection}
{\normalfont\large\bfseries}{\thesubsection}{1em}{}
\titleformat{\subsubsection}
{\normalfont\normalsize\bfseries}{\thesubsubsection}{1em}{}
\titleformat{\paragraph}[runin]
{\normalfont\normalsize\bfseries}{\theparagraph}{1em}{}
\titleformat{\subparagraph}[runin]
{\normalfont\normalsize\bfseries}{\thesubparagraph}{1em}{}

\titlespacing*{\chapter} {0pt}{50pt}{40pt}
\titlespacing*{\section} {0pt}{3.5ex plus 1ex minus .2ex}{2.3ex plus .2ex}
\titlespacing*{\subsection} {0pt}{3.25ex plus 1ex minus .2ex}{1.5ex plus .2ex}
\titlespacing*{\subsubsection}{0pt}{3.25ex plus 1ex minus .2ex}{1.5ex plus .2ex}
\titlespacing*{\paragraph} {0pt}{3.25ex plus 1ex minus .2ex}{1em}
\titlespacing*{\subparagraph} {\parindent}{3.25ex plus 1ex minus .2ex}{1em}

%\usepackage{tikz}
%\usetikzlibrary{matrix, shapes}

\usepackage{float, subfig}
\usepackage{amsrefs}

\usepackage{titletoc}

\setcounter{tocdepth}{3}
\contentsmargin{2.55em}
\dottedcontents{section}[1.5em]{}{2em}{1pc}
\dottedcontents{subsection}[4.35em]{}{2.8em}{1pc}
\dottedcontents{subsubsection}[7.6em]{}{3.2em}{1pc}
\dottedcontents{paragraph}[10.3em]{}{3.2em}{1pc}

\usepackage[columns=3,rule=0pt]{idxlayout}

\makeindex

\newtheorem{theorem}{Theorem}[section]
\newtheorem{lemma}[theorem]{Lemma}
\newtheorem{proposition}[theorem]{Proposition}
\newtheorem{corollary}[theorem]{Corollary}

 % "letter-numbered" theorems

\theoremstyle{definition}
\newtheorem{definition}[theorem]{Definition}

\newtheorem{notation}[theorem]{Notation}
\newtheorem{example}[theorem]{Example}

\theoremstyle{remark}
\newtheorem{remark}[theorem]{Remark}

\usepackage{tikz}
\usetikzlibrary{matrix,shapes}
\usetikzlibrary{cd}
\tikzcdset{diagrams={nodes={inner sep=2pt}}}
\usetikzlibrary{arrows}
\tikzcdset{arrow style=tikz, diagrams={>={Latex[round,length=3pt,width=2pt,inset=1.75pt]}}}

\DeclareMathOperator{\End}{End}

\DeclareMathOperator{\ide}{id}
\DeclareMathOperator{\Ide}{Id}
\DeclareMathOperator{\Ext}{Ext}
\DeclareMathOperator{\EXT}{EXT}

\DeclareMathOperator{\Soc}{Soc}

\DeclareMathOperator{\ima}{Im}

\DeclareMathOperator{\Hom}{Hom}

\DeclareMathOperator{\Tot}{Tot}

\DeclareMathOperator{\sh}{sh}

\DeclareMathOperator{\sg}{sg}

\DeclareMathOperator{\ho}{H}

\newcommand{\ov}{\overline}
\newcommand{\ot}{\otimes}
\newcommand{\wh}{\widehat}
\newcommand{\wt}{\widetilde}

\newcommand{\xcirc}{\circ}

\numberwithin{equation}{section}

\DeclareMathAlphabet{\mathpzc}{OT1}{pzc}{m}{it}

\usepackage{mathtools}
\newtagform{brackets}{[}{]}
\usetagform{brackets}

\let\origmaketitle\maketitle
\def\maketitle{
	\begingroup
	\def\uppercasenonmath##1{} % this disables uppercasing title
	\let\MakeUppercase\relax % this disables uppercasing authors
	\origmaketitle
	\endgroup
}

\usepackage{fancyhdr}
%\fancyhead[C]{\rule{.5\textwidth}{4\baselineskip}
\setlength{\headheight}{12.0pt}

\pagestyle{fancy}
\fancyhf{}
\fancyhead[CE]{Jorge A. Guccione, Juan Jos\'e Guccione and Christian Valqui}
\fancyhead[CO]{Extensions of Linear Cycle Sets}
\fancyhead[LE,RO]{\thepage}

\usepackage[foot]{amsaddr}

\usepackage{adjustbox}

\begin{document}
	
\title{\large Extensions of a family of Linear Cycle Sets}

\author{Jorge A. Guccione}
\address{Pontificia Universidad Cat\'olica del Per\'u, Av. Universitaria 1801, San Miguel, Lima 32, Per\'u.}
\email{vander@dm.uba.ar}
	
\author{Juan J. Guccione}
\address{Pontificia Universidad Cat\'olica del Per\'u, Av. Universitaria 1801, San Miguel, Lima 32, Per\'u.}
\email{jjguccione@gmail.com}
	
\author[C. Valqui]{Christian Valqui}
\address{Pontificia Universidad Cat\'olica del Per\'u - Instituto de Matem\'atica y Ciencias Afi\-nes, Secci\'on Matem\'aticas, PUCP, Av. Universitaria 1801, San Miguel, Lima 32, Per\'u.}
\email{cvalqui@pucp.edu.pe}

\thanks{Jorge A. Guccione, Juan J. Guccione and Christian Valqui were supported by PUCP-CAP 2023-PI0991}

\subjclass[2020]{55N35, 20E22, 16T25}
\keywords{Linear cycle sets, extensions, cohomology}
	
\begin{abstract}
This paper explores the cohomology of linear cycle sets, focusing on extensions of a specific linear cycle set $H$ by an abelian group $I$. We derive explicit formulas for the second cohomology group, which classifies these extensions, and establish conditions under which the extensions are fully determined. Key results include a characterization of extensions when $I$ lies in the socle of the extended structure and $H$ is trivial, and the construction of explicit examples for both trivial and non-trivial cases. The paper provides a systematic approach to understanding the structure of these extensions, with applications to various families of abelian groups.
\end{abstract}

%\begin{nouppercase}	
\maketitle
%\end{nouppercase}
	
\tableofcontents
	
\section*{Introduction}	

The relevance of the Yang-Baxter equation, (or equivalently the braid equation), led Drinfeld in~\cite{Dr}  to propose the study of the set-theoretic solutions, which are related with many important mathematical structures such as affine torsors, solvable groups, Biberbach groups and groups of I-type, Artin-Schelter regular rings, Garside structures, biracks, Hopf algebras, left symmetric algebras, etcetera; see for example~\cites{CR, CJO, CJR, Ch, DG, De1, DDM, GI2, GI4, GIVB, JO, S}.

An important class of set-theoretical solutions of the braid equation are the non-degenerate involutive solutions. The study of these solutions led to the introduction of the linear cycle set structures by Rump in~\cite{R}. This notion is equivalent to the notion of braces and to the notion of bijective $1$-cocycles (see~\cites{CJO1, ESS, LV, R, R3}). The study of extensions of linear cycle sets was done in the following papers~\cites{B,BG,LV,GGV}.

In the article~\cite{GGV} it was proved that, in order to classify the class of extension of a lineal cycle set $H$ by~a~trivial cycle set $I$, we must determine all the maps $\blackdiamond\colon H\times I\to I$ and $\Yleft \colon I\times H\to I$ satisfying suitable conditions, and then, for each one of this pair of maps, to compute the second cohomology group $\ho_{\blackdiamond,\Yleft}^2(H,I)$, of a filtrated cochain complex ${\mathcal{C}}_{\blackdiamond,\Yleft}(H,I)$. In the same paper it was proved that the classification of the extensions $(\iota,B,\pi)$, in which $\iota(H)$ is included in the socle of $B$, is obtained considering the cases in which $\Yleft = 0$. The aim of this paper is to classify all the extensions of $H$ by a trivial linear cycle set $I$ in the case in which $H$ is the linear cycle set with underlying additive group $(\mathds{Z}_{p^{\eta}},+)$ and operation $\cdot$, defined by $i\cdot j\coloneqq (1-p^{\nu} i)j$, where $p$ is a prime number, the juxtaposition denotes the multiplication in $\mathds{Z}_{p^{\eta}}$,  and $0<\nu\le\eta \le 2\nu$. The main results obtained in this paper are Proposition~\ref{condiciones sin potencia general}, where (for this case) we obtain all the maps $\blackdiamond$ and $\Yleft$ satisfying the suitable conditions mentioned above, and Corollary~\ref{calculo de la homo B'}, where we prove that each one of the cohomologies $\ho_{\blackdiamond,\Yleft}^2(H,I)$ is isomorphic to a quotient $\frac{\ker(F_1)\cap \ker(F_2)}{\ima(G)}$, where $F_1,F_2:I\times I\to I$ and $G:I\to I\times I$ are certain maps that are explicitly computable in many cases. Applying this result, in Subsection~\ref{seccion 4.1} we explicitly compute all extensions with $\Yleft=0$, of $H$, endowed with the trivial linear cycle set structure (case $\eta=\nu$), by a finite cyclic $p$-group $I=\mathds{Z}_{p^r}$. Finally, in Subsection~\ref{seccion 4.2} we use the main result in order to construct some extensions when $H$ is non-trivial (case $\nu<\eta$), or $\Yleft\ne 0$.

The article is organized as follows. In Section~\ref{seccion 1} we make a brief review of the notions of braces, linear cycle sets, extensions of linear cycle sets and the cohomology of linear cycle sets $\ho_{\blackdiamond,\Yleft}^*(H,I)$ (see~\cite{GGV}*{Section 6}). In Section~\ref{seccion 2} we introduce the linear cycle set $H$ mentioned above and we compute the actions $\blackdiamond: H\times I\to I$ and $\Yleft: I\times H\to I$. The main result of this section is Proposition~\ref{condiciones sin potencia general}. In Section~\ref{seccion 3} we first prove that a cocycle $(\beta,g)$ is equivalent modulo coboundaries to a cocycle of the form $(\alpha_1(\gamma),f)$, where $\alpha_1(\gamma)$ is a standard cocycle used classically to compute the group cohomology of a cyclic group. Then we prove that such a cocycle depends only on $f_0=f(1,1)$ and $\gamma$, both in $I$. We determine the necessary and sufficient conditions on $(f_0,\gamma)$, in order that there exists a cocycle $(\alpha_1(\gamma),f)$ with $f(1,1)=f_0$. Then we prove that the correspondence $(\alpha_1(\gamma),f)\leftrightarrow (f_0,\gamma)$ yields a linear isomorphism $\frac{\ker(F_1)\cap \ker(F_2)}{\ima(G)}\simeq \ho_{\blackdiamond,\Yleft}^2(H,I)$. The construction of $f$ out of $f_0$ and $\gamma$ is very complicated. Thus, for pedagogical reasons, we first study in Subsection 3.2 the construction of extensions with $\Yleft=0$, which means that the ideal of the extension $(\iota,B,\pi)$ is contained in the socle of~$B$. Such extensions are still of interest, and many known examples fall into this category. On the other hand, they are technically easier to handle than the general extensions (Subsection 3.3). We therefore found it instructive to present this “reduced” case before the general one. Finally, in Section~\ref{seccion 4} we apply our results to the construction of several families of extensions of $H$ by abelian groups.

\section{Preliminaries}\label{section:Preliminaries}\label{seccion 1}

In this section we recall the definitions of braces and linear cycle sets, and describe the relationship between the cohomology of linear cycle sets and the extensions. In this paper we work in the category of abelian groups, all the maps are $\mathds{Z}$-linear, $\ot$ means $\ot_{\mathds{Z}}$ and $\Hom$ means $\Hom_{\mathds{Z}}$.

\subsection[Braces and linear cycle sets]{Braces and linear cycle sets}\label{Braces and linear cycle sets}
A {\em left brace} is an abelian group $(A,+)$, endowed with an additional group operation $\times$, such that
\begin{equation}\label{compatibilidad braza}
a\times (b+c)= a\times b+a\times c-a\quad\text{for all $a,b,c\in A$.}
\end{equation}
The two group structures necessarily share the same neutral element, denoted by $0$. Of course, the additive inverse of $a$ is denoted by $-a$, while its multiplicative inverse is denoted by $\cramped{a^{\times -1}}$. In particular $\cramped{0^{\times -1}} = 0$.

Let $A$ and $B$ be left braces. A map $f\colon A\to B$ is an homomorphism of left braces if $f(a+a') = f(a)+f(a')$ and$~f(a\times a') = f(a)\times f(a')$, for all $a,a'\in A$.

\smallskip

A {\em linear cycle set} is an abelian group $(A,+)$, endowed with a binary operation $\cdot$, having bijective left translations $b\mapsto a\cdot b$, and satisfying the conditions
\begin{equation}\label{compatibilidades cdot suma y suma cdot}
a\cdot (b+c)=a\cdot b + a\cdot c\quad\text{and}\quad (a+b)\cdot c = (a\cdot b)\cdot (a\cdot c)\qquad\text{for all $a,b,c\in A$.}
\end{equation}
It is well known that each linear cycle set is a cycle set. That is:
\begin{equation}\label{condicion de cycle set}
(a\cdot b)\cdot (a\cdot c)=(b\cdot a)\cdot (b\cdot c)\quad\text{for all $a,b,c\in A$}.
\end{equation}
Let $A$ and $B$ be linear cycle sets. A map $f\colon A\to B$ is an homomorphism of linear cycle sets if $f(a+a') = f(a)+f(a')$ and $f(a\cdot a')=f(a)\cdot f(a')$, for all $a,a'\in A$.

The notions of left braces and linear cycle sets were introduced by Rump, who proved that they are equivalent via the relations
\begin{equation}
a\cdot b = a^{\times -1}\times (a+b)\quad \text{and}\quad a\times b = {}^ab + a,\label{equivalencia linear cycle set braza}
\end{equation}
where the map $b\mapsto {}^ab$ is the inverse of the map $b\mapsto a\cdot b$.

\smallskip

Each additive abelian group $A$ is a linear cycle set via $a\cdot b\coloneqq b$. These ones are the so called {\em trivial linear cycle sets}. Note that by~\eqref{equivalencia linear cycle set braza}, a linear cycle set $A$ is trivial if and only if $a\times b = a+b$, for all $a,b\in A$.

\smallskip

Let $A$ be a linear cycle set. An {\em ideal} of $A$ is  a subgroup $I$ of $(A,+)$ such that $a\cdot y \in I$ and $y\cdot a - a\in I$, for all~$a\in A$ and $y\in I$. An ideal $I$ of $A$ is called a {\em central ideal} if $a\cdot y=y$ and $y\cdot a=a$, for all $a\in A$ and $y\in I$. The {\em socle} of $A$ is the ideal $\Soc(A)$, of all $y\in A$ such that $y\cdot a = a$, for all $a\in A$. The {\em center} of $A$ is the set $Z(A)$, of all~$y\in \Soc(A)$ such that $a\cdot y = y$, for all $a\in A$. Thus, the center of $A$ is a central ideal of $A$, and each central ideal of~$A$ is included in $Z(A)$.

\begin{notation}\label{Yleft} Given an ideal $I$ of a linear cycle set $A$, for each $y\in I$ and $a\in A$, we set $y\Yleft a\coloneqq y\cdot a-a$.
\end{notation}

\subsection[Extensions of linear cycle sets]{Extensions of linear cycle sets}\label{Extensions of linear cycle sets}
Let $I$ and $H$ be additive abelian groups. Recall that an extension $(\iota, B, \pi)$, of $H$ by $I$, is a short exact sequence
\begin{equation}
\begin{tikzpicture}
\begin{scope}[yshift=0cm,xshift=0cm, baseline]
			\matrix(BPcomplex) [matrix of math nodes, row sep=0em, text height=1.5ex, text
			depth=0.25ex, column sep=2.5em, inner sep=0pt, minimum height=5mm, minimum width =6mm]
			{0 & I & B & H & 0,\\};
			\draw[-{latex}] (BPcomplex-1-1) -- node[above=1pt,font=\scriptsize] {} (BPcomplex-1-2);
			\draw[-{latex}] (BPcomplex-1-2) -- node[above=1pt,font=\scriptsize] {$\iota$} (BPcomplex-1-3);
			\draw[-{latex}] (BPcomplex-1-3) -- node[above=1pt,font=\scriptsize] {$\pi$} (BPcomplex-1-4);
			\draw[-{latex}] (BPcomplex-1-4) -- node[above=1pt,font=\scriptsize] {} (BPcomplex-1-5);
\end{scope}\label{sucesion exacta corta de grupos abelianos}
\end{tikzpicture}
\end{equation}
of additive abelian groups. Recall also that two such extensions, $(\iota, B, \pi)$ and $(\iota', B', \pi')$, are equivalent if there exists a morphism $\phi\colon B\to B'$ such that $\pi'\xcirc \phi=\pi$ and $\phi\xcirc \iota=\iota'$. Then, $\phi$ is necessarily an isomorphism.

\smallskip

Let $(\iota, B, \pi)$ be an extensions of $H$ by $I$ and let $s\colon H\to B$ be a set theoretic section of $\pi$, with $s(0)=0$. For each $b\in B$, there exist unique $y\in I$ and $h\in H$ such that $b=\iota(y)+s(h)$. Moreover, since~$\iota$ and $\pi$ are morphisms and $\iota$ is injective, there exists a unique map $\beta\colon H\times H\to I$ such that
\begin{equation}\label{construccion de beta}
\iota(y)+s(h) + \iota(y')+s(h')= \iota(y) + \iota(y')+ s(h) + s(h')= \iota(y+y' + \beta(h,h')) + s(h+h').
\end{equation}
It is well known that, for all $h,h',h''\in H$,
\begin{align}
& \beta(h,h')+\beta(h+h',h'') = \beta(h',h'')+\beta(h,h'+h''),\label{condicion de cociclo}\\
&\beta(h,0) = \beta(0,h) = 0\qquad\text{and}\qquad \beta(h,h') = \beta(h',h).\label{normalidad y cociclo abeliano}
\end{align}	
In fact, the first condition is equivalent to the associativity of the sum in $B$; the second one, to the fact that $s(0)=0$, and the third one, to the commutativity of the sum in $B$. Conversely, given a map $\beta\colon H\times H\to I$ satisfying~\eqref{condicion de cociclo} and~\eqref{normalidad y cociclo abeliano}, we have an extension
\begin{equation}\label{sucesion exacta corta de grupos abelianos 2}
\begin{tikzpicture}
\begin{scope}[yshift=0cm,xshift=0cm, baseline]
			\matrix(BPcomplex) [matrix of math nodes, row sep=0em, text height=1.5ex, text
			depth=0.25ex, column sep=2.5em, inner sep=0pt, minimum height=5mm, minimum width =6mm]
			{0 & I & I\times_{\beta} H & H & 0,\\};
			\draw[->] (BPcomplex-1-1) -- node[above=1pt,font=\scriptsize] {} (BPcomplex-1-2);
			\draw[->] (BPcomplex-1-2) -- node[above=1pt,font=\scriptsize] {$\iota$} (BPcomplex-1-3);
			\draw[->] (BPcomplex-1-3) -- node[above=1pt,font=\scriptsize] {$\pi$} (BPcomplex-1-4);
			\draw[->] (BPcomplex-1-4) -- node[above=1pt,font=\scriptsize] {} (BPcomplex-1-5);
\end{scope}
\end{tikzpicture}
\end{equation}
of $H$ by $I$, in which  $I\times_{\beta} H$ is $I\times H$, endowed with the sum
\begin{equation*}
(y,h)+(y',h')\coloneqq (y+y'+\beta(h,h'),h+h'),
\end{equation*}
and the maps $\iota$ and $\pi$ are the canonical ones. For the sake of simplicity, here and subsequently, we will write $y$ in\-stead of $(y,0)$ and $w_h$ instead of $(0,h)$. So, $(y,h)=(y,0)+(0,h)=y+w_h$. With this notation $\iota(y) = y$, $\pi(y+w_h) =h$ and the sum in $I\times_{\beta} H$ reads
\begin{equation}\label{formula para suma en I times H}
(y+w_h) + (y'+w_{h'}) = y+y'+\beta(h,h')+ w_{h+h'}.
\end{equation}
It is evident that each extension of abelian groups $(\iota,B,\pi)$, endowed with a set theoretic section $s$ of $\pi$ such that $s(0)=0$, is equivalent to the extension \eqref{sucesion exacta corta de grupos abelianos 2} associated with the map $\beta$ obtained in~\eqref{construccion de beta}. Concretely, the map $\phi\colon I\times_{\beta} H\to B$ is given by $\phi(y+w_h)\coloneqq\iota(y)+s(h)$. Moreover, it is well known that two extensions $(\iota,I\times_{\beta} H,\pi)$ and $(\iota,I\times_{\beta'} H,\pi)$ are equivalent if and only if there exists a map $\varphi\colon H\to I$, such that
\begin{equation}\label{equivalencia a nivel de qrupos}
\varphi(0) = 0\qquad\text{and}\qquad \varphi(h)-\varphi(h+h')+\varphi(h')=\beta(h,h') - \beta'(h,h')\quad\text{for all $h,h'\in H$.}
\end{equation}
The map $\phi\colon I\times_{\beta} H\to I\times_{\beta'} H$ performing the equivalence is given by $\phi(y+w_h)= y+\varphi(h)+w_h$.

\begin{definition}\label{extensiones de linear cycle sets} Let $I$ and $H$ be linear cycle sets. An {\em extension} $(\iota,B, \pi)$, of $H$ by $I$, is a short exact sequence of the underlying additive abelian group of $H$ by the underlying additive abelian groups of $I$ as~\eqref{sucesion exacta corta de grupos abelianos}, in which,~additional\-ly,~$B$ is a linear cycle set and both, $\iota$ are $\pi$, are linear cycle set morphisms. Two extensions $(\iota,B,\pi)$ and $(\iota',B',\pi')$, of $H$ by $I$, are {\em equivalent} if there exists a linear cycle set morphism $\phi\colon B\to B'$, such that $\pi'\xcirc \phi=\pi$ and $\phi\xcirc \iota=\iota'$. Then, $\phi$ is an isomorphism. We let $\EXT(H;I)$ denote the set of equivalence classes of extensions of $H$ by $I$.
\end{definition}

Assume that~\eqref{sucesion exacta corta de grupos abelianos} is an extension of linear cycle sets and let $s$ be a section of $\pi$ with $s(0)=0$. For the sake of simplicity, in the rest of this section we identify $I$ with $\iota(I)$ and we write $y$ instead of $\iota(y)$. Let $\blackdiamond\colon H\times I\to I$ and $\Yleft \colon I\times H\to I$ be the maps defined by $h\blackdiamond y \coloneqq s(h)\cdot y$ and $y\Yleft h\coloneqq y\Yleft s(h) = y\cdot s(h)-s(h)$, respectively. From now on we assume that $I$ is trivial. By \cite{GGV}*{Proposition~3.3}, this implies that $\blackdiamond$ and $\Yleft$ do not depend on~$s$. Moreover, as shown in \cite{GGV}*{Subsection~3.1}, there is a map $f\colon H\times H\to I$, such that:
\begin{equation}\label{formula para cdot en I times H}
(y+s(h))\cdot (y'+s(h'))= h\blackdiamond y' + f(h,h') + h\blackdiamond y \Yleft h\cdot h' + s(h\cdot h').
\end{equation}

\begin{notation}\label{notacion ext} Fix maps $\blackdiamond\colon H\times I\to I$ and $\Yleft \colon I\times H\to I$. We let $\Ext_{\blackdiamond,\Yleft}(H;I)$ denote the set of equivalence classes of  extensions $(\iota,B,\pi)$, of $H$ by $I$, such that $s(h)\cdot y = h\blackdiamond y$ and $y\cdot s(h) -s(h) = y\Yleft h$, for every section $s$ of $\pi$ satisfying $s(0) = 0$. When $\Yleft$ is the zero map, we write $\Ext_{\blackdiamond}(H;I)$ instead of $\Ext_{\blackdiamond,0}(H;I)$.
\end{notation}

\subsection{Building extensions of linear cycle sets}\label{Building extensions of linear cycle sets}
Let $(\iota, I\times_{\beta} H, \pi)$ be the short exact sequence of abelian groups~\eqref{sucesion exacta corta de grupos abelianos 2}. We assume that  $H$ is a linear cycle set and we fix maps $\blackdiamond\colon H\times I\to I$, $\Yleft \colon I\times H\to I$ and $f\colon H\times H\to I$. We let $I\times_{\beta,f}^{\blackdiamond,\Yleft} H$ denote $I\times_{\beta} H$, endowed with the binary operation $\cdot$, defined by
\begin{equation}\label{formula para cdot en I times H'}
(y+w_h)\cdot (y'+w_{h'})= h\blackdiamond y' + f(h,h') + h\blackdiamond y \Yleft h\cdot h' + w_{h\cdot h'}.
\end{equation}

\begin{proposition}\label{condiciones para extension} Let $\triangleleft\colon I \times H\to I$ be the map defined by $y\triangleleft h \coloneqq  h\blackdiamond (y - y\Yleft h)$. For each $y\in I$ and $h\in H$, we set $y^h\coloneqq h^{\times -1}\blackdiamond (y \triangleleft h)$. By \cite{GGV}*{Theorem~5.6 and Remark~5.12} we know that $(\iota, I\times_{\beta,f}^{\blackdiamond,\Yleft} H, \pi)$ is an extension of linear cycle sets (where $I$ has the trivial structure) if and only if the following conditions are fulfilled:
\begin{align}
& y \triangleleft (h\times h') = (y \triangleleft h) \triangleleft h',\quad (y + y')\triangleleft h  = y\triangleleft h + y'\triangleleft h,\quad y \triangleleft 0 = y,\label{eqq 1}\\
& (h'\times h)\blackdiamond y  = h\blackdiamond (h' \blackdiamond y),\quad h\blackdiamond (y + y') = h\blackdiamond y + h\blackdiamond y',\quad 0\blackdiamond y = y,\label{eqq 2}\\
& y^{h+h'} + y = y^h + y^{h'},\quad 0^h = 0,\label{eqq 3}\\
& h\blackdiamond  \beta(h',h'') + f(h,h'+h'') = f(h,h')+f(h,h'')+\beta(h\cdot h',h\cdot h'')\label{eqq 4}
\shortintertext{and}
&\begin{aligned}\label{eqq5}
f(h+h',h'') + (h+h')\blackdiamond \beta(h,h')\Yleft (h+h')\cdot h'' & = (h\cdot h')\blackdiamond f(h,h'') + f(h\cdot h',h\cdot h'')\\
&+ (h\cdot h')\blackdiamond f(h,h') \Yleft (h\cdot h')\cdot(h\cdot h''),
\end{aligned}
\end{align}
for all $h,h',h''\in H$ and $y,y'\in I$.
\end{proposition}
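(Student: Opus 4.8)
The additive group structure of $I\times_{\beta,f}^{\blackdiamond,\Yleft} H$ is already fixed by the standing hypothesis that $I\times_\beta H$ is the abelian extension \eqref{sucesion exacta corta de grupos abelianos 2}, i.e.\ that $\beta$ satisfies the cocycle and symmetry conditions \eqref{condicion de cociclo}--\eqref{normalidad y cociclo abeliano}. The task therefore reduces to characterizing when the product $\cdot$ of \eqref{formula para cdot en I times H'} makes $I\times_\beta H$ into a linear cycle set for which $\iota$ and $\pi$ are morphisms. Since the statement already invokes \cite{GGV}*{Theorem~5.6 and Remark~5.12}, the most economical route is to start from the list of constraints that result provides for a \emph{general} coefficient cycle set $I$, and then specialize to the trivial structure $y\cdot y'=y'$ on $I$. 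Under this specialization every internal product of $I$ collapses and the induced operation $y\Yleft y'=y\cdot y'-y'=0$ inside $I$ vanishes, so the general constraints degenerate term by term into \eqref{eqq 1}--\eqref{eqq5}. I would present the general list first and then carry out this collapse explicitly.

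For a self-contained verification I would instead substitute \eqref{formula para cdot en I times H'} and \eqref{formula para suma en I times H} directly into the defining requirements of a linear cycle set --- bijectivity of the left translations and the two identities of \eqref{compatibilidades cdot suma y suma cdot} --- together with the morphism conditions on $\iota$ and $\pi$, and then project each resulting identity onto its $H$- and $I$-components. The $H$-components are automatic: they reduce to the distributivity and cycle-set identities already valid in $H$, while the condition for $\pi$ is the tautology $h\cdot h'=h\cdot h'$. All content lies in the $I$-components, which I would sort as follows. Left distributivity $a\cdot(b+c)=a\cdot b+a\cdot c$ splits into the additivity of $h\blackdiamond(-)$ in \eqref{eqq 2} and, through its $\beta$-cross term, into exactly \eqref{eqq 4}. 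The cycle-set identity together with the brace structure supplies the multiplicative compatibilities $y\triangleleft(h\times h')=(y\triangleleft h)\triangleleft h'$ and $(h'\times h)\blackdiamond y=h\blackdiamond(h'\blackdiamond y)$ of \eqref{eqq 1} and \eqref{eqq 2}. Finally, bijectivity of the left translations, combined with the morphism condition for $\iota$ (where triviality of $I$ is used), forces the unitality and additive normalizations $0\blackdiamond y=y$, $y\triangleleft 0=y$, $0^h=0$ and $y^{h+h'}+y=y^h+y^{h'}$; the auxiliary maps $\triangleleft$ and $y^h$ are introduced precisely so that the inverse of the left translation by $y+w_h$ admits a closed form, and demanding that this inverse be additive and unital is what produces the remaining parts of \eqref{eqq 1} and \eqref{eqq 3}.

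The genuine obstacle is the cycle-set identity $(a+b)\cdot c=(a\cdot b)\cdot(a\cdot c)$, whose $y$-independent $I$-component is \eqref{eqq5}. Expanding both sides forces one to nest \eqref{formula para cdot en I times H'} inside \eqref{formula para suma en I times H} and conversely, so that a single product on the right-hand side consumes the outputs of two others, and the bookkeeping of which action $\blackdiamond$ or $\Yleft$ is applied to which value of $\beta$ or $f$ becomes delicate. I would control this by repeatedly using the additivity of $h\blackdiamond(-)$ from \eqref{eqq 2}, the cocycle identity \eqref{condicion de cociclo} for $\beta$, and the distributivity of $\cdot$ in $H$, and by arranging both sides so that the common $H$-component $(h+h')\cdot h''=(h\cdot h')\cdot(h\cdot h'')$ cancels. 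Once that cancellation is performed, the surviving $I$-terms collapse to \eqref{eqq5}, which completes the identification of the general constraints with \eqref{eqq 1}--\eqref{eqq5} and hence the proof.
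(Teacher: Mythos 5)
Your first route is exactly what the paper does: Proposition~\ref{condiciones para extension} is stated with no proof at all, the justification being the citation of \cite{GGV}*{Theorem~5.6 and Remark~5.12} embedded in the statement itself, already specialized to the case where $I$ carries the trivial structure (so that $y\Yleft y'=0$ inside $I$). In that sense your proposal agrees with the paper, and nothing more is required of you than of the authors.

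Your second, self-contained route goes beyond the paper, and the overall architecture is sound: the $H$-components are indeed tautological, left distributivity of $\cdot$ over $+$ in the second argument does produce the additivity in \eqref{eqq 2} together with \eqref{eqq 4}, and the $y$-independent part of $(a+b)\cdot c=(a\cdot b)\cdot(a\cdot c)$ does produce \eqref{eqq5}. But as written it is an outline, not an argument. Two points in particular are left as intentions rather than computations. First, the extraction of \eqref{eqq 4} from left distributivity silently uses that $\Yleft$ is additive in its $H$-argument (so that $h\blackdiamond y\Yleft h\cdot(h'+h'')$ splits into two terms); this is not an axiom but a consequence of \eqref{eqq 1}--\eqref{eqq 3} (cf.\ Remark~\ref{linealidad de Yleft}), so in the ``if'' direction you must order the deductions so that this linearity is available before you invoke it, and in the ``only if'' direction you must derive it from the extension axioms first. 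Second, the derivation of \eqref{eqq 1} and \eqref{eqq 3} from ``bijectivity of left translations plus the morphism condition on $\iota$'' is the least transparent part of the equivalence --- one has to exhibit the inverse of $b\mapsto(y+w_h)\cdot b$ in closed form via $\triangleleft$ and $y^{(-)}$ and show that the second identity of \eqref{compatibilidades cdot suma y suma cdot} applied with one argument in $I$ is exactly what forces the right-action and cocycle-type identities --- and you assert this rather than carry it out. The same goes for the cancellation scheme you describe for \eqref{eqq5}. None of this is a wrong idea, but to count as a proof these computations would have to be written down; as a reproduction of the paper's treatment, citing \cite{GGV} as in your first paragraph is the faithful answer.
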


\begin{remark}\label{comentario} Note that, by~\eqref{eqq 2}, we have $y^h = y - y\Yleft h$.
\end{remark}

\begin{remark}\label{linealidad de Yleft} Assume that conditions~\eqref{eqq 1}--\eqref{eqq 3} are satisfied and let $y\in I$ and $h\in H$. By~\eqref{eqq 3}, we have $y^0 = y$. Since $y\Yleft h = y-y^h = y - h^{\times -1}\blackdiamond (y\triangleleft h)$, by~\eqref{eqq 1}, \eqref{eqq 2} and~\eqref{eqq 3} the map $\Yleft$ is bilinear. So, $0\Yleft h = y\Yleft 0 = 0$.
\end{remark}

\begin{remark} Let $\iota\colon I\to B$ and $\pi\colon B\to H$ be morphisms of linear cycle sets. Assume that $I$ is trivial and $(\iota,B,\pi)$ is an extension of linear cycle sets. Let $s$ be a set theoretic section of $\pi$, such that $s(0)=0$. Let $\beta$ be the map obtained in~\eqref{construccion de beta} and let $\blackdiamond$, $\Yleft$ and $f$ be the maps obtained above Notation~\ref{notacion ext}. Consider now the short exact sequence of abelian groups~\eqref{sucesion exacta corta de grupos abelianos 2}, with $I\times_{\beta} H$ endowed with the operation $\cdot$, introduced in~\eqref{formula para cdot en I times H'}. By~\cite{GGV}*{Remark~4.9}, the map $\phi\colon I\times_{\beta} H\to B$, defined by $\phi(y+w_h)\coloneqq \iota(y)+s(h)$, is a bijection compatible with $+$ and~$\cdot$. Hence,~\eqref{sucesion exacta corta de grupos abelianos 2} is an extension of linear cycle sets and $\phi$ is an equivalence of extensions.
\end{remark}

\begin{remark}\label{equivalecia entre extensiones producto, caso I trivial} By~\cite{GGV}*{Remark~5.14}, two extensions $(\iota,I\times_{\beta,f}^{\blackdiamond,\Yleft} H,\pi)$ and $(\iota,I\times_{\beta',f'}^{\blackdiamond,\Yleft} H,\pi)$, of $H$ by $I$, are equivalent if and only if there exists a map $\varphi\colon H\to I$ satisfying~\eqref{equivalencia a nivel de qrupos} and
\begin{equation}\label{tercera condicion caso I trivial}
\varphi(h\cdot h') + f(h,h') = h\blackdiamond \varphi(h') + f'(h,h') + h\blackdiamond \varphi(h)\Yleft h\cdot h'\quad\text{for all $h,h'\in H$.}
\end{equation}
\end{remark}

\begin{remark}\label{condicion para I incluido en Soc} Since $I$ is trivial, $I\subseteq \Soc\bigl(I\times_{\beta,f}^{\blackdiamond,\Yleft} H\bigr)$ if and only if $y\Yleft h= y\cdot w_h-w_h = 0$, for all $y\in I$ and~$h\in H$. Since $y^h=y-y\Yleft h$, this is equivalent to the fact that $y^h=y$, for all $y\in I$ and $h\in H$. Moreover, in this case $y\triangleleft h=h\blackdiamond y$, condition~\eqref{eqq5} becomes
\begin{equation}\label{condicion equivalente compatibilidad suma cdot con hmassimp3 caso simple}
f(h+h',h'')  = (h\cdot h')\blackdiamond f(h,h'') + f(h\cdot h',h\cdot h''),
\end{equation}
and condition~\eqref{tercera condicion caso I trivial} becomes
$$
\varphi(h\cdot h') + f(h,h') = h\blackdiamond \varphi(h') + f'(h,h').
$$
Consequently, conditions~\eqref{eqq 1}--\eqref{eqq5} reduce to~\eqref{eqq 2}, \eqref{eqq 4} and~\eqref{condicion equivalente compatibilidad suma cdot con hmassimp3 caso simple}.
\end{remark}

\subsection[Cohomology of linear cycle sets]{Cohomology of linear cycle sets}\label{Cohomology of linear cycle sets}

Let $H$ be a linear cycle set. For each $s\ge 1$, we let $\mathds{Z}[H^s]$ denote the free abelian group with basis $H^s$. Let $\sh(\mathds{Z}[H^s])$ be the subgroup of $\mathds{Z}[H^s]$, generated by the shuffles
\begin{equation*}
\sum_{\sigma\in \sh_{l,s-l}} \sg(\sigma)(h_{\sigma^{-1}(1)},\dots, h_{\sigma^{-1}(s)}),
\end{equation*}
taken for all $1\le l<s$. Here $\sh_{l,s-l}$ is the subset of all the permutations $\sigma$ of $s$ elements satisfying $\sigma(1)<\cdots<\sigma(l)$ and $\sigma(l+1)<\cdots<\sigma(s)$. For each $r\ge 0$ and $s\ge 1$, let $C^N_{rs}(H)\coloneqq \mathds{Z}[H^r]\ot \ov{M}(s)$, where $\ov{M}(s)\coloneqq \frac{\mathds{Z}[H^s]}{\sh(\mathds{Z}[H^s)]}$. Given $h_1,\dots,h_s\in H$, we let $[h_1,\dots,h_s]$ denote the class of $(h_1,\dots,h_s)\in \mathds{Z}[H^s]$ in $\ov{M}(s)$. Let $I$ be an abelian group endowed with a map $\blackdiamond \colon H\times I \to I$ satisfying conditions~\eqref{eqq 2}. In this subsection we consider~$I$ as a trivial linear cycle set. Let $\wh{C}_N^{rs}(H,I)$ be the subgroup of $\Hom(C^N_{rs}(H),I)$,~con\-sisting of the morphisms $f\colon C^N_{rs}(H)\to I$, such that $f(h_1,\dots, h_r\ot [h_{r+1},\dots,h_{r+s}]) = 0$, if some $h_i=0$. Notice that $\wh{C}_N^{rs}(H,I)$ is isomorphic to the subgroup of $\Hom(\mathds{Z}[H^{r+s}], I)$, formed by all the maps $f$, that vanish on all the $(r+s)$-tuples, with some coordinate equal to zero, and that have the following property:
\begin{equation*}
\sum_{\sigma\in \sh_{l,s-l}} \sg(\sigma) f(h_1,\dots,h_r, h_{r+\sigma^{-1}(1)},\dots, h_{r+\sigma^{-1}(s)})=0,
\end{equation*}
for all $h_1,\dots,h_{r+s}\in H$ and $1\le l<s$. Consider the diagram $\bigl(\wh{C}_N^{**}(H,I),\partial_{\mathrm{h}}^{**}, \partial_{\mathrm{v}}^{**}\bigr)$, where
\begin{equation*}
\partial_{\mathrm{h}}^{r+1,s}\colon \wh{C}_N^{rs}(H,I)\to \wh{C}_N^{r+1,s}(H,I)\quad\text{and}\quad \partial_{\mathrm{v}}^{r,s+1}\colon \wh{C}_N^{rs}(H,I)\to \wh{C}_N^{r,s+1}(H,I)
\end{equation*}
are the maps defined by
\begin{align*}
& \partial_{\mathrm{h}}^{r+1,s}(f)(h_1,\dots, h_{r+1},h_{r+2},\dots,h_{r+s+1}) \coloneqq f(h_1\cdot h_2,\dots,h_1\cdot h_{r+1},h_1\cdot h_{r+2},\dots,h_1\cdot h_{r+s+1})\\
&\phantom{\partial_{\mathrm{h}}^{r+1,s}(f)(h_1,\dots,h_{r+1}} + \sum_{\jmath=1}^r (-1)^{\jmath} f(h_1,\dots, h_{\jmath-1},h_{\jmath}+h_{\jmath+1}, h_{\jmath+2},\dots,h_{r+1},h_{r+2},\dots, h_{r+s+1})\\
& \phantom{\partial_{\mathrm{h}}^{r+1,s}(f)(h_1,\dots,h_{r+1}} + (-1)^{r+1} \bigl((h_1+\cdots + h_r)\cdot h_{r+1}\bigr) \blackdiamond f(h_1,\dots,h_r,h_{r+2},\dots, h_{r+s+1})
\shortintertext{and}
& \partial_{\mathrm{v}}^{r,s+1}(f)(h_1,\dots,h_r,h_{r+1},\dots,h_{r+s+1}) \coloneqq (-1)^r f(h_1,\dots,h_r,h_{r+2},\dots,h_{r+s+1})\\
& \phantom{\partial_{\mathrm{v}}^{r,s+1}(f)(h_1,\dots,h_r} + \sum_{\jmath=r+1}^{r+s} (-1)^{\jmath} f(h_1,\dots,h_r,h_{r+1},\dots,h_{\jmath-1}, h_{\jmath}+h_{\jmath+1},h_{\jmath+2},\dots, h_{r+s+1})\\
& \phantom{\partial_{\mathrm{v}}^{r,s+1}(f)(h_1,\dots,h_r} + (-1)^{r+s+1} f(h_1,\dots,h_r,h_{r+1},\dots,h_{r+s}),
\end{align*}
respectively. Note that $\bigl(\wh{C}_N^{**}(H,I),\partial_{\mathrm{h}}^{**}, \partial_{\mathrm{v}}^{**}\bigr)$ is the diagram obtained from Figure~\ref{ddiagonales}, by deleting the arrows $D$.

\begin{theorem}[\cite{GGV}*{Theorem 6.1}]\label{principal} The diagram $\bigl(\wh{C}_N^{**}(H,I),\partial_{\mathrm{h}}^{**},\partial_{\mathrm{v}}^{**}\bigr)$ is a double cochain complex.
\end{theorem}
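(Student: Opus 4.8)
The plan is to reduce the statement to the three defining relations of a double cochain complex, namely $\partial_{\mathrm{h}}^2 = 0$, $\partial_{\mathrm{v}}^2 = 0$ and the mixed relation $\partial_{\mathrm{h}}\partial_{\mathrm{v}} + \partial_{\mathrm{v}}\partial_{\mathrm{h}} = 0$, together with the check that each differential lands in the stated codomain. I would organize both differentials into families of face operators: write $\partial_{\mathrm{v}}^{r,s+1} = (-1)^r\sum (-1)^i\delta_i^{\mathrm{v}}$, where $\delta_0^{\mathrm{v}}$ deletes $h_{r+1}$, the operators $\delta_i^{\mathrm{v}}$ merge adjacent pairs $h_\jmath + h_{\jmath+1}$ inside the block $h_{r+1},\dots,h_{r+s+1}$, and the last $\delta^{\mathrm{v}}$ deletes $h_{r+s+1}$; and write $\partial_{\mathrm{h}}^{r+1,s} = \sum (-1)^i d_i^{\mathrm{h}}$, where $d_0^{\mathrm{h}}$ is the translation face $f\mapsto f(h_1\cdot h_2,\dots,h_1\cdot h_{r+s+1})$, the operators $d_i^{\mathrm{h}}$ merge adjacent pairs in the first block, and $d_{r+1}^{\mathrm{h}}$ is the $\blackdiamond$-face. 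The point of this bookkeeping is that, once each family is shown to satisfy the appropriate (pre)simplicial identities and the two families are shown to commute as operators, the two square-zero relations and the anticommutation follow by the standard alternating-sum cancellation, the prefactor $(-1)^r$ in $\partial_{\mathrm{v}}$ being exactly what turns the would-be commutation of two commuting face families into the anticommutation required of a double cochain complex.

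The relation $\partial_{\mathrm{v}}^2 = 0$ I expect to be purely formal: the operators $\delta_i^{\mathrm{v}}$ involve only deletions and additions of adjacent coordinates, so the simplicial identities reduce to the associativity and commutativity of $+$ in $H$ and in $I$, and no linear cycle set axiom is needed. In parallel I would verify well-definedness of both differentials, that is, that they preserve the normalization (vanishing whenever some argument is $0$) and descend to the shuffle quotient $\ov{M}(s)$. The normalization is routine once one notes the derivable identity $0\cdot c = c$ together with $h_1\cdot 0 = 0$ and $0\blackdiamond y = y$ from \eqref{eqq 2}: when a horizontal argument vanishes the translation and first merger faces cancel in pairs while the remaining faces vanish termwise, and similarly in the vertical direction; compatibility with the shuffles is the usual compatibility of bar-type faces with shuffle permutations.

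The relation $\partial_{\mathrm{h}}^2 = 0$ is where the linear cycle set structure genuinely enters, and I expect it to be the main obstacle. The delicate point is the identity $d_0^{\mathrm{h}} d_1^{\mathrm{h}} = d_0^{\mathrm{h}} d_0^{\mathrm{h}}$: applying the translation face twice multiplies every later entry by $(h_1\cdot h_2)\cdot(-)$, whereas first merging $h_1 + h_2$ and then translating multiplies by $(h_1+h_2)\cdot(-)$, and these two terms of $\partial_{\mathrm{h}}^2$ cancel precisely because of the cycle set compatibility $(h_1+h_2)\cdot h_k = (h_1\cdot h_2)\cdot(h_1\cdot h_k)$ in \eqref{compatibilidades cdot suma y suma cdot}. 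Commuting the translation face past the interior mergers uses the left-distributivity $a\cdot(b+c) = a\cdot b + a\cdot c$, while the identities involving the $\blackdiamond$-face $d_{r+1}^{\mathrm{h}}$ — both its square and its interaction with $d_0^{\mathrm{h}}$ — rely on the three conditions in \eqref{eqq 2}, notably the associativity-type law $(h'\times h)\blackdiamond y = h\blackdiamond(h'\blackdiamond y)$ and the additivity of $\blackdiamond$. The difficulty is bookkeeping: since $\partial_{\mathrm{h}}$ mixes three structurally different faces, squaring it produces cross terms of several kinds, and one must track carefully how the translation prefix and the $\blackdiamond$-coefficient transform under the interior face maps so that every term is matched with an oppositely signed partner.

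Finally, for the mixed relation I would show that each horizontal face commutes, as an operator on tuples, with each vertical face. The interior mergers act on disjoint coordinate blocks and hence commute automatically; the translation face $d_0^{\mathrm{h}}$ commutes with the vertical mergers exactly because of the left-distributivity $h_1\cdot(h_\jmath + h_{\jmath+1}) = h_1\cdot h_\jmath + h_1\cdot h_{\jmath+1}$ of \eqref{compatibilidades cdot suma y suma cdot}, and the $\blackdiamond$-face commutes with all vertical faces by the additivity of $\blackdiamond$ in \eqref{eqq 2}. For commuting face families the alternating sums would commute, but the prefactor $(-1)^r$ built into $\partial_{\mathrm{v}}$ — which becomes $(-1)^{r+1}$ once the horizontal degree has been raised by $\partial_{\mathrm{h}}$ — supplies the extra sign and converts this into the anticommutation $\partial_{\mathrm{h}}\partial_{\mathrm{v}} + \partial_{\mathrm{v}}\partial_{\mathrm{h}} = 0$. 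Together with the two square-zero relations and the well-definedness check, this establishes that $\bigl(\wh{C}_N^{**}(H,I),\partial_{\mathrm{h}}^{**},\partial_{\mathrm{v}}^{**}\bigr)$ is a double cochain complex.
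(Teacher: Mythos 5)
The paper does not actually prove this statement: it is imported verbatim from \cite{GGV}*{Theorem~6.1}, so there is no in-text argument to set yours against. That said, your plan is the standard presimplicial-decomposition argument and I see no structural flaw in it: you correctly locate where each hypothesis enters ($\partial_{\mathrm{v}}^2=0$ is formal, the cancellation $d_0^{\mathrm{h}}d_0^{\mathrm{h}}=d_0^{\mathrm{h}}d_1^{\mathrm{h}}$ is exactly $(a+b)\cdot c=(a\cdot b)\cdot(a\cdot c)$ from~\eqref{compatibilidades cdot suma y suma cdot}, and the mixed relation is distributivity plus additivity of $\blackdiamond$ together with the prefactor $(-1)^r$). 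Two items that you file under ``bookkeeping'' carry real content and should be made explicit. First, the cross terms involving the $\blackdiamond$-face in $\partial_{\mathrm{h}}^2$ are not a direct instance of~\eqref{eqq 2}: composing the $\blackdiamond$-face with itself produces, via $(h'\times h)\blackdiamond y=h\blackdiamond(h'\blackdiamond y)$, the coefficient $\bigl((h_1+\cdots+h_r)\cdot h_{r+1}\bigr)\times\bigl((h_1+\cdots+h_{r+1})\cdot h_{r+2}\bigr)$, and matching this against the coefficient $(h_1+\cdots+h_r)\cdot(h_{r+1}+h_{r+2})$ coming from the adjacent merger requires the identity $(a\cdot b)\times\bigl((a+b)\cdot c\bigr)=a\cdot(b+c)$, which one derives from $(a+b)\cdot c=(a\cdot b)\cdot(a\cdot c)$ together with $x\times(x\cdot y)=x+y$, the latter being a consequence of~\eqref{equivalencia linear cycle set braza}. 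Second, the descent of $\partial_{\mathrm{v}}$ to the quotient $\ov{M}(s)$ is the Eilenberg--Mac\,Lane fact that the shuffles span a subcomplex of the bar complex of an abelian group and deserves a citation or a short proof; for $\partial_{\mathrm{h}}$ one should note that the translation face acts on the second block by the coordinatewise bijection $h\mapsto h_1\cdot h$, which carries shuffle sums to shuffle sums. With these two points spelled out (and with the normalization check at position $r+1$ done via $a\cdot 0=0$ and $0\blackdiamond y=y$, where it is the last merger and the $\blackdiamond$-face that cancel, not the translation pair), your argument is complete.
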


Let $\blackdiamond \colon H\times I \to I$ and $\Yleft \colon I\times H\to I$ be morphisms. Assume that conditions~\eqref{eqq 1}--\eqref{eqq 3} are satisfied. According to \cite{GGV}*{Section 7}, there exist maps
\begin{equation*}
D_{rs}^{r+s,1}\colon \wh{C}_N^{rs}(H,I) \to \wh{C}_N^{r+s,1}(H,I),
\end{equation*}
that yield a degree~$1$ endomorphism $D$ of $\wh{C}_N^*(H,I)\coloneqq \Tot\bigl(\wh{C}_N^{**}(H,I)\bigl)$ such that ${\mathcal{C}}_{\blackdiamond,\Yleft}(H,I)\coloneqq \bigl(\wh{C}_N^*(H,I),\partial+D\bigr)$ is a cochain complex, where $\partial\coloneqq \partial_{\mathrm{h}}+\partial_{\mathrm{v}}$. Next we recall the definition of $D$.

\begin{notation} Given $h_1,\dots,h_{r+s}$ we set $\mathrm{h}_{1,r+s} \coloneqq (h_1,\dots,h_{r+s})$.
\end{notation}

For all $r\ge0$ and $s\ge 1$, we define $D_{rs}^{r+s,1}\colon \wh{C}_N^{rs}(H,I)\to \wh{C}_N^{r+s,1}(H,I)$, by
\begin{equation*}
D_{rs}^{r+s,1}(f)(\mathrm{h}_{1,r+s+1})=(-1)^{r+s} \bigl(\bigl((h_1+\dots+h_r)\cdot (h_{r+1}+\dots+h_{r+s})\bigr)\blackdiamond f(\mathrm{h}_{1,r+s})\bigr) \Yleft (h_1+\dots+h_{r+s})\cdot h_{r+s+1}.
\end{equation*}
Finally, for $n\ge 1$, we let $D_n^{n+1}\colon \wh{C}_N^n(H,I)\to \wh{C}_N^{n+1}(H,I)$ denote the map given by
\begin{equation*}
D_n^{n+1}(f_{0n},\dots,f_{n-1,1})\coloneqq \sum_{k=1}^n D_{n-k,k}^{n1}(f_{n-k,k}).
\end{equation*}
Figure~\ref{ddiagonales} shows in black the diagram $\bigl(\wh{C}_N^{**}(H,I),\partial_{\mathrm{h}}^{**}, \partial_{\mathrm{v}}^{**}\bigr)$, and in green the maps $D_{rs}^{r+s,1}$. For the sake of legibility, in that diagram we write $\wh{C}_N^{rs}$ instead of $\wh{C}_N^{rs}(H,I)$.

\begin{figure}[H]
	\begin{center}
		\adjustbox{scale=1.4,center}{%
			\begin{tikzcd}[row sep=2em, column sep=2 * 1.61803398874988em, scale=2]
				\vdots \arrow[rrrrddd, green(ncs), shift left=0.2ex, end anchor={[yshift=0.4ex]}, end anchor={[xshift=-0.1ex]}, start anchor={[xshift=0.5ex]}, start anchor={[yshift=0.3ex]}, outer sep=-1.6pt, "D" pos=0.07] & \vdots \arrow[rrrrddd, green(ncs), shift left=0.2ex, end anchor={[yshift=1.4ex]north west}, end anchor={[xshift=-1.6ex]}, start anchor={[xshift=0.5ex]}, start anchor={[yshift=0.3ex]}, outer sep=-1.6pt, "D" pos=0.07] & \vdots \arrow[rrrdd, dash, green(ncs), shift left=0.3ex, end anchor={[yshift=-1.25ex]north west}, start anchor={[xshift=0.3ex]}, start anchor={[yshift=0.05ex]}, outer sep=-1.6pt, "D" pos=0.105] & \vdots \arrow[rrd, dash, green(ncs), shift left=0.2ex, end anchor={[yshift=-3.2ex]north west}, start anchor={[xshift=0.5ex]}, start anchor={[yshift=0.05ex]}, outer sep=-1.6pt, "D" pos=0.18]  & \vdots \arrow[rd, dash, green(ncs), shift left= 0.4ex, end anchor={[yshift=2.4ex]north west}, outer sep=-1.6pt, "D" pos=0.35] & \\
\wh{C}_N^{03} \arrow[r] \arrow[u] \arrow[rrrdd, green(ncs), shift left=0ex, end anchor={[yshift=0.6ex]}, start anchor={[xshift=-0.3ex]}, start anchor={[yshift=0.3ex]}, outer sep=-1.6pt, "D" pos=0.08] & \wh{C}_N^{13} \arrow[r, crossing over] \arrow[u, crossing over] \arrow[rrrdd, green(ncs), shift left=0ex, end anchor={[yshift=0.6ex]}, start anchor={[xshift=-0.3ex]}, start anchor={[yshift=0.3ex]}, outer sep=-1.6pt, "D" pos=0.08]& \wh{C}_N^{23} \arrow[r, crossing over] \arrow[u, crossing over] \arrow[rrrdd, green(ncs), shift left=0ex, end anchor={[yshift=1.45ex]north west}, start anchor={[xshift=-0.3ex]}, start anchor={[yshift=0.3ex]}, outer sep=-1.6pt, "D" pos=0.08] & \wh{C}_N^{33} \arrow[r, crossing over] \arrow[u, crossing over] \arrow[rrd, dash, green(ncs), shift left=0ex, end anchor={[yshift=-1.25ex]north west}, start anchor={[xshift=-0.3ex]}, start anchor={[yshift=0.05ex]}, outer sep=-1.8pt, "D" pos=0.14] & \wh{C}_N^{43} \arrow[r, crossing over] \arrow[u, crossing over] \arrow[r, dash, green(ncs), shift left=-0.5ex, end anchor={[yshift=-3.4ex]north west}, outer sep=-1.8pt, "D" pos=0.35] & \dots \\
\wh{C}_N^{02} \arrow[r] \arrow[u] \arrow[rrd, green(ncs), shift left=0ex, end anchor={[yshift=0.5ex]}, outer sep=-1.6pt, "D" pos=0.11] & \wh{C}_N^{12} \arrow[r, crossing over] \arrow[u, crossing over]  \arrow[rrd, green(ncs), shift left=0ex, end anchor={[yshift=0.5ex]}, outer sep=-1.6pt, "D" pos=0.12]& \wh{C}_N^{22} \arrow[r, crossing over] \arrow[u, crossing over]  \arrow[rrd, green(ncs), shift left=0ex, end anchor={[yshift=0.5ex]}, outer sep=-1.6pt, "D" pos=0.12]& \wh{C}_N^{32} \arrow[r, crossing over] \arrow[u, crossing over] \arrow[rrd, green(ncs), shift left=0ex, end anchor={[yshift=1.1ex]north west}, outer sep=-1.6pt, "D" pos=0.12] & \wh{C}_N^{42} \arrow[r, crossing over] \arrow[u, crossing over] \arrow[r, dash, green(ncs), shift left=-0.5ex, end anchor={[yshift=-2.5ex]north west}, outer sep=-1.6pt, "D" pos=0.36] & \dots \\
\wh{C}_N^{01} \arrow[r] \arrow[u] \arrow[r, green(ncs), shift left=0.5ex, outer sep=-1.4pt, "D" pos=0.41]  & \wh{C}_N^{11} \arrow[r] \arrow[u, crossing over] \arrow[r, green(ncs), shift left=0.5ex, outer sep=-1.4pt, "D" pos=0.43] & \wh{C}_N^{21} \arrow[r] \arrow[u, crossing over] \arrow[r, green(ncs), shift left=0.5ex, outer sep=-1.4pt, "D" pos=0.42] & \wh{C}_N^{31} \arrow[r] \arrow[u, crossing over] \arrow[r, green(ncs), shift left=0.5ex, outer sep=-1.4pt, "D" pos=0.42] & \wh{C}_N^{41} \arrow[r] \arrow[u, crossing over, outer sep=-1.6pt, pos=0.11] \arrow[r, green(ncs), shift left=0.5ex, outer sep=-1.4pt, "D" pos=0.42] & \dots
			\end{tikzcd}
		}
	\end{center}
	\caption{$\bigl(\wh{C}_N^{**}(H,I),\partial_{\mathrm{h}},\partial_{\mathrm{v}}, D\bigr)$}
	\label{ddiagonales}
\end{figure}

\begin{proposition}[\cite{GGV}*{Proposition 7.9}]\label{significado de cociclos caso general} A pair $(\beta,f)\in \wh{C}_N^2(H,I)=\wh{C}_N^{02}(H,I)\oplus \wh{C}_N^{11}(H,I)$ is a $2$-cocycle of the com\-plex ${\mathcal{C}}_{\blackdiamond,\Yleft}(H,I)$ if and only if $(\beta,-f)$ satisfies conditions~\eqref{condicion de cociclo},~\eqref{normalidad y cociclo abeliano},~\eqref{eqq 4} and~\eqref{eqq5}.
\end{proposition}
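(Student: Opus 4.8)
The plan is to unwind the cocycle condition $(\partial + D)(\beta,f)=0$ according to the bigrading of the double complex and then match each bigraded component against one of the four conditions. Since $\wh{C}_N^2=\wh{C}_N^{02}\oplus\wh{C}_N^{11}$ and $\wh{C}_N^3=\wh{C}_N^{03}\oplus\wh{C}_N^{12}\oplus\wh{C}_N^{21}$, and since $\partial_{\mathrm h}$ raises $r$, $\partial_{\mathrm v}$ raises $s$, and $D_{rs}^{r+s,1}$ lands in bidegree $(r+s,1)$, the single equation $(\partial+D)(\beta,f)=0$ splits into exactly three, one in each summand of $\wh{C}_N^3$: namely $\partial_{\mathrm v}\beta=0$ in $\wh{C}_N^{03}$; $\partial_{\mathrm h}\beta+\partial_{\mathrm v}f=0$ in $\wh{C}_N^{12}$; and $\partial_{\mathrm h}f+D_{02}^{21}\beta+D_{11}^{21}f=0$ in $\wh{C}_N^{21}$ (recall that $D_2^3(\beta,f)=D_{02}^{21}\beta+D_{11}^{21}f$). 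Before evaluating these I would record two elementary facts used throughout. First, $0\cdot h=h$ in any linear cycle set: this follows from \eqref{compatibilidades cdot suma y suma cdot} together with the injectivity of the left translation by $0$, and it lets the empty-sum conventions appearing in the formulas for $\partial_{\mathrm h}$ and $D$ simplify cleanly (e.g. $0\cdot(h_1+h_2)=h_1+h_2$). Second, by \eqref{eqq 2} and Remark~\ref{linealidad de Yleft} both $a\blackdiamond(-)$ and $(-)\Yleft c$ are additive, so replacing $f$ by $-f$ pulls a global sign through every $\blackdiamond$- and $\Yleft$-expression.

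Next I would evaluate the three equations on a triple $(h_1,h_2,h_3)$ using the explicit formulas for $\partial_{\mathrm h}$, $\partial_{\mathrm v}$ and $D$. The first, $\partial_{\mathrm v}\beta=0$, expands to $\beta(h_2,h_3)-\beta(h_1+h_2,h_3)+\beta(h_1,h_2+h_3)-\beta(h_1,h_2)=0$, which is exactly the $2$-cocycle identity \eqref{condicion de cociclo}; moreover membership of $\beta$ in $\wh{C}_N^{02}$ already encodes \eqref{normalidad y cociclo abeliano}, since the shuffle quotient $\ov M(2)$ by $[h_1,h_2]-[h_2,h_1]$ forces $\beta(h_1,h_2)=\beta(h_2,h_1)$ and the hat condition forces $\beta(h,0)=\beta(0,h)=0$. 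The second equation expands (using $0\cdot h_1=h_1$) to $\beta(h_1\cdot h_2,h_1\cdot h_3)-h_1\blackdiamond\beta(h_2,h_3)-f(h_1,h_3)+f(h_1,h_2+h_3)-f(h_1,h_2)=0$, which, after substituting $-f$ for $f$ and rearranging, is precisely \eqref{eqq 4}.

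The third equation is the one carrying the real content, and I expect it to be the main obstacle. Expanding it gives
\[
f(h_1\cdot h_2,h_1\cdot h_3)-f(h_1+h_2,h_3)+(h_1\cdot h_2)\blackdiamond f(h_1,h_3)+\bigl((h_1+h_2)\blackdiamond\beta(h_1,h_2)\bigr)\Yleft(h_1+h_2)\cdot h_3+\bigl((h_1\cdot h_2)\blackdiamond f(h_1,h_2)\bigr)\Yleft(h_1+h_2)\cdot h_3=0 .
\]
Rewriting \eqref{eqq5} with $-f$ in place of $f$ (distributing the sign through the $\blackdiamond$- and $\Yleft$-terms by the second preliminary fact) and comparing term by term, every summand matches except the last $\Yleft$-term: the present computation produces the argument $(h_1+h_2)\cdot h_3$, whereas \eqref{eqq5} carries $(h_1\cdot h_2)\cdot(h_1\cdot h_3)$. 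These are reconciled by the linear cycle set identity $(a+b)\cdot c=(a\cdot b)\cdot(a\cdot c)$ of \eqref{compatibilidades cdot suma y suma cdot}, applied with $(a,b,c)=(h_1,h_2,h_3)$. This is the crucial step, and it is exactly the point where the linear cycle set structure of $H$, rather than mere additivity, is used; without it the two $\Yleft$-arguments would fail to coincide. Once this identification is made, the $\wh{C}_N^{21}$-equation is equivalent to \eqref{eqq5} for $(\beta,-f)$, and assembling the three equivalences proves the proposition. All remaining work is the bookkeeping of signs and empty sums, handled uniformly by the two preliminary observations.
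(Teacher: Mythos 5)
Your proof is correct: the bidegree decomposition of $(\partial+D)(\beta,f)=0$ into the three components in $\wh{C}_N^{03}$, $\wh{C}_N^{12}$ and $\wh{C}_N^{21}$, the sign bookkeeping under $f\mapsto -f$, and the reconciliation of the two $\Yleft$-arguments via $(h+h')\cdot h''=(h\cdot h')\cdot(h\cdot h'')$ from~\eqref{compatibilidades cdot suma y suma cdot} are all exactly right, and this is the same unwinding the paper itself performs for the special case $\beta=\alpha_1(\gamma)$ in Remark~\ref{cociclos generales}. The paper gives no proof of this proposition — it is imported verbatim from \cite{GGV}*{Proposition 7.9} — so there is nothing further to compare against; your argument is the expected one.
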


\begin{corollary}[\cite{GGV}*{Corollary 7.10}]\label{equiv caso general} There is a bijective correspondence $\ho_{\blackdiamond,\Yleft}^2(H,I) \longleftrightarrow \Ext_{\blackdiamond,\Yleft}(H,I)$. Concretely, this correspondence sends the class of a $2$-cocycle $(\beta,f)\in \wh{C}_N^2(H,I)=\wh{C}_N^{02}(H,I)\oplus \wh{C}_N^{11}(H,I)$ module coboundaries into the class of the extension $(\iota, I\times_{\beta,-f}^{\blackdiamond,\Yleft} H,\pi)$.
\end{corollary}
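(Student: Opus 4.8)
The plan is to verify that the assignment $(\beta,f)\mapsto \bigl(\iota, I\times_{\beta,-f}^{\blackdiamond,\Yleft} H,\pi\bigr)$ induces a well-defined bijection on the level of classes, using the three structural results already at hand: Proposition~\ref{condiciones para extension} (when a product is an extension), Proposition~\ref{significado de cociclos caso general} (what it means to be a $2$-cocycle), and Remark~\ref{equivalecia entre extensiones producto, caso I trivial} (when two product extensions are equivalent). First I would check that the assignment sends cocycles to extensions. Since we are standing under the hypotheses~\eqref{eqq 1}--\eqref{eqq 3} on the fixed pair $\blackdiamond,\Yleft$ (these are exactly the conditions needed to define ${\mathcal{C}}_{\blackdiamond,\Yleft}(H,I)$), and since by Proposition~\ref{significado de cociclos caso general} a pair $(\beta,f)$ is a $2$-cocycle precisely when $(\beta,-f)$ satisfies~\eqref{condicion de cociclo},~\eqref{normalidad y cociclo abeliano},~\eqref{eqq 4} and~\eqref{eqq5}, the full hypothesis list~\eqref{eqq 1}--\eqref{eqq5} of Proposition~\ref{condiciones para extension} holds for the datum $(\beta,-f)$. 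Hence $\bigl(\iota, I\times_{\beta,-f}^{\blackdiamond,\Yleft} H,\pi\bigr)$ is indeed an extension of linear cycle sets, so the map is defined on representatives.

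The heart of the argument is to show that the map descends to cohomology classes \emph{and} is injective, both of which follow from one computation: evaluating the degree-$1$ differential $(\partial+D)\varphi$ of a normalized $1$-cochain $\varphi\in\wh{C}_N^{1}(H,I)=\wh{C}_N^{01}(H,I)$, which lands in $\wh{C}_N^{02}(H,I)\oplus\wh{C}_N^{11}(H,I)$. Using $0\cdot h=h$, the $\wh{C}_N^{02}$-component is $\partial_{\mathrm{v}}\varphi(h,h')=\varphi(h)-\varphi(h+h')+\varphi(h')$, which is exactly the group-level coboundary appearing in~\eqref{equivalencia a nivel de qrupos}; the $\wh{C}_N^{11}$-component is $(\partial_{\mathrm{h}}+D)\varphi(h,h')=\varphi(h\cdot h')-h\blackdiamond\varphi(h')-\bigl(h\blackdiamond\varphi(h)\bigr)\Yleft h\cdot h'$. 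After substituting $f\mapsto -f$ and $f'\mapsto -f'$ into~\eqref{tercera condicion caso I trivial} (the extensions in question are built from $-f$ and $-f'$), the latter is precisely the difference $f(h,h')-f'(h,h')$ prescribed there. Therefore two cocycles $(\beta,f)$ and $(\beta',f')$ differ by the coboundary of some $\varphi$ if and only if that same $\varphi$ satisfies both~\eqref{equivalencia a nivel de qrupos} and~\eqref{tercera condicion caso I trivial}, which by Remark~\ref{equivalecia entre extensiones producto, caso I trivial} is exactly the condition that the associated extensions be equivalent. This yields well-definedness and injectivity simultaneously.

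For surjectivity I would invoke the fact recorded in the remark following Remark~\ref{linealidad de Yleft}: an arbitrary extension $(\iota,B,\pi)$ realizing the prescribed $\blackdiamond,\Yleft$ is equivalent, through a section $s$ with $s(0)=0$, to one of product form $I\times_{\beta,f_0}^{\blackdiamond,\Yleft} H$. By Proposition~\ref{condiciones para extension} the datum $(\beta,f_0)$ then satisfies~\eqref{eqq 1}--\eqref{eqq5}; in particular it fulfils~\eqref{condicion de cociclo},~\eqref{normalidad y cociclo abeliano},~\eqref{eqq 4} and~\eqref{eqq5}, so Proposition~\ref{significado de cociclos caso general} (applied with $-f_0$ in place of $f$) shows that $(\beta,-f_0)$ is a $2$-cocycle. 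Under our correspondence this cocycle maps to $I\times_{\beta,-(-f_0)}^{\blackdiamond,\Yleft} H=I\times_{\beta,f_0}^{\blackdiamond,\Yleft} H$, i.e. to the given class, proving surjectivity.

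The conceptual skeleton is the classical ``$H^2$ classifies extensions'' pattern, so the only genuine obstacle is bookkeeping. One must track the sign convention $f\mapsto -f$ consistently across Propositions~\ref{condiciones para extension} and~\ref{significado de cociclos caso general} and Remark~\ref{equivalecia entre extensiones producto, caso I trivial}, and—most delicately—verify that the diagonal term $D\varphi$ contributes exactly the $\Yleft$-summand $\bigl(h\blackdiamond\varphi(h)\bigr)\Yleft h\cdot h'$ of~\eqref{tercera condicion caso I trivial}, so that the abstract differential $\partial+D$ of ${\mathcal{C}}_{\blackdiamond,\Yleft}(H,I)$ reproduces term-for-term the concrete equivalence relation on extensions. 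Once that single identification is pinned down, the remaining verifications are formal.
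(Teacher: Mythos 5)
Your argument is correct and is the standard one: the paper itself gives no proof of this corollary (it is imported verbatim as \cite{GGV}*{Corollary 7.10}), and your reconstruction from Propositions~\ref{condiciones para extension} and~\ref{significado de cociclos caso general} and Remark~\ref{equivalecia entre extensiones producto, caso I trivial} matches the intended argument, including the key computation that $(\partial_{\mathrm{v}}^{02}+\partial_{\mathrm{h}}^{11}+D_{01}^{11})\varphi$ reproduces exactly conditions~\eqref{equivalencia a nivel de qrupos} and~\eqref{tercera condicion caso I trivial} after the sign change $f\mapsto -f$. The sign bookkeeping in your surjectivity step is also handled correctly.
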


When $\Yleft=0$, we will write ${\mathcal{C}}_{\blackdiamond}(H,I)$ instead of ${\mathcal{C}}_{\blackdiamond,\Yleft}(H,I)$ and $\ho_{\blackdiamond}^2(H,I)$ instead of $\ho_{\blackdiamond,\Yleft}^2(H,I)$.

\section[The actions \texorpdfstring{$\blackdiamond$}{*} and \texorpdfstring{$\Yleft$}{-<} for a specific linear cycle set]{The actions \texorpdfstring{$\blackdiamond$}{*} and \texorpdfstring{$\pmb\Yleft$}{-<} for a specific linear cycle set}\label{seccion 2}
Let $p$ be a prime number, let $0<\nu\le\eta \le 2\nu$ and let $H$ be the set $\mathds{Z}_{p^{\eta}}$, endowed with the usual sum and the~oper\-ator~$\cdot$, defined by $i\cdot j\coloneqq (1-p^{\nu} i)j$, where the juxtaposition denotes the multiplication in $\mathds{Z}_{p^{\eta}}$. Using~\eqref{equivalencia linear cycle set braza} one can check that the brace associated with $H$ is $\left(\mathds{Z}_{p^{\eta}},+,\times\right)$, where $i\times j = i+j+p^{\nu} ij$. Note that, since $i^{\times p^{\eta}} = 0$, the expression $i^{\times j}$ is well defined, for $i,j\in \mathds{Z}_{p^{\eta}}$. We claim that
\begin{equation}\label{i times j}
i^{\times j} = i j + \binom{j}{2} p^{\nu} i^2\qquad\text{for all $i,j\in \mathds{Z}_{p^{\eta}}$.}
\end{equation}
To prove this it suffices to check this equality for $j\in \mathds{N}_0$, which follows by an inductive argument. Let $I$ be an abelian group. In this subsection we are going to describe the operations $\blackdiamond\colon H\times I\to I$ and $\Yleft \colon I\times H\to I$, such that the conditions~\eqref{eqq 1}--\eqref{eqq 3} are satisfied. We will use again and again the fact that
\begin{align}
&(j+j')\cdot c= (1-(j+j')p^{\nu})c=(1-jp^{\nu})(1-j'p^{\nu})c=j\cdot(j'\cdot c),\label{(j+1).c}\\
&1^{\times j}\cdot c = \left(1-\left(j + \binom{j}{2} p^{\nu}\right)p^{\nu}\right)c=(1-jp^{\nu})c=j\cdot c\label{1^times j.c}
\shortintertext{and}
&i\cdot j = j&&\text{if $p^{\eta-\nu}\mid i$.}\label{i cdot j = j}
\end{align}

\begin{remark}  Later, we will need to consider maps with variables in $H$ as maps with variables in $\mathds{Z}$ via the canonical map $\pi\colon \mathds{Z}\to H$. For the sake of simplicity we will not write the map $\pi$ in these cases. Thus, for instance, if $a\in H$ and $b\in \mathds{Z}$, the expression $a=b$ means that $a$ is the class of $b$ in $H$. Moreover, we set $i\cdot j\coloneqq (1-p^{\nu} i)j\in \mathds{Z}$, for all~$i,j\in \mathds{Z}$. Note that $\pi(i)\cdot \pi(j) = \pi(i\cdot j)$.
\end{remark}

\subsection[Case \texorpdfstring{$p$}{p} odd or \texorpdfstring{$p=2$}{p=2} and \texorpdfstring{$(\nu,\eta)\ne (1,2)$}{(v,n)=/(1,2)}]{Case \texorpdfstring{$\mathbf{p}$}{p} odd or \texorpdfstring{$\mathbf{p}\pmb{=}\mathbf{2}$}{p=2} and \texorpdfstring{$\pmb{(\nu,\eta)\ne (1,2)}$}{(v,n)=/(1,2)}}\label{subsection 2.1}

We claim that $(\mathds{Z}_{p^{\eta}},\times)$ is a cyclic group whenever $p$ is odd or $p=2$ and $(\nu,\eta)\ne (1,2)$. Let $0<i<p^{\eta}$ and let $|i|$ denote the order of $i$ in $(\mathds{Z}_{p^{\eta}},\times)$. Assume that $p\nmid i$ and write $|i|=p^j$. Consider first that $p$ is odd. By~\eqref{i times j}, $j$ is the smallest number such that
\begin{equation*}
p^{\eta} \mid p^j i+\binom{p^{j}}{2}p^{\nu}i^2= p^j i\left(1+\frac{p^j-1}{2}p^{\nu}i\right).
\end{equation*}
Since $\nu>0$, from this formula we conclude that, necessarily, $j=\eta$. Thus, $|i|=p^{\eta}$, and so $(\mathds{Z}_{p^{\eta}},\times)$ is a cyclic group having $i$ as a generator. Similarly, if $p=2$, then $j$ is the smallest number such that
\begin{equation*}
2^{\eta} \mid 2^j i+\binom{2^{j}}{2}2^{\nu}i^2= 2^j i\left(1+(2^j-1) 2^{\nu-1}i\right).
\end{equation*}
Since $\nu>1$ or $\nu=\eta =1$, necessarily, $j=\eta$, and, as before, $i$ is a generator of $(\mathds{Z}_{p^{\eta}},\times)$.

\smallskip

Since $1$ generates $(\mathds{Z}_{p^{\eta}},\times)$, for each $h\in \mathds{Z}_{p^{\eta}}$, there exists a unique $l(h)\in \mathds{Z}_{p^{\eta}}$, such that $1^{\times l(h)}=h$. A direct computation shows that $l(h)$ is given by
\begin{equation}\label{formula para l}
l(h)=\begin{cases} h-\binom{h}{2}p^{\nu}&\text{if $p$ is odd or $\eta<2\nu$,}\\ h-\binom{h}{2}(2^{\nu}+2^{2\nu-1})& \text{if $p$ is $2$ and $\eta=2\nu>2$.}
\end{cases}
\end{equation}
Thus, it suffices to define $\blackdiamond\colon H\times I\to I$ and $\Yleft\colon I\times H\to I$, for $h=1^{\times l}$ with $l\in \mathds{Z}_{p^{\eta}}$, and $y\in I$.

\smallskip

For the rest of the subsection we will assume that $p$ odd or $p=2$ and $(\nu,\eta)\ne (1,2)$. In the computations we will freely use that $\binom{a+a'}{2} = \binom{a}{2}+\binom{a'}{2}+aa'$, for all $a,a'\in \mathds{Z}$.

\begin{remark}\label{es inversa} By~\eqref{i times j}, we have $l\bigl(k+\binom{k}{2}p^{\nu}\bigr) = l\bigl(1^{\times k}\bigr) = k$, for all $k\in \mathds{Z}_{p^{\eta}}$.
\end{remark}

\begin{remark}\label{def de L(l,l')} Since $1$ generates $(\mathds{Z}_{p^{\eta}},\times)$, for each $l,l'\in \mathds{Z}$, there exists $L(l,l')$ in $\mathds{Z}_{p^{\eta}}$ such that $1^{\times l}+1^{\times l'} = 1^{\times L(l,l')}$. Moreover $L(l,l')$ is unique and by the first equality in~\eqref{equivalencia linear cycle set braza}, we have
\begin{equation}\label{a.b=}
1^{\times l}\cdot 1^{\times l'} = 1^{\times -l}\times (1^{\times l}+ 1^{\times l'}) = 1^{\times -l}\times 1^{\times L(l,l')} = 1^{\times L(l,l')-l}.
\end{equation}
\end{remark}

\begin{proposition}\label{para condiciones sin potencia} If $p$ is odd, or $p=2$ and $\eta<2\nu$, then $L(l,l') = l+l'-ll'p^{\nu}$, for all $l,l'\in \mathds{Z}$.
\end{proposition}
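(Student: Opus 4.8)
The plan is to verify the formula by a direct computation: since $1$ generates $(\mathds{Z}_{p^{\eta}},\times)$, the element $L(l,l')$ is characterized as the unique exponent with $1^{\times L(l,l')}=1^{\times l}+1^{\times l'}$ (Remark~\ref{def de L(l,l')}), so it suffices to check that the proposed value $L\coloneqq l+l'-ll'p^{\nu}$ satisfies $1^{\times L}=1^{\times l}+1^{\times l'}$ in $\mathds{Z}_{p^{\eta}}$. Using~\eqref{i times j} with $i=1$, both sides unfold explicitly: the right-hand side equals $(l+l')+\bigl(\binom{l}{2}+\binom{l'}{2}\bigr)p^{\nu}$, while the left-hand side equals $L+\binom{L}{2}p^{\nu}=(l+l')-ll'p^{\nu}+\binom{L}{2}p^{\nu}$.

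Comparing the two and invoking the identity $\binom{l+l'}{2}=\binom{l}{2}+\binom{l'}{2}+ll'$ recorded before Remark~\ref{es inversa}, the desired equality $1^{\times L}=1^{\times l}+1^{\times l'}$ reduces to the single congruence $\binom{L}{2}p^{\nu}\equiv\binom{l+l'}{2}p^{\nu}\pmod{p^{\eta}}$, equivalently $\binom{L}{2}\equiv\binom{l+l'}{2}\pmod{p^{\eta-\nu}}$ (cancelling one factor $p^{\nu}$ is legitimate because $\nu\le\eta$). Since $L$ and $l+l'$ differ by the multiple $u\coloneqq -ll'p^{\nu}$ of $p^{\nu}$, a direct expansion of the two binomial coefficients gives $\binom{L}{2}-\binom{l+l'}{2}=\tfrac{1}{2}u\bigl(2(l+l')+u-1\bigr)$, and the whole statement comes down to estimating the $p$-adic valuation of this quantity.

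This valuation estimate is the one place where the two cases of the statement genuinely separate, and is the main (mild) obstacle. When $p$ is odd, division by $2$ does not affect $p$-divisibility, so $v_p\bigl(\tfrac{1}{2}u(2(l+l')+u-1)\bigr)\ge v_p(u)\ge\nu\ge\eta-\nu$, where the last inequality is the standing hypothesis $\eta\le 2\nu$; the congruence then holds, and this argument even covers the boundary case $\eta=2\nu$. When $p=2$ the factor $2(l+l')+u-1$ is odd, so the division by $2$ consumes exactly one power of $2$ and only yields $v_2\ge\nu-1$; this is $\ge\eta-\nu$ precisely when $2\nu-1\ge\eta$, that is, when $\eta<2\nu$, which explains both why the hypothesis is needed for $p=2$ and why the excluded case $\eta=2\nu$ must be treated separately. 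In either admissible case the congruence $\binom{L}{2}\equiv\binom{l+l'}{2}\pmod{p^{\eta-\nu}}$ holds, completing the verification that $L(l,l')=l+l'-ll'p^{\nu}$.
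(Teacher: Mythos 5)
Your proof is correct, but it is organized differently from the paper's. The paper computes $U\coloneqq 1^{\times l}+1^{\times l'}$ explicitly and then pushes $U$ through the explicit inverse formula~\eqref{formula para l}, i.e.\ $L(l,l')=U-\binom{U}{2}p^{\nu}$, simplifying via the divisibility $p^{\eta}\mid\binom{yp^{\nu}}{2}p^{\nu}+xyp^{2\nu}$. You instead take the candidate $L=l+l'-ll'p^{\nu}$ and verify $1^{\times L}=1^{\times l}+1^{\times l'}$ directly from~\eqref{i times j}, invoking only the uniqueness of $L(l,l')$ from Remark~\ref{def de L(l,l')}; this bypasses~\eqref{formula para l} entirely, which is a small gain in self-containedness since that formula is itself only asserted by ``a direct computation'' in the text. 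The arithmetic core is the same in both arguments: the discrepancy between the two sides is a binomial-coefficient difference whose $p$-adic valuation must be at least $\eta-\nu$, and in both proofs the single factor of $2$ in $\binom{\cdot}{2}$ is exactly what costs one power of $2$ when $p=2$ and forces the exclusion of $\eta=2\nu$ there (that case being handled separately in Proposition~\ref{para condiciones sin potencia2}). Your explicit identity $\binom{L}{2}-\binom{l+l'}{2}=\tfrac12 u\bigl(2(l+l')+u-1\bigr)$ with $u=-ll'p^{\nu}$ and the ensuing valuation estimates are correct, and your remark that the odd-$p$ argument covers the boundary $\eta=2\nu$ matches the hypotheses of the statement.
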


\begin{proof} By~\eqref{formula para l}, we have $L(l,l')= U-\binom{U}{2}p^{\nu}$, where
\begin{equation*}
U\coloneqq 1^{\times l}+1^{\times l'}=l+\binom{l}{2}p^{\nu}+l'+\binom{l'}{2}p^{\nu} = l+l'+\left(\binom{l+l'}{2} - ll'\right)p^{\nu}.
\end{equation*}
Set $x\coloneqq l+l'$ and $y\coloneqq \binom{l+l'}{2} - ll' = \binom{x}{2} - ll'$. Since $p^{\eta}\mid \binom{yp^{\nu}}{2}p^{\nu}+xyp^{2\nu}$, we have
\begin{equation*}
L(l,l') = x+yp^{\nu} - \binom{x+yp^{\nu}}{2}p^{\nu} = x+yp^{\nu} - \binom{x}{2}p^{\nu} - \binom{yp^{\nu}}{2}p^{\nu} - xyp^{2\nu} = x+yp^{\nu} - \binom{x}{2}p^{\nu} = l+l'-ll'p^{\nu},
\end{equation*}
as desired.
\end{proof}

\begin{proposition}\label{para condiciones sin potencia2} If $p=2$ and $\eta=2\nu>2$, then $L(l,l') = l+l'-ll'2^{\nu}-ll'2^{2\nu-1}$, for all $l,l'\in \mathds{Z}$.
\end{proposition}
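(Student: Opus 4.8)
The plan is to mirror the proof of Proposition~\ref{para condiciones sin potencia}, but using the second branch of formula~\eqref{formula para l} in place of the first. By Remark~\ref{def de L(l,l')} we have $L(l,l') = l(U)$, where $U\coloneqq 1^{\times l}+1^{\times l'}$. Applying~\eqref{i times j} with $p=2$ together with the identity $\binom{l+l'}{2}=\binom{l}{2}+\binom{l'}{2}+ll'$ gives, exactly as in the earlier proof, $U = x+y2^{\nu}$ with $x\coloneqq l+l'$ and $y\coloneqq \binom{x}{2}-ll'$. Since $\eta=2\nu>2$ forces $\nu\ge 2$, the relevant instance of~\eqref{formula para l} is $l(h)=h-\binom{h}{2}(2^{\nu}+2^{2\nu-1})$, so that $L(l,l') = U-\binom{U}{2}(2^{\nu}+2^{2\nu-1})$.

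Next I would expand $\binom{U}{2}=\binom{x+y2^{\nu}}{2}=\binom{x}{2}+\binom{y2^{\nu}}{2}+xy2^{\nu}$ via the same binomial identity, multiply through by $2^{\nu}+2^{2\nu-1}$, and reduce everything modulo $2^{\eta}=2^{2\nu}$. Here one uses $\nu\ge 2$ repeatedly: the cross-terms $xy2^{\nu}(2^{\nu}+2^{2\nu-1})$ and $\binom{y2^{\nu}}{2}2^{2\nu-1}$ are both divisible by $2^{2\nu}$ and hence drop out.

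The step that requires genuine care—and the point where this proof departs from that of Proposition~\ref{para condiciones sin potencia}—is the term $\binom{y2^{\nu}}{2}2^{\nu}$. Writing $\binom{y2^{\nu}}{2}=y2^{\nu-1}(y2^{\nu}-1)$, one finds $\binom{y2^{\nu}}{2}2^{\nu}=y2^{2\nu-1}(y2^{\nu}-1)\equiv -y2^{2\nu-1}\pmod{2^{2\nu}}$, which does \emph{not} vanish. In the odd case (and in the case $\eta<2\nu$) the analogous term was fully divisible by $p^{\eta}$; here the extra summand $2^{2\nu-1}$ appearing in $l(h)$, combined with the prime being $2$, makes this boundary contribution survive. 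Tracking it correctly is the crux of the computation and the main obstacle.

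Collecting the surviving pieces yields $\binom{U}{2}(2^{\nu}+2^{2\nu-1})\equiv \binom{x}{2}2^{\nu}+\binom{x}{2}2^{2\nu-1}-y2^{2\nu-1}\pmod{2^{2\nu}}$. Substituting $y=\binom{x}{2}-ll'$ into $U-\binom{U}{2}(2^{\nu}+2^{2\nu-1})$ then cancels all $\binom{x}{2}$-terms: the pair $y2^{\nu}-\binom{x}{2}2^{\nu}$ collapses to $-ll'2^{\nu}$, and $-\binom{x}{2}2^{2\nu-1}+y2^{2\nu-1}=(y-\binom{x}{2})2^{2\nu-1}$ collapses to $-ll'2^{2\nu-1}$. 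What remains is $L(l,l')=x-ll'2^{\nu}-ll'2^{2\nu-1}=l+l'-ll'2^{\nu}-ll'2^{2\nu-1}$, as claimed.
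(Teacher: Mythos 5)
Your proof is correct and follows essentially the same route as the paper's: both compute $L(l,l')=U-\binom{U}{2}(2^{\nu}+2^{2\nu-1})$ with $U=x+y2^{\nu}$, reduce modulo $2^{2\nu}$ using $\nu\ge 2$, and isolate the one surviving boundary contribution $-y2^{2\nu-1}$, which together with $y-\binom{x}{2}=-ll'$ yields the claimed formula. The paper merely organizes the bookkeeping differently, via $U^2 2^{\nu-1}\equiv x^2 2^{\nu-1}$ and $x^2-x=2(y+ll')$, instead of expanding $\binom{U}{2}$ with the binomial identity as you do.
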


\begin{proof} By~\eqref{formula para l},
\begin{equation*}
L(l,l')= U-U(U-1)2^{\nu-1}-U(U-1)2^{2\nu-2}=U(1+2^{\nu-1}+2^{2\nu-2})-U^2 2^{\nu-1}(1+2^{\nu-1}),
\end{equation*}
where
\begin{equation*}
U\coloneqq 1^{\times l}+1^{\times l'}=l+\binom{l}{2}2^{\nu}+l'+\binom{l'}{2}2^{\nu}=l+l'+\left(\binom{l+l'}{2}-ll'\right)2^{\nu}.
\end{equation*}
Set $x\coloneqq l+l'$ and $y\coloneqq \binom{l+l'}{2} - ll'$. Then $U=x+y2^{\nu}$, and so $U^2=x^2+xy2^{\nu+1}$. Therefore,
\begin{align*}
L(l,l')&=U(1+2^{\nu-1}+2^{2\nu-2})-U^2 2^{\nu-1}(1+2^{\nu-1})\\
&= x(1+2^{\nu-1}+2^{2\nu-2})+y2^{\nu}(1+2^{\nu-1})-U^2 2^{\nu-1}(1+2^{\nu-1})&& \text{since $2^{3\nu-2}=0$}\\
&= x + y2^{\nu}(1+2^{\nu-1})-(x^2-x)2^{\nu-1}(1+2^{\nu-1})&& \text{since $U^2 2^{\nu-1}=x^2 2^{\nu-1}$}\\
&=x + y2^{\nu}(1+2^{\nu-1})- (y+ll') 2^{\nu}(1+2^{\nu-1})&& \text{since $x^2-x=2(y+ll')$}\\
&= x-ll' 2^{\nu}-ll'2^{2\nu-1},
\end{align*}
as desired.
\end{proof}

\begin{lemma}\label{general conditions} Let $A,B\in \End(I)$ be such that $p^{\eta}B = 0$ and $[B,A] = BAB+p^{\nu}AB$. We have
\begin{equation}\label{condicion compuesta}
(A-AB)^l=A^l-\left(l+\binom{l}{2}p^{\nu}\right)A^l B\qquad\text{for all $l\in \mathds{Z}_{p^{\eta}}$}.
\end{equation}
%%%%%%%%%%%%%%%%%%%%%%%%%%%%%%%%%%%%%%%%%%%%%%%%%%%%%%%%%%%%%%%%%%%%%%%%%%%%%%%%%%%%%%%%%%
% Note that the right hand side of this formula is well defined since $p^{\eta}B = 0$.
%%%%%%%%%%%%%%%%%%%%%%%%%%%%%%%%%%%%%%%%%%%%%%%%%%%%%%%%%%%%%%%%%%%%%%%%%%%%%%%%%%%%%%%%%%
\end{lemma}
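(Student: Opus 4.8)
The plan is to prove \eqref{condicion compuesta} by induction on a non-negative integer exponent $l$, writing $C\coloneqq A-AB$ throughout. The base case $l=0$ is immediate since both sides equal the identity, and $l=1$ reads $C=A-AB$, which is the definition of $C$ (note $\binom{1}{2}=0$). Before running the induction I would first distill the hypothesis $[B,A]=BAB+p^{\nu}AB$ into a usable multiplicative identity: rewriting it as $BA=AB+BAB+p^{\nu}AB$ gives at once
\[
BA-BAB=(1+p^{\nu})AB,\qquad\text{equivalently}\qquad B(A-AB)=(1+p^{\nu})AB.
\]
This is the only place the commutator relation enters, and it is exactly what lets me slide a factor $B$ past one copy of $C$ at the cost of the scalar $1+p^{\nu}$.

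For the inductive step I would assume $C^{l}=A^{l}-\bigl(l+\binom{l}{2}p^{\nu}\bigr)A^{l}B$ and multiply on the right by $C=A-AB$. The term $A^{l}C$ contributes $A^{l+1}-A^{l+1}B$, while for the second term the auxiliary identity yields
\[
A^{l}B\,(A-AB)=A^{l}(BA-BAB)=(1+p^{\nu})A^{l+1}B.
\]
Collecting the coefficients of $A^{l+1}B$ then gives $C^{l+1}=A^{l+1}-\lambda\,A^{l+1}B$ with
\[
\lambda=1+\Bigl(l+\binom{l}{2}p^{\nu}\Bigr)(1+p^{\nu})=(l+1)+\Bigl(\binom{l}{2}+l\Bigr)p^{\nu}+\binom{l}{2}p^{2\nu},
\]
and here the combinatorial identity $\binom{l}{2}+l=\binom{l+1}{2}$ reshapes the $p^{\nu}$-coefficient into exactly the one predicted by \eqref{condicion compuesta}.

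The single genuine point — less an obstacle than the crux where the hypotheses are actually used — is disposing of the leftover summand $\binom{l}{2}p^{2\nu}A^{l+1}B$. This vanishes because $p^{2\nu}B=0$: indeed $\eta\le 2\nu$ forces $p^{\eta}\mid p^{2\nu}$, so $p^{2\nu}B=p^{2\nu-\eta}(p^{\eta}B)=0$ by the hypothesis $p^{\eta}B=0$. With this term gone I obtain $C^{l+1}=A^{l+1}-\bigl((l+1)+\binom{l+1}{2}p^{\nu}\bigr)A^{l+1}B$, closing the induction and establishing the identity for every integer $l\ge 0$. Finally I would note that the notation $l\in\mathds{Z}_{p^{\eta}}$ is consistent, since the scalar $l+\binom{l}{2}p^{\nu}$ equals $1^{\times l}$ by \eqref{i times j} and is applied to $B$, which is annihilated by $p^{\eta}$; hence the right-hand coefficient depends only on the class of $l$ modulo $p^{\eta}$.
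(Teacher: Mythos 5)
Your proof is correct and follows essentially the same route as the paper's: induction on $l\in\mathds{N}_0$, multiplying the inductive hypothesis on the right by $A-AB$, using the commutator relation in the form $B(A-AB)=(1+p^{\nu})AB$, and closing with $\binom{l}{2}+l=\binom{l+1}{2}$ and $p^{2\nu}B=0$. The only difference is cosmetic: you spell out why $p^{2\nu}B=0$ and why the coefficient is well defined modulo $p^{\eta}$, which the paper leaves implicit.
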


\begin{proof} Since the right side of~\eqref{condicion compuesta} is periodic of period $p^{\eta}$, it is sufficient to prove this equality for $l\in \mathds{N}_0$. For $l=0$ and $l=1$ this is trivially true. Assume by induction that~\eqref{condicion compuesta} holds, for some $l\in \mathds{N}$. Then
\begin{align*}
(A-AB)^{l+1}&=\left(A^l-\left(l+\binom{l}{2}p^{\nu}\right)A^l B\right)(A-AB)\\
&=A^{l+1}-A^{l+1}B-\left(l+\binom{l}{2}p^{\nu}\right)A^l B(A-AB)\\
&= A^{l+1}-A^{l+1}B-\left(l+\binom{l}{2}p^{\nu}\right)A^l (1+p^{\nu})AB && \text{since $[B,A] = BAB+p^{\nu}AB$}\\
&= A^{l+1}-\left(1+l+\binom{l}{2}p^{\nu}+lp^{\nu}+\binom{l}{2}p^{2\nu}\right) A^{l+1}B\\
&= A^{l+1}-\left(1+l+\binom{l+1}{2}p^{\nu}\right) A^{l+1}B &&\text{since $p^{2\nu}B=0$ and $\binom l2+l=\binom{l+1}{2}$.}
\end{align*}
This completes the inductive step and concludes the proof of~\eqref{condicion compuesta}.
\end{proof}

\begin{proposition}\label{condiciones sin potencia general} Let $A,B\in \End(I)$ be such that
\begin{equation}\label{igualdad exponencial general}
A^{p^{\eta}}= \Ide,\quad p^{\eta}B = 0\quad\text{and}\quad [B,A] = BAB+p^{\nu}AB.
\end{equation}
Then, the maps $\blackdiamond\colon H\times I\to I$ and $\Yleft \colon I\times H\to I$, defined by
\begin{equation}\label{def de diamante general}
1^{\times l}\blackdiamond y \coloneqq A^ly\qquad\text{and}\qquad y\Yleft h\coloneqq hBy,
\end{equation}
satisfy the conditions~\eqref{eqq 1}--\eqref{eqq 3}, where $y \triangleleft h$ and $y^h$ are defined as in Proposition~\ref{condiciones para extension}. Conversely, if $\blackdiamond\colon H\times I\to I$ and $\Yleft \colon I\times H\to I$ are operations that satisfy~the~con\-ditions~\eqref{eqq 1}--\eqref{eqq 3}, then the maps $A\colon I\to I$ and $B\colon I\to I$, defined by $Ay\coloneqq 1\blackdiamond y$ and $ By\coloneqq y\Yleft 1$, are endomorphisms of $I$, that satisfy~\eqref{igualdad exponencial general} and~\eqref{def de diamante general}.
\end{proposition}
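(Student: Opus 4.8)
The plan is to treat the two implications separately. In both directions conditions~\eqref{eqq 2} and~\eqref{eqq 3} are routine, and the genuine content lives in the associativity identity, the first equation of~\eqref{eqq 1}. For the forward implication I would first check that the definitions~\eqref{def de diamante general} are well posed: since $A^{p^\eta}=\Ide$ the endomorphism $A^l$ depends only on $l\bmod p^\eta$, and since $p^\eta B=0$ the map $h\mapsto hBy$ depends only on $h\bmod p^\eta$; as $1$ has order $p^\eta$ in $(\mathds{Z}_{p^\eta},\times)$, this makes $1^{\times l}\blackdiamond y$ and $y\Yleft h$ unambiguous. Condition~\eqref{eqq 2} is then immediate: $h\blackdiamond-$ is additive because $A^l$ is, $0\blackdiamond y=A^0y=y$, and $(1^{\times l'}\times 1^{\times l})\blackdiamond y=A^{l+l'}y=A^l(A^{l'}y)$ using that $(\mathds{Z}_{p^\eta},\times)$ is abelian with $1^{\times l}\times 1^{\times l'}=1^{\times(l+l')}$. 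Having~\eqref{eqq 2}, Remark~\ref{comentario} gives $y^h=y-y\Yleft h=y-hBy$, so the two identities of~\eqref{eqq 3} reduce to the additivity of $B$ and of $h\mapsto hBy$.

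The crux of the forward direction is the first identity of~\eqref{eqq 1}, and the decisive step is to put $y\triangleleft 1^{\times l}$ in closed form. Since $1^{\times l}=l+\binom{l}{2}p^\nu$ in $\mathds{Z}_{p^\eta}$, one has $y\Yleft 1^{\times l}=\bigl(l+\binom{l}{2}p^\nu\bigr)By$, and hence $y\triangleleft 1^{\times l}=A^l\bigl(y-(l+\binom{l}{2}p^\nu)By\bigr)=\bigl(A^l-(l+\binom{l}{2}p^\nu)A^lB\bigr)y$. This is exactly the right-hand side of Lemma~\ref{general conditions}, whose hypotheses are precisely the relations in~\eqref{igualdad exponencial general}, so $y\triangleleft 1^{\times l}=(A-AB)^ly$. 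With this clean formula the associativity is trivial: writing $h=1^{\times l}$, $h'=1^{\times l'}$, both $y\triangleleft(h\times h')=(A-AB)^{l+l'}y$ and $(y\triangleleft h)\triangleleft h'=(A-AB)^{l'}(A-AB)^ly$ coincide. The remaining two identities of~\eqref{eqq 1} ($\triangleleft$ additive in $y$, and $y\triangleleft 0=y$) follow at once from additivity of $A^l$ and $B$.

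For the converse, assume~\eqref{eqq 1}--\eqref{eqq 3} and set $Ay=1\blackdiamond y$ and $By=y\Yleft 1$. Then $A\in\End(I)$ by the additivity in~\eqref{eqq 2}, and $B\in\End(I)$ because $\Yleft$ is bilinear by Remark~\ref{linealidad de Yleft}. Iterating the first identity of~\eqref{eqq 2} with $h=1$ gives $1^{\times l}\blackdiamond y=A^ly$ by induction on $l\ge0$; evaluating at $l=p^\eta$ with $1^{\times p^\eta}=0$ forces $A^{p^\eta}=\Ide$, so the formula holds for all $l\in\mathds{Z}_{p^\eta}$. Similarly the bilinearity of $\Yleft$ yields $y\Yleft h=hBy$, and $h=0$ in $H$ forces $p^\eta B=0$. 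It remains to recover the commutation relation, which I would read off the smallest nontrivial instance of~\eqref{eqq 1}: evaluating $y\triangleleft(1\times1)=(y\triangleleft 1)\triangleleft 1$ and using the formulas just proved to compute $y\triangleleft 1=Ay-ABy$, $y\triangleleft 1^{\times2}=A^2y-(2+p^\nu)A^2By$ and $(y\triangleleft 1)\triangleleft 1=A^2y-A^2By-ABAy+ABABy$, one obtains, after cancelling $A^2y$ and left-multiplying by $A^{-1}=A^{p^\eta-1}$, the relation $BA-AB=BAB+p^\nu AB$. Together with the two displayed formulas this is exactly~\eqref{igualdad exponencial general} and~\eqref{def de diamante general}.

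The main obstacle in both directions is the same, namely controlling the non-commutativity of $A$ and $B$ inside the iterated operation $\triangleleft$. In the forward direction Lemma~\ref{general conditions} packages all of that bookkeeping into the single identity $y\triangleleft 1^{\times l}=(A-AB)^ly$; in the converse the point is that the one instance $h=h'=1$ already pins down $[B,A]$, so no further cases are needed.
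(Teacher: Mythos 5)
Your proposal is correct and follows essentially the same route as the paper: both directions hinge on the closed form $y\triangleleft 1^{\times l}=(A-AB)^l y$ supplied by Lemma~\ref{general conditions} together with the identity $1^{\times l}=l+\binom{l}{2}p^{\nu}$, and the converse recovers $[B,A]=BAB+p^{\nu}AB$ from the single instance $y\triangleleft 1^{\times 2}=(y\triangleleft 1)\triangleleft 1$, exactly as in the paper's proof.
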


\begin{proof} $\Rightarrow$)\enspace We know that $(\mathds{Z}_{p^{\eta}},\times)$ is generated by $1$ subject to the relation $1^{\times p^{\eta}} = 0$. Consequently, since $A^{p^{\eta}} = \Ide$, the map $\blackdiamond$ is a well defined $\mathds{Z}$-linear left group action of $(\mathds{Z}_{p^{\eta}},\times)$ on $I$ (thus, conditions~\eqref{eqq 2} are satisfied). On the other hand, by~\eqref{condicion compuesta}, there is a well defined $\mathds{Z}$-linear right group action $\triangleleft$, of $(\mathds{Z}_{p^{\eta}},\times)$ on $I$ (in other words $\triangleleft$ satisfy conditions~\eqref{eqq 1}), given by
\begin{equation}\label{formula de triangleleft}
y\triangleleft 1^{\times l}\coloneqq (A-AB)^l y.
\end{equation}
Note that, since $p^{\eta}B = 0$, the map $\Yleft$ is well defined. We must prove that $\triangleleft$ is the map defined at the be\-gin\-ning of Proposition~\ref{condiciones para extension}. In other words, that
\begin{equation}\label{equivalencia entre las dos definicionesv'}
h\blackdiamond (y - y\Yleft h) =y\triangleleft h\qquad\text{for all $h\in \mathds{Z}_{p^{\eta}}$ and $y\in I$.}
\end{equation}
Concretely, we must show that
\begin{equation}\label{equivalencia entre las dos definiciones'}
A^{l(h)}\left(\Ide - h B\right)= (A -AB)^{l(h)} \qquad\text{for all $h\in \mathds{Z}_{p^{\eta}}$.}
\end{equation}
But this follows immediately from~\eqref{condicion compuesta} and the fact that $\bigl(l(h)+\binom{l(h)}{2}p^{\nu}\bigr) = 1^{\times l(h)} = h$, by~\eqref{i times j}. Consequently,
\begin{equation*}
y^l = y - y\Yleft l = (\Ide - lB )y.
\end{equation*}
where the first equality holds by Remark~\ref{comentario}. Now, a direct computation proves that this operation satisfies~\eqref{eqq 3}.

\smallskip

\noindent $\Leftarrow$)\enspace To begin with, note that by~\eqref{eqq 2} and Remark~\ref{linealidad de Yleft}, the maps $A$ and $B$ are endomorphisms of $I$ and equalities~\eqref{def de diamante general} are satisfied. Consequently, since $1^{\times p^{\eta}} = 0$, by conditions~\eqref{eqq 2} we have $A^{p^{\eta}} = \Ide$. Moreover, by the $H$-linearity of~$\Yleft$, we have $p^{\eta} B y= p^{\eta} (y\Yleft 1) =  y\Yleft p^{\eta} =  y\Yleft 0 = 0$. Finally, the last equality in~\eqref{igualdad exponencial general} follows from the fact that
\begin{equation*}
A^2(\Ide-(2+p^{\nu}) B)y = 1^{\times 2}\blackdiamond (y - y\Yleft (2+p^{\nu})) = 1^{\times 2} \blackdiamond (y - y\Yleft 1^{\times 2}) = y\triangleleft 1^{\times 2} = (A-AB)^2y,
\end{equation*}
for all $y\in I$.
\end{proof}

\begin{proposition}\label{general conditions'} Let $A,B\in \End(I)$ be such that $A^{p^{\eta}}=\Ide$. If $AB = BA$, then the last two conditions in~\eqref{igualdad exponencial general} hold if and only if one of the following mutually excluding cases holds:

\begin{enumerate}[itemsep=0.3ex, topsep=0.6ex, label=\emph{(\arabic*)}]

\item $B=0$.

\item $B\ne 0$, $B^2=0$ and $p^{\nu}B=0$.

\item $B^2\ne 0$, $p^{\eta}B=0$, $B^3=0$ and $B^2+p^{\nu} B=0$.
\end{enumerate}
\end{proposition}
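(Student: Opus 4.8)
The plan is to exploit the commutativity hypothesis to collapse the two conditions of~\eqref{igualdad exponencial general} into a far more tractable pair. When $AB=BA$ the commutator $[B,A]=BA-AB$ vanishes, and moreover $BAB=AB^2$ and $p^{\nu}AB=A(p^{\nu}B)$, so the right-hand side of the third equation in~\eqref{igualdad exponencial general} factors as $BAB+p^{\nu}AB=A(B^2+p^{\nu}B)$. Since $A^{p^{\eta}}=\Ide$ the endomorphism $A$ is invertible, hence $A(B^2+p^{\nu}B)=0$ is equivalent to $B^2+p^{\nu}B=0$. Thus, under the standing assumption $AB=BA$, the last two conditions in~\eqref{igualdad exponencial general} are equivalent to the single pair
\begin{equation*}
p^{\eta}B=0\qquad\text{and}\qquad B^2+p^{\nu}B=0.
\end{equation*}
Everything then reduces to showing that this pair is equivalent to the disjunction of the three cases.

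For the forward implication I would assume both reduced identities and split on the vanishing of $B$ and of $B^2$, which manifestly produces three mutually excluding alternatives. If $B=0$ we are in case~(1). If $B\ne0$ but $B^2=0$, then $B^2+p^{\nu}B=0$ forces $p^{\nu}B=0$, giving case~(2). If $B^2\ne0$, then $B^2=-p^{\nu}B$ gives $B^3=-p^{\nu}B^2=p^{2\nu}B$; here is the one place where the hypothesis $\eta\le2\nu$ enters, since it yields $p^{2\nu}B=p^{2\nu-\eta}\,p^{\eta}B=0$ and hence $B^3=0$, placing us in case~(3).

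For the converse I would check that each of the three cases returns the two reduced identities. Case~(1) is immediate. For case~(2) one has $B^2+p^{\nu}B=0+0=0$, while $p^{\eta}B=p^{\eta-\nu}(p^{\nu}B)=0$ because $\nu\le\eta$; this use of $\nu\le\eta$ is the only non-automatic point, since $p^{\eta}B=0$ is not listed among the hypotheses of case~(2). Case~(3) lists $p^{\eta}B=0$ and $B^2+p^{\nu}B=0$ outright, so nothing remains to prove.

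I do not expect a serious obstacle: the entire argument rests on the factorization $BAB+p^{\nu}AB=A(B^2+p^{\nu}B)$ together with the invertibility of $A$, and the only subtlety is the bookkeeping of the inequalities $\nu\le\eta\le2\nu$, namely $\eta\le2\nu$ to force $B^3=0$ in case~(3) and $\nu\le\eta$ to recover $p^{\eta}B=0$ in case~(2).
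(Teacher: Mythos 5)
Your proof is correct and follows essentially the same route as the paper: commutativity plus invertibility of $A$ (from $A^{p^{\eta}}=\Ide$) reduce the bracket condition to $B^2+p^{\nu}B=0$, and the trichotomy on $B$ and $B^2$ together with $\eta\le 2\nu$ yields the three cases. The only difference is that you spell out the converse direction (in particular that $p^{\eta}B=0$ follows from $p^{\nu}B=0$ and $\nu\le\eta$ in case (2)), which the paper leaves implicit.
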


\begin{proof} Since $A$ and $B$ commutes, $[B,A] = BAB+p^{\nu} AB$ if and only if $B^2+p^{\nu} B=0$. Assume this and that $p^{\eta}B=0$. Since $\eta\le 2\nu$, we have
\begin{equation*}
B^3=-p^{\nu}B^2=p^{2\nu}B=0\quad\text{and}\quad B^2 = 0 \Rightarrow p^{\nu}B = 0.
\end{equation*}
So, the last two conditions in~\eqref{igualdad exponencial general} hold if and only if one of the items~(1), (2) and~(3) is satisfied.
\end{proof}

\begin{remark}\label{para ejemplo con I ciclico} Let $I\coloneqq \mathds{Z}_n$, for some $n\coloneqq p^r c$ with $p\nmid c$. Let $A,B\in \End(I)$ and write $A\coloneqq a\Ide$ and $B\coloneqq b\Ide$ with $0\le a,b<n$. Clearly $A^{p^{\eta}} = \Ide$ if and only if $a^{p^{\eta}}\equiv 1\pmod{n}$. If $b>0$, write $b\coloneqq p^sd$ with $p\nmid d$. We have:

\begin{enumerate}

\item $B=0$ if and only if $b=0$.

\item $B\ne 0$, $B^2=0$ and $p^{\nu}B=0$ if and only if $s<r\le \min(\nu+s,2s)$ and $c\mid d$.

\item $B^2\ne 0$, $p^{\eta}B=0$, $B^3=0$ and $B^2+p^{\nu} B=0$ if and only if $s=\nu$,  $2\nu<r\le \eta+\nu$, $p^{r-2\nu}\mid d+1$ and $c\mid d$.
\end{enumerate}
\end{remark}

%%%%%%%%%%%%%%%%%%%%%%%%%%%%%%%%%%%%%%%%%%%%%%%%%%%%%%%%%%%%%%%%%%%%%%%%%%%%%%%%%%%%%%%%%%%%%%%%%%%%%%%%%%%%%%%%%%%%%%%%%%%%%%%%%%%%%%%%%%%%%%%%%%%%%%%%%%%%
\begin{comment}
%%%%%%%%%%%%%%%%%%%%%%%%%%%%%%%%%%%%%%%%%%%%%%%%%%%%%%%%%%%%%%%%%%%%%%%%%%%%%%%%%%%%%%%%%%%%%%%%%%%%%%%%%%%%%%%%%%%%%%%%%%%%%%%%%%%%%%%%%%%%%%%%%%%%%%%%%%%%
%Prueba de la nota previa
%%%%%%%%%%%%%%%%%%%%%%%%%%%%%%%%%%%%%%%%%%%%%%%%%%%%%%%%%%%%%%%%%%%%%%%%%%%%%%%%%%%%%%%%%%%%%%%%%%%%%%%%%%%%%%%%%%%%%%%%%%%%%%%%%%%%%%%%%%%%%%%%%%%%%%%%%%%%

\begin{proof} 1)\enspace This is trivial.

\smallskip

\noindent 2)\enspace Clearly
\begin{align*}
&B\ne 0 \Leftrightarrow p^rc\nmid p^s d \Leftrightarrow c\nmid d \text{ or } s<r,\\
%
& B^2 = 0 \Leftrightarrow p^rc\mid p^{2s} d^2 \Leftrightarrow r\le 2\text{ and } c\mid d^2,
%
\shortintertext{and}
%
&p^{\nu}B= 0 \Leftrightarrow p^rc\mid p^{\nu+s} d \Leftrightarrow r\le \nu+s\text{ and } c\mid d.
\end{align*}
The item follows easily from these facts.

\smallskip

\noindent 3)\enspace Clearly
\begin{align*}
& B^2\ne 0 \Leftrightarrow p^rc\nmid p^{2s} d^2 \Leftrightarrow 2s<r\text{ or } c\nmid d^2,\\
%
& p^{\eta} B = 0 \Leftrightarrow p^rc\mid p^{\nu+s} d \Leftrightarrow r\le \eta+s\text{ and } c\mid d,\\
%
& B^3 = 0 \Leftrightarrow p^rc\mid p^{3s} d^3 \Leftrightarrow r\le 3s\text{ and } c\mid d^3,\\
%
\shortintertext{and}
%
& -p^{\nu} B = B^2\ne 0 \Leftrightarrow - p^{\nu+s} d \equiv p^{2s} d^2 \nequiv 0 \pmod{p^rd} \Leftrightarrow s=\nu\text{ and } p^{r-2\nu}\mid d+1
\end{align*}
The item follows easily from these facts.
\end{proof}
%%%%%%%%%%%%%%%%%%%%%%%%%%%%%%%%%%%%%%%%%%%%%%%%%%%%%%%%%%%%%%%%%%%%%%%%%%%%%%%%%%%%%%%%%%%%%%%%%%%%%%%%%%%%%%%%%%%%%%%%%%%%%%%%%%%%%%%%%%%%%%%%%%%%%%%%%%%%
\end{comment}
%%%%%%%%%%%%%%%%%%%%%%%%%%%%%%%%%%%%%%%%%%%%%%%%%%%%%%%%%%%%%%%%%%%%%%%%%%%%%%%%%%%%%%%%%%%%%%%%%%%%%%%%%%%%%%%%%%%%%%%%%%%%%%%%%%%%%%%%%%%%%%%%%%%%%%%%%%%%

\subsection[Case \texorpdfstring{$p=2$}{p=2}, \texorpdfstring{$\nu=1$}{v=1} and \texorpdfstring{$\eta=2$}{n=2}]{Case \texorpdfstring{$\mathbf{p}\pmb{=}\mathbf{2}$}{p=2}, \texorpdfstring{$\pmb{\nu=1}$}{v=1} and \texorpdfstring{$\pmb{\eta=2}$}{n=2}}

Let $I$ be an abelian group. In this brief subsection we determine the operations $\blackdiamond\colon H\times I\to I$ and $\Yleft\colon I\times H\to I$, when $p=2$, $\nu = 1$ and $\eta=2$. A direct computation using that $i\times j = i+j+2ij$ shows that $(\mathds{Z}_4,\times)\simeq \mathds{Z}_2\times \mathds{Z}_2$.

\begin{proposition}\label{condiciones sin potencia particular} Let $A_1,A_2,B\in \End(I)$ be such that
\begin{equation}\label{igualdad exponencial particular}
\begin{aligned}
&4B=0, && A_1^2=A_2^2=(A_1-A_1B)^2=(A_2-2A_2B)^2=(A_1A_2-3A_1A_2B)^2 = \Ide,\\
& A_1A_2 = A_2A_1, &&(A_1-A_1B)(A_2-2A_2B) = (A_2-2A_2B)(A_1-A_1B) = A_1A_2-3A_1A_2B.
\end{aligned}
\end{equation}
Define operations $\blackdiamond\colon H\times I\to I$ and $\Yleft \colon I\times H\to I$ by
\begin{align}
& 0\blackdiamond y \coloneqq y,\quad 1\blackdiamond y \coloneqq A_1y,\quad 2\blackdiamond y \coloneqq A_2y,\quad 3\blackdiamond y \coloneqq A_1A_2y,\label{def de diamante particular}\\
& y\Yleft h\coloneqq h By\quad\text{for $0\le h< 4$.}\label{definicion de yleft particular}
\end{align}
Then, the conditions~\eqref{eqq 1}--\eqref{eqq 3} are satisfied. Conversely, if $\blackdiamond\colon H\times I\to I$ and $\Yleft \colon I\times H\to I$ are operations satisfying~\eqref{eqq 1}--\eqref{eqq 3}, then the maps $A_1\colon I\to I$, $A_2\colon I\to I$ and $B\colon I\to I$, defined by $A_1y\coloneqq 1\blackdiamond y$, $A_2y\coloneqq 2\blackdiamond y$ and $By\coloneqq y\Yleft 1$, are endomorphisms of $I$, that  satisfy~\eqref{igualdad exponencial particular}, \eqref{definicion de yleft particular} and the last equality in~\eqref{def de diamante particular}.
\end{proposition}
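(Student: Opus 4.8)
The plan is to run the argument of Proposition~\ref{condiciones sin potencia general} again, with the cyclic group $(\mathds{Z}_{p^{\eta}},\times)$ replaced by $(\mathds{Z}_4,\times)\simeq \mathds{Z}_2\times\mathds{Z}_2$. This group is generated by $1$ and $2$, subject to $1\times 1 = 2\times 2 = 0$ and $1\times 2 = 2\times 1 = 3$ (so that $3\times 3 = 0$ as well); accordingly the single operator $A$ of the cyclic case is replaced by the pair of commuting involutions $A_1,A_2$, and I expect the whole proof to reduce to bookkeeping over the four elements of this group.

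For the forward implication I would first check conditions~\eqref{eqq 2}, i.e.\ that $\blackdiamond$ is a $\mathds{Z}$-linear left action of $(\mathds{Z}_4,\times)$. Because $\mathds{Z}_2\times\mathds{Z}_2$ is presented by its two generators with relations $1^{\times 2}=2^{\times 2}=0$ and $1\times 2 = 2\times 1$, the assignment $1\mapsto A_1$, $2\mapsto A_2$ extends to an action exactly when $A_1^2=A_2^2=\Ide$ and $A_1A_2=A_2A_1$; the value forced on $3=1\times 2$ is then $A_2A_1=A_1A_2$, matching the definition $3\blackdiamond y = A_1A_2 y$. Since $4B=0$ the map $\Yleft$ is well defined, and computing $y\triangleleft h = h\blackdiamond(y-y\Yleft h)=h\blackdiamond\bigl((\Ide-hB)y\bigr)$ for $h=0,1,2,3$ produces exactly the operators $R_0=\Ide$, $R_1=A_1-A_1B$, $R_2=A_2-2A_2B$ and $R_3=A_1A_2-3A_1A_2B$ appearing in~\eqref{igualdad exponencial particular}.

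To obtain condition~\eqref{eqq 1} I would verify that $h\mapsto R_h$ is an anti-homomorphism, i.e.\ $R_{h\times h'}=R_{h'}R_h$, by running through the multiplication table of $\mathds{Z}_2\times\mathds{Z}_2$. The hypotheses supply the generator relations $R_1^2=R_2^2=\Ide$ and $R_1R_2=R_2R_1=R_3$ directly, and all remaining entries follow formally: for instance $R_3R_1=R_1R_2R_1=R_1^2R_2=R_2$, and likewise $R_1R_3=R_2$, $R_2R_3=R_3R_2=R_1$ and $R_3^2=\Ide$. Linearity of $\triangleleft$ and $y\triangleleft 0 = y$ are immediate. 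Finally, once~\eqref{eqq 2} is in force, Remark~\ref{comentario} gives $y^h=y-y\Yleft h=(\Ide-hB)y$, so $y^{h+h'}+y=(2\Ide-(h+h')B)y=y^h+y^{h'}$ and $0^h=0$, which is~\eqref{eqq 3}.

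For the converse, given operations satisfying~\eqref{eqq 1}--\eqref{eqq 3}, the maps $A_1,A_2$ are endomorphisms by the additivity in~\eqref{eqq 2} and $B$ is one by the bilinearity of $\Yleft$ (Remark~\ref{linealidad de Yleft}); that same bilinearity gives $y\Yleft h = hBy$ and $4By = y\Yleft 4 = y\Yleft 0 = 0$, which are~\eqref{definicion de yleft particular} and $4B=0$. Evaluating~\eqref{eqq 2} on the products $1\times 1$, $2\times 2$ and $1\times 2 = 2\times 1$ yields $A_1^2=A_2^2=\Ide$, $A_1A_2=A_2A_1$ and the last equality in~\eqref{def de diamante particular}; evaluating~\eqref{eqq 1} on $h\times h = 0$ (for $h=1,2,3$) and on $1\times 2$, $2\times 1$ recovers the three squared identities and the two mixed products of~\eqref{igualdad exponencial particular}. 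The only real subtlety, and the reason the mixed relations must be imposed explicitly rather than deduced from $A_1,A_2,B$ as in the cyclic case, is that $\triangleleft$ is prescribed on every element of $\mathds{Z}_4$ at once, so one must ensure that the directly defined operator $R_3=A_1A_2-3A_1A_2B$ really coincides with the composite $R_1R_2$ forced by the group law; this compatibility is precisely the content of the mixed equations in~\eqref{igualdad exponencial particular}.
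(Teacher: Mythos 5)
Your proposal is correct, and for the converse direction it follows the paper's argument essentially verbatim: endomorphy of $A_1,A_2,B$ from Remark~\ref{linealidad de Yleft} and the additivity in~\eqref{eqq 2}, $4B=0$ from the $H$-linearity of $\Yleft$, the involution and commutation identities from the first and third conditions in~\eqref{eqq 2}, and the remaining relations in~\eqref{igualdad exponencial particular} by evaluating~\eqref{eqq 1} on the operators $y\triangleleft h$. The only substantive difference is that the paper declares the forward implication ``left to the reader,'' whereas you supply it in full; your route --- using the presentation of $(\mathds{Z}_4,\times)\simeq\mathds{Z}_2\times\mathds{Z}_2$ by the generators $1,2$ to get the left action $\blackdiamond$, computing $R_h=h\blackdiamond(\Ide-hB)$ explicitly for $h=0,1,2,3$, and checking $R_{h\times h'}=R_{h'}R_h$ against the multiplication table (with $R_3R_1=R_2$ etc.\ following formally from the imposed generator and mixed relations) --- is exactly the natural adaptation of the proof of Proposition~\ref{condiciones sin potencia general} to the non-cyclic case, and your closing observation correctly identifies why the mixed relations $R_1R_2=R_2R_1=R_3$ must be hypotheses here rather than consequences of a single commutation identity as in Lemma~\ref{general conditions}. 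The verification of~\eqref{eqq 3} via $y^h=(\Ide-hB)y$ is also sound, since $4B=0$ makes $hB$ well defined for $h\in\mathds{Z}_4$.
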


\begin{proof} $\Leftarrow$)\enspace The fact that $A_1$, $A_2$ and $B$ are endomorphisms of $I$, follows from Remark~\ref{linealidad de Yleft} and the second identity in~\eqref{eqq 2}. Moreover, again by Remark~\ref{linealidad de Yleft}, we know that $4B=0$. On the other hand, by the first and third conditions in~\eqref{eqq 2}, we have $A_1^2 = A_2^2 = \Ide$ and $A_1A_2=A_2A_1$. Finally, since $y\triangleleft h \coloneqq  h\blackdiamond (y - y\Yleft h)$, we have $y\triangleleft 1 = (A_1-A_1B) y$, $y\triangleleft 2 = (A_2-2A_2B) y$ and $y\triangleleft 3 = (A_1A_2-3A_1A_2B) y$. So, by the first and third conditions in~\eqref{eqq 1}, it follows that the remaining conditions in~\eqref{igualdad exponencial particular} are satisfied.

\smallskip

\noindent $\Rightarrow$)\enspace Left to the reader.
\end{proof}

\section{Computing the cohomology in some cases}\label{seccion 3}
Let $H$ be as at the beginning of Section~\ref{seccion 2}, let $I$ be an abelian group and let $\blackdiamond\colon H\times I\to I$ and $\Yleft \colon I\times H\to I$ be operations satisfying~\eqref{eqq 1}--\eqref{eqq 3}. In this section we compute $\ho_{\blackdiamond,\Yleft}^2(H,I)$ when $(\mathds{Z}_{p^{\eta}},\times)$ is cyclic, which, as we saw in Subsection~\ref{subsection 2.1}, happens if and only if $p$ is odd or $p=2$ and $(\nu,\eta)\ne (1,2)$. Consequently throughout this section we assume that we are under one of these conditions. As in Proposition~\ref{condiciones sin potencia general}, we let $A$ and $B$ denote the endomorphism of~$I$ given by $Ay\coloneqq 1\blackdiamond y$ and $ By\coloneqq y\Yleft 1$, respectively.

\begin{lemma}\label{cociclos verticales} For $\beta\in \wh{C}_N^{02}(H,I)$, we have $\beta \in \ker(\partial_{\mathrm{v}}^{03})$ if and only if \begin{equation}\label{eq1}
\beta(i,j) = \sum_{k=j}^{i+j-1} \beta(1,k)- \sum_{k=1}^{i-1} \beta(1,k)\qquad\text{for $1\le i,j<p^{\eta}$.}
\end{equation}
\end{lemma}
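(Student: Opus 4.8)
The plan is to make the kernel condition explicit and then treat the two implications separately. Unwinding the definition of $\partial_{\mathrm{v}}^{r,s+1}$ with $r=0$ and $s=2$ gives
\[
\partial_{\mathrm{v}}^{03}(\beta)(h_1,h_2,h_3)=\beta(h_2,h_3)-\beta(h_1+h_2,h_3)+\beta(h_1,h_2+h_3)-\beta(h_1,h_2),
\]
so that $\beta\in\ker(\partial_{\mathrm{v}}^{03})$ is exactly the symmetric $2$-cocycle identity~\eqref{condicion de cociclo}, namely $\beta(h_1,h_2)+\beta(h_1+h_2,h_3)=\beta(h_2,h_3)+\beta(h_1,h_2+h_3)$. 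Throughout I use that every $\beta\in\wh{C}_N^{02}(H,I)$ is automatically normalized and symmetric -- that is, it satisfies~\eqref{normalidad y cociclo abeliano} -- by the vanishing condition and the shuffle relation defining $\ov{M}(2)$, and that any index $k$ occurring inside a value $\beta(1,k)$ is read modulo $p^{\eta}$.

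For the implication $(\Rightarrow)$ I would specialize the cocycle identity to $h_1=1$, $h_2=i-1$, $h_3=j$, which yields the recursion
\[
\beta(i,j)=\beta(i-1,j)+\beta(1,i-1+j)-\beta(1,i-1).
\]
Then~\eqref{eq1} follows by induction on $i$: the case $i=1$ is the tautology $\beta(1,j)=\beta(1,j)$ (using $\beta(0,j)=0$ and $\beta(1,0)=0$), and the inductive step simply appends the term $\beta(1,i+j-1)$ to the first sum and the term $\beta(1,i-1)$ to the second.

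For the converse $(\Leftarrow)$ the cleanest device is to introduce the partial-sum function $P(m)\coloneqq\sum_{k=1}^{m-1}\beta(1,k)$, defined for all integers $m$ with $\beta(1,k)$ read periodically modulo $p^{\eta}$; then~\eqref{eq1} reads simply $\beta(i,j)=P(i+j)-P(i)-P(j)$. The key property is the quasi-periodicity $P(m+p^{\eta})=P(m)+\Sigma$, where $\Sigma\coloneqq\sum_{k=1}^{p^{\eta}-1}\beta(1,k)$. This shows at once that the right-hand side of~\eqref{eq1} descends to a well-defined function on $\mathds{Z}_{p^{\eta}}\times\mathds{Z}_{p^{\eta}}$, and that for representatives $i,j,k\in\{0,\dots,p^{\eta}-1\}$ one has $\beta(i,\overline{j+k})=P(i+j+k)-P(i)-P(j+k)$ and $\beta(\overline{i+j},k)=P(i+j+k)-P(i+j)-P(k)$ whether or not the inner sums wrap around, the two occurrences of $\Sigma$ cancelling. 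Substituting these into both sides of the cocycle identity collapses each side to $P(i+j+k)-P(i)-P(j)-P(k)$, which proves the identity.

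The main obstacle is the bookkeeping forced by the cyclic wraparound: the sums in~\eqref{eq1} run over indices that may exceed $p^{\eta}-1$, so one must check both that the formula is independent of the integer representatives chosen and that the cocycle identity survives reduction modulo $p^{\eta}$. Packaging all of this into the single relation $P(m+p^{\eta})=P(m)+\Sigma$ is what keeps the argument short, and the corner cases where $i+j$ or $j+k$ is divisible by $p^{\eta}$ (so $\beta$ is forced to vanish by normalization) are absorbed automatically by the same relation.
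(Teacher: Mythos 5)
Your proof is correct and follows essentially the same route as the paper's: the forward direction uses the same recursion obtained by evaluating the cocycle identity at $(1,i,j)$ together with induction on $i$, and the converse verifies the cocycle identity by telescoping the formula~\eqref{eq1}. Your partial-sum function $P$ merely packages explicitly the paper's terse observation that~\eqref{eq1} extends to all $i,j\in \mathds{N}$, with the quasi-periodicity $P(m+p^{\eta})=P(m)+\Sigma$ handling the wraparound.
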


\begin{proof} Assume that $\beta \in \ker(\partial_{\mathrm{v}}^{03})$. Then, for all $i,j$, we have
\begin{equation*}
0 = \partial_{\mathrm{v}}^{03} \beta(1,i,j) = \beta(i,j) - \beta(i+1,j) + \beta(1,i+j) - \beta(1,i).
\end{equation*}
Thus, $\beta(i+1,j) = \beta(i,j) + \beta(1,i+j) - \beta(1,i)$. An inductive argument on $i$ using this fact proves that~\eqref{eq1} is true. Conversely, assume that~\eqref{eq1} holds. We must show that
\begin{equation*}
\beta(b,c) - \beta(a+b,c) + \beta(a,b+c) - \beta(a,b) = 0\qquad\text{for all $0\le a,b,c< p^{\eta}$}.
\end{equation*}
But this is a consequence of that $\beta(i,j) = \sum_{k=j}^{i+j-1} \beta(1,k) - \sum_{k=1}^{i-1} \beta(1,k)$ for all $i,j\in \mathds{N}$ (which follows from~\eqref{eq1}).
\end{proof}

\begin{remark}\label{generadores canonicos de los cociclos} Lemma~\ref{cociclos verticales} implies that each $\beta \in \ker(\partial_{\mathrm{v}}^{03})$ is uniquely determined by $\beta(1,1), \dots,\beta(1,p^{\eta}-1)$. For~ex\-ample, for each $1\le k <p^{\eta}$ and each $\gamma\in I$, the element $\beta_k(\gamma)$, given, for each $0\le i,j<p^{\eta}$, by
\begin{equation}\label{bala1}
\beta_k(\gamma)(i,j) \coloneqq \begin{cases} \phantom{-}\gamma &\text{if $0< i,j\le k<i+j$,}\\ -\gamma &\text{if $i,j>k$ and $i+j-p^{\eta}\le k$,}\\ \phantom{-} 0 &\text{otherwise,}\end{cases}
\end{equation}
is the unique $\beta \in \ker(\partial_{\mathrm{v}}^{03})$ with $\beta(1,k) = \gamma$ and $\beta(1,j)=0$, for $j\ne k$. Note that $\beta_k$'s are linear in $\gamma$ and
\begin{equation*}
\beta= \sum_{k=1}^{p^{\eta}-1} \beta_k(\beta(1,k))\qquad\text{for each $\beta \in \ker(\partial_{\mathrm{v}}^{03})$.}
\end{equation*}
In particular $\{\beta_k(\gamma): 1\le k<p^{\eta}\text{ and } \gamma\in I\}$ generate $\ker(\partial_{\mathrm{v}}^{03})$.
\end{remark}

For each $1\le k<p^{\eta}$ and $\gamma\in I$, let $\chi_k(\gamma)\in \wh{C}_N^{01}(H,I)$ be the map given by $\chi_k(\gamma)(i)\coloneqq\begin{cases} \gamma &\text{if $i=k$,}\\ 0 & \text{otherwise.}\end{cases}$

\begin{remark}\label{los bordes verticales} Note that $\chi_k(\gamma)$'s generate $\wh{C}_N^{01}(H,I)$. Moreover, a direct computation using Remark~\ref{generadores canonicos de los cociclos}, shows that
\begin{equation*}
\partial_{\mathrm{v}}^{02}(\chi_k(\gamma)) = \begin{cases}  2 \beta_1(\gamma) + \beta_2(\gamma) +\cdots + \beta_{p^{\eta}-1}(\gamma) & \text{if $k = 1$,}\\  \beta_k(\gamma) - \beta_{k-1}(\gamma) & \text{if $k\ne 1$.}
\end{cases}
\end{equation*}
This implies that the $\beta_{p^{\eta}-1}(\gamma)$'s generate $\frac{\ker(\partial_{\mathrm{v}}^{03})} {\ima(\partial_{\mathrm{v}}^{02})}$ and $\frac{\ker(\partial_{\mathrm{v}}^{03})} {\ima(\partial_{\mathrm{v}}^{02})}\simeq \frac{I}{p^{\eta}I}$.
\end{remark}

For the sake of brevity, from now on we will write $\alpha_1(\gamma)$ instead of $\beta_{p^{\eta}-1}(\gamma)$. Note that, by~\eqref{bala1},
\begin{equation}\label{sime}
\alpha_1(\gamma)(i,j) = \alpha_1(\gamma)(j,i) = \begin{cases} \gamma & \text{if $0\le i,j<p^{\eta}$ and $p^{\eta}\le i+j$,}\\ 0 & \text{if $0\le i,j<p^{\eta}$ and $i+j< p^{\eta}$.}\end{cases}
\end{equation}

\begin{corollary}\label{basta tomar b_1(gamma)} Each $2$-cocycle $(\beta,g) \in \wh{C}^{02}_N(H,I)\oplus \wh{C}^{11}_N(H,I)$ is equivalent to a $2$-cocycle of the form $(\alpha_1(\gamma),f)$,~for some $\gamma\in I$.
\end{corollary}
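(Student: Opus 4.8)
The plan is to exploit the fact that correcting $\beta$ by a \emph{vertical} coboundary can be lifted to a genuine coboundary in the total complex ${\mathcal{C}}_{\blackdiamond,\Yleft}(H,I)$, thereby moving the pair $(\beta,g)$ within its cohomology class to one whose first component is $\alpha_1(\gamma)$. The content of the previous remarks does all the real work; what remains is to fit the vertical reduction into the total differential.

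First I would check that being a $2$-cocycle forces $\beta\in\ker(\partial_{\mathrm{v}}^{03})$. Indeed, the only summand of $(\partial+D)(\beta,g)$ landing in the $(0,3)$-slot $\wh{C}_N^{03}(H,I)$ is $\partial_{\mathrm{v}}^{03}(\beta)$: the map $D$ always lands in the bottom row $s=1$, the horizontal differential $\partial_{\mathrm{h}}^{12}$ applied to $\beta$ lands in $\wh{C}_N^{12}(H,I)$, and all three differentials applied to $g\in\wh{C}_N^{11}(H,I)$ land in $\wh{C}_N^{12}(H,I)$ or $\wh{C}_N^{21}(H,I)$. Hence $(\partial+D)(\beta,g)=0$ gives $\partial_{\mathrm{v}}^{03}(\beta)=0$. (Alternatively this is immediate from Proposition~\ref{significado de cociclos caso general}, since conditions~\eqref{condicion de cociclo} and~\eqref{normalidad y cociclo abeliano}, together with the symmetry and normalization built into $\wh{C}_N^{02}(H,I)$, are exactly the statement that $\beta\in\ker(\partial_{\mathrm{v}}^{03})$ by Lemma~\ref{cociclos verticales}.)

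Next I would invoke Remark~\ref{los bordes verticales}, which asserts that the elements $\alpha_1(\gamma)=\beta_{p^{\eta}-1}(\gamma)$ generate $\frac{\ker(\partial_{\mathrm{v}}^{03})}{\ima(\partial_{\mathrm{v}}^{02})}$. Since $\alpha_1$ is linear in $\gamma$ (Remark~\ref{generadores canonicos de los cociclos}), a sum of such generators is again of the form $\alpha_1(\gamma)$, so there exist $\gamma\in I$ and $\chi\in\wh{C}_N^{01}(H,I)$ with $\beta-\alpha_1(\gamma)=\partial_{\mathrm{v}}^{02}(\chi)$. The key step is then to subtract from $(\beta,g)$ the total coboundary $(\partial+D)(\chi)$. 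Because $\chi$ sits in $\wh{C}_N^{01}(H,I)=\wh{C}_N^1(H,I)$, its coboundary decomposes as $(\partial+D)(\chi)=\bigl(\partial_{\mathrm{v}}^{02}(\chi),\,\partial_{\mathrm{h}}^{11}(\chi)+D_{01}^{1,1}(\chi)\bigr)$, whose $\wh{C}_N^{02}(H,I)$-component is precisely $\partial_{\mathrm{v}}^{02}(\chi)=\beta-\alpha_1(\gamma)$. Therefore $(\beta,g)-(\partial+D)(\chi)=(\alpha_1(\gamma),f)$, where $f\coloneqq g-\partial_{\mathrm{h}}^{11}(\chi)-D_{01}^{1,1}(\chi)\in\wh{C}_N^{11}(H,I)$. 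Having subtracted a coboundary, $(\alpha_1(\gamma),f)$ is cohomologous to $(\beta,g)$; in particular it is again a $2$-cocycle lying in the same class, which is the claim.

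I expect the only genuine subtlety to be the bookkeeping in the final step: verifying that the purely vertical reduction of $\beta$ supplied by Remark~\ref{los bordes verticales} is exactly the $\wh{C}_N^{02}(H,I)$-shadow of an honest coboundary in the total complex, so that replacing $(\beta,g)$ leaves the first coordinate equal to $\alpha_1(\gamma)$ while only modifying the $\wh{C}_N^{11}(H,I)$-coordinate. This hinges on the structure of the top two rows of Figure~\ref{ddiagonales}, namely that the total differential restricted to $\wh{C}_N^{01}(H,I)$ contributes to $\wh{C}_N^{02}(H,I)$ solely through $\partial_{\mathrm{v}}^{02}$. Everything else is routine.
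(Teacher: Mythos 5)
Your argument is correct and is exactly the argument the paper has in mind: the paper's proof is the one-liner ``this follows immediately from Remark~\ref{los bordes verticales},'' and what you have written is that same vertical-reduction argument spelled out in full, including the (correct) observations that the $(0,3)$-component of the total differential of $(\beta,g)$ is just $\partial_{\mathrm{v}}^{03}(\beta)$ and that the total coboundary of $\chi\in \wh{C}_N^{01}(H,I)$ is $\bigl(\partial_{\mathrm{v}}^{02}(\chi),\partial_{\mathrm{h}}^{11}(\chi)+D_{01}^{11}(\chi)\bigr)$, consistent with how coboundaries are used later in Theorems~\ref{cobordes} and~\ref{cobordes Bne0}. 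No gaps.
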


\begin{proof} This follows immediately from Remark~\ref{los bordes verticales}.
\end{proof}

In order to perform our computations we will need the following facts about $\alpha_1$.

\begin{proposition}\label{calculos de beta 1} For $0\le b<p^{\eta}$, we have:
\begin{equation}\label{acerca de beta1}
\alpha_1(\gamma)(b,1) = \begin{cases} \gamma & \text{if $b=p^{\eta}-1$,}\\ 0 &\text{otherwise,}\end{cases} \qquad\text{and}\qquad \alpha_1(\gamma) (1\cdot b,1\cdot 1) = \begin{cases} \gamma & \text{if $r(b)\ge p^{\nu}-1$,}\\ 0 &\text{otherwise,}\end{cases}
\end{equation}
where the map $r\colon \mathds{Z}_{p^{\eta}}\to \{0,\dots,p^{\eta}-1\}$, evaluated at $b$, is the canonical representative of $1\cdot b=(1-p^{\nu})b$. Consequent\-ly, $\alpha_1(\gamma)(1\cdot b,1\cdot 1) = \gamma$, for $p^{\eta}-p^{\nu}+1$ elements of $\mathds{Z}_{p^{\eta}}$.
\end{proposition}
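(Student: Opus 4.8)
The plan is to reduce everything to the explicit description of $\alpha_1(\gamma)$ recorded in~\eqref{sime}, according to which $\alpha_1(\gamma)(i,j)=\gamma$ exactly when the canonical representatives of $i$ and $j$ in $\{0,\dots,p^{\eta}-1\}$ add up to at least $p^{\eta}$, and $\alpha_1(\gamma)(i,j)=0$ otherwise. Both displayed formulas are then instances of this dichotomy, so the work is entirely in identifying the right representatives.

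First, for $\alpha_1(\gamma)(b,1)$ with $0\le b<p^{\eta}$, the representatives are $b$ and $1$, so by~\eqref{sime} the value is $\gamma$ precisely when $b+1\ge p^{\eta}$, i.e. when $b=p^{\eta}-1$, and $0$ otherwise. This yields the first formula immediately.

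For the second formula I would begin by computing the canonical representative of $1\cdot 1$. Since $1\cdot 1 = 1-p^{\nu}$ and the hypotheses $0<\nu\le\eta$ force $1<p^{\nu}\le p^{\eta}$, the quantity $1-p^{\nu}$ is negative and adding $p^{\eta}$ once lands us in $\{0,\dots,p^{\eta}-1\}$, giving the representative $p^{\eta}-p^{\nu}+1$. Writing $r(b)$ for the representative of $1\cdot b=(1-p^{\nu})b$, relation~\eqref{sime} then gives $\alpha_1(\gamma)(1\cdot b,1\cdot 1)=\gamma$ exactly when $r(b)+(p^{\eta}-p^{\nu}+1)\ge p^{\eta}$, that is, when $r(b)\ge p^{\nu}-1$, and $0$ otherwise. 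This is the asserted second equality.

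Finally, for the counting statement I would observe that $1-p^{\nu}\equiv 1\pmod p$ is a unit in $\mathds{Z}_{p^{\eta}}$, so the map $b\mapsto (1-p^{\nu})b$ is a bijection of $\mathds{Z}_{p^{\eta}}$; consequently $r$ attains each value in $\{0,\dots,p^{\eta}-1\}$ exactly once as $b$ ranges over $\mathds{Z}_{p^{\eta}}$. Hence the number of $b$ with $r(b)\ge p^{\nu}-1$ equals the number of integers in $\{0,\dots,p^{\eta}-1\}$ that are at least $p^{\nu}-1$, namely $p^{\eta}-(p^{\nu}-1)=p^{\eta}-p^{\nu}+1$. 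None of these steps is genuinely hard; the only points that require care are computing the representative of $1\cdot 1$ correctly (where the bound $\nu\le\eta$ guarantees a single shift by $p^{\eta}$) and invoking the invertibility of $1-p^{\nu}$ modulo $p^{\eta}$ to justify that $r$ is a bijection for the count.
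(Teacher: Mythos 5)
Your proposal is correct and follows essentially the same route as the paper: both arguments reduce everything to the explicit formula~\eqref{sime}, identify the canonical representative of $1\cdot 1$ as $p^{\eta}-p^{\nu}+1$, and conclude the count from the fact that $b\mapsto r(b)$ is a permutation of $\mathds{Z}_{p^{\eta}}$. Your explicit justification that $1-p^{\nu}$ is a unit modulo $p^{\eta}$ is a small, welcome addition that the paper leaves implicit.
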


\begin{proof} The first equality in~\eqref{acerca de beta1} follows immediately from~\eqref{sime}. We next prove the second one. Again by~\eqref{sime}, we know that $\alpha_1(\gamma) ((1-p^{\nu})b,1-p^{\nu})$ can be only $\gamma$ or zero. Moreover, $\alpha_1(\gamma) ((1-p^{\nu})b,1-p^{\nu})=\gamma$ if and only if $r(b)+p^{\eta}+1-p^{\nu}\ge p^{\eta}$, or, equivalently, if and only if $p^{\nu}-1\le r(b)<p^{\eta}$. Since the map $b\mapsto r(b)$ is a permutation, we obtain that $\alpha_1(\gamma)(1\cdot b,1\cdot 1) = \gamma$, for $p^{\eta}-p^{\nu}+1$ elements of $\mathds{Z}_{p^{\eta}}$.
\end{proof}

\begin{remark}\label{cociclos generales} A pair $(\alpha_1(\gamma),f) \in \wh{C}^{02}_N(H,I)\oplus \wh{C}^{11}_N(H,I)$ is a $2$-cocycle if and only if
\begin{equation*}
\partial_{\mathrm{v}}^{12}(f)= -\partial^{12}_{\mathrm{h}}(\alpha_1(\gamma))\quad\text{and}\quad D_{02}^{21}(\alpha_1(\gamma))+ D_{11}^{21}(f)+\partial_{\mathrm{h}}^{21}(f)=0,
\end{equation*}
which, by the definition of the differentials and equalities~\eqref{def de diamante general}, occurs if and only if, for all $a,b,c\in H$,
\begin{align}
&f(a,b+c) - f(a,b)-f(a,c)=A^{l(a)}\alpha_1(\gamma)(b,c)-\alpha_1(\gamma) (a\cdot b,a\cdot c)\label{cociclo doble}
\shortintertext{and}
&f(a+b,c)=A^{l(a\cdot b)}f(a,c)+f(a\cdot b,a\cdot c)+((a+b)\cdot c)B A^{l(a\cdot b)}f(a,b)+((a+b)\cdot c)B A^{l(a+b)}\alpha_1(\gamma)(a,b),\label{cociclo horizontal con D}
\end{align}
where the map $l\colon H\to \mathds{Z}_{p^{\eta}}$ is as in~\eqref{formula para l} and $A,B$ are as in Proposition~\ref{condiciones sin potencia general}. A direct computation shows that
\begin{equation*}
l(a\cdot b)  = \begin{cases} b-abp^{\nu}-\binom{b}{2}p^{\nu} & \text{if $p$ is odd or $\eta<2\nu$,}\\
b - ab (2^{\nu}+2^{2\nu-1})-\binom{b}{2}(2^{\nu}+2^{2\nu-1}) & \text{if $p=2$ and $\eta=2\nu>2$.}\end{cases}
\end{equation*}
\end{remark}

\subsection{First properties}

In this subsection we will prove some properties of $2$-cocycles of the form $(\alpha_1(\gamma),f)$, using only the equality~\eqref{cociclo doble}.

\begin{notation}\label{Gamma} For $b\in H$, we set $\Gamma(\gamma,b)\coloneqq A\alpha_1(\gamma)(b,1)-\alpha_1(\gamma) (1\cdot b,1\cdot 1)$.
\end{notation}

\begin{remark}\label{primeros calculos de Gamma} Note that $\Gamma(\gamma,0) = 0$ and that if $\nu=\eta$ (i.e., $\cdot$ is trivial), then, by~Proposition~\ref{calculos de beta 1},
\begin{equation*}
\Gamma(\gamma,b) = \begin{cases} 0 &\text{for $0\le b<p^{\eta}-1$,}\\ (A-\Ide)\gamma &\text{for $b=p^{\eta}-1$.}\end{cases}
\end{equation*}
\end{remark}
\begin{proposition}\label{necesarias} Assume $(\alpha_1(\gamma), f) \in \wh{C}^{02}_N(H,I)\oplus \wh{C}^{11}_N(H,I)$ is a $2$-cocycle. Then
\begin{equation}\label{obligado 1}
p^{\eta}f(1,1) = (p^{\eta}\Ide-p^{\nu}\Ide+\Ide-A)\gamma
\end{equation}
and
\begin{equation}\label{recursiva para 1'}
f(1,c)=cf(1,1)+\sum_{b=0}^{c-1} \Gamma(\gamma,b)\quad\text{for $0\le c\le p^{\eta}$.}
\end{equation}
Consequently, $f(1,c)$ is uniquely determined by $f(1,1)$ and $\gamma$.
\end{proposition}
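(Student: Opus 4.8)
The plan is to derive both identities by specializing the double-cocycle equation~\eqref{cociclo doble} and then iterating in the second variable. First I would set $a=c=1$ in~\eqref{cociclo doble}. Since~\eqref{formula para l} gives $l(1)=1$ in every case (because $\binom{1}{2}=0$), the factor $A^{l(1)}$ collapses to $A$, and the right-hand side becomes exactly $A\alpha_1(\gamma)(b,1)-\alpha_1(\gamma)(1\cdot b,1\cdot 1)=\Gamma(\gamma,b)$ by Notation~\ref{Gamma}. This turns~\eqref{cociclo doble} into the recursion
\[
f(1,b+1)-f(1,b)-f(1,1)=\Gamma(\gamma,b)\qquad\text{for all }b\in H.
\]

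Next I would establish~\eqref{recursiva para 1'} by induction on $c$. The base case uses that $f$ vanishes whenever an argument is $0$ (the normalization defining $\wh{C}_N^{11}(H,I)$), so $f(1,0)=0$; the inductive step simply adds one instance of the recursion, reading the integer representatives $b=0,1,\dots,c-1$. This telescoping yields $f(1,c)=cf(1,1)+\sum_{b=0}^{c-1}\Gamma(\gamma,b)$, from which the final ``Consequently'' assertion is immediate, since $\Gamma(\gamma,b)$ depends only on $\gamma$ and $f(1,1)$ is the sole free constant.

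Finally, to obtain~\eqref{obligado 1} I would evaluate~\eqref{recursiva para 1'} at $c=p^{\eta}$. Because $p^{\eta}\equiv 0$ in $H$, the left-hand side equals $f(1,0)=0$, so $p^{\eta}f(1,1)=-\sum_{b=0}^{p^{\eta}-1}\Gamma(\gamma,b)$. The remaining work is to evaluate this full-period sum using Proposition~\ref{calculos de beta 1}: the term $\sum_b A\alpha_1(\gamma)(b,1)$ contributes only at $b=p^{\eta}-1$, giving $A\gamma$, while $\sum_b \alpha_1(\gamma)(1\cdot b,1\cdot 1)=(p^{\eta}-p^{\nu}+1)\gamma$ since the value $\gamma$ is attained exactly $p^{\eta}-p^{\nu}+1$ times. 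Hence $\sum_{b=0}^{p^{\eta}-1}\Gamma(\gamma,b)=A\gamma-(p^{\eta}-p^{\nu}+1)\gamma$, and rearranging gives $p^{\eta}f(1,1)=(p^{\eta}\Ide-p^{\nu}\Ide+\Ide-A)\gamma$. I expect the main obstacle to be precisely this period-sum computation: it is where the arithmetic of the operator $\cdot$ (the shift by $p^{\nu}$) enters, through the counting in Proposition~\ref{calculos de beta 1}, and one must observe that the consistency of the recursion around the full cycle $\mathds{Z}_{p^{\eta}}$ is exactly the torsion relation~\eqref{obligado 1}.
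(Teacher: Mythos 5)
Your proposal is correct and follows essentially the same route as the paper: specialize~\eqref{cociclo doble} at $a=c=1$ to get the recursion $f(1,b+1)=f(1,b)+f(1,1)+\Gamma(\gamma,b)$, telescope to obtain~\eqref{recursiva para 1'}, and then evaluate at $c=p^{\eta}$ using Proposition~\ref{calculos de beta 1} and $f(1,0)=0$ to deduce~\eqref{obligado 1}. The period-sum computation you flag as the main obstacle is exactly the step the paper carries out, with the same counting of the $p^{\eta}-p^{\nu}+1$ terms equal to $\gamma$.
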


\begin{proof} Since $(\alpha_1(\gamma), f)\in \wh{C}^{02}_N(H,I)\oplus \wh{C}^{11}_N(H,I)$ is a $2$-cocycle, by~\eqref{cociclo doble} with $a=c=1$, we have
\begin{equation}\label{recursiva para 1}
f(1,b+1) = f(1,b)+f(1,1)+ \Gamma(\gamma,b)\quad\text{for $0\le b<p^{\eta}$.}
\end{equation}
An inductive argument using this, shows that~\eqref{recursiva para 1'} holds. So, by Proposition~\ref{calculos de beta 1},
\begin{equation*}
f(1,0) = f(1,p^{\eta})= p^{\eta}f(1,1)+A\gamma - (p^{\eta}-p^{\nu}+1)\gamma,
\end{equation*}
which implies~\eqref{obligado 1}, since $f(1,0)=0$.
\end{proof}

\begin{definition}\label{def de fhat} Let $f_0,\gamma\in I$. Motivated by~\eqref{recursiva para 1'}, we define $\hat{f}\colon \mathds{N}_0\to I$ by
\begin{equation}\label{hat{f} en forma recursive}
\hat{f}(h)\coloneqq h f_0+\sum_{b=0}^{h-1} \Gamma(\gamma,b)\qquad\text{for all $h\ge 0$.}
\end{equation}
Note that $\hat{f}$ is determined by $\gamma$ and $f_0$. If convenient, we will write $\hat{f}_{f_0,\gamma}$ instead of $\hat{f}$.
\end{definition}

\begin{remark} By~\eqref{recursiva para 1'}, for each $2$-cocycle $(\alpha_1(\gamma), f)$, we have $f(1,c)=\hat{f}_{f_0,\gamma}(c)$, where $f_0=f(1,1)$ and $0\le c\le p^{\eta}$.
\end{remark}

\begin{proposition}\label{periodicoo} The following facts are equivalent:

\begin{enumerate}

\item The map $\hat{f}$ is periodic with period $p^{\eta}$,

\item $\hat{f}(p^{\eta})=0$,

\item $p^{\eta}f_0 = (p^{\eta}\Ide-p^{\nu}\Ide+\Ide-A)\gamma$.

\end{enumerate}
\end{proposition}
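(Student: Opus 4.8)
The plan is to establish the three equivalences through the single relation $\hat{f}(h+p^{\eta})=\hat{f}(h)+\hat{f}(p^{\eta})$, valid for every $h$, from which $(1)\Leftrightarrow(2)$ is immediate, and then to identify $(2)$ with $(3)$ by an explicit evaluation of $\hat{f}(p^{\eta})$. Since $\hat{f}$ is defined on $\mathds{N}_0$ by a recursive-type formula, a periodicity statement is naturally reduced to a quasi-additivity statement under shifts by the modulus.

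The main point is that $\Gamma(\gamma,b)$ depends only on the class of $b$ in $H=\mathds{Z}_{p^{\eta}}$, because, by Notation~\ref{Gamma}, both $\alpha_1(\gamma)(b,1)$ and $\alpha_1(\gamma)(1\cdot b,1\cdot 1)$ are computed from arguments that live in $H$. Hence $\Gamma(\gamma,\cdot)$ is genuinely $p^{\eta}$-periodic as a function of the integer $b$, so for any $h\ge 0$ the window $\{h,\dots,h+p^{\eta}-1\}$ runs over a complete set of residues modulo $p^{\eta}$ and therefore $\sum_{b=h}^{h+p^{\eta}-1}\Gamma(\gamma,b)=\sum_{b=0}^{p^{\eta}-1}\Gamma(\gamma,b)$. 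Substituting into~\eqref{hat{f} en forma recursive} I would obtain
\begin{equation*}
\hat{f}(h+p^{\eta})-\hat{f}(h)=p^{\eta}f_0+\sum_{b=0}^{p^{\eta}-1}\Gamma(\gamma,b)=\hat{f}(p^{\eta}),
\end{equation*}
that is, $\hat{f}(h+p^{\eta})=\hat{f}(h)+\hat{f}(p^{\eta})$ for all $h$. Since $\hat{f}(0)=0$, evaluating a hypothetical periodicity at $h=0$ gives $(1)\Rightarrow(2)$, and conversely, if $\hat{f}(p^{\eta})=0$ the displayed relation collapses to $\hat{f}(h+p^{\eta})=\hat{f}(h)$ for every $h$, which is $(2)\Rightarrow(1)$.

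To identify $(2)$ with $(3)$ I would compute $\hat{f}(p^{\eta})=p^{\eta}f_0+\sum_{b=0}^{p^{\eta}-1}\Gamma(\gamma,b)$ directly. Splitting $\Gamma$ as in Notation~\ref{Gamma} and invoking Proposition~\ref{calculos de beta 1}: the sum $\sum_{b=0}^{p^{\eta}-1}A\alpha_1(\gamma)(b,1)$ collapses to $A\gamma$, because by the first formula in~\eqref{acerca de beta1} the term $\alpha_1(\gamma)(b,1)$ vanishes except at $b=p^{\eta}-1$; while the second formula in~\eqref{acerca de beta1} gives $\sum_{b=0}^{p^{\eta}-1}\alpha_1(\gamma)(1\cdot b,1\cdot 1)=(p^{\eta}-p^{\nu}+1)\gamma$, since the value $\gamma$ is attained for exactly $p^{\eta}-p^{\nu}+1$ elements. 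Thus $\hat{f}(p^{\eta})=p^{\eta}f_0+A\gamma-(p^{\eta}-p^{\nu}+1)\gamma$, and setting this equal to zero is precisely condition~(3). This is the very computation already carried out in the proof of Proposition~\ref{necesarias} to obtain~\eqref{obligado 1}, now performed with $f_0$ in place of $f(1,1)$.

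The argument is essentially routine, and the only step requiring genuine care is the quasi-additivity observation: one must notice that $\Gamma(\gamma,\cdot)$ factors through $H$ and is therefore exactly $p^{\eta}$-periodic in the integer variable $b$, so that its partial sums over any shifted full period coincide. Everything else reduces to bookkeeping with the two formulas of Proposition~\ref{calculos de beta 1}.
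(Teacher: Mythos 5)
Your proof is correct and follows essentially the same route as the paper: establish the quasi-additivity $\hat{f}(h+p^{\eta})=\hat{f}(h)+\hat{f}(p^{\eta})$ from the definition~\eqref{hat{f} en forma recursive}, deduce $(1)\Leftrightarrow(2)$, and then evaluate $\hat{f}(p^{\eta})$ via Proposition~\ref{calculos de beta 1} to get $(2)\Leftrightarrow(3)$. The only difference is cosmetic: you spell out explicitly that $\Gamma(\gamma,\cdot)$ factors through $H$ and is therefore $p^{\eta}$-periodic in the integer variable, a point the paper uses tacitly in its chain of equalities.
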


\begin{proof} By definition,
\begin{equation*}
\hat{f}(h+p^{\eta}) = (h+p^{\eta}) f_0+\sum_{b=0}^{p^{\eta}+h-1} \Gamma(\gamma,b) = h f_0+\sum_{b=0}^{h-1} \Gamma(\gamma,b) + p^{\eta} f_0+ \sum_{b=h}^{p^{\eta}+h-1} \Gamma(\gamma,b) = \hat{f}(h) + \hat{f}(p^{\eta}).
\end{equation*}
Consequently, $\hat{f}$ is periodic, with period $p^{\eta}$, if and only if $\hat{f}(p^{\eta})=0$. But, by Proposition~\ref{calculos de beta 1},
\begin{equation*}
\hat{f}(p^{\eta})=p^{\eta}f_0+\sum_{b=0}^{p^{\eta}-1}\bigl(A\alpha_1(\gamma)(b,1)-\alpha_1(\gamma)(1\cdot b,1\cdot 1)\bigr) = p^{\eta}f_0+A\gamma - (p^{\eta}-p^{\nu}+1)\gamma.
\end{equation*}
Hence, item~(3) is satisfied if and only if $\hat{f}(p^{\eta})=0$, which concludes the proof.
\end{proof}

In the rest of this subsection we assume that the pair $(f_0,\gamma)$ satisfies the condition in item~(3) of Proposition~\ref{periodicoo}.

\begin{proposition}\label{prop para definir f} The map $\tilde{f}\colon H\to I$, induced by $\hat{f}$, satisfies
\begin{equation}\label{cociclo doble para a uno f tilde}
\tilde{f}(b+c) - \tilde{f}(b)-\tilde{f}(c)=A\alpha_1(\gamma)(b,c)-\alpha_1(\gamma) (1\cdot b,1\cdot c)\qquad\text{for all $b,c\in H$.}
\end{equation}
\end{proposition}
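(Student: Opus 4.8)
The plan is to transport the identity~\eqref{cociclo doble para a uno f tilde} from $\tilde f$ on $H$ to the map $\hat f$ on the non\-negative integers, and to prove it there by induction on $c$. Since we are assuming the condition in item~(3) of Proposition~\ref{periodicoo}, the map $\hat f$ is periodic with period $p^{\eta}$. Hence, for $b,c\in H$ with integer representatives $\hat b,\hat c\in\{0,\dots,p^{\eta}-1\}$, we have $\tilde f(b)=\hat f(\hat b)$, $\tilde f(c)=\hat f(\hat c)$ and $\tilde f(b+c)=\hat f(\hat b+\hat c)$, the last equality holding precisely because $\hat f$ is periodic and $\hat b+\hat c$ reduces to $b+c$ modulo $p^{\eta}$. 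As the right-hand side of~\eqref{cociclo doble para a uno f tilde} depends only on the classes of $b$ and $c$, it suffices to prove
\[
\hat f(b+c)-\hat f(b)-\hat f(c)=A\alpha_1(\gamma)(b,c)-\alpha_1(\gamma)(1\cdot b,1\cdot c)
\]
for all integers $b,c\ge 0$.

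I would fix $b$ and induct on $c$. The case $c=0$ is immediate: the left-hand side collapses to $-\hat f(0)=0$, while the right-hand side vanishes because $\alpha_1(\gamma)\in\wh C_N^{02}(H,I)$ is normalized, so both $\alpha_1(\gamma)(b,0)$ and $\alpha_1(\gamma)(1\cdot b,0)$ are zero. For the inductive step I would use the recursion $\hat f(h+1)=\hat f(h)+f_0+\Gamma(\gamma,h)$, which follows at once from Definition~\ref{def de fhat}, applied at $h=b+c$ and at $h=c$. Subtracting, the two $f_0$ contributions cancel, leaving
\[
\hat f(b+c+1)-\hat f(b)-\hat f(c+1)=\bigl[\hat f(b+c)-\hat f(b)-\hat f(c)\bigr]+\Gamma(\gamma,b+c)-\Gamma(\gamma,c).
\]
Substituting the induction hypothesis for the bracket and expanding the two $\Gamma$'s by Notation~\ref{Gamma} then reduces everything to an identity purely about $\alpha_1(\gamma)$.

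After collecting the terms that carry the operator $A$ separately from those that do not, the inductive step comes down to the two equalities
\[
\alpha_1(\gamma)(b,c)+\alpha_1(\gamma)(b+c,1)=\alpha_1(\gamma)(c,1)+\alpha_1(\gamma)(b,c+1)
\]
and
\[
\alpha_1(\gamma)(1\cdot b,1\cdot c)+\alpha_1(\gamma)(1\cdot(b+c),1\cdot 1)=\alpha_1(\gamma)(1\cdot c,1\cdot 1)+\alpha_1(\gamma)(1\cdot b,1\cdot(c+1)).
\]
This is where I expect the real content to lie, and the point is to recognize both as instances of the abelian $2$-cocycle identity~\eqref{condicion de cociclo}, which $\alpha_1(\gamma)$ satisfies because $\alpha_1(\gamma)=\beta_{p^{\eta}-1}(\gamma)\in\ker(\partial_{\mathrm{v}}^{03})$ (Remark~\ref{generadores canonicos de los cociclos}). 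Indeed, the first equality is~\eqref{condicion de cociclo} with $(h,h',h'')=(b,c,1)$, and the second is~\eqref{condicion de cociclo} with $(h,h',h'')=(1\cdot b,1\cdot c,1\cdot 1)$, where one uses the additivity of $1\cdot(-)$ from~\eqref{compatibilidades cdot suma y suma cdot} to rewrite $1\cdot b+1\cdot c=1\cdot(b+c)$ and $1\cdot c+1\cdot 1=1\cdot(c+1)$. Applying $A$ to the first equality and subtracting the second converts the right-hand side of the displayed recursion into $A\alpha_1(\gamma)(b,c+1)-\alpha_1(\gamma)(1\cdot b,1\cdot(c+1))$, which is exactly the claimed value at $(b,c+1)$, closing the induction and hence establishing~\eqref{cociclo doble para a uno f tilde}.
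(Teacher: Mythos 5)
Your proof is correct and is essentially the paper's argument: both fix $b$, induct on $c$, and reduce the inductive step to the identity $\partial_{\mathrm{v}}^{03}(\alpha_1(\gamma))=0$ evaluated at $(b,c,1)$ and at $(1\cdot b,1\cdot c,1\cdot 1)$, using the additivity of $1\cdot(-)$. Your recursion $\hat f(h+1)=\hat f(h)+f_0+\Gamma(\gamma,h)$ is just the paper's decomposition $L(b,c+1)=L(b+c,1)+L(b,c)-L(c,1)$ written out, since $L(h,1)=\Gamma(\gamma,h)$.
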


\begin{proof} We proceed by induction on $c$. For $c=0$ this is trivial and, for $c=1$, this is clear by~\eqref{hat{f} en forma recursive}. Assume it is satisfied for all $b$ and some $1\le c<p^{\eta}-1$. Let $L(x,y)\coloneqq \tilde{f}(x+y)- \tilde{f}(x)-\tilde{f}(y)$. We have,
\begin{align*}
L&(b,c+1) = L(b+c,1)+L(b,c) -L(c,1)\\
&=A\alpha_1(\gamma)(b+c,1)-\alpha_1(\gamma) (1\cdot (b+c),1\cdot 1) + A\alpha_1(\gamma)(b,c)-\alpha_1(\gamma) (1\cdot b,1\cdot c)- A\alpha_1(\gamma)(c,1)+ \alpha_1(\gamma) (1\cdot c,1\cdot 1)\\
& = A\alpha_1(\gamma)(b,c+1)-\alpha_1(\gamma)(1\cdot b,1\cdot (c+1)),
\end{align*}
where the last equality follows using~\eqref{compatibilidades cdot suma y suma cdot} and evaluating the identity $\partial_{\mathrm{v}}^{03}(\alpha_1(\gamma)) = 0$ in $(b,c,1)$ and $(1\cdot b,1\cdot c,1\cdot 1)$. Thus~\eqref{cociclo doble para a uno f tilde} is satisfied, for all $b,c\in H$.
\end{proof}

The map $\tilde{f}$ depends on $\gamma$ and $f_0$. If necessary, we will write $\tilde{f}_{f_0,\gamma}$ instead of $\tilde{f}$.

\begin{lemma}\label{le prev periodico} For $0<t\le p^{\eta-\nu}$, we have $\tilde{f}(t)=t(f_0-\gamma) +\gamma$.
\end{lemma}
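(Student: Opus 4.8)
The plan is to compute $\hat f(t)$ directly from its defining formula in Definition~\ref{def de fhat} and then to use that $\tilde f(t)=\hat f(t)$ for $0<t\le p^{\eta-\nu}$, since such a $t$ is a genuine representative in $\{0,\dots,p^{\eta}-1\}$ (here $p^{\eta-\nu}<p^{\eta}$ because $\nu>0$). In this way the lemma reduces to evaluating the sum $\sum_{b=0}^{t-1}\Gamma(\gamma,b)$ that appears in $\hat f(t)=tf_0+\sum_{b=0}^{t-1}\Gamma(\gamma,b)$.

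First I would unwind $\Gamma(\gamma,b)=A\alpha_1(\gamma)(b,1)-\alpha_1(\gamma)(1\cdot b,1\cdot 1)$ via Proposition~\ref{calculos de beta 1}. Since every index satisfies $b\le t-1\le p^{\eta-\nu}-1<p^{\eta}-1$, the term $\alpha_1(\gamma)(b,1)$ vanishes (it is nonzero only at $b=p^{\eta}-1$), so $A\alpha_1(\gamma)(b,1)=0$ and hence $\Gamma(\gamma,b)=-\alpha_1(\gamma)(1\cdot b,1\cdot 1)$ throughout the range.

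The heart of the matter is the second term. By Proposition~\ref{calculos de beta 1}, $\alpha_1(\gamma)(1\cdot b,1\cdot 1)=\gamma$ precisely when $r(b)\ge p^{\nu}-1$, where $r(b)$ is the canonical representative of $(1-p^{\nu})b$. For $b=0$ one has $r(0)=0<p^{\nu}-1$, so $\Gamma(\gamma,0)=0$. For $1\le b\le t-1$ I would show $r(b)=p^{\eta}-(p^{\nu}-1)b$ (the integer $(1-p^{\nu})b$ being negative but larger than $-p^{\eta}$ here), and then verify $r(b)\ge p^{\nu}-1$, which rearranges to $(p^{\nu}-1)(b+1)\le p^{\eta}$. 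This is exactly where the hypothesis $t\le p^{\eta-\nu}$ is used: from $b+1\le t\le p^{\eta-\nu}$ we get $(p^{\nu}-1)(b+1)\le(p^{\nu}-1)p^{\eta-\nu}=p^{\eta}-p^{\eta-\nu}<p^{\eta}$. Thus $\Gamma(\gamma,b)=-\gamma$ for every $1\le b\le t-1$.

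Combining these evaluations gives $\sum_{b=0}^{t-1}\Gamma(\gamma,b)=-(t-1)\gamma$, whence $\hat f(t)=tf_0-(t-1)\gamma=t(f_0-\gamma)+\gamma$, as claimed. The only genuine obstacle is the representative computation together with the uniform check of the inequality $r(b)\ge p^{\nu}-1$ across the whole summation range; once the bound $t\le p^{\eta-\nu}$ is brought to bear, the rest is straightforward bookkeeping.
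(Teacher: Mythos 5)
Your proposal is correct and follows essentially the same route as the paper: reduce to $\hat f(t)=tf_0+\sum_{b=0}^{t-1}\Gamma(\gamma,b)$, observe that the first term of each $\Gamma(\gamma,b)$ vanishes because $b<p^{\eta}-1$, and then verify $r(b)\ge p^{\nu}-1$ for $1\le b\le t-1$ so that each remaining term contributes $-\gamma$. Your rearrangement of the key inequality as $(p^{\nu}-1)(b+1)\le p^{\eta}$ is just a cosmetic variant of the paper's chain of inequalities.
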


\begin{proof} By~\eqref{acerca de beta1}, \eqref{hat{f} en forma recursive} and the definition of $\Gamma(\gamma,b)$, we have
\begin{equation*}
\tilde{f}(t)= \hat{f}(t) = t f_0+\sum_{b=0}^{t-1} \Gamma(\gamma,b) = t f_0  - \sum_{b=0}^{t-1} \alpha_1(\gamma)(1\cdot b,1\cdot 1).
\end{equation*}
Thus, in order to check the statement, it suffices to prove that
\begin{equation}\label{ecua 3}
\alpha_1(\gamma)(1\cdot b,1\cdot 1)=\gamma\qquad\text{for $1\le b\le p^{\eta-\nu}-1$.}
\end{equation}
By~\eqref{acerca de beta1}, for this it will be sufficient to show that $p^{\nu}-1\le r(b)\le  p^{\eta}-1$, where $r$ is as in Proposition~\ref{calculos de beta 1}. But this follows from the fact that
\begin{equation*}
p^{\nu}-1\le p^{\eta-\nu} + p^{\nu} - 1 = p^{\eta} + (1-p^{\nu})(p^{\eta-\nu}-1)\le p^{\eta} + (1-p^{\nu}) b \le p^{\eta}+1-p^{\nu}\le p^{\eta}-1,
\end{equation*}
for $1\le b\le p^{\eta-\nu}-1$.
\end{proof}

\begin{lemma}\label{le periodico} For  $j\in \mathds{N}_0$, we have
\begin{equation}\label{fle periodico}
\tilde{f}(jp^{\eta-\nu}) = jp^{\eta-\nu}(f_0-\gamma) + j\gamma + \left\lfloor \frac{j}{p^{\nu}}\right\rfloor (A-\ide)\gamma.
\end{equation}
\end{lemma}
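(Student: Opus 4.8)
The plan is to induct on $j$, extracting a one-step recursion for $\tilde f(jp^{\eta-\nu})$ from the twisted cocycle identity~\eqref{cociclo doble para a uno f tilde} of Proposition~\ref{prop para definir f}. First I would set $b=jp^{\eta-\nu}$ and $c=p^{\eta-\nu}$ in~\eqref{cociclo doble para a uno f tilde}, and use the value $\tilde f(p^{\eta-\nu})=p^{\eta-\nu}(f_0-\gamma)+\gamma$ furnished by Lemma~\ref{le prev periodico} (applied with $t=p^{\eta-\nu}$). This yields
\begin{equation*}
\tilde f\bigl((j+1)p^{\eta-\nu}\bigr)=\tilde f\bigl(jp^{\eta-\nu}\bigr)+p^{\eta-\nu}(f_0-\gamma)+\gamma+A\alpha_1(\gamma)\bigl(jp^{\eta-\nu},p^{\eta-\nu}\bigr)-\alpha_1(\gamma)\bigl(1\cdot jp^{\eta-\nu},1\cdot p^{\eta-\nu}\bigr),
\end{equation*}
so the entire problem reduces to evaluating the two $\alpha_1$-terms on the right.

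The core computation is to pin down those two evaluations. The canonical representative of $jp^{\eta-\nu}$ in $\{0,\dots,p^{\eta}-1\}$ is $(j\bmod p^{\nu})p^{\eta-\nu}$, since $p^{\nu}p^{\eta-\nu}=p^{\eta}$. Moreover, for any multiple $x$ of $p^{\eta-\nu}$ we have $1\cdot x=(1-p^{\nu})x\equiv x\pmod{p^{\eta}}$, again because $p^{\nu}x$ is a multiple of $p^{\eta}$; hence the arguments $1\cdot jp^{\eta-\nu}$ and $1\cdot p^{\eta-\nu}$ represent the same classes as $jp^{\eta-\nu}$ and $p^{\eta-\nu}$, and the two $\alpha_1$-terms coincide. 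By the description~\eqref{sime} of $\alpha_1$, the common value $\alpha_1(\gamma)(jp^{\eta-\nu},p^{\eta-\nu})$ equals $\gamma$ exactly when the sum of the canonical representatives reaches $p^{\eta}$, i.e. when $(j\bmod p^{\nu})+1\ge p^{\nu}$, equivalently $j\equiv-1\pmod{p^{\nu}}$, and equals $0$ otherwise. Consequently the correction $A\alpha_1(\gamma)(\cdots)-\alpha_1(\gamma)(\cdots)$ simplifies to $(A-\ide)\gamma$ when $p^{\nu}\mid j+1$ and to $0$ otherwise.

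Finally I would plug the inductive hypothesis~\eqref{fle periodico} for $j$ into the recursion and collect terms; the $p^{\eta-\nu}(f_0-\gamma)$ and $\gamma$ contributions promote the coefficients $j$ to $j+1$, while the $(A-\ide)\gamma$-coefficient advances by the elementary identity that $\lfloor(j+1)/p^{\nu}\rfloor-\lfloor j/p^{\nu}\rfloor$ is $1$ when $p^{\nu}\mid j+1$ and $0$ otherwise, matching exactly the correction computed above. The base case $j=0$ is just $\tilde f(0)=0$. The main obstacle is the bookkeeping of the two $\alpha_1$ evaluations, specifically recognizing that the trivializing action $1\cdot(-)$ fixes the class of any multiple of $p^{\eta-\nu}$ modulo $p^{\eta}$, so that the two terms cancel down to $(A-\ide)$ applied to a single $\gamma$ that surfaces once per block of $p^{\nu}$ steps; the apparent tension between the non-periodic right-hand side of~\eqref{fle periodico} and the $p^{\nu}$-periodicity of $\tilde f(jp^{\eta-\nu})$ requires no separate argument, being automatic from the standing hypothesis (item~(3) of Proposition~\ref{periodicoo}).
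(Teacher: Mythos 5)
Your proof is correct and follows essentially the same route as the paper: both arguments induct on $j$ via the cocycle identity~\eqref{cociclo doble para a uno f tilde} applied to multiples of $p^{\eta-\nu}$, use that $1\cdot x=x$ for such multiples so the two $\alpha_1$-terms share a common value, read that value off~\eqref{sime} in terms of canonical representatives (the paper phrases this as $r_{p^{\eta}}(b)+r_{p^{\eta}}(c)\ge p^{\eta}$, you as $j\equiv-1\pmod{p^{\nu}}$, which is the same condition), and feed in Lemma~\ref{le prev periodico} for $\tilde f(p^{\eta-\nu})$. The only cosmetic difference is that the paper first isolates the intermediate identity $\tilde f(jp^{\eta-\nu})=j\tilde f(p^{\eta-\nu})+\lfloor j/p^{\nu}\rfloor(A-\ide)\gamma$ before substituting, whereas you carry the full formula through the induction directly.
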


\begin{proof}  Let $b,c\in \mathds{N}_0$. Note that if $p^{\eta-\nu}$ divides $b$ and $c$, then $1\cdot b = b$ and $1\cdot c = c$ in $H$, and so, by equality~\eqref{sime},
\begin{equation*}
\alpha_1(\gamma)(1\cdot b,1\cdot c)=\alpha_1(\gamma)(b,c) = \begin{cases} \gamma & \text{if $r_{\!p^{\eta}}(b)+r_{\!p^{\eta}}(c)\ge p^{\eta}$,}\\ 0
& \text{if $r_{\!p^{\eta}}(b)+r_{\!p^{\eta}}(c) < p^{\eta}$,}\end{cases}
\end{equation*}
where $r_{\!p^{\eta}}(k)$ is the remainder of the integer division of $k$ by $p^{\eta}$. A direct inductive argument using these facts and~\eqref{cociclo doble para a uno f tilde} with $b\coloneqq (j-1)p^{\eta-\nu}$ and $c\coloneqq p^{\eta-\nu}$, gives
\begin{equation*}
\tilde{f}(jp^{\eta-\nu}) = j\tilde{f}(p^{\eta-\nu}) + \left\lfloor \frac{j}{p^{\nu}}\right\rfloor (A-\ide)\gamma.
\end{equation*}
Now~\eqref{fle periodico} follows immediately from Lemma~\ref{le prev periodico}.
\end{proof}

\begin{proposition}\label{formula mas simple de tilde f} Let $0\le i<p^{\eta}$. Write $i = sp^{\eta-\nu}+t$ with $0\le t<p^{\eta-\nu}$. We have
\begin{equation}\label{segunda}
\tilde{f}(i)=\begin{cases} i(f_0-\gamma)+s\gamma & \text{if $tp^{\nu}\le i$,}\\ i(f_0-\gamma)+(s+1)\gamma & \text{otherwise.}\end{cases}
\end{equation}
\end{proposition}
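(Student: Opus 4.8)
The plan is to reduce everything to the additive cocycle identity \eqref{cociclo doble para a uno f tilde} together with the two lemmas already established. Writing $i = sp^{\eta-\nu} + t$ with $0 \le t < p^{\eta-\nu}$ and $0 \le i < p^{\eta}$ forces $0 \le s < p^{\nu}$, so that $sp^{\eta-\nu}$ and $t$ are the canonical representatives in $\{0,\dots,p^{\eta}-1\}$ of their classes and their sum is exactly $i < p^{\eta}$. I would apply \eqref{cociclo doble para a uno f tilde} with $b = sp^{\eta-\nu}$ and $c = t$ to obtain
$$\tilde{f}(i) = \tilde{f}(sp^{\eta-\nu}) + \tilde{f}(t) + A\alpha_1(\gamma)(sp^{\eta-\nu}, t) - \alpha_1(\gamma)\bigl(1\cdot(sp^{\eta-\nu}), 1\cdot t\bigr).$$
The two summands $\tilde{f}(sp^{\eta-\nu})$ and $\tilde{f}(t)$ are already known: since $s < p^{\nu}$ the floor term in Lemma~\ref{le periodico} vanishes, giving $\tilde{f}(sp^{\eta-\nu}) = sp^{\eta-\nu}(f_0-\gamma) + s\gamma$, while Lemma~\ref{le prev periodico} gives $\tilde{f}(t) = t(f_0-\gamma) + \gamma$ when $t \ge 1$ (the case $t=0$ is trivial and would be handled separately, where all correction terms vanish).

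It then remains to evaluate the two correction terms through the combinatorial description \eqref{sime}. For the first term, since the representatives of $sp^{\eta-\nu}$ and $t$ add up to $i < p^{\eta}$, equation \eqref{sime} gives $\alpha_1(\gamma)(sp^{\eta-\nu}, t) = 0$, so the $A$-term drops out. For the second term, I would first note that $p^{\eta-\nu} \mid sp^{\eta-\nu}$, whence $1 \cdot (sp^{\eta-\nu}) = sp^{\eta-\nu}$ by \eqref{i cdot j = j}, and then compute the canonical representative $r(t)$ of $1 \cdot t = (1-p^{\nu})t$; for $1 \le t < p^{\eta-\nu}$ one has $t(1-p^{\nu}) < 0$ together with $p^{\nu}t < p^{\eta}$, so that $r(t) = p^{\eta} + t(1-p^{\nu})$ lies in $\{0,\dots,p^{\eta}-1\}$. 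By \eqref{sime}, $\alpha_1(\gamma)(sp^{\eta-\nu}, r(t))$ equals $\gamma$ precisely when $sp^{\eta-\nu} + r(t) \ge p^{\eta}$, that is, when $sp^{\eta-\nu} + t - p^{\nu}t \ge 0$, i.e. $i \ge tp^{\nu}$; otherwise it is $0$.

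Substituting these values into the displayed identity yields $\tilde{f}(i) = i(f_0-\gamma) + (s+1)\gamma - \alpha_1(\gamma)(sp^{\eta-\nu}, r(t))$, so the correction by $\gamma$ produces exactly the announced dichotomy: $i(f_0-\gamma) + s\gamma$ when $tp^{\nu} \le i$, and $i(f_0-\gamma) + (s+1)\gamma$ otherwise. The only genuinely delicate point is the bookkeeping of the canonical representative $r(t)$ and the verification that the threshold $sp^{\eta-\nu} + r(t) \ge p^{\eta}$ coincides with the stated condition $tp^{\nu} \le i$; everything else is a direct substitution of Lemmas~\ref{le prev periodico} and~\ref{le periodico} into the cocycle identity.
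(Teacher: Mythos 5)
Your proof is correct and follows essentially the same route as the paper: decompose $i=sp^{\eta-\nu}+t$, apply the cocycle identity \eqref{cociclo doble para a uno f tilde} with $b=sp^{\eta-\nu}$ and $c=t$, substitute Lemmas~\ref{le prev periodico} and~\ref{le periodico}, and evaluate the single surviving $\alpha_1(\gamma)$ term via \eqref{sime}. The only cosmetic difference is that you track the canonical representative $r(t)=p^{\eta}+t-tp^{\nu}$ explicitly, whereas the paper works with $\alpha_1(\gamma)(sp^{\eta-\nu},t-tp^{\nu})$ directly; the threshold computation is identical.
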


\begin{proof} Lemma~\ref{le periodico} yields the result in the case $t=0$. Assume $t>0$. By identity~\eqref{cociclo doble para a uno f tilde}
\begin{equation}\label{ecuacion doblelinea}
\begin{aligned}
\tilde{f}(sp^{\eta-\nu}+t)& = \tilde{f}(sp^{\eta-\nu})+\tilde{f}(t)+A\alpha_1(\gamma)(sp^{\eta-\nu},t)-\alpha_1(\gamma)(1\cdot sp^{\eta-\nu},1\cdot t)\\
& = \tilde{f}(sp^{\eta-\nu})+\tilde{f}(t)-\alpha_1(\gamma)(sp^{\eta-\nu}, t-tp^{\nu}),
\end{aligned}
\end{equation}
where the last equality holds because $sp^{\eta-\nu}+t=i<p^{\eta}$ (see equality~\eqref{sime}).  Combining~\eqref{ecuacion doblelinea} with Lemmas~\ref{le prev periodico} and~\ref{le periodico}, we obtain that
\begin{equation*}
\tilde{f}(sp^{\eta-\nu}+t) = (sp^{\eta-\nu}+t)(f_0-\gamma) + (s+1)\gamma - \alpha_1(\gamma)(sp^{\eta-\nu}, t-tp^{\nu}).
\end{equation*}
Thus~\eqref{segunda} follows from the equalities
\begin{equation}\label{setima}
\alpha_1(\gamma)(sp^{\eta-\nu}, t-tp^{\nu}) = \alpha_1(\gamma)(sp^{\eta-\nu}, p^{\eta}+t-tp^{\nu})= \begin{cases} \gamma & \text{if $sp^{\eta-\nu}+ p^{\eta}+t-tp^{\nu}\ge p^{\eta}$,}\\ 0 & \text{otherwise,}\end{cases}
\end{equation}
and the fact that $sp^{\eta-\nu}+ p^{\eta}+t-tp^{\nu}\ge p^{\eta}$ if and only if $tp^{\nu}\le i$.
\end{proof}

\begin{proposition}\label{coro de prop para definir f} For all $0\le k<p^{\eta}$ and $b,c\in H$, we have
\begin{equation*}
\sum_{j=0}^{k-1}A^{k-1-j}\Bigl(\tilde{f}\bigl(j\cdot (b+c)\bigr)- \tilde{f}\bigl(j\cdot b\bigr) - \tilde{f}\bigl(j\cdot c\bigr)\Bigr) = A^k\alpha_1(\gamma)(b,c)-\alpha_1(\gamma)\bigl(1^{\times k}\cdot b,1^{\times k}\cdot c\bigr).
\end{equation*}
\end{proposition}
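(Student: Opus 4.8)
The plan is to induct on $k$, turning the left-hand side into a telescoping sum by feeding the base relation~\eqref{cociclo doble para a uno f tilde} into a single summand at each stage. Write $S_k(b,c)$ for the left-hand side and $R_k(b,c):=A^k\alpha_1(\gamma)(b,c)-\alpha_1(\gamma)\bigl(1^{\times k}\cdot b,1^{\times k}\cdot c\bigr)$ for the right-hand side. For $k=0$ both vanish: the sum $S_0$ is empty, while $1^{\times 0}=0$ acts as the identity for $\cdot$, so $0\cdot b=b$ and $0\cdot c=c$, whence $R_0(b,c)=\alpha_1(\gamma)(b,c)-\alpha_1(\gamma)(b,c)=0$.

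For the inductive step I would isolate the $j=k$ term of $S_{k+1}$ and factor $A$ out of the remaining $k$ terms, obtaining
\[
S_{k+1}(b,c)=A\,S_k(b,c)+\bigl(\tilde f(k\cdot(b+c))-\tilde f(k\cdot b)-\tilde f(k\cdot c)\bigr).
\]
By the inductive hypothesis the first summand equals $A^{k+1}\alpha_1(\gamma)(b,c)-A\,\alpha_1(\gamma)\bigl(1^{\times k}\cdot b,1^{\times k}\cdot c\bigr)$. For the second summand, the distributivity of $\cdot$ over $+$ (the first identity in~\eqref{compatibilidades cdot suma y suma cdot}) gives $k\cdot(b+c)=k\cdot b+k\cdot c$, so I may apply~\eqref{cociclo doble para a uno f tilde} with the pair $(k\cdot b,k\cdot c)$ in place of $(b,c)$, yielding
\[
\tilde f(k\cdot b+k\cdot c)-\tilde f(k\cdot b)-\tilde f(k\cdot c)=A\,\alpha_1(\gamma)(k\cdot b,k\cdot c)-\alpha_1(\gamma)\bigl(1\cdot(k\cdot b),1\cdot(k\cdot c)\bigr).
\]

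It then remains to match the arguments of $\alpha_1(\gamma)$. Using~\eqref{1^times j.c} I rewrite $1^{\times k}\cdot b=k\cdot b$ and $1^{\times(k+1)}\cdot b=(k+1)\cdot b$, and using~\eqref{(j+1).c} I rewrite $1\cdot(k\cdot b)=(k+1)\cdot b$, with the analogous identities for $c$; hence the second summand equals $A\,\alpha_1(\gamma)\bigl(1^{\times k}\cdot b,1^{\times k}\cdot c\bigr)-\alpha_1(\gamma)\bigl(1^{\times(k+1)}\cdot b,1^{\times(k+1)}\cdot c\bigr)$. Adding the two summands, the terms $\pm A\,\alpha_1(\gamma)\bigl(1^{\times k}\cdot b,1^{\times k}\cdot c\bigr)$ cancel and leave exactly $R_{k+1}(b,c)$, completing the induction. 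The computation is almost entirely bookkeeping; the only delicate point is the identification of the arguments of $\alpha_1(\gamma)$, namely $1^{\times k}\cdot b=k\cdot b$ and $1\cdot(k\cdot b)=1^{\times(k+1)}\cdot b$, which is precisely where~\eqref{(j+1).c} and~\eqref{1^times j.c} are needed.
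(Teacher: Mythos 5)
Your proof is correct and follows essentially the same route as the paper's: the paper applies~\eqref{cociclo doble para a uno f tilde} to every summand at once and telescopes the resulting sum directly, while your induction on $k$ is just that telescoping unrolled one step at a time, using the same identities~\eqref{compatibilidades cdot suma y suma cdot}, \eqref{(j+1).c} and~\eqref{1^times j.c} to identify $1^{\times k}\cdot b=k\cdot b$ and $1\cdot(k\cdot b)=(k+1)\cdot b$. Nothing to add.
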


\begin{proof} By~\eqref{compatibilidades cdot suma y suma cdot}, \eqref{(j+1).c}, \eqref{1^times j.c} and~\eqref{cociclo doble para a uno f tilde},
\begin{align*}
\sum_{j=0}^{k-1}\! A^{k-1-j}\Bigl(\!\tilde{f}\bigl(j\cdot (b\!+\!c)\bigr)\!-\!\tilde{f}\bigl(j\cdot b\bigr)\!-\!\tilde{f}\bigl(j\cdot c\bigr)\!\Bigr) &= \sum_{j=0}^{k-1}A^{k-1-j}\Bigl(A\alpha_1(\gamma)\bigl(j\cdot b,j\cdot c\bigr)-\alpha_1(\gamma) \bigl(1\cdot (j\cdot b),1\cdot (j\cdot c)\bigl)\Bigr)\\
&=\sum_{j=0}^{k-1}A^{k-j}\alpha_1(\gamma)\bigl(j\cdot b,j\cdot c\bigr)\!-\!\sum_{j=0}^{k-1}A^{k-1-j}\alpha_1(\gamma) \bigl((j\!+\!1)\cdot b,(j\!+\!1)\cdot c\bigr)\\
&= A^k\alpha_1(\gamma)(b,c)-\alpha_1(\gamma)\bigl(k\cdot b,k\cdot c\bigr)\\
&= A^k\alpha_1(\gamma)(b,c)-\alpha_1(\gamma)\bigl(1^{\times k}\cdot b,1^{\times k}\cdot c\bigr),
\end{align*}
as desired.
\end{proof}

\begin{proposition} For $1\le s\le p^{\nu}$, we have
\begin{equation*}
\sum_{j=0}^{sp^{\eta-\nu}-1} A^{sp^{\eta-\nu}-1-j}\tilde{f}(j\cdot 1) = A^{sp^{\eta-\nu}-1}(f_0 + s\gamma-\gamma) + \sum_{j=1}^{sp^{\eta-\nu}-1} (1+jp^{\nu})A^{j-1}(f_0-\gamma) + p^{2\nu-\eta} \sum_{j=1}^{sp^{\eta-\nu}-1} j A^{j-1} \gamma,
\end{equation*}
\end{proposition}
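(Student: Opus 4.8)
The plan is to collapse the alternating-power sum into a single sum with \emph{ascending} powers of $A$ and then evaluate $\tilde{f}$ pointwise through Proposition~\ref{formula mas simple de tilde f}. Write $N\coloneqq sp^{\eta-\nu}$ and reindex the left-hand side by $j\mapsto N-1-j$. Since
\begin{equation*}
(N-1-j)\cdot 1 = 1-(N-1-j)p^{\nu}=1-sp^{\eta}+(j+1)p^{\nu}\equiv 1+(j+1)p^{\nu}\pmod{p^{\eta}},
\end{equation*}
this rewrites the sum as $\sum_{u=1}^{N}A^{u-1}\tilde{f}(1+up^{\nu})$, whose powers $A^{u-1}$ already match those on the right-hand side. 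Here I use $sp^{\eta}\equiv 0$; the point of reindexing is to trade the descending exponents $A^{N-1-j}$ for ascending ones.

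Next I would evaluate $\tilde{f}(1+up^{\nu})$ for $1\le u\le N$. Reducing modulo $p^{\eta}$ subtracts a multiple $\lfloor u/p^{\eta-\nu}\rfloor\,p^{\eta}$, and applying Proposition~\ref{formula mas simple de tilde f} to the representative $1+(u\bmod p^{\eta-\nu})p^{\nu}$ yields
\begin{equation*}
\tilde{f}(1+up^{\nu})=(1+up^{\nu})(f_0-\gamma)+up^{2\nu-\eta}\gamma-\Bigl\lfloor u/p^{\eta-\nu}\Bigr\rfloor\bigl(p^{\eta}(f_0-\gamma)+p^{\nu}\gamma\bigr)+\delta_u\,\gamma,
\end{equation*}
where $\delta_u=1$ when $p^{\eta-\nu}\mid u$ and $\delta_u=0$ otherwise, the extra $\gamma$ accounting for the two branches of Proposition~\ref{formula mas simple de tilde f}. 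Substituting and splitting the sum produces four pieces: the two \emph{main} sums $\sum_{u=1}^{N}(1+up^{\nu})A^{u-1}(f_0-\gamma)$ and $p^{2\nu-\eta}\sum_{u=1}^{N}uA^{u-1}\gamma$, a \emph{floor} sum weighted by $\lfloor u/p^{\eta-\nu}\rfloor$, and a \emph{boundary} sum $\sum_{a=1}^{s}A^{ap^{\eta-\nu}-1}\gamma$ arising from $\delta_u$.

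The crux is the floor sum. Here I would invoke the standing period hypothesis of Proposition~\ref{periodicoo}(3), rewritten as $p^{\eta}(f_0-\gamma)=(\Ide-A-p^{\nu})\gamma$; this collapses its coefficient to $p^{\eta}(f_0-\gamma)+p^{\nu}\gamma=(\Ide-A)\gamma$, so the floor sum equals $\bigl(\sum_{u=1}^{N}\lfloor u/p^{\eta-\nu}\rfloor A^{u-1}\bigr)(\Ide-A)\gamma$. Abel summation, using $\lfloor u/p^{\eta-\nu}\rfloor-\lfloor (u-1)/p^{\eta-\nu}\rfloor=\delta_u$, telescopes this to $\bigl(\sum_{a=1}^{s}A^{ap^{\eta-\nu}-1}-sA^{N}\bigr)\gamma$, whose first part cancels exactly against the boundary sum, leaving $sA^{N}\gamma$. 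Separating the $u=N$ terms of the two main sums reproduces verbatim the right-hand sums $\sum_{j=1}^{N-1}(1+jp^{\nu})A^{j-1}(f_0-\gamma)$ and $p^{2\nu-\eta}\sum_{j=1}^{N-1}jA^{j-1}\gamma$; and the leftover $u=N$ terms $(1+sp^{\eta})A^{N-1}(f_0-\gamma)+sp^{\nu}A^{N-1}\gamma$, combined with the surviving $sA^{N}\gamma$ and one further use of $p^{\eta}(f_0-\gamma)=(\Ide-A-p^{\nu})\gamma$, collapse to $A^{N-1}(f_0+(s-1)\gamma)$, which is the first term of the right-hand side.

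The main obstacle is bookkeeping rather than conceptual: tracking the modular reduction of $1+up^{\nu}$ (equivalently, the two cases of Proposition~\ref{formula mas simple de tilde f}) so as to isolate the floor-weighted error term, and then recognizing that the period relation turns its coefficient into $(\Ide-A)\gamma$, which is precisely what makes the floor sum telescope and cancel. The degenerate case $\eta=\nu$ (where $\cdot$ is trivial and every index is divisible by $p^{\eta-\nu}=1$, so $\tilde{f}(j\cdot 1)=\tilde{f}(1)=f_0$) is covered by the same formulas, and I would verify it in passing.
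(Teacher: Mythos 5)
Your argument is correct, and it reaches the identity by a genuinely different route from the paper's. Both proofs begin with the same reindexing $j\mapsto sp^{\eta-\nu}-1-j$, turning the left side into $A^{sp^{\eta-\nu}-1}f_0+\sum_{u}A^{u-1}\tilde f(1+up^{\nu})$; from there the paper decomposes $\tilde f(1+jp^{\nu})=\tilde f(1)+\tilde f(jp^{\nu})+A\alpha_1(\gamma)(1,jp^{\nu})-\alpha_1(\gamma)(1\cdot 1,1\cdot jp^{\nu})$ via Proposition~\ref{prop para definir f} and evaluates the four resulting sums separately, using Lemma~\ref{le periodico} for $\sum_j A^{j-1}\tilde f(jp^{\nu})$ and Proposition~\ref{calculos de beta 1} for the $\alpha_1$ sums, with the floor-weighted term $\sum_j\lfloor j/p^{\eta-\nu}\rfloor(A^j-A^{j-1})\gamma$ telescoping exactly as your floor sum does. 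You instead substitute the closed form of Proposition~\ref{formula mas simple de tilde f} pointwise, which obliges you to track the reduction of $1+up^{\nu}$ modulo $p^{\eta}$ and yields the correction $-\lfloor u/p^{\eta-\nu}\rfloor\bigl(p^{\eta}(f_0-\gamma)+p^{\nu}\gamma\bigr)+\delta_u\gamma$; the standing hypothesis (item~(3) of Proposition~\ref{periodicoo}, in force throughout the subsection) turns the coefficient into $(\Ide-A)\gamma$, and Abel summation makes the floor sum cancel against the boundary sum. I verified your pointwise formula for $\tilde f(1+up^{\nu})$ in both branches, the telescoping identity $\sum_u\lfloor u/p^{\eta-\nu}\rfloor(A^{u-1}-A^u)=\sum_{a=1}^{s}A^{ap^{\eta-\nu}-1}-sA^{sp^{\eta-\nu}}$, and the final collapse of the $u=sp^{\eta-\nu}$ terms to $A^{sp^{\eta-\nu}-1}(f_0+(s-1)\gamma)$: all correct, including the degenerate case $\eta=\nu$. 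What your version buys is an explicit, mechanical reliance on the period relation (which the paper hides inside the $(A-\ide)\gamma$ term of Lemma~\ref{le periodico}); what it costs is heavier modular bookkeeping, where the paper's additive decomposition keeps the $\alpha_1$ evaluations localized and reuses lemmas already in place.
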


\begin{proof} Note that
\begin{equation}\label{formula base'''}
\sum_{j=0}^{sp^{\eta-\nu}-1}A^{sp^{\eta-\nu}-1-j}\tilde{f}(j\cdot 1) = \sum_{j=0}^{sp^{\eta-\nu}-1}A^{sp^{\eta-\nu}-1-j}\tilde{f}(1-jp^{\nu}) = A^{sp^{\eta-\nu}-1} f_0 + \sum_{j=1}^{sp^{\eta-\nu}-1}A^{j-1} \tilde{f}(1+jp^{\nu}).
\end{equation}
By Proposition~\ref{prop para definir f}, we have
\begin{equation}\label{descomposicion'''}
\tilde{f}(1+jp^{\nu})=\tilde{f}(1)+\tilde{f}(jp^{\nu})+A\alpha_1(\gamma)(1,jp^{\nu})-\alpha_1(\gamma)(1\cdot 1,1\cdot jp^{\nu}).
\end{equation}
So, we proceed by calculating the sum corresponding to each of these four terms. By~\eqref{acerca de beta1}, we know that
\begin{equation}\label{tercer sumando'''}
\sum_{j=1}^{sp^{\eta-\nu}-1} A^j \alpha_1(\gamma)(1,jp^{\nu}) = 0.
\end{equation}
Let $r$ be as in Proposition~\ref{calculos de beta 1}. For $0<j<sp^{\eta-\nu}$, write $j=up^{\eta-\nu}+t$ with $0\le t<p^{\eta-\nu}$. Since, in $\mathds{Z}_{p^{\eta}}$,
\begin{equation*}
1\cdot j p^{\nu}=(1-p^{\nu})jp^{\nu} = jp^{\nu} = up^{\eta}+tp^{\nu},
\end{equation*}
we have $r(jp^{\nu}) = tp^{\nu} \ge p^{\nu}-1$ if and only if $0<t<p^{\eta-\nu}$. Consequently, by~\eqref{acerca de beta1},
\begin{equation}\label{cuarto sumando'''}
\sum_{j=1}^{sp^{\eta-\nu}-1} A^{j-1} \alpha_1(\gamma)(1\cdot 1,1\cdot jp^{\nu}) = \sum_{u=0}^{s-1}\sum_{t=1}^{p^{\eta-\nu}-1} A^{up^{\eta-\nu}+t-1}\gamma.
\end{equation}
In order to compute $\sum_{j=1}^{sp^{\eta-\nu}-1}A^{j-1} \tilde{f}(jp^{\nu})$, we note that, by Lemma~\ref{le periodico},
\begin{equation*}
\tilde{f}(jp^{\nu}) =\tilde{f}(jp^{2\nu-\eta}p^{\eta-\nu}) = jp^{\nu} (f_0-\gamma) + jp^{2\nu-\eta} \gamma + \left\lfloor \frac{j}{p^{\eta-\nu}}\right\rfloor(A-\Ide)\gamma\qquad\text{for $0<j<sp^{\eta-\nu}$.}
\end{equation*}
Hence
\begin{equation}\label{f tilde de p a la nu'''}
\sum_{j=1}^{sp^{\eta-\nu}-1} A^{j-1} \tilde{f}(jp^{\nu}) = p^{\nu} \sum_{j=1}^{sp^{\eta-\nu}-1} j A^{j-1} (f_0-\gamma) + p^{2\nu-\eta} \sum_{j=1}^{sp^{\eta-\nu}-1} j A^{j-1} \gamma + \sum_{j=1}^{sp^{\eta-\nu}-1} \left\lfloor \frac{j}{p^{\eta-\nu}}\right\rfloor(A^j-A^{j-1})\gamma.
\end{equation}
Note now that
\begin{equation*}
\sum_{j=1}^{sp^{\eta-\nu}-1} \left\lfloor \frac{j}{p^{\eta-\nu}}\right\rfloor(A^j-A^{j-1})\gamma = \sum_{u=1}^{s-1} u A^{(u+1)p^{\eta-\nu}-1}\gamma -  \sum_{u=1}^{s-1} u A^{up^{\eta-\nu}-1}\gamma = (s-1) A^{sp^{\eta-\nu}-1}\gamma - \sum_{u=1}^{s-1} A^{up^{\eta-\nu}-1}\gamma.
\end{equation*}
Combining this with~\eqref{formula base'''}, \eqref{descomposicion'''}, \eqref{tercer sumando'''}, \eqref{cuarto sumando'''} and~\eqref{f tilde de p a la nu'''}, and taking into account that $\tilde{f}(1) = f_0$, we obtain
\begin{equation*}
\sum_{j=0}^{sp^{\eta-\nu}-1} A^{sp^{\eta-\nu}-1-j}\tilde{f}(j\cdot 1) = A^{sp^{\eta-\nu}-1}(f_0 + s\gamma-\gamma) + \sum_{j=1}^{sp^{\eta-\nu}-1} (1+jp^{\nu})A^{j-1}(f_0-\gamma) + p^{2\nu-\eta} \sum_{j=1}^{sp^{\eta-\nu}-1} j A^{j-1} \gamma,
\end{equation*}
as desired.
\end{proof}

From now on we set
\begin{equation}\label{los polinomios1}
P(X)\coloneqq \sum_{j=0}^{p^{\eta}-1} (jp^{\nu}+1)X^j\quad \text{and}\quad Q(X)\coloneqq  \sum_{j=0}^{p^{\eta}-1}jp^{2\nu-\eta}X^j + p^{\nu}.
\end{equation}
Let $(\alpha_1(\gamma),f)\in \wh{C}^{02}_N(H,I)\oplus \wh{C}^{11}_N(H,I)$ be a $2$-cocycle. Assuming that $\Yleft=0$, in Proposition~\ref{necesarias con Yleft cero} we will prove that $f(0,1)=\sum_{j=0}^{p^{\eta}-1}A^{p^{\eta}-1-j}\tilde{f}(j\cdot 1)$, where $\tilde f(c)=f(1,c)$. Motivated by this~equal\-ity, we establish the following result:

\begin{corollary}\label{periodico} The following equality holds:
\begin{equation}\label{formula periodico}
\sum_{j=0}^{p^{\eta}-1}A^{p^{\eta}-j}\tilde{f}(j\cdot 1) = P(A) (f_0-\gamma) + Q(A)\gamma.
\end{equation}
\end{corollary}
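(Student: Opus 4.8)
The plan is to obtain \eqref{formula periodico} as the specialization $s=p^{\nu}$ of the preceding Proposition, after multiplying through by $A$. Observe first that taking $s=p^{\nu}$ gives $sp^{\eta-\nu}=p^{\eta}$, so the summation index in that Proposition runs exactly over $0\le j<p^{\eta}$, matching the range in \eqref{formula periodico}. The only discrepancy between the two left-hand sides is the exponent of $A$: the Proposition carries $A^{p^{\eta}-1-j}$, whereas \eqref{formula periodico} carries $A^{p^{\eta}-j}$. Hence I would simply apply $A$ on the left to both sides of the Proposition (with $s=p^{\nu}$); the left-hand side then becomes precisely $\sum_{j=0}^{p^{\eta}-1}A^{p^{\eta}-j}\tilde{f}(j\cdot 1)$.

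It then remains to rewrite the right-hand side. Multiplying the leading term by $A$ yields $A^{p^{\eta}}(f_0+p^{\nu}\gamma-\gamma)$, which by the first relation in \eqref{igualdad exponencial general} equals $(f_0-\gamma)+p^{\nu}\gamma$. Multiplying the two sums by $A$ shifts the exponent from $A^{j-1}$ to $A^{j}$, producing $\sum_{j=1}^{p^{\eta}-1}(1+jp^{\nu})A^{j}(f_0-\gamma)$ together with $p^{2\nu-\eta}\sum_{j=1}^{p^{\eta}-1}jA^{j}\gamma$.

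Finally I would recognize the polynomials of \eqref{los polinomios1}. The factor $(f_0-\gamma)$ arising from the leading summand is exactly the $j=0$ contribution $(0\cdot p^{\nu}+1)A^{0}$ of $P(A)$, so it combines with $\sum_{j=1}^{p^{\eta}-1}(1+jp^{\nu})A^{j}(f_0-\gamma)$ to give $P(A)(f_0-\gamma)$. Likewise, since the $j=0$ term of $\sum_{j=0}^{p^{\eta}-1}jp^{2\nu-\eta}A^{j}$ vanishes, the expression $p^{\nu}\gamma+p^{2\nu-\eta}\sum_{j=1}^{p^{\eta}-1}jA^{j}\gamma$ is precisely $Q(A)\gamma$. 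This yields the asserted identity. There is no genuine obstacle here: the argument is pure bookkeeping, and the only points requiring care are the use of $A^{p^{\eta}}=\Ide$ to convert the leading term into the missing $j=0$ summand of $P(A)$, and the harmless reindexing of the two sums so that their leading terms match the $j=0$ entries of $P$ and $Q$.
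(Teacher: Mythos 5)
Your proposal is correct and is exactly the argument the paper uses: the paper's proof is the one-line remark that the identity follows from the preceding proposition by taking $s\coloneqq p^{\nu}$, multiplying by $A$ and using $A^{p^{\eta}}=\Ide$. Your version merely spells out the bookkeeping that absorbs the leading term into the $j=0$ summands of $P(A)$ and $Q(A)$, and that bookkeeping checks out.
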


\begin{proof} Follows directly from the previous proposition taking $s\coloneqq p^{\nu}$,  multiplying by $A$ and using $A^{p^{\eta}}=\Ide$.
\end{proof}

\begin{proposition}\label{previa a cociclos} Let $c\in \mathds{Z}$ and let $0\le s \le p^{\nu}$. If $0\le c < p^{\eta}$ or $s=p^{\nu}$, then
\begin{equation}\label{formula para sum_{j=0}^{p^{eta}-1} A^{p^{eta}-1-j} tilde{f}(j cdot c)1}
\sum_{j=0}^{sp^{\eta-\nu}-1} A^{sp^{\eta-\nu}-j} \tilde{f}(j\cdot c) = c \sum_{j=0}^{sp^{\eta-\nu}-1} A^{sp^{\eta-\nu}-j} \tilde{f}(j\cdot 1).
\end{equation}
\end{proposition}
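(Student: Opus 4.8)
The plan is to prove the asserted identity \eqref{formula para sum_{j=0}^{p^{eta}-1} A^{p^{eta}-1-j} tilde{f}(j cdot c)1} by induction on $c\ge 0$, writing $N\coloneqq sp^{\eta-\nu}$ throughout. The base case $c=0$ is immediate: since $j\cdot 0=0$ and $\tilde f(0)=\hat f(0)=0$, both sides vanish. For the inductive step I would exploit the left distributivity in \eqref{compatibilidades cdot suma y suma cdot}, which gives $j\cdot(c+1)=j\cdot c+j\cdot 1$, together with the identity \eqref{cociclo doble para a uno f tilde} for $\tilde f$ applied with first entry $j\cdot c$ and second entry $j\cdot 1$. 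Using \eqref{(j+1).c} to rewrite $1\cdot(j\cdot x)=(j+1)\cdot x$, this yields
\[
\tilde f(j\cdot(c+1))=\tilde f(j\cdot c)+\tilde f(j\cdot 1)+A\alpha_1(\gamma)(j\cdot c,j\cdot 1)-\alpha_1(\gamma)\bigl((j+1)\cdot c,(j+1)\cdot 1\bigr).
\]
Multiplying by $A^{N-j}$ and summing over $0\le j<N$, the first term produces $c\sum_j A^{N-j}\tilde f(j\cdot 1)$ by the induction hypothesis and the second produces $\sum_j A^{N-j}\tilde f(j\cdot 1)$, so together they give the desired $(c+1)\sum_j A^{N-j}\tilde f(j\cdot 1)$.

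Everything then hinges on showing that the remaining error
\[
E\coloneqq\sum_{j=0}^{N-1}A^{N-j+1}\alpha_1(\gamma)(j\cdot c,j\cdot 1)-\sum_{j=0}^{N-1}A^{N-j}\alpha_1(\gamma)\bigl((j+1)\cdot c,(j+1)\cdot 1\bigr)
\]
vanishes. The key observation is that, after reindexing the second sum by $k=j+1$, both sums share the common summand $g(j)\coloneqq A^{N-j+1}\alpha_1(\gamma)(j\cdot c,j\cdot 1)$ for $1\le j\le N-1$, so $E$ telescopes to the boundary terms $g(0)-g(N)$. Since $0\cdot x=x$ and $p^{\nu}N=sp^{\eta}\equiv 0\pmod{p^{\eta}}$ forces $N\cdot x\equiv x$ in $H$, this collapses to $E=(A^{N+1}-A)\alpha_1(\gamma)(\bar c,1)=A(A^{N}-\Ide)\alpha_1(\gamma)(\bar c,1)$, where $\bar c$ denotes $c$ reduced modulo $p^{\eta}$.

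It remains to argue that this boundary term is zero under the two hypotheses, and this case split is exactly where the stated restriction on $(c,s)$ becomes visible, so I expect it to be the only subtle point. If $s=p^{\nu}$ then $N=p^{\eta}$ and $A^{N}=A^{p^{\eta}}=\Ide$, hence $E=0$ regardless of $c$; the induction then runs for every $c\ge 0$, and since the error vanishes identically one may run it downward as well to cover all $c\in\mathds{Z}$. If instead $s<p^{\nu}$, then $N<p^{\eta}$ and I would invoke \eqref{sime}: one has $\alpha_1(\gamma)(\bar c,1)=\gamma$ precisely when $\bar c=p^{\eta}-1$ and $\alpha_1(\gamma)(\bar c,1)=0$ otherwise. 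In the chain of steps producing the values $c=1,\dots,p^{\eta}-1$, the old index $\bar c$ runs only through $0,\dots,p^{\eta}-2$, so $\alpha_1(\gamma)(\bar c,1)=0$ and $E=0$ at every step; this establishes the formula on the whole range $0\le c<p^{\eta}$. The restriction is sharp, since the single step $c=p^{\eta}-1\mapsto p^{\eta}$ would incur the generally nonzero error $A(A^{N}-\Ide)\gamma$, which is precisely why the hypothesis excludes $c\ge p^{\eta}$ unless $s=p^{\nu}$.
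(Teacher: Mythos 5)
Your proof is correct and follows essentially the same route as the paper's: induction on $c$ using \eqref{cociclo doble para a uno f tilde} together with \eqref{(j+1).c}, telescoping the $\alpha_1(\gamma)$-terms down to the boundary contribution $(A^{sp^{\eta-\nu}+1}-A)\alpha_1(\gamma)(c,1)$, and killing it either by $A^{p^{\eta}}=\Ide$ (when $s=p^{\nu}$) or by $\alpha_1(\gamma)(c,1)=0$ for $0\le c<p^{\eta}-1$ (when $s<p^{\nu}$). The only cosmetic difference is in extending to all $c\in\mathds{Z}$ when $s=p^{\nu}$: you run the induction downward, while the paper evaluates at $c=p^{\eta}$ to get $p^{\eta}\sum_j A^{p^{\eta}-j}\tilde f(j\cdot 1)=0$ and then reduces modulo $p^{\eta}$; both are valid.
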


\begin{proof}  Assume that $s=p^{\nu}$. First we prove the case $c\in \mathds{N}_0$, by induction on $c$. For $c\in\{0,1\}$ it is trivial.  Suppose that it is true for some $c\in \mathds{N}$. By Proposition~\ref{prop para definir f}~and~equal\-ities~\eqref{compatibilidades cdot suma y suma cdot} and~\eqref{(j+1).c},
\begin{align*}
\tilde{f}(j\cdot (c+1)) & = \tilde{f}(j\cdot c+ j\cdot 1)\\
& = \tilde{f}(j\cdot c)+ \tilde{f}(j\cdot 1)+ A\alpha_1(\gamma)(j\cdot c,j\cdot 1) - \alpha_1(\gamma)(1\cdot (j\cdot c),1\cdot (j\cdot 1))\\
& = \tilde{f}(j\cdot c)+ \tilde{f}(j\cdot 1)+ A\alpha_1(\gamma)(j\cdot c,j\cdot 1) - \alpha_1(\gamma)((j+1)\cdot c,(j+1)\cdot 1).
\end{align*}
On the other hand
\begin{align*}
& \sum_{j=0}^{sp^{\eta-\nu}-1} A^{sp^{\eta-\nu}-j} \Bigl(A\alpha_1(\gamma)(j\cdot c,j\cdot 1) - \alpha_1(\gamma)((j+1)\cdot c,(j+1)\cdot 1)\Bigr)\\
& = \sum_{j=0}^{s p^{\eta-\nu}-1} A^{sp^{\eta-\nu}-j+1}\alpha_1(\gamma)(j\cdot c,j\cdot 1) - \sum_{j=1}^{sp^{\eta-\nu}} A^{sp^{\eta-\nu}-j+1} \alpha_1(\gamma)(j\cdot c,j\cdot 1)\\
& = (A^{sp^{\eta-\nu}+1}-A)\alpha_1(\gamma)(c,1)\\
& = 0,
\end{align*}
where the last equality  holds since $A^{p^{\eta}}=\ide$, and the second one holds by~\eqref{i cdot j = j}. From this it follows that
\begin{equation*}
\sum_{j=0}^{sp^{\eta-\nu}-1} A^{sp^{\eta-\nu}-j} \tilde{f}(j\cdot (c+1)) = \sum_{j=0}^{sp^{\eta-\nu}-1} A^{sp^{\eta-\nu}-j} \tilde{f}(j\cdot c) + \sum_{j=0}^{sp^{\eta-\nu}-1} A^{sp^{\eta-\nu}-j} \tilde{f}(j\cdot 1) = (c+1)\sum_{j=0}^{sp^{\eta-\nu}-1} A^{sp^{\eta-\nu}-j} \tilde{f}(j\cdot 1),
\end{equation*}
where the last equality follows from the inductive hypothesis. This finishes the proof of~\eqref{formula para sum_{j=0}^{p^{eta}-1} A^{p^{eta}-1-j} tilde{f}(j cdot c)1} for $c\in \mathds{N}_0$. Thus
\begin{equation*}
p^{\eta} \sum_{j=0}^{sp^{\eta-\nu}-1} A^{sp^{\eta-\nu}-j} \tilde{f}(j\cdot 1) = \sum_{j=0}^{sp^{\eta-\nu}-1} A^{sp^{\eta-\nu}-j} \tilde{f}(j\cdot 0) = 0,
\end{equation*}
and so~\eqref{formula para sum_{j=0}^{p^{eta}-1} A^{p^{eta}-1-j} tilde{f}(j cdot c)1} is true for $c\in \mathds{Z}$. When $s\ne p^{\nu}$, but
$0\le c< p^{\eta}$, the same proof works, noting that, by Proposition~\ref{calculos de beta 1}, we have $\alpha_1(\gamma)(c,1)=0$, for $0\le c <p^{\eta}-1$.
\end{proof}

\begin{corollary}\label{previa a cociclos1}  For all $c\in \mathds{Z}$, we have
\begin{equation}\label{formula para sum_{j=0}^{p^{eta}-1} A^{p^{eta}-1-j} tilde{f}(j cdot c)}
\sum_{j=0}^{p^{\eta}-1} A^{p^{\eta}-j} \tilde{f}(j\cdot c) = c\bigl(P(A) (f_0-\gamma) + Q(A)\gamma\bigr).
\end{equation}
\end{corollary}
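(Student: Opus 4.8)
The plan is to derive this corollary as an immediate specialization of Proposition~\ref{previa a cociclos}, followed by a single application of Corollary~\ref{periodico}. The crucial observation is that taking $s\coloneqq p^{\nu}$ collapses the upper summation bound to the one we want, since $sp^{\eta-\nu} = p^{\nu}p^{\eta-\nu} = p^{\eta}$. With this choice, the partial sums occurring in Proposition~\ref{previa a cociclos} become precisely the full sums $\sum_{j=0}^{p^{\eta}-1} A^{p^{\eta}-j}\tilde{f}(j\cdot c)$ that appear in the present statement.

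The second point to check is that, for $s=p^{\nu}$, the hypothesis of Proposition~\ref{previa a cociclos} holds for \emph{every} $c\in\mathds{Z}$: the disjunction ``$0\le c<p^{\eta}$ or $s=p^{\nu}$'' is satisfied by its second alternative, irrespective of $c$. This is exactly what upgrades the identity from the restricted range of $c$ to all integers. Concretely, I would first invoke Proposition~\ref{previa a cociclos} with $s=p^{\nu}$ to obtain
\[
\sum_{j=0}^{p^{\eta}-1} A^{p^{\eta}-j}\tilde{f}(j\cdot c) = c\sum_{j=0}^{p^{\eta}-1} A^{p^{\eta}-j}\tilde{f}(j\cdot 1)\qquad\text{for all }c\in\mathds{Z},
\]
and then substitute the value of the remaining sum supplied by Corollary~\ref{periodico}, namely $P(A)(f_0-\gamma)+Q(A)\gamma$, which yields the claimed equality.

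Since both ingredients are already established, there is no genuine obstacle here; the argument is purely a matter of assembling the two prior results. The only thing requiring attention is the bookkeeping of indices—verifying that the substitution $s=p^{\nu}$ aligns the exponents of $A$ and the summation range correctly—but this is routine once the identification $sp^{\eta-\nu}=p^{\eta}$ has been made.
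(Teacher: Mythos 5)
Your proof is correct and follows exactly the paper's own route: the paper proves this corollary by citing Proposition~\ref{previa a cociclos} (with $s=p^{\nu}$, so that the hypothesis holds for all $c\in\mathds{Z}$ and the summation range becomes $0\le j\le p^{\eta}-1$) together with Corollary~\ref{periodico}. Your index bookkeeping $sp^{\eta-\nu}=p^{\eta}$ and the observation about the disjunction in the hypothesis are precisely the right points to check.
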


\begin{proof} By Corollary~\ref{periodico} and Proposition~\ref{previa a cociclos}.
\end{proof}

\begin{corollary}\label{anularse implica invarianza} For all $M,c\in \mathds{Z}$, we have:
\begin{equation*}
\sum_{j=0}^{p^{\eta}-1}A^{M-j}\tilde{f}(j\cdot c)= (M\cdot c)\sum_{j=0}^{p^{\eta}-1}A^{p^{\eta}-j}\tilde{f}(j\cdot 1).
\end{equation*}
\end{corollary}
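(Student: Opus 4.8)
The plan is to prove the identity by reading Corollary~\ref{previa a cociclos1} backwards. Rather than simplifying the left-hand side directly, I would rewrite the right-hand side $(M\cdot c)\sum_{j=0}^{p^{\eta}-1}A^{p^{\eta}-j}\tilde{f}(j\cdot 1)$ as a single sum of the same shape and then recognize it, after shifting the summation index, as the left-hand side.

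First I would invoke Corollary~\ref{periodico} to identify $\sum_{j=0}^{p^{\eta}-1}A^{p^{\eta}-j}\tilde{f}(j\cdot 1)$ with $P(A)(f_0-\gamma)+Q(A)\gamma$. Since $M\cdot c=(1-p^{\nu}M)c$ is an integer and Corollary~\ref{previa a cociclos1} holds for every $c\in\mathds{Z}$, I may apply that corollary with $M\cdot c$ in place of $c$ to obtain
\[
(M\cdot c)\sum_{j=0}^{p^{\eta}-1}A^{p^{\eta}-j}\tilde{f}(j\cdot 1)=\sum_{j=0}^{p^{\eta}-1}A^{p^{\eta}-j}\tilde{f}\bigl(j\cdot(M\cdot c)\bigr).
\]
Next I would use the identity~\eqref{(j+1).c}, in the form $j\cdot(M\cdot c)=(j+M)\cdot c$ (valid in $H$, where $p^{2\nu}=0$), to rewrite the right-hand sum as $\sum_{j=0}^{p^{\eta}-1}A^{p^{\eta}-j}\tilde{f}\bigl((j+M)\cdot c\bigr)$.

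Finally I would perform the substitution $k=j+M$. The key observation is that the summand depends on the index only modulo $p^{\eta}$: we have $A^{p^{\eta}}=\Ide$ by~\eqref{igualdad exponencial general}, while $(j+M)\cdot c=(1-p^{\nu}(j+M))c$ is unchanged in $H$ when the index is shifted by $p^{\eta}$, the change being a multiple of $p^{\nu+\eta}\equiv 0$. Hence the range $\{0,\dots,p^{\eta}-1\}$ can be replaced by $\{M,\dots,M+p^{\eta}-1\}$, and the reindexing together with $A^{p^{\eta}}=\Ide$ turns $A^{p^{\eta}-j}$ into $A^{M-k}$ and $\tilde{f}((j+M)\cdot c)$ into $\tilde{f}(k\cdot c)$, producing exactly $\sum_{k=0}^{p^{\eta}-1}A^{M-k}\tilde{f}(k\cdot c)$, which is the desired left-hand side. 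The argument involves no genuine obstacle: its whole content is the bookkeeping of the exponents of $A$ modulo $p^{\eta}$ and the conversion, via~\eqref{(j+1).c}, of a shift in the first argument of $\cdot$ into a shift of the summation variable. The only points deserving a word of care are that Corollary~\ref{previa a cociclos1} is available for the integer $M\cdot c$ (it is stated for all integers), and that the index shift is legitimized by the $p^{\eta}$-periodicity of the summand.
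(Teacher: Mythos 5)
Your argument is correct and follows essentially the same route as the paper: the paper applies Proposition~\ref{previa a cociclos} (with $s=p^{\nu}$) to the integer $M\cdot c$ to absorb the scalar, uses~\eqref{(j+1).c} to turn $j\cdot(M\cdot c)$ into $(j+M)\cdot c$, and then reindexes using the $p^{\eta}$-periodicity of the summand, exactly as you do. Your detour through Corollaries~\ref{periodico} and~\ref{previa a cociclos1} is logically equivalent to citing Proposition~\ref{previa a cociclos} directly, so there is no substantive difference.
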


\begin{proof} By Proposition~\ref{previa a cociclos} and equality~\eqref{(j+1).c}, we have
\begin{equation*}
(M\cdot c)\sum_{j=0}^{p^{\eta}-1}A^{p^{\eta}-j}\tilde{f}(j\cdot 1) = \sum_{j=0}^{p^{\eta}-1}A^{p^{\eta}-j}\tilde{f}(j\cdot (M\cdot c)) = \sum_{j=0}^{p^{\eta}-1}A^{p^{\eta}-j}\tilde{f}((j+M)\cdot c) = \sum_{j=0}^{p^{\eta}-1} A^{M-j}\tilde{f}(j\cdot c),
\end{equation*}
as desired.
\end{proof}

\begin{remark}\label{hat f para cociclo} Let $(\alpha_1(\gamma),f)\in\wh{C}^{02}_N(H,I)\oplus \wh{C}^{11}_N(H,I)$ be a $2$-cocycle. Set $f_0\coloneqq f(1,1)$ and let $\hat{f}$ and $\tilde{f}$ be as in~Defi\-nition~\ref{def de fhat} and Proposition~\ref{prop para definir f}, respectively. By Proposition~\ref{necesarias}, the pair $(f_0,\gamma)$ satisfies the condition required above Proposition~\ref{prop para definir f}. Comparing~\eqref{recursiva para 1'} with~\eqref{hat{f} en forma recursive}, we get $\tilde{f}(b) = \hat{f}(b) = f(1,b)$, for all $0\le b<p^{\eta}$.
\end{remark}

\subsection[Cocycles when \texorpdfstring{$\Yleft=0$}{-<=0}]{Cocycles when \texorpdfstring{$\pmb{\Yleft=0}$}{-<=0}}\label{subsection caso < = 0}

In this subsection we assume $\Yleft=0$ (equivalently, $B=0$). As we saw above, a pair $(\alpha_1(\gamma),f) \in \wh{C}^{02}_N(H,I)\oplus \wh{C}^{11}_N(H,I)$ is  a $2$-cocycle if and only if conditions~\eqref{cociclo doble} and~\eqref{cociclo horizontal con D} hold. Under our assumption~\eqref{cociclo doble} is not modified, but~\eqref{cociclo horizontal con D} be\-comes
\begin{equation}
f(a+b,c)  = A^{l(a\cdot b)} f(a,c)+f(a\cdot b,a\cdot c)\qquad\text{for all $a,b,c\in H$,}\label{cociclo horizontal}
\end{equation}
where the map $l$ is as in~\eqref{formula para l} and $A$ is as in Proposition~\ref{condiciones sin potencia general}.

\begin{proposition}\label{necesarias con Yleft cero} Assume that $(\alpha_1(\gamma), f) \in \wh{C}^{02}_N(H,I)\oplus \wh{C}^{11}_N(H,I)$ is a $2$-cocycle. Then
\begin{equation}
f(1^{\times k},c) = \sum_{j=0}^{k-1}A^{k-1-j}f(1,j\cdot c)\qquad\text{for all $k\in \mathds{N}_0$ and $c\in H$.}\label{recursiva general}
\end{equation}
Consequently, by Proposition~\ref{necesarias}, the map $f$ is uniquely determined by $f(1,1)$ and $\gamma$.
\end{proposition}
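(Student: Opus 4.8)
The plan is to prove~\eqref{recursiva general} by induction on $k$, after first rewriting the target identity as a one-step recursion. Indeed, if~\eqref{recursiva general} holds for a given $k$, then peeling off the last term of the sum for $k+1$ shows that~\eqref{recursiva general} at $k+1$ is equivalent to the single relation
\begin{equation*}
f(1^{\times(k+1)},c) = A\,f(1^{\times k},c) + f(1,k\cdot c).
\end{equation*}
So it suffices to establish this recursion for every $k$. The base case $k=0$ is immediate: $1^{\times 0}=0$, the empty sum vanishes, and $f(0,c)=0$ because $f\in\wh{C}^{11}_N(H,I)$ annihilates tuples with a zero entry.

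The heart of the argument is to feed the right pair $(a,b)$ into the $\Yleft=0$ cocycle condition~\eqref{cociclo horizontal},
\begin{equation*}
f(a+b,c) = A^{l(a\cdot b)}f(a,c) + f(a\cdot b,a\cdot c).
\end{equation*}
I would take $a\coloneqq 1^{\times k}$ and $b\coloneqq 1^{\times(k+1)}-1^{\times k}$, so that $a+b=1^{\times(k+1)}$ and the left-hand side is exactly $f(1^{\times(k+1)},c)$. The two terms on the right then match $A\,f(1^{\times k},c)$ and $f(1,k\cdot c)$ respectively, once we verify two supporting facts: that $a\cdot b=1$ and that $a\cdot c=k\cdot c$.

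For the first, I would invoke the brace relation~\eqref{equivalencia linear cycle set braza}, namely $a\cdot b = a^{\times-1}\times(a+b)$, which gives $1^{\times k}\cdot b = 1^{\times(-k)}\times 1^{\times(k+1)} = 1^{\times 1}=1$; consequently $l(a\cdot b)=l(1)=1$, so the power of $A$ appearing is $A^{1}=A$. The second fact is precisely~\eqref{1^times j.c}: since $\eta\le 2\nu$ forces $p^{2\nu}=0$ in $\mathds{Z}_{p^{\eta}}$, we have $1^{\times k}\cdot c=k\cdot c$, whence $f(1,a\cdot c)=f(1,k\cdot c)$. Substituting these two observations into the cocycle condition yields the recursion displayed above, completing the inductive step.

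Finally, for the concluding assertion, every $h\in H$ equals $1^{\times k}$ for some $k$ because $1$ generates $(\mathds{Z}_{p^{\eta}},\times)$; hence~\eqref{recursiva general} expresses every value $f(h,c)$ through the values $f(1,c')$, and by Proposition~\ref{necesarias} (equation~\eqref{recursiva para 1'}) these are in turn determined by $f(1,1)$ and $\gamma$. The genuinely delicate point is the choice of $b$ in the inductive step: recognizing that $b=1^{\times(k+1)}-1^{\times k}$ is exactly what makes $a\cdot b$ collapse to the generator $1$ via~\eqref{equivalencia linear cycle set braza}, so that the two summands of the cocycle identity align with the two terms of the recursion. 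I expect this matching to be the main obstacle, whereas the auxiliary identities $a\cdot b=1$ and $1^{\times k}\cdot c=k\cdot c$ are short consequences of~\eqref{equivalencia linear cycle set braza} and~\eqref{1^times j.c}.
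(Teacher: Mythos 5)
Your argument is correct, and it follows the same overall strategy as the paper (induction on $k$ driven by the horizontal cocycle condition~\eqref{cociclo horizontal}), but with a different decomposition of $1^{\times(k+1)}$. The paper writes $1^{\times(k+1)} = 1 + b_k$ with $b_k = k+\binom{k+1}{2}p^{\nu}$ and applies~\eqref{cociclo horizontal} to $(a,b)=(1,b_k)$; this peels off the $j=0$ term of the sum and requires two small computations — checking $1\cdot b_k = 1^{\times k}$ via~\eqref{i times j}, and re-indexing $f(1,j\cdot(1\cdot c)) = f(1,(j+1)\cdot c)$ inside the inherited sum via~\eqref{(j+1).c}. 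You instead take $(a,b)=\bigl(1^{\times k},\,1^{\times(k+1)}-1^{\times k}\bigr)$, which peels off the $j=k$ term; the key point $a\cdot b = a^{\times-1}\times(a+b) = 1^{\times(-k)}\times 1^{\times(k+1)} = 1$ drops out of~\eqref{equivalencia linear cycle set braza} with no binomial bookkeeping, and the only other ingredient is $1^{\times k}\cdot c = k\cdot c$ from~\eqref{1^times j.c}. Both recursions, $f(1^{\times(k+1)},c)=A^k f(1,c)+f(1^{\times k},1\cdot c)$ and your $f(1^{\times(k+1)},c)=A f(1^{\times k},c)+f(1,k\cdot c)$, sum to the same closed form; your choice makes the inductive step marginally cleaner since no re-indexing of the sum is needed, while the paper's choice of $b_k$ is the one it reuses later in the general case with $\Yleft\ne 0$ (Remark~\ref{b_k=1 si y...} and Proposition~\ref{necesaria para Yleft}), which is presumably why it is set up that way. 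Your handling of the base case and of the final uniqueness assertion via Proposition~\ref{necesarias} is also correct.
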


\begin{proof} For $k=0$, the equality~\eqref{recursiva general} holds since $f(0,c) = 0$. Assume this is true for some $k\in \mathds{N}_0$. Since, in $\mathds{Z}_{p^{\eta}}$,
\begin{equation*}
b\coloneqq k + \binom{k+1}{2}p^{\nu} = k+\binom{k}{2}p^{\nu}+kp^{\nu}\qquad\text{and}\qquad bp^{\nu} = kp^{\nu},
\end{equation*}
by~\eqref{i times j}, we have
\begin{equation*}
1\cdot b = (1-p^{\nu})b = k + \binom{k}{2}p^{\nu}  = 1^{\times k}.
\end{equation*}
Thus,  $l(1\cdot b)$ is the class of $k$ in $\mathds{Z}_{p^{\eta}}$. So,
\begin{align*}
f(1^{\times (k+1)},c) & = f(1+b,c)&&\text{by~\eqref{i times j}}\\
& = A^{l(1\cdot b)} f(1,c)+ f\bigl(1\cdot b,1\cdot c\bigr) &&\text{by~\eqref{cociclo horizontal}}\\
& = A^k f(1,c)+ f\bigl(1^{\times k},1\cdot c\bigr) \\
& = A^k f(1,c)+\sum_{j=0}^{k-1}A^{k-1-j}f(1,j\cdot (1\cdot c)) &&\text{by the inductive hypothesis}\\
& = A^k f(1,c)+\sum_{j=0}^{k-1}A^{k-1-j}f(1,(j+1)\cdot c) &&\text{by~\eqref{(j+1).c}}\\
& = \sum_{j=0}^{k}A^{k-j}f(1,j\cdot c),
\end{align*}
which completes the inductive step and proves~\eqref{recursiva general}.
\end{proof}

Let $\gamma,f_0\in I$. Our next goal is to construct a $2$-cocycle $(\alpha_1(\gamma), f)\in \wh{C}^{02}_N(H,I)\oplus \wh{C}^{11}_N(H,I)$ with $f(1,1)= f_0$,~when\-ever it is possible.

\begin{definition}\label{definicion de breve f} Assume that the condition in item~(3) of Proposition~\ref{periodicoo} is satisfied and let $\tilde{f}$ be as in Proposition~\ref{prop para definir f}. Motivated by~\eqref{recursiva general}, we define $f\colon H\times H\to I$, by
\begin{equation}\label{recursiva general definicion}
f(1^{\times k},c) \coloneqq \sum_{j=0}^{k-1}A^{k-1-j}\tilde{f}(j\cdot c)\qquad\text{for $0\le k< p^{\eta}$ and $c\in H$.}
\end{equation}
Note that, by~\eqref{hat{f} en forma recursive}, the map $f$ is determined by $f_0$ and $\gamma$. If convenient, we will~write~$f_{f_0,\gamma}$~in\-stead of $f$.
\end{definition}

\begin{remark}\label{si cociclo da lo mismo} Suppose $(\alpha_1(\gamma),f) \in \wh{C}^{02}_N(H,I)\oplus \wh{C}^{11}_N(H,I)$ is a $2$-cocycle and set $f_0\coloneqq f(1,1)$. Then,~by~Remark~\ref{hat f para cociclo} and Propo\-sition~\ref{necesarias con Yleft cero}, the map $f_{f_0,\gamma}$ in Definition~\ref{definicion de breve f}, coincides with $f$.
\end{remark}

\begin{theorem}\label{cociclos} Each $2$-cocycle $(\beta,g)\in \wh{C}^{02}_N(H,I)\oplus \wh{C}^{11}_N(H,I)$ is equivalent modulo coboundaries to a $2$-cocycle of the form $(\alpha_1(\gamma), f)$. Moreover, if we set $f_0\coloneqq f(1,1)$, then
\begin{equation}\label{necesarias y suficientes}
p^{\eta}f_0 = (p^{\eta}\Ide-p^{\nu}\Ide + \Ide -A)\gamma \qquad\text{and}\qquad P(A)(f_0-\gamma) + Q(A)\gamma = 0,
\end{equation}	
where $P$ and $Q$ are as in~\eqref{los polinomios1}. Conversely, if $\gamma, f_0\in I$ satisfy conditions~\eqref{necesarias y suficientes}, then $\bigl(\alpha_1(\gamma),f_{f_0,\gamma}\bigr)$ is a $2$-cocycle.
\end{theorem}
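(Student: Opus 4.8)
The plan is to treat the three assertions in turn, leaning on the machinery assembled earlier in the section. The first assertion—that every $2$-cocycle $(\beta,g)$ is cohomologous to one of the form $(\alpha_1(\gamma),f)$—is exactly Corollary~\ref{basta tomar b_1(gamma)}, so no new work is needed there. For the necessity of \eqref{necesarias y suficientes}, fix such a cocycle $(\alpha_1(\gamma),f)$ and put $f_0:=f(1,1)$. The first equality in \eqref{necesarias y suficientes} is literally \eqref{obligado 1} of Proposition~\ref{necesarias}. For the second, I would evaluate the recursion \eqref{recursiva general} of Proposition~\ref{necesarias con Yleft cero} at $k=p^{\eta}$ and $c=1$: since $1^{\times p^{\eta}}=0$ and $f$ vanishes whenever a coordinate is $0$, the left side is $f(0,1)=0$, whence $\sum_{j=0}^{p^{\eta}-1}A^{p^{\eta}-1-j}\tilde f(j\cdot 1)=0$, where $\tilde f(c)=f(1,c)$ by Remark~\ref{hat f para cociclo}. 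Multiplying by $A$ and invoking Corollary~\ref{periodico} turns this into $P(A)(f_0-\gamma)+Q(A)\gamma=0$.

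For the converse I would start from $\gamma,f_0$ satisfying \eqref{necesarias y suficientes} and form $f:=f_{f_0,\gamma}$ as in Definition~\ref{definicion de breve f}; note that the first equality in \eqref{necesarias y suficientes} is precisely the hypothesis (item~(3) of Proposition~\ref{periodicoo}) that makes $\tilde f$, hence $f$, well defined, and that $f$ is normalized, so it lies in $\wh{C}^{11}_N(H,I)$. By Remark~\ref{cociclos generales} it then suffices to check the two cocycle identities \eqref{cociclo doble} and \eqref{cociclo horizontal}. Identity \eqref{cociclo doble} is immediate: writing $a=1^{\times k}$, so $l(a)=k$ by Remark~\ref{es inversa}, and unfolding \eqref{recursiva general definicion}, the difference $f(a,b+c)-f(a,b)-f(a,c)$ becomes $\sum_{j=0}^{k-1}A^{k-1-j}\bigl(\tilde f(j\cdot(b+c))-\tilde f(j\cdot b)-\tilde f(j\cdot c)\bigr)$, which is the left side of Proposition~\ref{coro de prop para definir f} and hence equals $A^{k}\alpha_1(\gamma)(b,c)-\alpha_1(\gamma)(1^{\times k}\cdot b,1^{\times k}\cdot c)$, exactly as \eqref{cociclo doble} demands.

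The real content is the horizontal identity \eqref{cociclo horizontal}, which I would first recast multiplicatively. Using $a+b=a\times(a\cdot b)$ from \eqref{equivalencia linear cycle set braza}, parametrize $(a,b)$ by $a=1^{\times m}$ and $a\cdot b=1^{\times n}$ (as $b$ runs over $H$, the bijectivity of left translation makes $a\cdot b$ run over all of $H$), so that $a+b=1^{\times(m+n)}$, $l(a\cdot b)=n$, and $a\cdot c=1^{\times m}\cdot c=m\cdot c$. In these terms \eqref{cociclo horizontal} reads
\begin{equation*}
f(1^{\times(m+n)},c)=A^{n}f(1^{\times m},c)+f(1^{\times n},1^{\times m}\cdot c)\qquad\text{for }0\le m,n<p^{\eta}.
\end{equation*}
When $m+n<p^{\eta}$ this is pure bookkeeping: expanding both sides through \eqref{recursiva general definicion}, using $1^{\times m}\cdot c=m\cdot c$ from \eqref{1^times j.c} and $j\cdot(m\cdot c)=(j+m)\cdot c$ from \eqref{(j+1).c}, and re-indexing the second sum by $i=j+m$, the two right-hand sums glue to $\sum_{j=0}^{m+n-1}A^{m+n-1-j}\tilde f(j\cdot c)$, which is the left side.

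The one delicate point—and the place where the second condition in \eqref{necesarias y suficientes} is indispensable—is the wrap-around case $m+n\ge p^{\eta}$, where the left side must be read at the reduced exponent $1^{\times((m+n)\bmod p^{\eta})}$. To handle it I would extend the defining sum to all integer exponents, $\bar f(1^{\times k},c):=\sum_{j=0}^{k-1}A^{k-1-j}\tilde f(j\cdot c)$, and establish the periodicity $\bar f(1^{\times(k+p^{\eta})},c)=\bar f(1^{\times k},c)$. Splitting off the block $j=0,\dots,p^{\eta}-1$ and using $(j+p^{\eta})\cdot c=j\cdot c$ (from \eqref{(j+1).c}), the tail reproduces $\bar f(1^{\times k},c)$, while the head equals $A^{k-1}\sum_{j=0}^{p^{\eta}-1}A^{p^{\eta}-j}\tilde f(j\cdot c)$, which by Corollary~\ref{previa a cociclos1} is $A^{k-1}c\bigl(P(A)(f_0-\gamma)+Q(A)\gamma\bigr)$ and therefore vanishes precisely because of the norm condition. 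Thus $\bar f$ agrees with the function of \eqref{recursiva general definicion}, the bookkeeping identity propagates past the wrap-around, and \eqref{cociclo horizontal} holds for all $a,b,c$. I expect this periodicity step—not the sum manipulations—to be the crux, since it is exactly where the two halves of \eqref{necesarias y suficientes} interlock to force $f$ to be a genuine, well-defined $2$-cochain on $H$.
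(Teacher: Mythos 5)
Your proposal is correct and follows essentially the same route as the paper: Corollary~\ref{basta tomar b_1(gamma)} for the reduction, Proposition~\ref{necesarias} together with the vanishing of $f(0,1)$ and Corollary~\ref{periodico} for necessity, Proposition~\ref{coro de prop para definir f} for \eqref{cociclo doble}, and the periodicity of the defining sum (forced by Corollary~\ref{previa a cociclos1} and the norm condition) to push \eqref{cociclo horizontal} past the wrap-around. Your multiplicative parametrization $a=1^{\times m}$, $a\cdot b=1^{\times n}$ is just a relabeling of the paper's choice of a representative $M>l$ of $L(l,l')$ and the use of $r_{\!p^{\eta}}$, so the two arguments coincide in substance.
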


\begin{proof} By Corollary~\ref{basta tomar b_1(gamma)}, we know that there exist $\gamma$ and $f$ such that  $(\alpha_1(\gamma),f)$ is a $2$-cocycle equivalent to $(\beta,g)$. By Proposi\-tion~\ref{necesarias}, the map $f$ satisfies the first condition in~\eqref{necesarias y suficientes}. Also,  by Corollary~\ref{periodico}, Remark~\ref{hat f para cociclo} and Proposition~\ref{necesarias con Yleft cero},
\begin{equation*}
P(A)(f_0-\gamma) + Q(A) \gamma = \sum_{j=0}^{p^{\eta}-1}A^{p^{\eta}-j}\tilde{f}(j\cdot 1) = \sum_{j=0}^{p^{\eta}-1}A^{p^{\eta}-j}f(1,j\cdot 1) = Af(0,1) = 0,
\end{equation*}
as desired. Assume conversely that $f_0$ and $\gamma$ satisfy~\eqref{necesarias y suficientes} and let $f$ be as in~\eqref{recursiva general definicion}. By definition,
\begin{equation*}
f(0,c) = f(1^{\times 0},c) = 0\quad\text{for all $c\in H$.}
\end{equation*}
Moreover,  $\tilde{f}(j\cdot 0) = \tilde{f}(0) = 0$, for all $j\in H$, and so
\begin{equation*}
f(1^{\times k},0) = 0\quad\text{for all $0\le k < p^{\eta}$.}
\end{equation*}
In order to finish the proof we must show that $f$ satisfies conditions~\eqref{cociclo doble} and~\eqref{cociclo horizontal}. By identity~\eqref{recursiva general definicion} and Proposi\-tion~\ref{coro de prop para definir f}, we have
\begin{equation*}
f(1^{\times k},b+c)- f(1^{\times k},b)-f(1^{\times k},c) = A^k\alpha_1(\gamma)(b,c)-\alpha_1(\gamma)\bigl(1^{\times k}\cdot b,1^{\times k}\cdot c\bigr)\qquad\text{for $0\le k< p^{\eta}$.}
\end{equation*}
Thus, condition~\eqref{cociclo doble} holds. To prove~\eqref{cociclo horizontal} we set $a\coloneqq 1^{\times l}$ and $b\coloneqq 1^{\times l'}$, with $0\le l,l'<p^{\eta}$, and we com\-pute the terms in that expression. First note that, by  Corollary~\ref{previa a cociclos1} and the fact that $P(A)(f_0-\gamma) + Q(A)\gamma = 0$, we have
\begin{equation}\label{pepe}
\sum_{j=0}^{p^{\eta}-1} A^{p^{\eta}-1-j} \tilde{f}(j\cdot c) = 0\qquad\text{for all $c\in H$.}
\end{equation}
Hence,
\begin{equation}\label{cuenta previa}
f(1^{\times r_{\!p^{\eta}}(k)},c)= \sum_{j=0}^{r_{\!p^{\eta}}(k)-1}A^{r_{\!p^{\eta}}(k)-j-1}\tilde{f}(j\cdot c)= \sum_{j=0}^{k-1}A^{k-j-1}\tilde{f}(j\cdot c) \qquad\text{for all $k\in \mathds{N}_0$,}
\end{equation}
where $r_{\!p^{\eta}}(k)$ is the remainder of the integer division of $k$ by $p^{\eta}$. Let $L(l,l')$ be as in Remark~\ref{def de L(l,l')} and let $M>l$ be such that the class of $M$ in $\mathds{Z}_{p^{\eta}}$ is $L(l,l')$. On the left hand side of the equality~\eqref{cociclo horizontal}, we have
\begin{equation}\label{lado izquierdo}
f(a+b,c) = f(1^{\times L(l,l')},c) = f(1^{\times r_{\!p^{\eta}(M)}},c) = \sum_{j=0}^{M-1}A^{M-j-1}\tilde{f}(j\cdot c),
\end{equation}
where the last equality holds by equality~\eqref{cuenta previa}. On the other hand, by equalities~\eqref{a.b=}, we have $a\cdot b  = 1^{\times M-l}$. So, by Remark~\ref{es inversa} and  formula~\eqref{recursiva general definicion},
\begin{equation}\label{lado derecho 1}
A^{l(a\cdot b)}f(a,c)= A^{r_{\!p^{\eta}}(M-l)} f(1^{\times l},c)= \sum_{j=0}^{l-1} A^{M-j-1}\tilde{f}(j\cdot c);
\end{equation}
while, by~\eqref{(j+1).c}, \eqref{1^times j.c} and~\eqref{cuenta previa}, we have
\begin{equation}\label{lado derecho segundo}
f\bigl(a\cdot b,a\cdot c\bigr) = f\bigl(1^{\times M-l},1^{\times l}\cdot c\bigr) = f\bigl(1^{\times M-l},l\cdot c\bigr)= \sum_{j=0}^{M-l-1}A^{M-l-j-1}\tilde{f}\bigl(j\cdot (l\cdot c)\bigr)=\sum_{i=l}^{M-1}A^{M-i-1}\tilde{f}(i\cdot c).
\end{equation}
Comparing~\eqref{lado izquierdo} with the sum of~\eqref{lado derecho 1} and~\eqref{lado derecho segundo}, we obtain~\eqref{cociclo horizontal}, which concludes the proof.
\end{proof}

\begin{theorem}\label{cobordes} A $2$-cocycle of the form $(\alpha_1(\gamma),f)\in \wh{C}^{02}_N(H,I)\oplus \wh{C}^{11}_N(H,I)$ is a $2$-coboundary if and only~if there exists $t_0\in I$, such that
\begin{equation}\label{coborde actua}
f(1,1) = (p^{\eta}\Ide-p^{\nu}\Ide+\Ide-A)t_0\qquad\text{and}\qquad \gamma = p^{\eta}t_0.
\end{equation}
\end{theorem}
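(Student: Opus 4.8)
The plan is to make the notion of $2$-coboundary fully explicit and then reduce everything to a single evaluation at $(1,1)$. Since $\Yleft=0$ we have $B=0$, hence $D=0$, and the total differential is $\partial=\partial_{\mathrm h}+\partial_{\mathrm v}$; a $2$-coboundary is therefore $\partial\varphi$ for some $\varphi\in\wh{C}_N^{01}(H,I)$, i.e. a normalized set map $\varphi\colon H\to I$. Writing out the two relevant components of the differential gives
$$\partial_{\mathrm v}^{02}(\varphi)(i,j)=\varphi(i)+\varphi(j)-\varphi(i+j)\quad\text{and}\quad \partial_{\mathrm h}^{11}(\varphi)(h_1,h_2)=\varphi(h_1\cdot h_2)-h_1\blackdiamond\varphi(h_2),$$
where $h_1\blackdiamond\varphi(h_2)=A^{l(h_1)}\varphi(h_2)$ by Proposition~\ref{condiciones sin potencia general}. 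Thus $(\alpha_1(\gamma),f)$ is a $2$-coboundary if and only if there is a $\varphi$ with $\partial_{\mathrm v}^{02}(\varphi)=\alpha_1(\gamma)$ and $\partial_{\mathrm h}^{11}(\varphi)=f$.

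For the direct implication I would suppose $(\alpha_1(\gamma),f)=\partial\varphi$ and set $t_0:=\varphi(1)$. The first equation, evaluated at $(i,1)$ with $0\le i\le p^{\eta}-2$, reads $\varphi(i+1)=\varphi(i)+t_0$, since $\alpha_1(\gamma)(i,1)=0$ there by~\eqref{sime}; an induction yields $\varphi(i)=it_0$ on canonical representatives. Evaluating the same equation at $(p^{\eta}-1,1)$, where $\alpha_1(\gamma)$ now equals $\gamma$, gives $p^{\eta}t_0=\gamma$. Finally, evaluating $\partial_{\mathrm h}^{11}(\varphi)$ at $(1,1)$, using $1\cdot 1=1-p^{\nu}$ (whose representative is $p^{\eta}-p^{\nu}+1$) and $l(1)=1$, produces $f(1,1)=(p^{\eta}-p^{\nu}+1)t_0-At_0=(p^{\eta}\Ide-p^{\nu}\Ide+\Ide-A)t_0$, which is exactly~\eqref{coborde actua}.

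For the converse I would take the given $t_0$, define $\varphi(i):=it_0$ on representatives, and run the computation backwards: a short check shows $\partial_{\mathrm v}^{02}(\varphi)=\alpha_1(p^{\eta}t_0)=\alpha_1(\gamma)$ and $(\partial_{\mathrm h}^{11}\varphi)(1,1)=(p^{\eta}\Ide-p^{\nu}\Ide+\Ide-A)t_0=f(1,1)$. Since $\partial\varphi$ is automatically a $2$-cocycle whose $\wh{C}_N^{02}$-component is $\alpha_1(\gamma)$, the uniqueness part of Theorem~\ref{cociclos} (a cocycle of the form $(\alpha_1(\gamma),\,\cdot\,)$ is determined by $\gamma$ and its value at $(1,1)$) forces $\partial_{\mathrm h}^{11}\varphi=f$. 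Hence $\partial\varphi=(\alpha_1(\gamma),f)$ and the cocycle is a coboundary.

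I expect the main obstacle to be the converse, specifically matching the whole of $f$ rather than only $f(1,1)$: computing $\partial_{\mathrm h}^{11}\varphi$ directly on all of $H\times H$ would reproduce the delicate sums of Subsection~\ref{subsection caso < = 0}. The way to sidestep this is to lean on the already established uniqueness of Theorem~\ref{cociclos}, so that agreement at $(1,1)$ propagates to all arguments for free. A secondary point requiring care is the bookkeeping between $\mathds{Z}$ and $H$ when passing through canonical representatives, both in the evaluation of $\varphi$ at $1\cdot 1$ and in the wrap-around step of the telescoping argument.
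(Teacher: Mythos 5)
Your proposal is correct and follows essentially the same route as the paper's proof: characterize $\partial_{\mathrm v}^{02}(\varphi)=\alpha_1(\gamma)$ by the telescoping argument forcing $\varphi(j)=jt_0$ and $\gamma=p^{\eta}t_0$, evaluate $\partial_{\mathrm h}^{11}(\varphi)$ at $(1,1)$, and in the converse invoke the fact that a $2$-cocycle of the form $(\alpha_1(\gamma),f)$ is determined by $\gamma$ and $f(1,1)$. The only slip is an attribution: that uniqueness statement is Proposition~\ref{necesarias con Yleft cero}, not Theorem~\ref{cociclos}.
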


\begin{proof} Let $t\in \wh{C}^{01}_N(H,I)$. By Remark~\ref{generadores canonicos de los cociclos} we know that
\begin{equation*}
\partial_{\mathrm{v}}^{02}(t)=\alpha_1(\gamma) \Longleftrightarrow \partial_{\mathrm{v}}^{02}(t)(1,p^{\eta}-1) = \gamma\text{ and } \partial_{\mathrm{v}}^{02}(t)(1,j)=0,\text{ for $0\le j< p^{\eta}-1$.}
\end{equation*}
An inductive computation, using this and that $\partial_{\mathrm{v}}^{02}(t)(1,j)=t(j)-t(j+1)+t(1)$, shows that
\begin{equation}\label{equiv a beta1 es borde}
\partial_{\mathrm{v}}^{02}(t)=\alpha_1(\gamma)\Longleftrightarrow  t(j)=jt(1),\text{ for } 1\le j< p^{\eta} \text{ and } t(0)=t(p^{\eta}) = p^{\eta} t(1) - \gamma.
\end{equation}
Since $t(0)=0$, if $\partial_{\mathrm{v}}^{02}(t)=\alpha_1(\gamma)$, this implies $\gamma=p^{\eta}t(1)$. Assume that $f=\partial_{\mathrm{h}}^{11}(t)$. Then, by the definition of~$\partial_{\mathrm{h}}^{11}$~and the first equal\-ity in~\eqref{def de diamante general}, we have
\begin{equation*}
f(a,b)= \partial_{\mathrm{h}}^{11}(t)(a,b)= t(a\cdot b) -a\blackdiamond t(b)=t(a\cdot b) - A^{l(a)}t(b).
\end{equation*}
A direct computation using this and \eqref{equiv a beta1 es borde}, yields
\begin{equation*}
f(1,1)=t(1-p^{\nu}) - At(1) = (p^{\eta}\Ide - p^{\nu}\Ide+ \Ide-A)t(1),
\end{equation*}
which concludes the proof that if $\bigl(\partial_{\mathrm{v}}^{02}(t),\partial_{\mathrm{h}}^{11}(t)\bigr) = (\alpha_1(\gamma),f)$, then~\eqref{coborde actua} is satisfied with $t_0\coloneqq t(1)$. Conversely,~as\-sume~\eqref{coborde actua} is satisfied and define $t\in \wh{C}^{01}_N(H,I)$ by $t(j)\coloneqq jt_0$, for $0\le j<p^{\eta}$. Since $t(p^{\eta}) = 0 =  p^{\eta} t_0 - \gamma$, we can apply condition~\eqref{equiv a beta1 es borde}, in order to obtain that $\partial_{\mathrm{v}}^{02}(t) = \alpha_1(\gamma)$. Moreover,
\begin{equation*}
\partial_{\mathrm{h}}^{11}(t)(1,1)=t(1-p^{\nu}) - At(1) = (p^{\eta}\Ide - p^{\nu}\Ide+ \Ide-A)t_0.
\end{equation*}
Consequently, $\partial_{\mathrm{h}}^{11}(t)(1,1)= f(1,1)$, which concludes the proof, since $\partial_{\mathrm{v}}^{02}(t) = \alpha_1(\gamma)$ and, by Proposition~\ref{necesarias con Yleft cero}, in a $2$-cocycle of the form $(\alpha_1(\gamma),f)$, the map $f$ is univocally determined by $f(1,1)$  and $\gamma$.
\end{proof}

Let $F_1,F_2\colon I^2 \to I$ and $G\colon I\to I^2$, be the maps given by
\begin{equation*}
F_1(y_1,y_2) \coloneqq p^{\eta}y_1-V y_2,\quad F_2(y_1,y_2) \coloneqq P(A)(y_1-y_2)+Q(A)y_2 \quad\text{and}\quad G(y) \coloneqq (Vy, p^{\eta}y),
\end{equation*}
where $V\coloneqq p^{\eta}\Ide- p^{\nu}\Ide +\Ide-A$ and $P$ and $Q$ are the polynomials introduced in~\eqref{los polinomios1}. The map
\begin{equation*}
F\colon \ker(F_1)\cap \ker(F_2) \to \wh{C}^{02}_N(H,I)\oplus \wh{C}^{11}_N(H,I),
\end{equation*}
given by $F(f_0,\gamma)\coloneqq \bigl(\alpha_1(\gamma),f_{f_0,\gamma}\bigr)$, where $\alpha_1(\gamma)$ is as in equality~\eqref{sime} and $f_{f_0,\gamma}$ is as in Definition~\ref{definicion de breve f}, is a linear map. In fact, this follows immediately from the definitions of $\alpha_1(\gamma)$ and $f_{f_0,\gamma}$, since, by Proposition~\ref{formula mas simple de tilde f}, the map~$\tilde{f}$ is linear in $f_0$ and $\gamma$.

\begin{corollary}\label{calculo de la homo} The map $F$~in\-duces a linear iso\-morphism
\begin{equation*}
\frac{\ker(F_1)\cap \ker(F_2)}{\ima(G)}\simeq \ho_{\blackdiamond}^2(H,I).
\end{equation*}
\end{corollary}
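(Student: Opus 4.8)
The plan is to read off the corollary from Theorems~\ref{cociclos} and~\ref{cobordes}, once one recognizes that conditions~\eqref{necesarias y suficientes} are nothing but the assertion $(f_0,\gamma)\in\ker(F_1)\cap\ker(F_2)$. Indeed, the first equality in~\eqref{necesarias y suficientes} rewrites as $p^{\eta}f_0 - V\gamma = 0$, i.e. $F_1(f_0,\gamma)=0$, and the second is literally $F_2(f_0,\gamma)=0$.

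First I would confirm that $F$ takes values in the group of $2$-cocycles and descends to cohomology. By the converse part of Theorem~\ref{cociclos}, for every $(f_0,\gamma)\in\ker(F_1)\cap\ker(F_2)$ the pair $F(f_0,\gamma)=\bigl(\alpha_1(\gamma),f_{f_0,\gamma}\bigr)$ is a $2$-cocycle; linearity of $F$ has already been recorded just above the statement. Composing $F$ with the canonical projection onto $\ho_{\blackdiamond}^2(H,I)$ therefore yields a linear map $\bar F\colon \ker(F_1)\cap\ker(F_2)\to\ho_{\blackdiamond}^2(H,I)$.

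Next I would compute $\ker(\bar F)$. A short check using~\eqref{recursiva general definicion} and Lemma~\ref{le prev periodico} gives $f_{f_0,\gamma}(1,1)=\tilde f(1)=f_0$, so the coboundary criterion of Theorem~\ref{cobordes} becomes: $F(f_0,\gamma)$ is a coboundary if and only if there is $t_0\in I$ with $f_0 = Vt_0$ and $\gamma = p^{\eta}t_0$, that is, $(f_0,\gamma)=G(t_0)$. Hence $\bar F(f_0,\gamma)=0$ precisely when $(f_0,\gamma)\in\ima(G)$, so $\ker(\bar F)=\ima(G)$. Since every coboundary is a cocycle, Proposition~\ref{necesarias} together with the computation in the proof of Theorem~\ref{cociclos} shows $\ima(G)\subseteq\ker(F_1)\cap\ker(F_2)$ (equivalently, one verifies $F_1\circ G=0$ and $F_2\circ G=0$ directly), so the quotient is meaningful and $\bar F$ induces an injection $\tfrac{\ker(F_1)\cap\ker(F_2)}{\ima(G)}\hookrightarrow\ho_{\blackdiamond}^2(H,I)$.

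Finally, surjectivity follows from the direct part of Theorem~\ref{cociclos}: an arbitrary class in $\ho_{\blackdiamond}^2(H,I)$ is represented, modulo coboundaries, by some $(\alpha_1(\gamma),f)$, and by Remark~\ref{si cociclo da lo mismo} together with Proposition~\ref{necesarias con Yleft cero} this $f$ equals $f_{f_0,\gamma}$ with $f_0 = f(1,1)$ and $(f_0,\gamma)\in\ker(F_1)\cap\ker(F_2)$; thus the class lies in $\ima(\bar F)$. The substantive work is entirely contained in the two preceding theorems, so the only genuinely non-routine points here are the translation of~\eqref{necesarias y suficientes} into the conditions $F_1=F_2=0$, the identity $f_{f_0,\gamma}(1,1)=f_0$, and the verification that $\ima(G)$ lands inside the domain $\ker(F_1)\cap\ker(F_2)$.
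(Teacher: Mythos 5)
Your proof is correct and follows exactly the route the paper intends: the paper's own proof is the one-line remark that the corollary ``follows immediately from Remark~\ref{si cociclo da lo mismo} and Theorems~\ref{cociclos} and~\ref{cobordes}'', and your argument is just that deduction written out in full (identifying~\eqref{necesarias y suficientes} with $F_1=F_2=0$, using $f_{f_0,\gamma}(1,1)=f_0$ to match Theorem~\ref{cobordes} with $\ima(G)$, and getting surjectivity from the first part of Theorem~\ref{cociclos}). All the individual checks you flag as the non-routine points are verified correctly.
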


\begin{proof} This follows immediately from Remark~\ref{si cociclo da lo mismo} and Theorems~\ref{cociclos} and~\ref{cobordes}.
\end{proof}

For each $u\in \mathds{Z}$, we let $\ell(u)$ denote the unique representative of $l(u)\in \mathds{Z}_{p^{\eta}}$ such that $0\le\ell(u)<p^{\eta}$. By~\eqref{formula para l}, we have
\begin{equation}\label{calculo de ell(p^{eta-nu})}
\ell(p^{\eta-\nu})=\begin{cases} p^{\eta-\nu}&\text{if $p$ is odd or $\eta=\nu$,}\\ 2^{\eta-\nu} + 2^{\eta-1}& \text{if $p=2$ and $1<\nu<\eta$.}\end{cases}
\end{equation}
 Note that $p=2$ and $1=\nu<\eta$ gives the excluded case $(\nu,\eta)=(1,2)$.
\begin{lemma}\label{sacar c} For all $0\le c<p^{\eta}$, we have
\begin{equation*}
\sum_{j=0}^{\ell(p^{\eta-\nu})-1} A^{\ell(p^{\eta-\nu})-1-j}\tilde{f}(j\cdot c) = c\sum_{j=0}^{\ell(p^{\eta-\nu})-1} A^{\ell(p^{\eta-\nu})-1-j}\tilde{f}(j\cdot 1).
\end{equation*}
\end{lemma}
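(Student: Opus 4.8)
The plan is to deduce this directly from Proposition~\ref{previa a cociclos}, which already proves the ``factoring out $c$'' identity $\sum_{j=0}^{sp^{\eta-\nu}-1}A^{sp^{\eta-\nu}-j}\tilde{f}(j\cdot c)=c\sum_{j=0}^{sp^{\eta-\nu}-1}A^{sp^{\eta-\nu}-j}\tilde{f}(j\cdot 1)$ for every integer $0\le s\le p^{\nu}$ (in the branch where $0\le c<p^{\eta}$). Only two cosmetic gaps separate that statement from the present one: the upper limit is $\ell(p^{\eta-\nu})$ here rather than $sp^{\eta-\nu}$, and all exponents are shifted down by one.

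First I would check that $N:=\ell(p^{\eta-\nu})$ is of the form $sp^{\eta-\nu}$ with $0\le s\le p^{\nu}$. By~\eqref{calculo de ell(p^{eta-nu})}, if $p$ is odd or $\eta=\nu$ then $N=p^{\eta-\nu}$, so $s=1$; and if $p=2$ with $1<\nu<\eta$ then $N=2^{\eta-\nu}+2^{\eta-1}=(1+2^{\nu-1})2^{\eta-\nu}$, so $s=1+2^{\nu-1}$, which satisfies $s\le 2^{\nu}=p^{\nu}$ since $1\le 2^{\nu-1}$. In either case $N=sp^{\eta-\nu}$ with $s\in\{1,\dots,p^{\nu}\}$.

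Next I would invoke Proposition~\ref{previa a cociclos} with this $s$ and the given $c$ (using the hypothesis $0\le c<p^{\eta}$, so that the requirement $s=p^{\nu}$ is not needed), obtaining $\sum_{j=0}^{N-1}A^{N-j}\tilde{f}(j\cdot c)=c\sum_{j=0}^{N-1}A^{N-j}\tilde{f}(j\cdot 1)$. Since $A^{p^{\eta}}=\Ide$, the map $A$ is invertible with $A^{-1}=A^{p^{\eta}-1}$; multiplying both sides on the left by $A^{-1}$ and using that $A$ is $\mathds{Z}$-linear (hence commutes with the integer scalar $c$) replaces each exponent $N-j$ by $N-1-j$, which is exactly the asserted identity.

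The only genuine point to verify is the bookkeeping identity $\ell(p^{\eta-\nu})=sp^{\eta-\nu}$ with $s\le p^{\nu}$; everything else is a one-line application of the previous proposition followed by multiplication by $A^{-1}$. If a self-contained argument were preferred instead, I would rerun the induction on $c$ from the proof of Proposition~\ref{previa a cociclos}: the telescoping in the inductive step leaves only $(A^{N}-\Ide)\alpha_1(\gamma)(c,1)$, and this vanishes throughout the range $0\le c<p^{\eta}-1$ by Proposition~\ref{calculos de beta 1}, once one notes that $p^{\nu}N\equiv 0\pmod{p^{\eta}}$ (which follows from $1^{\times N}=p^{\eta-\nu}$ via~\eqref{i times j} together with $\eta\le 2\nu$), so that $N\cdot c\equiv c$ and $N\cdot 1\equiv 1$ in $H$.
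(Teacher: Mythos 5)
Your proposal is correct and follows essentially the same route as the paper: the paper's proof simply observes that $p^{\eta-\nu}\mid \ell(p^{\eta-\nu})$ (by~\eqref{calculo de ell(p^{eta-nu})}) and then applies Proposition~\ref{previa a cociclos}. Your version is slightly more explicit in two harmless respects — verifying that the resulting $s$ satisfies $s\le p^{\nu}$, and noting that the shift from exponent $N-j$ to $N-1-j$ is obtained by multiplying by $A^{-1}$ — both of which the paper leaves implicit.
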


\begin{proof} By~\eqref{calculo de ell(p^{eta-nu})} we know that $p^{\eta-\nu}\mid \ell(p^{\eta-\nu})$. Hence, we can apply Proposition~\ref{previa a cociclos}, in order to obtain the result.
\end{proof}

\begin{example}\label{calculo con A=Ide} Let $F_1$, $F_2$ and $G$ be as above Corollary~\ref{calculo de la homo}, let $P$ and $Q$ be as in~\eqref{los polinomios1} and let $A=\Ide$.~A~direct~com\-putation shows that
\begin{equation*}
P(\Ide) = p^{\eta}\frac{p^{\eta+\nu}-p^{\nu}+2}{2}\Ide \quad\text{and}\quad Q(\Ide)= p^{\nu}\frac{p^{\eta+\nu}-p^{\nu}+2}{2}\Ide.
\end{equation*}
Hence,
\begin{equation*}
F_1(y_1,y_2)= p^{\eta}(y_1-y_2) + p^{\nu}y_2,\quad F_2(y_1,y_2) = \frac{p^{\eta+\nu}-p^{\nu}+2}{2}F_1(y_1,y_2)\quad\text{and} \quad G(y) = (p^{\eta}y-p^{\nu}y,p^{\eta}y),
\end{equation*}
and so, by Corollary~\ref{calculo de la homo}, we have $\ho_{\blackdiamond}^2(H,I)\simeq \frac{\ker(F_1)}{\ima (G)}$. Let $I_{p^{\nu}}\coloneqq \ker (p^{\nu}\Ide)$. Clearly, the map  $\varphi\colon \ker(F_1)\to I\oplus I_{p^{\nu}}$, given by
\begin{equation*}
\varphi(y_1,y_2)\coloneqq (y_1-y_2, y_2 +p^{\eta-\nu}(y_1-y_2))
\end{equation*}
is bijective and induces an iso\-morphism $\frac{\ker(F_1)}{\ima (G)}\simeq \frac{I}{p^{\nu}I}\oplus I_{p^{\nu}}$.
%%%%%%%%%%%%%%%%%%%%%%%%%%%%%%%%%%%%%%%%%%%%%%%%%%%%%%%%%%%%%%%%%%%%%%%%%%%%%%%%%%%%%%%%%%%%%%%%%%%%%%%%%%%%%%%%%%%%%%%%%%%%%%%%%%%%%%%%%%%%%%%%%%%%%%%
%La inversa de $\varphi$ env\'ia $(z_1,z_2)$ en $(z_1+z_2-p^{\eta-\nu}z_1,z_2-p^{\eta-\nu}z_1)$.
%%%%%%%%%%%%%%%%%%%%%%%%%%%%%%%%%%%%%%%%%%%%%%%%%%%%%%%%%%%%%%%%%%%%%%%%%%%%%%%%%%%%%%%%%%%%%%%%%%%%%%%%%%%%%%%%%%%%%%%%%%%%%%%%%%%%%%%%%%%%%%%%%%%%%%%
\end{example}

Note that when $p^{\eta}I=0$, there is an action of $\mathds{Z}_{p^{\eta}}=\langle \sigma \rangle$, on $I$, given by $\sigma y\coloneqq Ay+p^{\nu}y$. When $p^{\nu}I = 0$ this action becomes $\sigma y = Ay$. In fact to check this it suffices to note that $(A+p^{\nu}\Ide)^{p^{\eta}} y = A^{p^{\eta}} y = y$, which hold because $p^{\eta} y = 0$ and $A^{p^{\eta}}=\Ide$. We will use these facts in Example~\ref{ejemplo 1} and Proposition~\ref{ejemplo complicado}.

\begin{example}\label{ejemplo 1} Assume that $p^{\nu}I=0$. Then $F_1(y_1,y_2)=A y_2-y_2$, and so, $\ker(F_1) = I\oplus \ker(A-\Ide)$. Moreover, since $p^{\nu}I=0$, the formulas for $P(A)$ and $Q(A)$, introduced in~\eqref{los polinomios1}, become
\begin{equation*}
P(A)=\sum_{j=0}^{p^{\eta}-1}A^j \quad \text{and}\quad Q(A)=\sum_{j=0}^{p^{\eta}-1}jp^{2\nu-\eta}A^j.
\end{equation*}
Thus, if $(y_1,y_2)\in \ker (F_1)$, then
\begin{equation*}
Ay_2=y_2,\quad P(A)y_2=p^{\eta}y_2=0\quad \text{and}\quad Q(A)y_2= p^{2\nu-\eta}\binom{p^{\eta}}{2}y_2=\frac{p^{2\nu}(p^{\eta}-1)}{2}y_2=0.
\end{equation*}
Hence $F_2(y_1,y_2)=P(A)(y_1-y_2)+Q(A)y_2=P(A)y_1$, and so
\begin{equation*}
\ker(F_1)\cap \ker(F_2) = \ker(P(A))\oplus \ker(A-\Ide).
\end{equation*}
Since $G(y) = (y-Ay,0)$, from Corollary~\ref{calculo de la homo} if follows that
\begin{equation*}
\ho_{\blackdiamond}^2(H,I) \simeq \frac{\ker(P(A))}{(A-\Ide)I}\oplus \ker(A-\Ide)= \ho^1(\mathds{Z}_{p^{\eta}},I)\oplus \ho^0(\mathds{Z}_{p^{\eta}},I),
\end{equation*}
where $\ho^0(\mathds{Z}_{p^{\eta}},I)$ and $\ho^1(\mathds{Z}_{p^{\eta}},I)$ denote the first cohomology groups of $\mathds{Z}_{p^{\eta}}=\langle \sigma \rangle$, with~coeffi\-cients in $I$.
\end{example}

\begin{proposition}\label{ejemplo complicado} Assume that $p\mid (A-\Ide)^{p-1}$ and $p^{\eta}I=0$. Then
\begin{equation*}
\ho_{\blackdiamond}^2(H,I) \simeq \frac{I}{(A+p^{\nu}\Ide-\Ide)I}\oplus \bigl(\ker(A+p^{\nu}\Ide-\Ide)\cap \ker(p^{\nu}\Ide)\bigr) = \ho^1(\mathds{Z}_{p^{\eta}},I)\oplus \ho^0(\mathds{Z}_{p^{\eta}},I_{p^{\nu}}),
\end{equation*}
where $I_{p^{\nu}}\coloneqq \ker(p^{\nu}\Ide)$.
\end{proposition}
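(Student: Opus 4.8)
The plan is to substitute the hypotheses into the description $\ho_{\blackdiamond}^2(H,I)\simeq\frac{\ker(F_1)\cap\ker(F_2)}{\ima(G)}$ of Corollary~\ref{calculo de la homo} and to simplify each ingredient using $p^{\eta}I=0$. Write $\epsilon\coloneqq A-\Ide$ and, as in the paragraph preceding the statement, $\sigma\coloneqq A+p^{\nu}\Ide$, so that on $I$ the operator $V=p^{\eta}\Ide-p^{\nu}\Ide+\Ide-A$ equals $-(\sigma-\Ide)$. Since $p^{\eta}\Ide=0$ on $I$, the maps become $F_1(y_1,y_2)=-Vy_2$ and $G(y)=(Vy,0)$; hence $\ker(F_1)=I\oplus\ker(\sigma-\Ide)$ and $\ima(G)=(\sigma-\Ide)I\oplus 0$. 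The two summands in the answer will come, respectively, from the quotient $I/(\sigma-\Ide)I$ and from cutting $\ker(\sigma-\Ide)$ down by the condition $\ker(F_2)$.

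The crux, and the step I expect to be the main obstacle, is to prove that $P(A)=0$ on $I$; this is exactly where the hypothesis $p\mid(A-\Ide)^{p-1}$ is indispensable. Because $\eta\le 2\nu$ we first record $p^{2\nu}I=0$. Put $S_1\coloneqq\sum_{j=0}^{p^{\eta}-1}A^{j}$, $S_2\coloneqq\sum_{j=0}^{p^{\eta}-1}jA^{j}$ and $N\coloneqq\sum_{j=0}^{p^{\eta}-1}\sigma^{j}$. Expanding $A=\Ide+\epsilon$ and $\sigma=\Ide+(\epsilon+p^{\nu}\Ide)$ and using the hockey-stick identity $\sum_{j=i}^{p^{\eta}-1}\binom{j}{i}=\binom{p^{\eta}}{i+1}$ gives $S_1=\sum_{m=1}^{p^{\eta}}\binom{p^{\eta}}{m}\epsilon^{m-1}$ and $N=\sum_{m=1}^{p^{\eta}}\binom{p^{\eta}}{m}(\epsilon+p^{\nu}\Ide)^{m-1}$. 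From $(A-\Ide)^{p-1}=pC$ one gets $v_p(\epsilon^{k})\ge\lfloor k/(p-1)\rfloor$, and the same bound for $(\epsilon+p^{\nu}\Ide)^{k}$ since $\nu\ge 1$. Combining this with $v_p\binom{p^{\eta}}{m}=\eta-v_p(m)$ (for $1\le m\le p^{\eta}$) and the elementary inequality $\lfloor(m-1)/(p-1)\rfloor\ge v_p(m)$, every summand is divisible by $p^{\eta}$ and therefore annihilates $I$; thus $S_1=0$ and $N=0$ on $I$. A short telescoping computation, valid because $p^{\eta}I=0$, shows $\epsilon S_2=-S_1$ and hence $P(A)=N-p^{\nu}S_1$ on $I$, so both vanishings yield $P(A)=0$.

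With $P(A)=0$ the map $F_2$ collapses to $F_2(y_1,y_2)=Q(A)y_2$, so $\ker(F_1)\cap\ker(F_2)=I\oplus\bigl(\ker(\sigma-\Ide)\cap\ker Q(A)\bigr)$ and, dividing by $\ima(G)$, we obtain $\ho_{\blackdiamond}^2(H,I)\simeq\frac{I}{(\sigma-\Ide)I}\oplus\bigl(\ker(\sigma-\Ide)\cap\ker Q(A)\bigr)$. The remaining task is to identify $\ker Q(A)$ on $\ker(\sigma-\Ide)$ with $I_{p^{\nu}}$. For $\gamma$ with $\sigma\gamma=\gamma$ one has $A^{j}\gamma=(1-jp^{\nu})\gamma$, so $Q(A)\gamma=\bigl(p^{2\nu-\eta}\tbinom{p^{\eta}}{2}-p^{3\nu-\eta}\Sigma_2+p^{\nu}\bigr)\gamma$ with $\Sigma_2=\sum_{j=0}^{p^{\eta}-1}j^{2}$. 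Using $\eta\le 2\nu$ one checks $p^{3\nu-\eta}\Sigma_2\gamma=0$ always, while $p^{2\nu-\eta}\binom{p^{\eta}}{2}\gamma=0$ when $p$ is odd or $\eta<2\nu$; in the only remaining case $p=2$, $\eta=2\nu>2$ (so $\nu\ge 2$) it equals $-2^{2\nu-1}\gamma$, giving $Q(A)\gamma=2^{\nu}(1-2^{\nu-1})\gamma$ with $1-2^{\nu-1}$ odd and hence invertible on the $2$-group $I$. In every case $Q(A)\gamma=0\iff p^{\nu}\gamma=0$, so $\ker(\sigma-\Ide)\cap\ker Q(A)=\ker(\sigma-\Ide)\cap I_{p^{\nu}}$.

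Finally I would read off the cohomological interpretation. Since $N=0$ on $I$, $\ker(N)=I$, so $\ho^1(\mathds{Z}_{p^{\eta}},I)=\ker(N)/(\sigma-\Ide)I=I/(\sigma-\Ide)I$; and because $I_{p^{\nu}}$ is $\sigma$-stable, $\ho^0(\mathds{Z}_{p^{\eta}},I_{p^{\nu}})=(I_{p^{\nu}})^{\sigma}=\ker(\sigma-\Ide)\cap I_{p^{\nu}}$. Substituting $\sigma-\Ide=A+p^{\nu}\Ide-\Ide$ turns the isomorphism above into the stated one.
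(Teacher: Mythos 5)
Your proposal is correct and follows essentially the same route as the paper: prove $P(A)=0$ by expanding $\sum_j(A+p^{\nu}\Ide)^j$ via the hockey-stick identity and the valuation bound coming from $p\mid(A-\Ide)^{p-1}$, reduce $F_2$ to $Q(A)y_2$, and compute $Q(A)$ on the eigenspace $Ay_2=(1-p^{\nu})y_2$ to identify it with $p^{\nu}\Ide$ up to a unit. Your version is in fact slightly more careful at one point: the paper's identity $\sum_j(A+p^{\nu}\Ide)^j=P(A)$ hides a discrepancy of $p^{\nu}\sum_jA^j$, which you resolve explicitly by proving $S_1=\sum_jA^j=0$ separately and writing $P(A)=N-p^{\nu}S_1$.
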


\begin{proof} We first prove that $P(A)=0$. Note that $A^{p^{\eta}}=\Ide$ and $p^{\eta}I=0$ imply that
\begin{equation*}
\sum_{j=0}^{p^{\eta}-1}(A+p^{\nu}\Ide)^j=\sum_{j=0}^{p^{\eta}-1}A^j+ \sum_{j=0}^{p^{\eta}-1}j p^{\nu}A^j=\sum_{j=0}^{p^{\eta}-1}(1+jp^{\nu})A^j=P(A),
\end{equation*}
and so it suffices to prove that $\sum_{j=0}^{p^{\eta}-1}(A+p^{\nu}\Ide)^j=0$. We set $\tilde A\coloneqq A-\Ide$ and compute
\begin{equation*}
\sum_{j=0}^{p^{\eta}-1}(A+p^{\nu}\Ide)^j=\sum_{j=0}^{p^{\eta}-1}(\Ide+\tilde A+p^{\nu}\Ide)^j =\sum_{j=0}^{p^{\eta}-1}\sum_{i=0}^{j}\binom ji (\tilde A+p^{\nu}\Ide)^i=\sum_{i=0}^{p^{\eta}-1}\binom{p^{\eta}}{i+1}(\tilde A+p^{\nu}\Ide)^i.
\end{equation*}
Hence, in order to verify that $P(A)=0$, it suffices to check that $p^{\eta}\mid \binom{p^{\eta}}{i}(\tilde A+p^{\nu}\Ide)^{i-1}$ for all $i=1,\dots,p^{\eta}$. Write $i=s p^t$ with $p\nmid s$ and $t\ge 0$. On one hand, it is well known that $p^{\eta-t}\mid \binom{p^{\eta}}{i}$. On the other hand, since
\begin{equation*}
(p-1)t\le p^t-1\le sp^t-1=i-1,
\end{equation*}
and, by hypothesis $p\mid (\tilde A+p^{\nu}\Ide)^{p-1}$, we have $p^t\mid (\tilde A+p^{\nu}\Ide)^{i-1}$. Thus $P(A)=0$, as desired. It follows that
\begin{equation*}
F_1(y_1,y_2)=(A+p^{\nu}\Ide-\Ide)y_2 \quad\text{and}\quad F_2(y_1,y_2)=P(A)(y_1-y_2)+Q(A)y_2 = Q(A)y_2,
\end{equation*}
and consequently,
\begin{equation}\label{ejemplocohomologia}
\ker(F_1)\cap \ker(F_2) = I\oplus \bigl(\ker(A+p^{\nu}\Ide-\Ide)\cap \ker(Q(A))\bigr).
\end{equation}
Assume that $(y_1,y_2)\in \ker(F_1)$, which implies that $Ay_2=(1-p^{\nu})y_2$. We are going to prove that
\begin{equation}\label{implicacion}
Q(A)y_2 =0\quad \text{if and only if}\quad p^{\nu}y_2=0.
\end{equation}
For this note that $A^j y_2=(1-jp^{\nu})y_2$, for all $j\ge 0$. Hence,
\begin{equation*}
Q(A)y_2=\sum_{j=0}^{p^{\eta}-1}jp^{2\nu -\eta}(1-jp^{\nu})y_2+p^{\nu}y_2= \left(p^{2\nu -\eta}\binom{p^{\eta}}{2}-p^{3\nu -\eta}\left(2\binom{p^{\eta}}{3}+\binom{p^{\eta}}{2}\right)+p^{\nu}\right)y_2.
\end{equation*}
Since $p^{\eta}$ divides
\begin{equation*}
2p^{3\nu -\eta}\binom{p^{\eta}}{3}=p^{\eta}\frac{p^{3\nu-\eta}(p^{\eta}-1)(p^{\eta}-2)}{3} \qquad\text{and}\qquad p^{3\nu -\eta} \binom{p^{\eta}}{2}=p^{2\nu}\frac{p^{\nu}(p^{\eta}-1)}{2},
\end{equation*}
we arrive at
\begin{equation*}
Q(A)y_2 =\begin{cases}  p^{\nu}y_2 & \text{if $p$ is odd or $\eta< 2\nu$}, \\ (2^{\nu} - 2^{2\nu-1})y_2 & \text{if $p=2$ and $\eta = 2\nu>2$.}\end{cases}
\end{equation*}
By this formula, if $p$ is odd or $\eta<2\nu$, then $Q(A)y_2 =0$ if and only if $p^{\nu}y_2=0$. This is also true if $p=2$ and $\eta = 2\nu$, since $2^{\nu}-2^{2\nu-1} = 2^{\nu}(1-2^{\nu-1})$ and $1-2^{\nu-1}$ is odd, because $\nu>1$. This proves~\eqref{implicacion}.   Finally, by Corollary~\ref{calculo de la homo},
\begin{equation*}
\ho_{\blackdiamond}^2(H,I) \simeq \frac{I}{(A+p^{\nu}\Ide-\Ide)I}\oplus \bigl(\ker(A+p^{\nu}\Ide-\Ide)\cap \ker(p^{\nu}\Ide)\bigr) = \ho^1(\mathds{Z}_{p^{\eta}},I)\oplus \ho^0(\mathds{Z}_{p^{\eta}},I_{p^{\nu}}),
\end{equation*}
as we want.
\end{proof}

Note that the conditions in Proposition~\ref{ejemplo complicado} are fulfilled when $I=\mathds{Z}_{p^t}$ is a cyclic $p$-group with $t\le \eta$, or, in a more general setting, if $I=(\mathds{Z}_{p^t})^k$ for some $t\le \eta$ and $k\le p-1$. In fact in this case $A\in \End(I)=M_k(\mathds{Z}_{p^t})$, and can be mapped canonically into $M_k(\mathds{Z}_{p})$. The image of $A-\Ide$ is nilpotent with nilpotency index lower than or equal to $k\le p-1$, and so $p|(A-\Ide)^{p-1}$, as required.

\begin{example} Assume $H=\mathds{Z}_{p^{\eta}}$ is a trivial linear cycle set,  thus $\eta=\nu$. Assume $I=\mathds{Z}_p\oplus \mathds{Z}_p$. Then we are in the situation of Example~\ref{ejemplo 1} and of Proposition~\ref{ejemplo complicado}. Moreover, $A$ is equivalent to a matrix of the form $\left(\begin{smallmatrix} 1 & a \\ 0 & 1\end{smallmatrix}\right)$. If $a\ne 0$, then we obtain
\begin{equation*}
\ho_{\blackdiamond}^2(H,I) \simeq \frac{I}{(A-\Ide)I}\oplus \bigl(\ker(A-\Ide) = (0\oplus \mathds{Z}_p)\oplus (\mathds{Z}_p \oplus 0).
\end{equation*}
Hence, for $f_0=(0,z_1)$ and $\gamma=(z_2,0)$, we obtain
\begin{equation*}
h\blackdiamond y=A^hy=\begin{pmatrix} 1 & ha \\0 & 1\end{pmatrix}y\quad\text{and}
\quad  f_{f_0,\gamma}(1^{\times h},c)=c\sum_{j=0}^{h-1}A^j f_0=cz_1\binom{\binom h2 a}{h}.
\end{equation*}
\end{example}

\subsection[Cocycles in the general case]{Cocycles in the general case}

In this subsection we compute the $2$-cocycles $(\alpha_1(\gamma),f) \in \wh{C}^{02}_N(H,I)\oplus \wh{C}^{11}_N(H,I)$ without assuming that $\Yleft=0$. Thus the equalities~\eqref{cociclo doble} and~\eqref{cociclo horizontal con D} are satisfied. Recall from the beginning of Subsection~\ref{subsection 2.1}, that, for each $h\in \mathds{Z}_{p^{\eta}}$, there exists a unique $l(h)\in \mathds{Z}_{p^{\eta}}$, such that $1^{\times l(h)}=h$.

\begin{remark}\label{con B simplificacion} Let $(\alpha_1(\gamma),f)\in \wh{C}^{02}_N(H,I)\oplus \wh{C}^{11}_N(H,I)$ be a $2$-cocycle. By Proposition~\ref{necesarias}, we know that
\begin{equation*}
A^{j+1}\gamma=A^{j}\gamma +p^{\eta}A^j \gamma-p^{\nu}A^j \gamma - p^{\eta} A^j f(1,1),\qquad\text{ for all $j\in \mathds{N}_0$.}
\end{equation*}
An induc\-tive argument using this, and that  $2\nu\ge \eta$ and $p^{\eta}B = 0$, proves that
\begin{equation}\label{BAjgamma}
BA^j\gamma = (1-jp^{\nu})B\gamma\qquad\text{ for all $j\in \mathds{N}_0$.}
\end{equation}
Since $A^{p^{\eta}} = \Ide$ and $p^{\eta}B = 0$, this formula is true for all~$j\in \mathds{Z}$. Note also that, by~\eqref{BAjgamma},
\begin{equation}\label{BAjAgamma}
BA^jA\gamma = (1-(j+1)p^{\nu})B\gamma = (1-jp^{\nu})(1-p^{\nu})B\gamma = (1-jp^{\nu}) BA\gamma \quad\text{for all $j\in \mathds{N}_0$.}
\end{equation}
\end{remark}

\begin{remark}\label{b_k=1 si y...} Here and subsequently, for $k\in \mathds{Z}$, we set $b_k\coloneqq k + \binom{k+1}{2}p^{\nu}\in \mathds{Z}_{p^{\eta}}$. Note that $1^{\times(k+1)} = 1+b_k$ (see formula~\eqref{i times j}). So, $b_k=-1$ if and only if $k\equiv -1\pmod{p^{\eta}}$ and $k\equiv k'\pmod{p^{\eta}}$, if and only if $b_k = b_{k'}$.
\end{remark}

\begin{proposition}\label{necesaria para Yleft} Assume $(\alpha_1(\gamma),f)\in \wh{C}^{02}_N(H,I)\oplus \wh{C}^{11}_N(H,I)$ is a $2$-cocycle. Then, for $0\le k < p^{\eta}$, we have
\begin{equation}\label{recursiva general con B}
f(1^{\times k},c)=\sum_{j=0}^{k-1}A^{k-1-j}f(1,j\cdot c)+(k\cdot c)B\sum_{l=0}^{k-1} A^lf(1,b_l).
\end{equation}
Moreover,
\begin{equation}\label{condicion para f con yleft}
0=\sum_{j=0}^{p^{\eta}-1}A^{p^{\eta}-1-j}f(1,j\cdot c)+c B\sum_{l=0}^{p^{\eta}-1} A^l f(1,b_l)+c B \gamma.
\end{equation}
Note that, by Proposition~\ref{necesarias}, the map $f$ is uniquely determined~by~$f(1,1)$ and $\gamma$.
\end{proposition}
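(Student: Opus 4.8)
The plan is to prove \eqref{recursiva general con B} by induction on $k$, mirroring the argument for the case $\Yleft = 0$ in Proposition~\ref{necesarias con Yleft cero}, but now retaining the two additional summands of the horizontal cocycle identity \eqref{cociclo horizontal con D} that carry the endomorphism $B$. The base case $k=0$ is immediate, since $f(1^{\times 0},c) = f(0,c) = 0$ and both sums on the right-hand side of \eqref{recursiva general con B} are empty. For the inductive step I would pass from $k$ to $k+1$ (with $0\le k\le p^{\eta}-2$) by writing $1^{\times(k+1)} = 1 + b_k$ (Remark~\ref{b_k=1 si y...}) and applying \eqref{cociclo horizontal con D} with $a=1$ and $b=b_k$.

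The computation then relies on the same identities used in Proposition~\ref{necesarias con Yleft cero}: namely $1\cdot b_k = 1^{\times k}$, whence $l(1\cdot b_k) = k$, together with $(1+b_k)\cdot c = 1^{\times(k+1)}\cdot c = (k+1)\cdot c$ by \eqref{1^times j.c}. Substituting the inductive hypothesis for the term $f(1^{\times k},1\cdot c)$ and simplifying $j\cdot(1\cdot c) = (j+1)\cdot c$ and $k\cdot(1\cdot c) = (k+1)\cdot c$ via \eqref{(j+1).c}, the summands not involving $B$ reassemble (after re-indexing and using $0\cdot c = c$) into $\sum_{j=0}^{k} A^{k-j} f(1,j\cdot c)$, while the two $B$-summands, coming respectively from the inductive hypothesis and from the third term of \eqref{cociclo horizontal con D}, combine into $((k+1)\cdot c)\,B\sum_{l=0}^{k} A^{l} f(1,b_l)$. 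The crucial point is that the fourth term of \eqref{cociclo horizontal con D}, namely $((k+1)\cdot c)\,B A^{l(1+b_k)}\alpha_1(\gamma)(1,b_k)$, vanishes for $0\le k\le p^{\eta}-2$: by \eqref{sime} the value $\alpha_1(\gamma)(1,b_k)$ equals $\gamma$ only when $1+b_k\ge p^{\eta}$, i.e. when $b_k = p^{\eta}-1$, which by Remark~\ref{b_k=1 si y...} forces $k\equiv -1\pmod{p^{\eta}}$. This completes the inductive step and establishes \eqref{recursiva general con B}.

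To obtain \eqref{condicion para f con yleft} I would run the identical computation one step further, for the transition $k=p^{\eta}-1\to p^{\eta}$, where $f(1^{\times p^{\eta}},c) = f(0,c) = 0$ forces the right-hand side to vanish. The only difference is that the fourth term no longer disappears: here $b_{p^{\eta}-1} = p^{\eta}-1$, so $\alpha_1(\gamma)(1,b_{p^{\eta}-1}) = \gamma$ by \eqref{sime}, while $1+b_{p^{\eta}-1} = 0$ gives $l(1+b_{p^{\eta}-1}) = 0$ and scalar $(1+b_{p^{\eta}-1})\cdot c = 0\cdot c = c$, so this term contributes exactly $cB\gamma$. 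Collecting the terms without $B$ into $\sum_{j=0}^{p^{\eta}-1} A^{p^{\eta}-1-j} f(1,j\cdot c)$ and the $B$-terms into $cB\sum_{l=0}^{p^{\eta}-1} A^{l} f(1,b_l) + cB\gamma$, and equating the total to zero, yields \eqref{condicion para f con yleft}. The final assertion, that $f$ is determined by $f(1,1)$ and $\gamma$, is then immediate from \eqref{recursiva general con B} together with Proposition~\ref{necesarias}, which already expresses each $f(1,c)$ through $f(1,1)$ and $\gamma$.

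I expect the main obstacle to be purely organizational: correctly tracking the scalar multipliers $(k+1)\cdot c$, the reductions $p^{\eta}\cdot c = c$ and $j\cdot(1\cdot c) = (j+1)\cdot c$, and the shifts of summation index so that the surviving terms telescope into the asserted closed forms, all while verifying that the $\alpha_1(\gamma)$-contribution appears exactly once, in the final step, and is absent throughout the induction.
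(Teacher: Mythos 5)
Your proposal is correct and follows essentially the same route as the paper's proof: the same induction on $k$ via the identity $1^{\times(k+1)}=1+b_k$ and the horizontal cocycle condition \eqref{cociclo horizontal con D} with $(a,b)=(1,b_k)$, the same observation that $\alpha_1(\gamma)(1,b_k)$ vanishes for $k<p^{\eta}-1$ and equals $\gamma$ at $k=p^{\eta}-1$, and the same extraction of \eqref{condicion para f con yleft} from the final step using $f(1^{\times p^{\eta}},c)=f(0,c)=0$. The only cosmetic difference is that you write the last coefficient as $BA^{l(0)}=B$ while the paper writes $BA^{p^{\eta}}=B$; these agree since $A^{p^{\eta}}=\Ide$.
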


\begin{proof} For $k=0$, the equality~\eqref{recursiva general con B} holds since $f(0,c) = 0$. Assume it is true for some $0\le k<p^{\eta}$. Since,~in~$\mathds{Z}_{p^{\eta}}$,
\begin{equation*}
b_k = k + \binom{k+1}{2}p^{\nu} = k+ \binom{k}{2}p^{\nu}+kp^{\nu}\qquad\text{and}\qquad b_kp^{\nu} = kp^{\nu},
\end{equation*}
by~\eqref{i times j}, the fact that $1+b_k=1^{\times(k+1)}$, and~\eqref{1^times j.c}, we have,
\begin{equation}\label{seusadespues}
1\cdot b_k = (1-p^{\nu})b_k = k + \binom{k}{2}p^{\nu} = 1^{\times k}\quad\text{and}\quad (1+b_k)\cdot c = (1+k)\cdot c.
\end{equation}
Consequently, we have $l(1\cdot b_k) = k$ in $\mathds{Z}_{p^{\eta}}$. So, applying condition~\eqref{cociclo horizontal con D} with $(a,b,c)=(1,b_k,c)$, and using the fact that $1^{\times(k+1)} = 1+b$ and $l(1+b) = 1+k$ in $\mathds{Z}_{p^{\eta}}$, we have
\begin{align*}
f(1^{\times (k+1)},c) & = f(1+b,c)\\
&= A^{l(1\cdot b)} f(1,c)+ f\bigl(1\cdot b,1\cdot c\bigr) +((1+b)\cdot c)B A^{l(1\cdot b)}f(1,b)+((1+b)\cdot c)B A^{l(1+b)}\alpha_1(\gamma)(1,b)\\
& = A^k f(1,c)+ f\bigl(1^{\times k},1\cdot c\bigr)+((1+k)\cdot c)B A^kf(1,b)+((1+k)\cdot c)B A^{k+1}\alpha_1(\gamma)(1,b)\\
& = \sum_{j=0}^k A^{k-j}f(1,j\cdot c) + ((1+k)\cdot c) B\sum_{l=0}^k  A^lf(1,b_l) + ((1+k)\cdot c)B A^{k+1}\alpha_1(\gamma)(1,b),
\end{align*}
where in the last equality we have used the inductive hypothesis, equality~\eqref{(j+1).c} and that
\begin{equation*}
A^k f(1,c) + \sum_{j=0}^{k-1} A^{k-1-j}f(1,j\cdot (1\cdot c)) =  A^k f(1,c)+\sum_{j=0}^{k-1}A^{k-1-j}f(1,(j+1)\cdot c) = \sum_{j=0}^{k}A^{k-j}f(1,j\cdot c).
\end{equation*}
In order to finish the proof it suffices to note that $A^{p^{\eta}}=\Ide$, $p^{\eta}\cdot c=c$, and that, by Proposition~\ref{calculos de beta 1} and Remark~\ref{b_k=1 si y...}, for $k<p^{\eta}-1$, we have $\alpha_1(\gamma)(1,b_k)=0$, while, for $k=p^{\eta}-1$, we have $\alpha_1(\gamma)(1,b_k)= \alpha_1(\gamma)(1,p^{\eta}-1)= \gamma$.
\end{proof}

Let $\gamma,f_0\in I$. As in Subsection~\ref{subsection caso < = 0}, our next goal is to construct a $2$-cocycle $(\alpha_1(\gamma),f)\in \wh{C}^{02}_N(H,I) \oplus \wh{C}^{11}_N(H,I)$ with $f(1,1)\coloneqq f_0$, whenever it is possible. In the rest of this subsection, we assume that the condition in item~(3)~of Proposition~\ref{periodicoo} is satisfied.

\begin{definition}\label{definicion de breve f con B} Let $\tilde{f}$ and $b_l$ be as in Proposition~\ref{prop para definir f} and Remark~\ref{b_k=1 si y...}, respectively. Motivated by formula~\eqref{recursiva general con B}, we define $f\colon H\times H\to I$ by
\begin{equation}\label{recursiva general con B definicion}
f(1^{\times k},c)\coloneqq \sum_{j=0}^{k-1}A^{k-1-j}\tilde{f}(j\cdot c) + (k\cdot c)B\sum_{l=0}^{k-1} A^l\tilde{f}(b_l)\qquad\text{for $0\le k<p^{\eta}$ and $c\in H$.}
\end{equation}
Note that, by  equality~\eqref{hat{f} en forma recursive}, the map $f$ is determined by $\gamma$ and $f_0$. If convenient, we will~write $f_{f_0,\gamma}$ instead of $f$.
\end{definition}

We define $R(X)\in \mathds{Z}[X]$ and $S\in \mathds{Z}_{p^{\eta}}$, by
\begin{equation}\label{mas polinomios}
R(X)\coloneqq \sum_{j=1}^{p^{\eta}-1}jX^{j-\binom{j+1}{2}p^{\nu}}\qquad\text{and}\qquad S\coloneqq\begin{cases} -\frac{p^{\eta}+p^{\nu}}{2} & \text{if $p>3$ or $\eta <2 \nu$,}\\-\frac{p^{\eta}+p^{\nu}}{2}+\binom{p^{\nu}+1}{3}p^{\nu} & \text{if $2\nu = \eta$ and $p\in\{2,3\}$.}\end{cases}
\end{equation}
For computations it is useful to note that, if $p$ is odd and $p^{\nu}I = 0$, then
\begin{equation}\label{R simple}
R(A)=\sum_{j=1}^{p^{\eta}-1}j A^{j-\binom{j+1}{2}p^{\nu}}=\sum_{t=1}^{p^{\eta}-1}\left(t+\binom{t+1}{2}p^{\nu}\right)A^t =\sum_{t=1}^{p^{\eta}-1} tA^t,
\end{equation}
where the second equality holds by the mutually inverse bijections $j\mapsto j-\binom{j+1}{2}p^{\nu}$ and $t\mapsto t+\binom{t+1}{2}p^{\nu}$ of $\mathds{Z}_{p^{\eta}}\setminus \{0\}$.

\begin{remark} A direct computation shows that
\begin{equation}
S = \begin{cases} -2^{\eta-1}-2^{\nu-1}+\frac{2^{2\nu}-1}{3} 2^{2\nu-1} = -2^{\nu-1} & \text{if $p=2$ and $2\nu =\eta$,}\\ -\frac{3^{2\nu}+3^{\nu}}{2} + \frac{3^{2\nu}-1}{2} 3^{2\nu-1} = -\frac{3^{2\nu}+3^{\nu}}{2} + 3^{2\nu-1} & \text{if $p=3$ and $2\nu =\eta$.}\end{cases}
\end{equation}
\end{remark}

\begin{proposition}\label{periodico en c igual a 1} For $f_0,\gamma\in I$, we have:
\begin{equation}\label{R y S}
B\sum_{l=0}^{p^{\eta}-1} A^l \tilde{f}(b_l) = BR(A)f_0+SB\gamma,
\end{equation}
where $\tilde{f}$ is as in Proposition~\ref{prop para definir f} and the $b_l$'s are as in Remark~\ref{b_k=1 si y...}.
\end{proposition}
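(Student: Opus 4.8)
The plan is to use the linearity of $\tilde{f}$ in the pair $(f_0,\gamma)$ and to treat the contributions of $f_0$ and of $\gamma$ separately, since the final application of $B$ collapses the $\gamma$-contribution to a multiple of $B\gamma$ via~\eqref{BAjgamma}, while the $f_0$-contribution is governed by the polynomial $R$. The key preliminary observation is that, by the standing hypothesis that item~(3) of Proposition~\ref{periodicoo} holds, $\tilde{f}(k)=\hat{f}(k)$ for $0\le k<p^{\eta}$, and that in the defining sum $\hat{f}(k)=kf_0+\sum_{b=0}^{k-1}\Gamma(\gamma,b)$ the summand $A\alpha_1(\gamma)(b,1)$ of $\Gamma(\gamma,b)$ vanishes for every $b\le k-1\le p^{\eta}-2$, by the first equality in~\eqref{acerca de beta1}. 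Hence, writing $\ov{b_l}$ for the canonical representative of $b_l$ in $\{0,\dots,p^{\eta}-1\}$ and $r$ as in Proposition~\ref{calculos de beta 1}, I obtain the clean, $A$-free expression
\begin{equation*}
\tilde{f}(b_l) = \ov{b_l}\,f_0 - N_l\,\gamma, \qquad N_l\coloneqq \#\{\,b : 0\le b<\ov{b_l},\ r(b)\ge p^{\nu}-1\,\}.
\end{equation*}

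First I would handle the $f_0$-part. Its total is $\sum_{l=0}^{p^{\eta}-1}\ov{b_l}A^l f_0$, and I claim this equals $R(A)f_0$. Indeed, the exponents in $R(A)=\sum_{j=1}^{p^{\eta}-1} jA^{j-\binom{j+1}{2}p^{\nu}}$ are reduced modulo $p^{\eta}$ because $A^{p^{\eta}}=\Ide$, and the substitution $t=j-\binom{j+1}{2}p^{\nu}$, whose inverse is $j=t+\binom{t+1}{2}p^{\nu}=b_t$, is precisely the mutually inverse pair of bijections of $\mathds{Z}_{p^{\eta}}\setminus\{0\}$ recorded after~\eqref{R simple}. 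It therefore rewrites $R(A)=\sum_{t=1}^{p^{\eta}-1}\ov{b_t}A^t$, and applying $B$ yields the summand $BR(A)f_0$.

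Next I would treat the $\gamma$-part. Applying $B$ to $\sum_l A^l(-N_l\gamma)$ and using $BA^l\gamma=(1-lp^{\nu})B\gamma$ from~\eqref{BAjgamma} produces $-\bigl(\sum_{l=0}^{p^{\eta}-1}N_l(1-lp^{\nu})\bigr)B\gamma$, so it remains to show $\sum_l N_l(1-lp^{\nu})\equiv -S\pmod{p^{\eta}}$. Since $b_l\equiv l\pmod{p^{\nu}}$ gives $\ov{b_l}\equiv l\pmod{p^{\nu}}$, and $2\nu\ge\eta$, we have $1-lp^{\nu}\equiv 1-\ov{b_l}p^{\nu}\pmod{p^{\eta}}$; the bijection $l\mapsto \ov{b_l}$ then turns the sum into $\sum_{m=0}^{p^{\eta}-1}N(m)(1-mp^{\nu})$ with $N(m)=\#\{b<m:\ r(b)\ge p^{\nu}-1\}$. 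Interchanging the order of summation gives
\begin{equation*}
\sum_{m=0}^{p^{\eta}-1}N(m)(1-mp^{\nu}) = \sum_{\substack{0\le b<p^{\eta}\\ r(b)\ge p^{\nu}-1}}\Bigl[(p^{\eta}-1-b) - p^{\nu}\binom{p^{\eta}}{2} + p^{\nu}\binom{b+1}{2}\Bigr],
\end{equation*}
an explicit sum over the $p^{\eta}-p^{\nu}+1$ indices singled out in Proposition~\ref{calculos de beta 1}, which I would evaluate after the change of variable $a=r(b)=(1-p^{\nu})b$ (so that $b\equiv(1+p^{\nu})a$ and $a$ runs over $\{p^{\nu}-1,\dots,p^{\eta}-1\}$) by means of the standard closed forms for $\sum a$, $\sum a^2$ and $\sum\binom{b+1}{2}$.

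The main obstacle is exactly this last evaluation modulo $p^{\eta}$. The contribution of $p^{\nu}\binom{b+1}{2}$ produces a $\binom{\cdot}{3}$-type sum, and the delicate point is that, although $2\nu\ge\eta$ makes every genuine multiple of $p^{2\nu}$ vanish, the division by $6$ hidden in these binomial sums can lower the $p$-adic valuation when $p\in\{2,3\}$ and $2\nu=\eta$, leaving the residual term $\binom{p^{\nu}+1}{3}p^{\nu}$, whereas for $p>3$ or $\eta<2\nu$ that residue disappears. Tracking these corrections carefully---together with the parallel $p=2$ corrections to $\binom{p^{\eta}}{2}$ and to $p^{\eta-\nu}\binom{p^{\nu}}{2}$---is precisely what produces the two-case value of $S$ in~\eqref{mas polinomios}, and assembling the two parts gives $\sum_l N_l(1-lp^{\nu})\equiv -S$, which completes the proof.
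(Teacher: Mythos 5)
Your proposal is correct in structure and follows essentially the same route as the paper: split $\tilde{f}(b_l)=\ov{b_l}f_0-N_l\gamma$, identify the $f_0$-part with $BR(A)f_0$ via the mutually inverse bijections $l\mapsto b_l$ and $k\mapsto k-\binom{k+1}{2}p^{\nu}$, push $B$ through $A^l$ with~\eqref{BAjgamma} to reduce the $\gamma$-part to an integer congruence mod $p^{\eta}$, interchange the order of summation, and change variables via $r$. Two remarks. First, your index set is off by one: for $b\le p^{\eta}-2$ the condition $r(b)\ge p^{\nu}-1$ is equivalent to $r(b)\ge p^{\nu}$ (the value $r(b)=p^{\nu}-1$ is attained only at $b=p^{\eta}-1$), so $a$ runs over $\{p^{\nu},\dots,p^{\eta}-1\}$, i.e.\ over $p^{\eta}-p^{\nu}$ indices rather than $p^{\eta}-p^{\nu}+1$; this is harmless because the summand at $b=p^{\eta}-1$ is identically zero, but it should be stated correctly. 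Second, and more substantively, you stop at the point where the actual work begins: the closed-form evaluation of $\sum_{b}\bigl(1+b-\binom{b+1}{2}p^{\nu}\bigr)$ over the relevant set, modulo $p^{\eta}$, is exactly the step that produces the two-case constant $S$ of~\eqref{mas polinomios}, and you only describe where the corrections for $p\in\{2,3\}$, $\eta=2\nu$ come from without carrying them out. In the paper this is done by splitting $\binom{(1+p^{\nu})s+1}{2}$ into $\binom{s+1}{2}+\binom{p^{\nu}s}{2}+$ cross terms and checking that $\sum_{s=p^{\nu}}^{p^{\eta}-1}\binom{s+1}{2}p^{\nu}=-\binom{p^{\nu}+1}{3}p^{\nu}$ while $\sum_{s=p^{\nu}}^{p^{\eta}-1}\binom{p^{\nu}s}{2}p^{\nu}$ vanishes (the last point requiring a separate argument when $p=2$ and $\eta=2\nu$); until you supply this, the identification of your sum with $-S$ is asserted rather than proved.
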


\begin{proof} Since $l\mapsto b_l$ is a bijection with inverse $k\mapsto k-\binom{k+1}{2}p^{\nu}$, we have
\begin{equation*}
B\sum_{l=0}^{p^{\eta}-1} A^l\tilde{f}(b_l) = B\sum_{k=0}^{p^{\eta}-1} A^{k-\binom{k+1}{2}p^{\nu}} \tilde{f}(k) = B\sum_{k=2}^{p^{\eta}-1} A^{k-\binom{k+1}{2}p^{\nu}}\sum_{b=1}^{k-1} \Gamma(\gamma,b) + BR(A)f_0,
\end{equation*}
where the last equality holds by~\eqref{hat{f} en forma recursive} and the fact that $\tilde{f}(k) = \hat{f}(k)$ and $\Gamma(\gamma,0) = 0$ (by Remark~\ref{primeros calculos de Gamma}). Let
\begin{equation*}
\wt{S}\coloneqq B\sum_{k=2}^{p^{\eta}-1} A^{k-\binom{k+1}{2} p^{\nu}}\sum_{b=1}^{k-1} \Gamma(\gamma,b)\in I.
\end{equation*}
In order to finish the proof we must show that $\wt{S}=SB\gamma$. For each $b\in H$, let $r(b)$ be as in Proposition~\ref{calculos de beta 1}. By that pro\-po\-sition,
\begin{equation}\label{Gamma(gamma,b)}
\Gamma(\gamma,b)=A\alpha_1(\gamma)(b,1)-\alpha_1(\gamma)\bigl(1\cdot b,1\cdot 1\bigr)= \begin{cases} A\gamma -\gamma & \text{if $b=p^{\eta}-1$,}\\ -\gamma & \text{if $r(b)>p^{\nu}-1$,}\\ 0 & \text{if $r(b)<p^{\nu}-1$.}\end{cases}
\end{equation}
Hence, by~\eqref{BAjgamma} and the fact that $p^{2\nu}B=0$, we have $BA^{k-\binom{k+1}{2} p^{\nu}}\Gamma(\gamma,b) =(1-kp^{\nu})B\Gamma(\gamma,b)$, and so,
\begin{equation*}
\wt{S}= B\sum_{k=2}^{p^{\eta}-1} \sum_{b=1}^{k-1} (1-kp^{\nu})\Gamma(\gamma,b)= B \sum_{b=1}^{p^{\eta}-2} \sum_{k=b+1}^{p^{\eta}-1} (1-kp^{\nu})\Gamma(\gamma,b) = B \sum_{b=1}^{p^{\eta}-2}\left(-1-b+\binom {b+1}2 p^{\nu}\right)\Gamma(\gamma,b),
\end{equation*}
where the last equality holds since
\begin{equation*}
\sum_{k=b+1}^{p^{\eta}-1} (1-kp^{\nu})\equiv -1-b+\binom {b+1}2 p^{\nu}\pmod{p^{\eta}}.
\end{equation*}
Note that $r(b)\equiv (1-p^{\nu})b\pmod{p^{\eta}}$ implies $b\equiv (1+p^{\nu})r(b)\pmod{p^{\eta}}$, and that the image by $r$, of $\{1\le b\le p^{\eta}-2\}$, is the set $\{1\le s < p^{\eta}-1\}\setminus\{p^{\nu}-1\}$. Using these facts, \eqref{BAjgamma} and~\eqref{Gamma(gamma,b)}, we obtain
\begin{align*}
\wt{S} & = B \sum_{s= p^{\nu}}^{p^{\eta}-1}\left(-1-(1+p^{\nu})s+\binom {(1+p^{\nu})s+1}2 p^{\nu}\right)(-\gamma)\\
& =  B \sum_{s= p^{\nu}}^{p^{\eta}-1}\left(1+(1+p^{\nu})s-\binom {(1+p^{\nu})s+1}2 p^{\nu}\right)\gamma\\
&= -p^{\nu}B\gamma+\frac{p^{\nu}-p^{\eta}}{2}B\gamma-\sum_{s= p^{\nu}}^{p^{\eta}-1}\binom {s+1+p^{\nu}s} 2 p^{\nu}B\gamma\\
&= -\frac{p^{\eta}+p^{\nu}}{2}B\gamma-\sum_{s= p^{\nu}}^{p^{\eta}-1}\binom {s+1} 2 p^{\nu}B\gamma -\sum_{s= p^{\nu}}^{p^{\eta}-1}\binom {p^{\nu}s} 2 p^{\nu}B\gamma.
\end{align*}
Note now that
$$
\sum_{s= p^{\nu}}^{p^{\eta}-1}\binom {s+1} 2 p^{\nu}B=\binom{p^{\eta}+1}{3}p^{\nu} B- \binom{p^{\nu}+1}{3}p^{\nu}B = - \binom{p^{\nu}+1}{3}p^{\nu}B
$$
and
$$
\sum_{s=p^{\nu}}^{p^{\eta}-1}\binom{p^{\nu}s} 2 p^{\nu}B = \sum_{s= p^{\nu}}^{p^{\eta}-1}\frac{sp^{2\nu}(sp^{\nu}-1)}{2}B = 0,
$$
where the last equality is trivial if $p$ is odd or $\eta<2\nu$, and hold if $p=2$ and $\eta=2\nu>2$, because
$$
\sum_{s=2^{\nu}}^{2^{\eta}-1}\frac{s2^{2\nu}(s2^{\nu}-1)}{2}B = -2^{2\nu-1} \sum_{s=2^{\nu}}^{2^{\eta}-1} s B = - 2^{\eta-1} \left(2^{\eta-1}(2^{\eta}-1)-2^{\nu-1}(2^{\nu}-1)\right)B = 0.
$$
Consequently,
$$
\wt{S} = -\frac{p^{\eta}+p^{\nu}}{2}B\gamma + \binom{p^{\nu}+1}{3}p^{\nu}B\gamma.
$$
This finishes the proof, since $\binom{p^{\nu}+1}{3}p^{\nu}B\gamma=0$, when $p>3$ or $\eta<2\nu$.
\end{proof}

\begin{corollary}\label{corolario de periodico en c igual a 1} For all $c\in \mathds{Z}$, we  have:
\begin{equation*}
\sum_{j=0}^{p^{\eta}-1}A^{p^{\eta}-1-j}\tilde{f}(j\cdot c)+ cB\sum_{l=0}^{p^{\eta}-1} A^l \tilde{f}(b_l) = c\bigl(A^{-1}P(A)(f_0-\gamma)+ A^{-1}Q(A)\gamma+ BR(A)f_0+SB\gamma\bigr).
\end{equation*}
\end{corollary}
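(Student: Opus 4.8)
The plan is to split the left-hand side into its two summands and recognize each as a quantity already computed in the preceding two results. For the first summand, $\sum_{j=0}^{p^{\eta}-1}A^{p^{\eta}-1-j}\tilde{f}(j\cdot c)$, I would exploit that $A^{p^{\eta}}=\Ide$ (Proposition~\ref{condiciones sin potencia general}), so that $A$ is invertible with $A^{-1}=A^{p^{\eta}-1}$ and $A^{-1}$ commutes with every power of $A$. Writing $A^{p^{\eta}-1-j}=A^{-1}A^{p^{\eta}-j}$, I can pull $A^{-1}$ out of the sum to obtain $A^{-1}\sum_{j=0}^{p^{\eta}-1}A^{p^{\eta}-j}\tilde{f}(j\cdot c)$. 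Applying Corollary~\ref{previa a cociclos1} to the inner sum then yields $c\bigl(A^{-1}P(A)(f_0-\gamma)+A^{-1}Q(A)\gamma\bigr)$.

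For the second summand, $cB\sum_{l=0}^{p^{\eta}-1}A^l\tilde{f}(b_l)$, I would invoke Proposition~\ref{periodico en c igual a 1} directly, which evaluates $B\sum_{l=0}^{p^{\eta}-1}A^l\tilde{f}(b_l)$ as $BR(A)f_0+SB\gamma$; multiplying by $c$ gives $c\bigl(BR(A)f_0+SB\gamma\bigr)$. Adding the two contributions produces exactly the claimed right-hand side $c\bigl(A^{-1}P(A)(f_0-\gamma)+A^{-1}Q(A)\gamma+BR(A)f_0+SB\gamma\bigr)$.

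There is essentially no obstacle here, since the substantive content has already been isolated in Corollary~\ref{previa a cociclos1} and Proposition~\ref{periodico en c igual a 1}. The only manipulation required is the bookkeeping factor $A^{-1}$, which is legitimate precisely because $A^{p^{\eta}}=\Ide$ makes $A$ a unit in $\End(I)$; one should simply take care to note that $A^{-1}$ commutes past the powers $A^{p^{\eta}-j}$ before factoring it out, and that both $c$ and $A^{-1}$ act $\mathds{Z}$-linearly so the two displayed identities may be combined termwise.
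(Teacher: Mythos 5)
Your proposal is correct and coincides with the paper's own (one-line) proof, which likewise combines Corollary~\ref{previa a cociclos1} for the first summand and Proposition~\ref{periodico en c igual a 1} for the second; the only extra step, factoring out $A^{-1}=A^{p^{\eta}-1}$, is exactly the routine bookkeeping you describe and is justified since $A^{p^{\eta}}=\Ide$.
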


\begin{proof} By Corollary~\ref{previa a cociclos1} and Proposition~\ref{periodico en c igual a 1}.
\end{proof}

The main results of this subsection are Theorems~\ref{cociclos con B} and~\ref{cobordes Bne0}, and Corollary~\ref{calculo de la homo B'}. We carry out some of the necessary lengthy technical computations in the following lemma.

\begin{lemma}\label{lema tecnico} Let $\gamma, f_0\in I$ and let $f$ as in Definition~\ref{definicion de breve f con B}. Assume that $(f_0,\gamma)$ satisfies the condition in item~(3) of Proposition~\ref{periodicoo}. Let $0\le l,l'<p^{\eta}$ and set $a\coloneqq 1^{\times l}$, $b\coloneqq 1^{\times l'}$, let $L(l,l')$ be as in Remark~\ref{def de L(l,l')} and let $M\in \mathds{Z}$ be the representative of $L(l,l')$ with $0\le M<p^{\eta}$. Then,
\begin{align}
& (M\cdot c)(l\cdot b)BA^{M-l}B+(l\cdot c)A^{M-l}B=(M\cdot c)BA^{M-l}&&\text{for all $c\in H$,}\label{igualdad para dos terminos}\\
& b_j+c_j = b_{M-l+j} &&\text{for $j\in \mathds{Z}$}\label{provisoria}
\end{align}
and
\begin{equation}\label{eq1 lema tecnico}
\begin{aligned}
B\sum_{j=0}^{l-1} A^{M-l+j+1}\alpha_1(\gamma)(1,b_j+ c_j) & = B\sum_{j=0}^{l-1} A^{M-l+j} \tilde{f}\left(b_j\right) - B\sum_{j=0}^{l-1} A^{M-l+j} \tilde{f}\left(b_j+c_j\right)\\
& + B \sum_{j=0}^{l-1}A^{M-1-j}\tilde{f}(j\cdot b)+BA^{M}\alpha_1(\gamma)(a,b),
\end{aligned}
\end{equation}
where $b_j$ is as in Remark~\ref{b_k=1 si y...} and  $c_j\coloneqq (l-1-j)\cdot b$.
\end{lemma}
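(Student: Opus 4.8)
The three identities are essentially independent, and I would prove them in the order \eqref{provisoria}, \eqref{igualdad para dos terminos}, \eqref{eq1 lema tecnico}, since the last two both invoke the first. Throughout, the two structural inputs are the defining relations \eqref{igualdad exponencial general} (in particular $[B,A]=BAB+p^{\nu}AB$, together with $p^{\eta}B=0$ and hence $p^{2\nu}B=0$, because $\eta\le 2\nu$) and the arithmetic fact that $M-l\equiv l\cdot l'\pmod{p^{\eta}}$; the latter holds because $a\cdot b=1^{\times M-l}$ by \eqref{a.b=}, while $L(l,l')-l=l'-ll'p^{\nu}=(1-lp^{\nu})l'=l\cdot l'$ by Proposition~\ref{para condiciones sin potencia} (respectively Proposition~\ref{para condiciones sin potencia2} when $p=2$ and $\eta=2\nu$).

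For \eqref{provisoria} I would argue by direct expansion. Writing $m\coloneqq M-l$ and using $b_k=k+\binom{k+1}{2}p^{\nu}$, a short binomial manipulation (via $\binom{m+j+1}{2}-\binom{j+1}{2}=\binom{m+1}{2}+mj$) gives $b_{m+j}=b_j+b_m+mjp^{\nu}$, so the claim reduces to $c_j=b_m+mjp^{\nu}$. On the other hand $c_j=(l-1-j)\cdot b=(1-(l-1-j)p^{\nu})\,1^{\times l'}$; expanding $1^{\times l'}=l'+\binom{l'}{2}p^{\nu}$ and discarding all multiples of $p^{2\nu}$ (which vanish in $\mathds{Z}_{p^{\eta}}$) reduces $c_j=b_m+mjp^{\nu}$ to the congruence $m\equiv l\cdot l'$ together with the identity $\binom{l'}{2}+l'=\binom{l'+1}{2}$. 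The case $p=2$, $\eta=2\nu>2$ is handled identically once the corresponding formula for $L(l,l')$ is used.

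For \eqref{igualdad para dos terminos} the plan is to reduce the operator identity to a pair of scalar congruences. Rewriting the relation as $BA=(1+p^{\nu})AB+BAB$ and inducting, one expresses $BA^{j}=(1+p^{\nu})^{j}A^{j}B+\mathrm{corr}_{j}$, where $\mathrm{corr}_{j}$ is a sum of operators each containing two factors of $B$. Since $p^{2\nu}B=0$, every such double-$B$ operator is annihilated by $p^{2\nu}$, so I would split \eqref{igualdad para dos terminos} into its single-$B$ and double-$B$ contributions and match scalar coefficients modulo $p^{2\nu}$. The single-$B$ part reduces to $(M\cdot c)(1+p^{\nu})^{j}\equiv(l\cdot c)\pmod{p^{2\nu}}$, which is immediate from $j-M\equiv-l$; the double-$B$ part reduces to matching the leading coefficient of $\mathrm{corr}_{j}$ against $(M\cdot c)(l\cdot b)$, where one computes $(l\cdot b)\equiv j+\binom{l'}{2}p^{\nu}\pmod{p^{2\nu}}$. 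I expect this double-$B$ bookkeeping — controlling $\mathrm{corr}_{j}$ modulo operators with three factors of $B$ and verifying that the coefficient congruence closes — to be the main obstacle of the lemma, since here the non-commutativity of $A$ and $B$ genuinely enters and one cannot simply invoke $B^{2}=-p^{\nu}B$ as in the commutative case of Proposition~\ref{general conditions'}.

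For \eqref{eq1 lema tecnico} I would first substitute $b_j+c_j=b_{M-l+j}$ from \eqref{provisoria}, and then rewrite each difference $\tilde{f}(b_j)-\tilde{f}(b_j+c_j)$ by means of the cocycle identity \eqref{cociclo doble para a uno f tilde} for $\tilde{f}$, turning it into $-\tilde{f}(c_j)-A\alpha_1(\gamma)(b_j,c_j)+\alpha_1(\gamma)(1\cdot b_j,1\cdot c_j)$. The decisive simplification is that the $f_0$-dependent part cancels: under the reindexing $j\mapsto l-1-j$ one has $\sum_{j}A^{M-l+j}c_j=\sum_{j}A^{M-1-j}(j\cdot b)$, so the $f_0$-terms of $-\sum_j A^{M-l+j}\tilde{f}(c_j)$ exactly cancel those of the sum $\sum_j A^{M-1-j}\tilde{f}(j\cdot b)$ appearing on the right-hand side. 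What remains is a purely $\gamma$-valued identity, which I would settle using \eqref{BAjgamma} to collapse each $BA^{k}\gamma$ to a scalar multiple of $B\gamma$, together with the explicit values of $\alpha_1(\gamma)$ from Proposition~\ref{calculos de beta 1} and Remark~\ref{b_k=1 si y...}; the only real labor here is tracking the indicator terms $\alpha_1(\gamma)(1,b_{M-l+j})$ and $\alpha_1(\gamma)(a,b)$ across the reindexed sums.
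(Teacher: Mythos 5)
Your treatment of \eqref{provisoria} is correct and essentially the paper's own computation (direct binomial expansion of $b_{M-l+j}$ and of $c_j=(l-1-j)\cdot b$, using $M-l\equiv l'-ll'p^{\nu}$ from Proposition~\ref{para condiciones sin potencia}, with the $p=2$, $\eta=2\nu$ case handled via Proposition~\ref{para condiciones sin potencia2}). Your plan for \eqref{eq1 lema tecnico} also starts the same way the paper does --- apply \eqref{cociclo doble para a uno f tilde} to $\tilde f(b_j+c_j)-\tilde f(b_j)-\tilde f(c_j)$ and cancel the $\tilde f(c_j)$ sums against $\sum_j A^{M-1-j}\tilde f(j\cdot b)$ by the reindexing $j\mapsto l-1-j$ --- but where you propose to finish by brute-force evaluation of the residual $\gamma$-valued identity via \eqref{BAjgamma} and the indicator values of $\alpha_1(\gamma)$, the paper instead telescopes using $1\cdot c_j=c_{j-1}$ and $1\cdot b_j=1+b_{j-1}$ and then invokes $\partial_{\mathrm{v}}^{03}(\alpha_1(\gamma))(1,b_j,c_j)=0$ to reduce everything to $\sum_j A^{M-l+j+1}\alpha_1(\gamma)(1,b_j)=0$, which holds because $j\le l-1<p^{\eta}-1$. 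Your route is plausible but left as a sketch; the paper's is shorter and avoids any case analysis on the indicators.

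The genuine gap is in \eqref{igualdad para dos terminos}. You correctly dispose of the single-$B$ part (the congruence $(M\cdot c)(1+(M-l)p^{\nu})=(l-M)\cdot(M\cdot c)=l\cdot c$ is exactly what the paper uses), but you explicitly leave the ``double-$B$ bookkeeping'' unresolved, and the expansion $BA^j=(1+p^{\nu})^jA^jB+\mathrm{corr}_j$ does not obviously close: $\mathrm{corr}_j$ is a sum of words in $A$ and $B$ with two or more $B$'s in varying positions, and nothing in your plan collapses it to a scalar multiple of $BA^{M-l}B$. The paper's missing idea is to work with $(A-AB)^l$ rather than $BA^l$: combining the already-proved $(A-AB)^l=A^l-\bigl(l+\binom{l}{2}p^{\nu}\bigr)A^lB$ from Lemma~\ref{general conditions} with the companion identity $B(A-AB)^l=(1+lp^{\nu})A^lB$ (a one-line induction from $B(A-AB)=(1+p^{\nu})AB$) yields the closed form
\begin{equation*}
\Bigl(l+\binom{l}{2}p^{\nu}\Bigr)BA^lB \;=\; BA^l-(1+lp^{\nu})A^lB .
\end{equation*}
The point is that the scalar $l\cdot b=1^{\times M-l}=(M-l)+\binom{M-l}{2}p^{\nu}$ multiplying $BA^{M-l}B$ in \eqref{igualdad para dos terminos} is precisely the coefficient on the left of this identity (with $l$ replaced by $M-l$), so $(M\cdot c)(l\cdot b)BA^{M-l}B$ can be rewritten without ever isolating $BA^{M-l}B$ itself --- which is essential, since that coefficient is not invertible. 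Without this (or an equivalent) identity, your coefficient-matching scheme has no way to terminate, so as written the proof of \eqref{igualdad para dos terminos} is incomplete.
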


\begin{proof} We first prove~\eqref{igualdad para dos terminos}. We claim that
\begin{equation}\label{f1}
B(A-AB)^l=(1+lp^{\nu})A^lB\qquad\text{for all $l\in \mathds{Z}_{p^{\eta}}$.}
\end{equation}
In fact, by~\eqref{igualdad exponencial general} we know that $B(A-AB)=(1+p^{\nu})AB$ and an inductive argument using this shows that~\eqref{f1} holds for $l\in \mathds{N}_0$. Since the right side of~\eqref{f1} is periodic of period $p^{\eta}$, the claim is true. Combining~\eqref{condicion compuesta} with~\eqref{f1} we obtain that
\begin{equation}\label{BAB simplificado}
\left(l+\binom l2 p^{\nu}\right)BA^lB=BA^l-(1+lp^{\nu})A^lB\qquad\text{for all $l\in \mathds{Z}_{p^{\eta}}$.}
\end{equation}
Now notice that by equalities~\eqref{i times j}, \eqref{1^times j.c} and~\eqref{a.b=},
\begin{equation*}
l\cdot b =l\cdot 1^{\times l'} = 1^{\times l}\cdot 1^{\times l'} = 1^{\times M-l}= M-l+\binom{M-l}{2}p^{\nu}.
\end{equation*}
Hence, by~\eqref{BAB simplificado} we have
\begin{equation*}
(M\cdot c)(l\cdot b)BA^{M-l}B= (M\cdot c)\left( BA^{M-l}-(1+(M-l)p^{\nu})A^{M-l}B\right).
\end{equation*}
Moreover, by~\eqref{(j+1).c},
\begin{equation*}
(M\cdot c)(1+(M-l)p^{\nu})= (l-M)\cdot (M\cdot c) = l\cdot c,
\end{equation*}
and so,
\begin{equation*}
(M\cdot c)(l\cdot b)BA^{M-l}B+(l\cdot c)A^{M-l}B=(M\cdot c)BA^{M-l},
\end{equation*}
which finishes the proof of~\eqref{igualdad para dos terminos}. We next prove~\eqref{provisoria}. A direct computation using~\eqref{i times j} and the comment above Remark~\ref{es inversa}, yields
\begin{align*}
b_j+c_j & = j + \binom{j+1}{2}p^{\nu} + (l-1-j)\cdot 1^{\times l'}\\
& = j + \binom{j+1}{2}p^{\nu} + \bigl(1-(l-1-j)p^{\nu}\bigr)\left(l'+\binom{l'}{2}p^{\nu}\right)\\
& = j + \binom{j+1}{2}p^{\nu} + l'+\binom{l'}{2}p^{\nu} - (l-1-j)l'p^{\nu}\\
& = j + l' - ll'p^{\nu} + \binom{j+l'+1}{2}p^{\nu}.
\end{align*}
When $p>2$ or $\eta<2\nu$, we have
\begin{equation*}
b_j+c_j = j + l' - ll'p^{\nu} + \binom{j+l'+1}{2}p^{\nu} = j + l' - ll'p^{\nu} + \binom{j+l'-ll'p^{\nu}+1}{2}p^{\nu} = b_{M-l+j},
\end{equation*}
where the last equality holds by Proposition~\ref{para condiciones sin potencia} and Remark~\ref{b_k=1 si y...}; while a similar computation, using Proposition~\ref{para condiciones sin potencia2} and Remark~\ref{b_k=1 si y...}, shows that equality~\eqref{provisoria} also holds when $p=2$ and $\eta=2\nu$. This finishes the proof of~\eqref{provisoria}. Finally, we prove~\eqref{eq1 lema tecnico}. Since
\begin{equation*}
B\sum_{j=0}^{l-1} A^{M-l+j} \tilde{f}(c_j) = B\sum_{j=0}^{l-1} A^{M-l+j} \tilde{f}((l-1-j)\cdot b) = B \sum_{j=0}^{l-1}A^{M-1-j}\tilde{f}(j\cdot b),
\end{equation*}
by condition~\eqref{cociclo doble para a uno f tilde}, we have
\begin{align*}
B\sum_{j=0}^{l-1} A^{M-l+j}\bigl(A\alpha_1(\gamma)(b_j,c_j) & -\alpha_1(\gamma)(1\cdot b_j,1\cdot c_j)\bigr) =  B\sum_{j=0}^{l-1} A^{M-l+j}\bigl(\tilde{f}(b_j+c_j)-\tilde{f}(b_j)-\tilde{f}(c_j)\bigr)\\
& = B\sum_{j=0}^{l-1} A^{M-l+j} \tilde{f}\left(b_j+c_j\right) - B\sum_{j=0}^{l-1} A^{M-l+j} \tilde{f}\left(b_j\right) - B\sum_{j=0}^{l-1}A^{M-1-j}\tilde{f}(j\cdot b).
\end{align*}
Thus, in order to check~\eqref{eq1 lema tecnico}, we must prove that
\begin{equation}\label{a probar1}
B\sum_{j=0}^{l-1} A^{M-l+j+1}\alpha_1(\gamma)(1,b_j+c_j) = B\sum_{j=0}^{l-1} A^{M-l+j}\bigl(\alpha_1(\gamma)(1\cdot b_j,1\cdot c_j) -A\alpha_1(\gamma)(b_j,c_j)\bigr)+ BA^{M}\alpha_1(\gamma)(a,b).
\end{equation}
Since $\alpha_1(\gamma)(1\cdot b_0,1\cdot c_0) = \alpha_1(\gamma)(1\cdot 0,1\cdot c_0) = 0$ and, by equalities~\eqref{(j+1).c} and~\eqref{seusadespues},
$$
1\cdot c_j=c_{j-1}\quad\text{and}\quad 1\cdot b_j= j + \binom{j}{2}p^{\nu}=1+b_{j-1},\qquad\text{for $0<j<p^{\eta}$,}
$$
we have
\begin{align*}
B\sum_{j=0}^{l-1} A^{M-l+j}\bigl(\alpha_1(\gamma)(1\cdot b_j,1\cdot c_j)-A\alpha_1(\gamma)(b_j,c_j)\bigr) & = B\sum_{j=0}^{l-2} A^{M-l+j+1}\alpha_1(\gamma)(1+b_j,c_j)\\
&-B\sum_{j=0}^{l-1} A^{M-l+j+1}\alpha_1(\gamma)(b_j,c_j).
\end{align*}
Moreover $BA^M\alpha_1(\gamma)(a,b)=BA^M\alpha_1(\gamma)(1+b_{l-1},c_{l-1})$, since $1+b_{l-1} = a$ and $c_{l-1}=b$. Hence, \eqref{a probar1} becomes
\begin{equation}\label{nuevo a probar1}
B\sum_{j=0}^{l-1} A^{M-l+j+1}\alpha_1(\gamma)(1,b_j+c_j) = B\sum_{j=0}^{l-1} A^{M-l+j+1}\bigl(\alpha_1(\gamma)(1+b_j,c_j)-\alpha_1(\gamma)(b_j,c_j)\bigr).
\end{equation}
Since
$$
\alpha_1(\gamma)(b_j,c_j)-\alpha_1(\gamma)(1+b_j,c_j)-\alpha_1(\gamma)(1,b_j)+\alpha_1(\gamma)(1,b_j+c_j) = \partial_{\mathrm{v}}^{03}(\alpha_1(\gamma)) (1,b_j,c_j)=0,
$$
in order to check~\eqref{nuevo a probar1}, we are reduced to prove that
\begin{equation*}
B\sum_{j=0}^{l-1} A^{M-l+j+1}\alpha_1(\gamma)(1,b_j)= 0.
\end{equation*}
But this is true, since $l<p^{\eta}$ and $\alpha_1(\gamma)(1,b_j)= 0$, when $j<p^{\eta}-1$ (see Proposition~\ref{calculos de beta 1} and Remark~\ref{b_k=1 si y...}).
\end{proof}

\begin{remark}\label{si cociclo da lo mismo1} Suppose $(\alpha_1(\gamma),f) \in \wh{C}^{02}_N(H,I)\oplus \wh{C}^{11}_N(H,I)$ is a $2$-cocycle and set $f_0\coloneqq f(1,1)$. Then,~by Propo\-sition~\ref{necesaria para Yleft}, the map $f_{f_0,\gamma}$ in Definition~\ref{definicion de breve f con B}, coincides with $f$.
\end{remark}

\begin{theorem}\label{cociclos con B} Let $P$, $Q$, $R$ and $S$ be as in~\eqref{los polinomios1} and~\eqref{mas polinomios}. Each $2$-cocycle \hbox{$(\beta,g)\in \wh{C}^{02}_N(H,I)\oplus \wh{C}^{11}_N(H,I)$} is equiva\-lent modulo coboundaries to a $2$-cocycle of the form $(\alpha_1(\gamma), f)$. Moreover, if we set $f_0\coloneqq f(1,1)$, then
\begin{equation}\label{necesarias y suficientes con B}
A^{-1}P(A)(f_0-\gamma)+ A^{-1}Q(A)\gamma+ BR(A)f_0+(S+1)B\gamma=0\quad\text{and}\quad p^{\eta}f_0 = (\Ide-A-p^{\nu}\Ide+p^{\eta}\Ide)\gamma
\end{equation}	
Conversely, if $\gamma, f_0\in I$ satisfy \eqref{necesarias y suficientes con B}, then the pair $\bigl(\alpha_1(\gamma),f_{f_0,\gamma}\bigr)$ is a $2$-cocycle (see Definition~\ref{definicion de breve f con B}).
\end{theorem}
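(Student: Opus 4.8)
The plan is to follow the template of the $B=0$ case (Theorem~\ref{cociclos}), upgrading the vanishing relation~\eqref{pepe} to its $B$-enriched form and absorbing all the new $B$-valued terms with Lemma~\ref{lema tecnico}. For the forward implication, Corollary~\ref{basta tomar b_1(gamma)} lets me replace $(\beta,g)$, up to coboundaries, by a cocycle $(\alpha_1(\gamma),f)$, and I set $f_0:=f(1,1)$. The second identity in~\eqref{necesarias y suficientes con B} is literally~\eqref{obligado 1} of Proposition~\ref{necesarias}. For the first identity I specialize~\eqref{condicion para f con yleft} of Proposition~\ref{necesaria para Yleft} to $c=1$, use Remark~\ref{hat f para cociclo} to rewrite $f(1,b)$ as $\tilde f(b)$, and apply Corollary~\ref{corolario de periodico en c igual a 1} to evaluate the sum; the resulting equation is $A^{-1}P(A)(f_0-\gamma)+A^{-1}Q(A)\gamma+BR(A)f_0+SB\gamma+B\gamma=0$, which is the first line of~\eqref{necesarias y suficientes con B} since $SB\gamma+B\gamma=(S+1)B\gamma$.

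For the converse, given $(f_0,\gamma)$ satisfying~\eqref{necesarias y suficientes con B}, I first observe that its second identity is exactly item~(3) of Proposition~\ref{periodicoo}, so $\tilde f$ is available and I may put $f:=f_{f_0,\gamma}$ as in Definition~\ref{definicion de breve f con B}. The normalizations $f(0,c)=0$ and $f(1^{\times k},0)=0$ are immediate from the formula and $\tilde f(0)=0$. To check~\eqref{cociclo doble} I split the defining expression into its diagonal part $\sum_j A^{k-1-j}\tilde f(j\cdot c)$ and its $B$-part $(k\cdot c)B\sum_l A^l\tilde f(b_l)$: the diagonal part reproduces the right-hand side by Proposition~\ref{coro de prop para definir f}, while the $B$-part contributes $\bigl(k\cdot(b+c)-k\cdot b-k\cdot c\bigr)B\sum_l A^l\tilde f(b_l)=0$ because $x\mapsto k\cdot x$ is additive.

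The real work is~\eqref{cociclo horizontal con D}. I begin by recording the periodicity identity obtained by feeding the first line of~\eqref{necesarias y suficientes con B} into Corollary~\ref{corolario de periodico en c igual a 1}, namely $\sum_{j=0}^{p^{\eta}-1}A^{p^{\eta}-1-j}\tilde f(j\cdot c)+cB\sum_{l=0}^{p^{\eta}-1}A^{l}\tilde f(b_{l})+cB\gamma=0$ for every $c$; this is the $B$-analogue of~\eqref{pepe}. Writing $a=1^{\times l}$, $b=1^{\times l'}$ and letting $M$ be the representative of $L(l,l')$ in $\{0,\dots,p^{\eta}-1\}$, I use $a+b=1^{\times M}$, $a\cdot b=1^{\times M-l}$, $1^{\times j}\cdot c=j\cdot c$ (by~\eqref{1^times j.c}),~\eqref{(j+1).c} and~\eqref{a.b=} to expand $f(1^{\times M},c)$ and the four summands on the right of~\eqref{cociclo horizontal con D} through Definition~\ref{definicion de breve f con B}. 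The diagonal pieces recombine exactly as in equations~\eqref{lado izquierdo},~\eqref{lado derecho 1} and~\eqref{lado derecho segundo} of Theorem~\ref{cociclos}; the only change is that where that proof invoked~\eqref{pepe} to replace a reduced index $r_{p^{\eta}}(k)$ by $k$, the same manipulation now leaves behind the residual $-cB\sum_l A^l\tilde f(b_l)-cB\gamma$ dictated by the periodicity identity. Collecting these residuals together with the explicit $B$-parts of $f$ turns the verification into a single $B$-valued identity, which is exactly what Lemma~\ref{lema tecnico} delivers: \eqref{igualdad para dos terminos} collapses $((a+b)\cdot c)BA^{l(a\cdot b)}f(a,b)$ together with the quadratic cross-term $BA^{M-l}B$ into $(M\cdot c)BA^{M-l}$, \eqref{provisoria} reindexes $b_j+c_j=b_{M-l+j}$ so the $\tilde f$-sums telescope, and~\eqref{eq1 lema tecnico} disposes of the remaining $\tilde f$- and $\alpha_1(\gamma)$-terms, including the last summand $((a+b)\cdot c)BA^{l(a+b)}\alpha_1(\gamma)(a,b)$.

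I expect this final matching to be the main obstacle. In contrast to the $B=0$ case, the diagonal and $B$-contributions no longer decouple—the periodicity identity feeds the diagonal residuals back into the $B$-part—and one meets a genuinely new term quadratic in $B$, namely $BA^{M-l}B$, for which there is no counterpart in Theorem~\ref{cociclos}. Lemma~\ref{lema tecnico} is engineered around precisely these phenomena, so the substance of the step is to organize the expansion so that its three identities apply term by term; once that alignment is in place, equality~\eqref{cociclo horizontal con D} falls out, and together with~\eqref{cociclo doble} and the normalizations this shows that $\bigl(\alpha_1(\gamma),f_{f_0,\gamma}\bigr)$ is a $2$-cocycle.
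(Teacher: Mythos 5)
Your proposal follows the paper's own proof in all essentials: the forward direction via Corollary~\ref{basta tomar b_1(gamma)}, Proposition~\ref{necesarias}, equality~\eqref{condicion para f con yleft} and Corollary~\ref{corolario de periodico en c igual a 1}; the converse via the split of~\eqref{recursiva general con B definicion} into diagonal and $B$-parts, with~\eqref{cociclo doble} handled by Proposition~\ref{coro de prop para definir f} and additivity of $k\cdot(-)$, and~\eqref{cociclo horizontal con D} reduced to the three identities of Lemma~\ref{lema tecnico} exactly as the paper does (the paper merely makes explicit the case split $l\le M$ versus $M<l$, where your ``periodicity residual'' enters only in the second case). The plan is correct and the alignment you describe is precisely the computation carried out in the paper.
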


\begin{proof} Let $(\beta,g)\in \wh{C}^{02}_N(H,I)\oplus \wh{C}^{11}_N(H,I)$ be a $2$-cocycle. By Corollary~\ref{basta tomar b_1(gamma)}, there exist $\gamma$ and $f$ such that  $(\alpha_1(\gamma), f)$ is a $2$-cocycle equivalent to $(\beta, g)$. By~Proposi\-tion~\ref{necesarias}, the second condition in~\eqref{necesarias y suficientes con B} is satisfied. We next prove the first one. In fact, we have
$$
A^{-1}P(A)(f_0-\gamma)+ A^{-1}Q(A)\gamma+ BR(A)f_0+(S+1)B\gamma =\sum_{j=0}^{p^{\eta}-1}A^{p^{\eta}-1-j}f(1,j\cdot 1)+B\sum_{l=0}^{p^{\eta}-1} A^l f(1,b_l)+B\gamma = 0,
$$
where the first equality holds by Remark~\ref{hat f para cociclo} and Corollary~\ref{corolario de periodico en c igual a 1}, and the last one, by equality~\eqref{condicion para f con yleft}. Assume~con\-verse\-ly that $f_0$ and $\gamma$ satisfy~\eqref{necesarias y suficientes con B} and let $f=f_{f_0,\gamma}$ be as in~\eqref{recursiva general con B definicion}. By definition,
\begin{equation*}
f(0,c) = f(1^{\times 0},c) = 0\quad\text{for all $c\in H$.}
\end{equation*}
Moreover, since $j\cdot 0 = 0$, for all $j\in H$, we have $\tilde{f}(j\cdot 0) = \tilde{f}(0) = \hat{f}(0) = 0$, for all $j\in H$. Hence
\begin{equation*}
f(1^{\times k},0) = 0\quad\text{for all $0\le k < p^{\eta}$.}
\end{equation*}
Thus, the map $f$ is normal. In order to finish the proof we must show that conditions~\eqref{cociclo doble} and~\eqref{cociclo horizontal con D} are satisfied. Let $b,c\in H$ and $0\le k<p^{\eta}$. By the first equality in~\eqref{compatibilidades cdot suma y suma cdot},
$$
(k\cdot (b+c)) B\sum_{l=0}^{k-1} A^l\tilde{f}(b_l) = (k\cdot b)B\sum_{l=0}^{k-1} A^l\tilde{f}(b_l) + (k\cdot c)B\sum_{l=0}^{k-1} A^l\tilde{f}(b_l).
$$
Using this and Proposition~\ref{coro de prop para definir f}, we obtain that
\begin{equation*}
f(1^{\times k},b+c)-f(1^{\times k},b)-f(1^{\times k},c)=A^k \alpha_1(\gamma)(b,c)-\alpha_1(\gamma)(1^{\times k}\cdot b,1^{\times k}\cdot c),
\end{equation*}
which proves condition~\eqref{cociclo doble}. It remains to prove condition~\eqref{cociclo horizontal con D}. In order to do this we take $a\coloneqq 1^{\times l}$ and $b\coloneqq 1^{\times l'}$ with $0\le l,l'<p^{\eta}$, and we take $M$ as in the previous lemma. By Remark~\ref{def de L(l,l')}, we have
\begin{equation}\label{ecua 1}
a+b = 1^{\times L(l,l')} = 1^{\times M} \qquad\text{and}\qquad a\cdot b = a^{\times -1}\times (a + b)= 1^{\times -l}\times 1^{\times M} = 1^{\times M-l}.
\end{equation}
From this and equality~\eqref{1^times j.c}, we conclude that
\begin{equation}\label{ecua 2}
A^{l(a\cdot b)} = A^{M-l},\quad A^{l(a+b)} = A^M,\quad a\cdot c = 1^{\times l}\cdot c = l\cdot c\quad\text{and}\quad (a+b)\cdot c = 1^{\times M}\cdot c = M\cdot c.
\end{equation}
Using equalities~\eqref{ecua 1} and~\eqref{ecua 2}, we obtain that~\eqref{cociclo horizontal con D} is equivalent to
\begin{equation}\label{cociclo horizontal con D simple}
-f(1^{\times M},c) + A^{M-l}f(1^{\times l},c) + f(1^{\times M-l},l\cdot c) + (M\cdot c)BA^{M-l}f(1^{\times l},b) + (M\cdot c)BA^{M}\alpha_1(\gamma)(a,b) = 0.
\end{equation}

\smallskip

\noindent {\bf Case \pmb{$0\le l\le M < p^{\eta}$}.}\enspace We expand the first four terms in~\eqref{cociclo horizontal con D simple} using the definition of $f$:
\begin{align}
&-f(1^{\times M},c) = -\sum_{j=0}^{M-1}A^{M-1-j}\tilde{f}(j\cdot c) - (M\cdot c)B \sum_{j=0}^{M-1} A^j \tilde{f}\left(b_j\right),\label{primer termino}\\
& A^{M-l}f(1^{\times l},c) = \sum_{j=0}^{l-1}A^{M-1-j}\tilde{f}(j\cdot c) + (l\cdot c)A^{M-l}B\sum_{j=0}^{l-1} A^j \tilde{f}\left(b_j\right),\label{segundo termino}\\
&f(1^{\times M-l},l\cdot c) = \sum_{j=l}^{M-1}A^{M-1-j}\tilde{f}(j\cdot c) + (M\cdot c)B \sum_{j=0}^{M-l-1} A^j \tilde{f}\left(b_j\right),\label{tercer termino}\\
& (M\cdot c) BA^{M-l} f(1^{\times l},b) = (M\cdot c) B \sum_{j=0}^{l-1}A^{M-1-j}\tilde{f}(j\cdot b) + (M\cdot c)(l\cdot b) BA^{M-l}B \sum_{j=0}^{l-1} A^j\tilde{f}\left(b_j\right),\label{cuarto termino}
\end{align}
%%%%%%%%%%%%%%%%%%%%%%%%%%%%%%%%%%%%%%%%%%%%%%%%%%%%%%%%%%%%%%%%%%%%%%%%%%%%%%%%%%%%%%%%%%%%%%%%%%%%%%%%%%%%%%%%%%%%%%%%%%%%%%%%%%%%%%%%%%%%%%%%%%%
\begin{comment}
%%%%%%%%%%%%%%%%%%%%%%%%%%%%%%%%%%%%%%%%%%%%%%%%%%%%%%%%%%%%%%%%%%%%%%%%%%%%%%%%%%%%%%%%%%%%%%%%%%%%%%%%%%%%%%%%%%%%%%%%%%%%%%%%%%%%%%%%%%%%%%%%%%%
\begin{equation}%\label{primer termino}
-f(1^{\times M},c) = -\colorbox{red!50}{$\displaystyle\sum_{j=0}^{M-1}A^{M-1-j}\tilde{f}(j\cdot c)$} - \colorbox{blue!20}{$(M\cdot c)B \displaystyle\sum_{j=1}^{M-1} A^j \tilde{f}\left(b_j\right)$} - \epsilon(M)(M\cdot c)BA^2\gamma,
\end{equation}
\begin{equation}%\label{segundo termino}
A^{M-l}f(1^{\times l},c)= \colorbox{red!50}{$\displaystyle\sum_{j=0}^{l-1}A^{M-1-j}\tilde{f}(j\cdot c)$}+ \colorbox{green!50}{$(l\cdot c)A^{M-l}B\displaystyle\sum_{j=1}^{l-1} A^j \tilde{f}\left(b_j\right)$} +\colorbox{green!20}{$\epsilon(l)(l\cdot c)A^{M-l}BA^2\gamma$},
\end{equation}
\begin{equation}%\label{tercer termino}
f(1^{\times M-l},l\cdot c) = b \colorbox{red!50}{$\displaystyle\sum_{j=l}^{M-1}A^{M-1-j}\tilde{f}(j\cdot c)$} +\colorbox{blue!20}{$(M\cdot c)B\displaystyle\sum_{j=1}^{M-l-1} A^j \tilde{f}\left(b_j\right)$} +\epsilon(M-l)(M\cdot c)BA^2\gamma,
\end{equation}
\begin{equation}
\begin{aligned}%\label{cuarto termino}
(M\cdot c) BA^{M-l} f(1^{\times l},b) =&  (M\cdot c) B \sum_{j=0}^{l-1}A^{M-1-j}\tilde{f}(j\cdot b)\\
%
& +\colorbox{green!50}{$(M\cdot c)(l\cdot b)BA^{M-l}B\displaystyle\sum_{j=1}^{l-1} A^j\tilde{f}\left(b_j\right)$}\\
%
& +\colorbox{green!20}{$\epsilon(l)(M\cdot c)(l\cdot b)BA^{M-l}BA^2\gamma$},
\end{aligned}
\end{equation}
%%%%%%%%%%%%%%%%%%%%%%%%%%%%%%%%%%%%%%%%%%%%%%%%%%%%%%%%%%%%%%%%%%%%%%%%%%%%%%%%%%%%%%%%%%%%%%%%%%%%%%%%%%%%%%%%%%%%%%%%%%%%%%%%%%%%%%%%%%%%%%%%%%%
\end{comment}
%%%%%%%%%%%%%%%%%%%%%%%%%%%%%%%%%%%%%%%%%%%%%%%%%%%%%%%%%%%%%%%%%%%%%%%%%%%%%%%%%%%%%%%%%%%%%%%%%%%%%%%%%%%%%%%%%%%%%%%%%%%%%%%%%%%%%%%%%%%%%%%%%%%
where in the computing of $f(1^{\times M-l},l\cdot c)$ we have used that $j\cdot (l\cdot c) = (j+l)\cdot c$ and $(M-l)\cdot (l\cdot c) = M\cdot c$ (see~\eqref{(j+1).c}). In identity~\eqref{cociclo horizontal con D simple}, the first terms at the right side of the equal sign in~\eqref{primer termino}, \eqref{segundo termino} and~\eqref{tercer termino} cancel out. Moreover, it is clear that the sum of the second term of~\eqref{primer termino} and the second term of~\eqref{tercer termino} gives
\begin{equation*}
-(M\cdot c)B\displaystyle\sum_{j=0}^{M-1} A^j \tilde{f}\left(b_j\right)+(M\cdot c)B\displaystyle\sum_{j=0}^{M-l-1} A^j \tilde{f}\left(b_j\right) = -(M\cdot c) B\sum_{j=M-l}^{M-1} A^j \tilde{f}\left(b_j\right)
\end{equation*}
and, by equality~\eqref{igualdad para dos terminos}, the sum of the second~term in~\eqref{segundo termino} and the second term in~\eqref{cuarto termino} gives
\begin{equation*}
(l\cdot c)A^{M-l}B\sum_{j=0}^{l-1} A^j \tilde{f}\left(b_j\right)+(M\cdot c)(l\cdot b)BA^{M-l}B\sum_{j=0}^{l-1} A^j \tilde{f}\left(b_j\right)=(M\cdot c)B\sum_{j=0}^{l-1} A^{M-l+j} \tilde{f}\left(b_j\right).
\end{equation*}
Hence~\eqref{cociclo horizontal con D simple} is equivalent to
\begin{align*}
0 & = - (M\cdot c)B \sum_{j=M-l}^{M-1} A^j \tilde{f}\left(b_j\right)
+ (M\cdot c)B \sum_{j=0}^{l-1} A^{M-l+j} \tilde{f}\left(b_j\right)\\ &
+(M\cdot c) B \sum_{j=0}^{l-1}A^{M-1-j}\tilde{f}(j\cdot b)+(M\cdot c) BA^M\alpha_1(\gamma)(a,b).
\end{align*}
%%%%%%%%%%%%%%%%%%%%%%%%%%%%%%%%%%%%%%%%%%%%%%%%%%%%%%%%%%%%%%%%%%%%%%%%%%%%%%%%%%%%%%%%%%%%%%%%%%%%%%%%%%%%%%%%%%%%%%%%%%%%%%%%%%%%%%%%%%%%%%%%%%%
Since one can choose $c$ such that $M\cdot c=1$, this equality is equivalent to
\begin{equation}\label{equivalente a 3.5}
B \sum_{j=M-l}^{M-1} A^j \tilde  f\left(b_j\right) - B\sum_{j=0}^{l-1} A^{M-l+j} \tilde{f}\left(b_j\right)-B \sum_{j=0}^{l-1}A^{M-1-j}\tilde{f}(j\cdot b)  =  BA^M\alpha_1(\gamma)(a,b).
\end{equation}
On the other hand, by~\eqref{provisoria},
\begin{equation*}
B\sum_{j=M-l}^{M-1} A^j \tilde{f}\left(b_j\right) = B\sum_{j=0}^{l-1} A^{M-l+j} \tilde{f}\left(b_{M-l+j}\right) = B\sum_{j=0}^{l-1} A^{M-l+j} \tilde{f}\left(b_j+c_j\right).
\end{equation*}
From this and equality~\eqref{eq1 lema tecnico}, it follows that~\eqref{equivalente a 3.5} is equivalent to
\begin{equation}\label{ultima equivalencia}
B\sum_{j=0}^{l-1} A^{M-l+j+1}\alpha_1(\gamma)(1,b_j+c_j) = 0.
\end{equation}
But this is true, since, by equality~\eqref{provisoria},
\begin{equation*}
B\sum_{j=0}^{l-1} A^{M-l+j+1}\alpha_1(\gamma)(1,b_j+c_j)= B\sum_{j=0}^{l-1} A^{M-l+j+1}\alpha_1(\gamma)(1,b_{M-l+j})= B\sum_{j=M-l}^{M-1} A^{j+1}\alpha_1(\gamma)\left(1,b_j\right),
\end{equation*}
which is $0$, since $M<p^{\eta}$ and $\alpha_1(\gamma)(1,b_j)= 0$, when $j<p^{\eta}-1$ (see Proposition~\ref{calculos de beta 1} and Remark~\ref{b_k=1 si y...}).

\smallskip

\noindent {\bf Case \pmb{$0\le M<l<p^{\eta}$}.}\enspace We expand the first four terms in~\eqref{cociclo horizontal con D simple} using the definition of $f$:
\begin{align}
& -f(1^{\times M},c) = -\sum_{j=0}^{M-1}A^{M-1-j}\tilde{f}(j\cdot c) - (M\cdot c)B \displaystyle\sum_{j=0}^{M-1} A^j \tilde{f}\left(b_j\right) ,\label{primer termino l mayor}\\
& A^{M-l}f(1^{\times l},c)= \sum_{j=0}^{l-1}A^{M-1-j}\tilde{f}(j\cdot c) + (l\cdot c)A^{M-l}B \sum_{j=0}^{l-1} A^j \tilde{f}\left(b_j\right),\label{segundo termino l mayor}\\
& f(1^{\times M-l},l\cdot c) = \sum_{j=l}^{p^{\eta}+M-1}A^{M-1-j}\tilde{f}(j\cdot c) + (M\cdot c)B \sum_{j=0}^{p^{\eta}+M-l-1} A^j \tilde{f}\left(b_j\right),\label{tercer termino l mayor}\\
& (M\cdot c) BA^{M-l} f(1^{\times l},b) = (M\cdot c) B \sum_{j=0}^{l-1}A^{M-1-j}\tilde{f}(j\cdot b) + (M\cdot c)(l\cdot b)BA^{M-l}B \sum_{j=0}^{l-1} A^j \tilde{f}\left(b_j\right),\label{cuarto termino l mayor}
\end{align}
where in the computing of $f(1^{\times M-l},l\cdot c)$ we have used that $j\cdot (l\cdot c) = (j+l)\cdot c$ and $(M-l)\cdot (l\cdot c) = M\cdot c$ (see~\eqref{(j+1).c}). Clearly the sum of the first terms at the right side of the equal sign in~\eqref{primer termino l mayor}, \eqref{segundo termino l mayor} and~\eqref{tercer termino l mayor}, yields
\begin{equation*}
\sum_{j=M}^{p^{\eta}+M-1}A^{M-1-j}\tilde{f}(j\cdot c) = \sum_{j=0}^{p^{\eta}-1}A^{M-1-j}\tilde{f}(j\cdot c)=(M\cdot c) \sum_{j=0}^{p^{\eta}-1} A^{p^{\eta}-1-j}\tilde{f}(j\cdot 1),
\end{equation*}
where the last equality follows from Corollary~\ref{anularse implica invarianza}. Moreover, it is clear that the sum of the second term of~\eqref{primer termino l mayor} and the second term of~\eqref{tercer termino l mayor} gives
\begin{equation*}
(M\cdot c)B\sum_{j=0}^{p^{\eta}+M-l-1} A^j \tilde{f}\left(b_j\right)-(M\cdot c)B\sum_{j=0}^{M-1} A^j \tilde{f}\left(b_j\right) = (M\cdot c) B\sum_{j=0}^{p^{\eta}-1} A^j \tilde{f}\left(b_j\right)-(M\cdot c)B\sum_{j=p^{\eta}+M-l}^{p^{\eta}+M-1} A^j \tilde{f}\left(b_j\right),
\end{equation*}
and, as before, by~\eqref{igualdad para dos terminos}, the sum of the second term in~\eqref{segundo termino l mayor} and~the second term in~\eqref{cuarto termino l mayor} gives
\begin{equation*}
(l\cdot c)A^{M-l}B\sum_{j=0}^{l-1} A^j \tilde{f}\left(b_j\right)+(M\cdot c)(l\cdot b)BA^{M-l}B\sum_{j=0}^{l-1} A^j \tilde{f}\left(b_j\right)=(M\cdot c) B\sum_{j=0}^{l-1} A^{M-l+j} \tilde{f}\left(b_j\right).
\end{equation*}
Hence,~\eqref{cociclo horizontal con D simple} is equivalent to
\begin{align*}
0 & = (M\cdot c)\sum_{j=0}^{p^{\eta}-1}A^{p^{\eta}-1-j}\tilde{f}(j\cdot 1) + (M\cdot c)B \sum_{j=0}^{p^{\eta}-1} A^j \tilde{f}\left(b_j\right) - (M\cdot c)B \sum_{j=p^{\eta}+M-l}^{p^{\eta}+M-1} A^j \tilde{f}\left(b_j\right)\\
&  + (M\cdot c)B \sum_{j=0}^{l-1} A^{M-l+j} \tilde{f}\left(b_j\right) + (M\cdot c) B \sum_{j=0}^{l-1}A^{M-1-j}\tilde{f}(j\cdot b)+(M\cdot c) BA^{M}\alpha_1(\gamma)(a,b).
\end{align*}
Since one can choose $c$ with $M\cdot c=1$, this is equivalent to
\begin{equation}\label{equivalente a 3.5'}
\begin{aligned}
0 & = \sum_{j=0}^{p^{\eta}-1}A^{p^{\eta}-1-j}\tilde{f}(j\cdot 1) +  B \sum_{j=0}^{p^{\eta}-1} A^j \tilde{f}\left(b_j\right)- B \sum_{j=p^{\eta}+M-l}^{p^{\eta}+M-1} A^j \tilde{f}\left(b_j\right) \\
& + B \sum_{j=0}^{l-1} A^{M-l+j} \tilde{f}\left(b_j\right) + B \sum_{j=0}^{l-1}A^{M-1-j}\tilde{f}(j\cdot b) + BA^{M}\alpha_1(\gamma)(a,b).
\end{aligned}
\end{equation}
%%%%%%%%%%%%%%%%%%%%%%%%%%%%%%%%%%%%%%%%%%%%%%%%%%%%%%%%%%%%%%%%%%%%%%%%%%%%%%%%%%%%%%%%%%%%%%%%%%%%%%%%%%%%%%%%%%%%%%%%%%%%%%%%%%%%%%%%%%%%%%%%%%%
\begin{comment}
%%%%%%%%%%%%%%%%%%%%%%%%%%%%%%%%%%%%%%%%%%%%%%%%%%%%%%%%%%%%%%%%%%%%%%%%%%%%%%%%%%%%%%%%%%%%%%%%%%%%%%%%%%%%%%%%%%%%%%%%%%%%%%%%%%%%%%%%%%%%%%%%%%%
\begin{align*}
0 & = \colorbox{red!50}{$\displaystyle \sum_{j=0}^{p^{\eta}-1}A^{p^{\eta}-1-j}\tilde{f}(j\cdot 1)$}+ \colorbox{blue!20}{$ B \displaystyle \sum_{j=0}^{p^{\eta}-1} A^j \tilde{f}\left(b_j\right)$}-\colorbox{blue!20}{$B \displaystyle\sum_{j=p^{\eta}+M-l}^{p^{\eta}+M-1} A^j \tilde{f}\left(b_j\right)$}  + \colorbox{green!50}{$ B \displaystyle\sum_{j=0}^{l-1} A^{M-l+j} \tilde{f}\left(b_j\right)$}\\
%
& + B \sum_{j=0}^{l-1}A^{M-1-j}\tilde{f}(j\cdot b)-\epsilon(M) BA^2\gamma +\colorbox{green!20}{$\epsilon(l) BA^{M-l+2}\gamma$} +\epsilon(p^{\eta}+M-l) BA^2\gamma + BA^{M}\alpha_1(\gamma)(a,b).
\end{align*}
%%%%%%%%%%%%%%%%%%%%%%%%%%%%%%%%%%%%%%%%%%%%%%%%%%%%%%%%%%%%%%%%%%%%%%%%%%%%%%%%%%%%%%%%%%%%%%%%%%%%%%%%%%%%%%%%%%%%%%%%%%%%%%%%%%%%%%%%%%%%%%%%%%%
\end{comment}
%%%%%%%%%%%%%%%%%%%%%%%%%%%%%%%%%%%%%%%%%%%%%%%%%%%%%%%%%%%%%%%%%%%%%%%%%%%%%%%%%%%%%%%%%%%%%%%%%%%%%%%%%%%%%%%%%%%%%%%%%%%%%%%%%%%%%%%%%%%%%%%%%%%
Now, by equality~\eqref{provisoria},
\begin{equation*}
B\sum_{j=p^{\eta}+M-l}^{p^{\eta}+M-1} A^j \tilde{f}\left(b_j\right) = B\sum_{j=0}^{l-1} A^{M-l+j} \tilde{f}\left(b_{M-l+j}\right) = B\sum_{j=0}^{l-1} A^{M-l+j} \tilde{f}\left(b_j+c_j\right),
\end{equation*}
which combined with equality~\eqref{eq1 lema tecnico}, shows that~\eqref{equivalente a 3.5'} is equivalent to
\begin{equation}\label{ultima equivalencia l mayor}
0=\sum_{j=0}^{p^{\eta}-1}A^{p^{\eta}-1-j}\tilde{f}(j\cdot 1)+ B\sum_{j=0}^{p^{\eta}-1} A^j\tilde{f}\left(b_j\right)+B\sum_{j=0}^{l-1} A^{M-l+j+1}\alpha_1(\gamma)(1,b_j+ c_j).
\end{equation}
Using again that $b_j+c_j=b_{M-l+j}$, we obtain
\begin{equation*}
B\sum_{j=0}^{l-1} A^{M-l+j+1}\alpha_1(\gamma)(1,b_j+c_j)= B\sum_{j=0}^{l-1} A^{M-l+j+1}\alpha_1(\gamma)(1,b_{M-l+j})= B\sum_{j=p^{\eta}+M-l}^{p^{\eta}+M-1} A^{j+1}\alpha_1(\gamma)\left(1,b_j\right)=B\gamma,
\end{equation*}
since, by Proposition~\ref{calculos de beta 1} and Remark~\ref{b_k=1 si y...}, $\alpha_1(\gamma)(1,b_{p^{\eta}-1}) =\gamma$ and $\alpha_1(\gamma)(1,b_j) = 0$ if $j\not\equiv -1\pmod{p^{\eta}}$. Now,~\eqref{ultima equivalencia l mayor} follows from the first equality in~\eqref{necesarias y suficientes con B} and Corollary~\ref{corolario de periodico en c igual a 1} with $c=1$, which concludes the proof of~\eqref{cociclo horizontal con D} in this case.
\end{proof}

\begin{theorem}\label{cobordes Bne0} A $2$-cocycle of the form $(\alpha_1(\gamma),f)\in \wh{C}^{02}_N(H,I)\oplus \wh{C}^{11}_N(H,I)$ is a $2$-coboundary if and only~if there exists $t_0\in I$, such that
\begin{equation}\label{coborde actua B}
f(1,1) =  (\Ide-A+(p^{\eta}-p^{\nu})\Ide+(p^{\nu}-1)BA)t_0 \qquad \text{and}\qquad \gamma= p^{\eta}t_0.
\end{equation}
\end{theorem}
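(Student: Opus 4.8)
The plan is to mirror the proof of Theorem~\ref{cobordes}, inserting the contribution of the diagonal differential $D$. Since $\wh{C}_N^1(H,I)=\wh{C}_N^{01}(H,I)$, a $2$-coboundary of the complex ${\mathcal{C}}_{\blackdiamond,\Yleft}(H,I)$ is $(\partial+D)(t)$ for some $t\in\wh{C}_N^{01}(H,I)$ with $t(0)=0$; as $D_1^2=D_{01}^{11}$, this reads
\[
(\partial+D)(t)=\bigl(\partial_{\mathrm{v}}^{02}(t),\,\partial_{\mathrm{h}}^{11}(t)+D_{01}^{11}(t)\bigr).
\]
Thus $(\alpha_1(\gamma),f)$ is a $2$-coboundary if and only if there is such a $t$ with $\partial_{\mathrm{v}}^{02}(t)=\alpha_1(\gamma)$ and $\partial_{\mathrm{h}}^{11}(t)+D_{01}^{11}(t)=f$.

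First I would dispatch the vertical condition, which is identical to the $\Yleft=0$ case. By~\eqref{equiv a beta1 es borde}, the equality $\partial_{\mathrm{v}}^{02}(t)=\alpha_1(\gamma)$ holds if and only if $t(j)=jt_0$ for $1\le j<p^{\eta}$, where $t_0\coloneqq t(1)$, and $t(0)=t(p^{\eta})=p^{\eta}t_0-\gamma$; since $t(0)=0$ this forces $\gamma=p^{\eta}t_0$, which is the second equality in~\eqref{coborde actua B}.

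Next I would reduce the horizontal condition to its value at $(1,1)$. The pair $(\partial+D)(t)$ is automatically a $2$-cocycle, and once $\partial_{\mathrm{v}}^{02}(t)=\alpha_1(\gamma)$ it is a cocycle of the form $(\alpha_1(\gamma),\cdot)$; by Remark~\ref{si cociclo da lo mismo1} (resting on the uniqueness statement of Proposition~\ref{necesaria para Yleft}), its second component is completely determined by its value at $(1,1)$ together with $\gamma$. The same applies to the given $f$, so $\partial_{\mathrm{h}}^{11}(t)+D_{01}^{11}(t)=f$ is equivalent to the two sides agreeing at $(1,1)$. The only new computation is the single evaluation of $D_{01}^{11}$ at $(1,1)$: from Theorem~\ref{cobordes} one has $\partial_{\mathrm{h}}^{11}(t)(1,1)=(p^{\eta}\Ide-p^{\nu}\Ide+\Ide-A)t_0$, while substituting $r=0$, $s=1$ into the definition of $D_{01}^{11}$ and using $0\cdot 1=1$, $1\blackdiamond t_0=At_0$, the formula $y\Yleft h=hBy$ from~\eqref{def de diamante general}, and $1\cdot 1=1-p^{\nu}$, gives
\[
D_{01}^{11}(t)(1,1)=-\bigl(1\blackdiamond t_0\bigr)\Yleft(1\cdot 1)=-(1-p^{\nu})BAt_0=(p^{\nu}-1)BAt_0.
\]
Adding the two contributions yields precisely the first equality in~\eqref{coborde actua B}.

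Both implications then follow. If $(\alpha_1(\gamma),f)$ is a coboundary, set $t_0\coloneqq t(1)$ and read off~\eqref{coborde actua B}; conversely, given $t_0$ satisfying~\eqref{coborde actua B}, define $t(j)\coloneqq jt_0$, check $\partial_{\mathrm{v}}^{02}(t)=\alpha_1(\gamma)$ through~\eqref{equiv a beta1 es borde}, and use the matching $(1,1)$-values together with the uniqueness cited above to conclude $\partial_{\mathrm{h}}^{11}(t)+D_{01}^{11}(t)=f$. The only genuinely new ingredient relative to the $\Yleft=0$ case is the diagonal term, so the main obstacle is the correct bookkeeping of the empty sum $r=0$ and the insertion of $\Yleft$ in evaluating $D_{01}^{11}(1,1)$; all the remaining steps transfer verbatim from Theorem~\ref{cobordes}.
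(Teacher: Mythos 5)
Your proof is correct and follows essentially the same route as the paper's: you dispatch the vertical component exactly as in Theorem~\ref{cobordes}, evaluate $\partial_{\mathrm{h}}^{11}(t)+D_{01}^{11}(t)$ at $(1,1)$ (with the correct value $D_{01}^{11}(t)(1,1)=-(1\cdot 1)BA\,t_0=(p^{\nu}-1)BAt_0$), and conclude both implications from the uniqueness statement of Proposition~\ref{necesaria para Yleft}. This matches the paper's argument step for step, so there is nothing to add.
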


\begin{proof} Let $t\in \wh{C}^{01}_N(H,I)$. The argument at the beginning of the proof of Theorem~\ref{cobordes} shows that~\eqref{equiv a beta1 es borde} is satisfied, and that, if $\partial_{\mathrm{v}}^{02}(t)=\alpha_1(\gamma)$, then $\gamma=p^{\eta}t(1)$. Assume now that $f = \partial_{\mathrm{h}}^{11}(t) + D_{01}^{11}(t)$. Then, by the definitions of $\partial_{\mathrm{h}}^{11}$ and~$D_{01}^{11}$, and equal\-ities~\eqref{def de diamante general}, we have
\begin{equation*}
f(a,b)=\partial_{\mathrm{h}}^{11}(t)(a,b)+D_{01}^{11}(t)(a,b)=t\bigl(a\cdot b\bigr) - A^{l(a)}t(b) - (a\cdot b)BA^{l(a)}t(a).
\end{equation*}
A direct computation using this and the formula for $t$ obtained in~\eqref{equiv a beta1 es borde}, yields
$$
f(1,1)=(\partial_{\mathrm{h}}^{11}(t)+ D_{01}^{11}(t))(1,1) =t(1-p^{\nu}) - (A+(1-p^{\nu})BA)t(1) = (p^{\eta} +1 - p^{\nu})t(1)  - (A+(1-p^{\nu})BA)t(1).
$$
So, if $\bigl(\partial_{\mathrm{v}}^{02}(t),\partial_{\mathrm{h}}^{11}(t)+ D_{01}^{11}(t)\bigr) = (\alpha_1(\gamma),f)$, then condition~\eqref{coborde actua B} is satisfied with $t_0\coloneqq t(1)$. Conversely, assume that~\eqref{coborde actua B} is satisfied and define $t\in \wh{C}^{01}_N(H,I)$ by $t(j)\coloneqq jt_0$, for $0\le j<p^{\eta}$. Since $t(p^{\eta}) = 0 =  p^{\eta} t_0 - \gamma$,~by~con\-di\-tion~\eqref{equiv a beta1 es borde}, we know that $\partial_{\mathrm{v}}^{02}(t) = \alpha_1(\gamma)$. Moreover, computing as above, we obtain
\begin{equation*}
\partial_{\mathrm{h}}^{11}(t)(1,1) + D_{01}^{11}(t)(1,1) = (p^{\eta} +1 - p^{\nu})t(1)  - (A+(1-p^{\nu})BA)t(1).
\end{equation*}
Consequently, $\partial_{\mathrm{h}}^{11}(t)(1,1)+ D_{01}^{11}(t)(1,1)= f(1,1)$, which concludes the proof, since $\partial_{\mathrm{v}}^{02}(t) = \alpha_1(\gamma)$ and, by Proposition~\ref{necesaria para Yleft}, in a $2$-cocycle of the form $(\alpha_1(\gamma),f)$, the map $f$ is univocally determined by $f(1,1)$ and $\gamma$.
\end{proof}

Let $F_1,F_2\colon I^2 \to I$ and $G\colon I\to I^2$ be the maps defined by
\begin{align*}
&F_1(y_1,y_2) \coloneqq p^{\eta}y_1- V y_2,\\
& F_2(y_1,y_2) \coloneqq P(A)(y_1-y_2)+Q(A)y_2+ABR(A)y_1+(S+1)ABy_2
\shortintertext{and}
& G(y) \coloneqq (Vy+(p^{\nu}-1)BAy,p^{\eta}y),
\end{align*}
where $V\coloneqq p^{\eta}\Ide- p^{\nu}\Ide +\Ide-A$ and $P$, $Q$, $R$ and $S$ are as in \eqref{los polinomios1} and~\eqref{mas polinomios}. The argument given above~Corol\-lary~\ref{calculo de la homo}, shows that the map
\begin{equation*}
F\colon \ker(F_1)\cap \ker(F_2) \to \wh{C}^{02}_N(H,I)\oplus \wh{C}^{11}_N(H,I),
\end{equation*}
given by $F(f_0,\gamma)\coloneqq \bigl(\alpha_1(\gamma),f_{f_0,\gamma}\bigr)$, where $\alpha_1(\gamma)$ is as in~\eqref{sime} and $f_{f_0,\gamma}$ is as in Definition~\ref{definicion de breve f con B}, is a linear map.

\begin{corollary}\label{calculo de la homo B'} The map $F$ induces an iso\-morphism
\begin{equation*}
\frac{\ker(F_1)\cap \ker(F_2)}{\ima(G)}\simeq \ho_{\blackdiamond}^2(H,I).
\end{equation*}
\end{corollary}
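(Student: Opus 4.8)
The plan is to show that $F$ is a well-defined linear map into the group of $2$-cocycles of $\mathcal{C}_{\blackdiamond,\Yleft}(H,I)$, that its composite with the projection to $\ho_{\blackdiamond}^2(H,I)$ is surjective, and that the kernel of this composite is exactly $\ima(G)$; the first isomorphism theorem then yields the claim. The entire argument is bookkeeping once Theorems~\ref{cociclos con B} and~\ref{cobordes Bne0} are in hand: the whole point is that $\ker(F_1)\cap\ker(F_2)$ encodes precisely the cocycle conditions~\eqref{necesarias y suficientes con B}, while $\ima(G)$ encodes precisely the coboundary conditions~\eqref{coborde actua B}.

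First I would unwind the definitions of $F_1$, $F_2$ and the pair~\eqref{necesarias y suficientes con B}. Writing $V = p^{\eta}\Ide - p^{\nu}\Ide + \Ide - A$, the equation $F_1(f_0,\gamma)=0$ is literally the second condition in~\eqref{necesarias y suficientes con B}, namely $p^{\eta}f_0 = V\gamma$. For $F_2$, the first condition in~\eqref{necesarias y suficientes con B} reads $A^{-1}P(A)(f_0-\gamma)+A^{-1}Q(A)\gamma+BR(A)f_0+(S+1)B\gamma=0$; multiplying on the left by the invertible endomorphism $A$ (recall $A^{p^{\eta}}=\Ide$ from Proposition~\ref{condiciones sin potencia general}) turns this into $F_2(f_0,\gamma)=0$. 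Hence $(f_0,\gamma)\in\ker(F_1)\cap\ker(F_2)$ if and only if $(f_0,\gamma)$ satisfies~\eqref{necesarias y suficientes con B}, and by the converse part of Theorem~\ref{cociclos con B} this is equivalent to $F(f_0,\gamma)=\bigl(\alpha_1(\gamma),f_{f_0,\gamma}\bigr)$ being a $2$-cocycle. Thus $F$ lands in the group of $2$-cocycles; linearity was already recorded above the statement. Moreover $\ima(G)\subseteq\ker(F_1)\cap\ker(F_2)$, since every coboundary is a cocycle, so the quotient is well formed.

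Next I would analyze the composite of $F$ with the projection onto $\ho_{\blackdiamond}^2(H,I)$. For surjectivity, Corollary~\ref{basta tomar b_1(gamma)} (equivalently, the first assertion of Theorem~\ref{cociclos con B}) shows every class is represented by some $(\alpha_1(\gamma),f)$; setting $f_0\coloneqq f(1,1)$, Remark~\ref{si cociclo da lo mismo1} gives $f=f_{f_0,\gamma}$, so that class equals $F(f_0,\gamma)$ modulo coboundaries, with $(f_0,\gamma)\in\ker(F_1)\cap\ker(F_2)$ because $F(f_0,\gamma)$ is a cocycle. To compute the kernel, I would read off $G$: since $V+(p^{\nu}-1)BA = \Ide-A+(p^{\eta}-p^{\nu})\Ide+(p^{\nu}-1)BA$ and $p^{\eta}\Ide$ are exactly the operators in~\eqref{coborde actua B}, one has $G(t_0)=(f_0,\gamma)$ precisely when $f_0=(\Ide-A+(p^{\eta}-p^{\nu})\Ide+(p^{\nu}-1)BA)t_0$ and $\gamma=p^{\eta}t_0$. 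Theorem~\ref{cobordes Bne0} then says $F(f_0,\gamma)$ is a coboundary if and only if $(f_0,\gamma)\in\ima(G)$, invoking Remark~\ref{si cociclo da lo mismo1} once more to replace the abstract $f$ by the explicit $f_{f_0,\gamma}$. Hence the kernel of the composite is exactly $\ima(G)$.

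The first isomorphism theorem then produces the asserted linear isomorphism $\frac{\ker(F_1)\cap\ker(F_2)}{\ima(G)}\simeq\ho_{\blackdiamond}^2(H,I)$. I do not expect any genuine obstacle at this stage: all the analytic difficulty has already been absorbed into Lemma~\ref{lema tecnico} and Theorems~\ref{cociclos con B} and~\ref{cobordes Bne0}. The only points that demand care are the harmless normalization by $A$ that matches $F_2$ to~\eqref{necesarias y suficientes con B}, and systematically quoting Remark~\ref{si cociclo da lo mismo1} whenever an abstract cocycle $f$ must be identified with the explicit map $f_{f_0,\gamma}$ built from $f_0$ and $\gamma$.
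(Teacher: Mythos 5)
Your proposal is correct and follows essentially the same route as the paper, which simply cites Remark~\ref{si cociclo da lo mismo1} and Theorems~\ref{cociclos con B} and~\ref{cobordes Bne0}; your unwinding of $\ker(F_1)\cap\ker(F_2)$ as the conditions~\eqref{necesarias y suficientes con B} (after multiplying the first one by the invertible $A$) and of $\ima(G)$ as the conditions~\eqref{coborde actua B} is exactly the intended bookkeeping. The first-isomorphism-theorem packaging and the systematic use of Remark~\ref{si cociclo da lo mismo1} to identify an abstract cocycle $f$ with $f_{f_0,\gamma}$ are both as the paper intends.
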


\begin{proof} This follows immediately from Remark~\ref{si cociclo da lo mismo1} and Theorems~\ref{cociclos con B} and~\ref{cobordes Bne0}.
\end{proof}

\section{Examples}\label{seccion 4}

In Subsection~\ref{seccion 4.1}, we explicitly compute all extensions with $\Yleft=0$, of $H$, endowed with the trivial linear cycle set structure (case $\eta=\nu$), by a finite cyclic $p$-group $I=\mathds{Z}_{p^r}$. In Subsection~\ref{seccion 4.2}, we construct some extensions when $H$ is non-trivial (case $\nu<\eta$) or $\Yleft\ne 0$.

\begin{lemma}\label{para ejemplos} Let $p\in \mathds{N}$ be an odd prime and let $r\ge 1$. Let $0\le a<p^r$ and $\eta\in \mathds{N}$. Then
\begin{equation*}
a^{p^{\eta}}\equiv 1\pmod{p^r} \Longleftrightarrow a^{p^{\eta_0}}\equiv 1\pmod{p^r} \Longleftrightarrow  a\equiv 1\pmod{p^{r-\eta_0}},
\end{equation*}
where $\eta_0\coloneqq \min(r-1,\eta)$
\end{lemma}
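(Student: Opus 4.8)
The plan is to reduce all three conditions to a single inequality on the $p$-adic valuation $s\coloneqq v_p(a-1)$ of $a-1$ (with the convention $s=\infty$ when $a=1$, a case in which every assertion holds trivially). The tool that makes this possible is the lifting-the-exponent identity for the odd prime $p$: if $p\mid a-1$, then
\[
v_p\bigl(a^{p^m}-1\bigr)=v_p(a-1)+m\qquad\text{for all }m\ge 0.
\]
Granting this, each of the three congruences modulo $p^r$ turns into an inequality of the shape $s+(\cdot)\ge r$, and a short bookkeeping computation shows they all coincide.

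First I would prove the displayed identity by induction on $m$, the substantive step being $m=1$. Writing $a=1+p^su$ with $p\nmid u$ and expanding
\[
a^{p}-1=\sum_{k=1}^{p}\binom{p}{k}p^{ks}u^{k},
\]
the term $k=1$ equals $p^{s+1}u$ and has valuation exactly $s+1$. For $2\le k\le p-1$ the prime $p$ divides $\binom{p}{k}$, so that term has valuation at least $1+ks\ge 1+2s\ge s+2$; and the term $k=p$ has valuation $ps=s+s(p-1)\ge s+2$, since $s\ge 1$ and $p\ge 3$. Hence every term except $k=1$ has valuation strictly larger than $s+1$, giving $v_p(a^{p}-1)=s+1$; as this is again $\ge 1$, the induction propagates. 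I expect this valuation estimate to be the main obstacle, and it is precisely here that the oddness of $p$ is indispensable: for $p=2$ and $s=1$ the $k=2$ term competes with the $k=1$ term and the identity breaks.

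It then remains to translate. If $s=0$, that is $a\not\equiv 1\pmod p$, then Fermat's little theorem gives $a^{p^m}\equiv a\pmod p$, so $v_p(a^{p^m}-1)=0<r$ for every $m$; thus all three congruences fail simultaneously and the equivalences hold vacuously, so I may assume $s\ge 1$. Using $r-\eta_0=r-\min(r-1,\eta)=\max(1,r-\eta)$, the condition $a\equiv 1\pmod{p^{\,r-\eta_0}}$ reads $s\ge\max(1,r-\eta)$, which under $s\ge 1$ is just $s\ge r-\eta$. By the identity, $a^{p^{\eta}}\equiv 1\pmod{p^r}$ reads $s+\eta\ge r$, i.e. $s\ge r-\eta$, while $a^{p^{\eta_0}}\equiv 1\pmod{p^r}$ reads $s+\eta_0\ge r$, i.e. $s\ge\max(1,r-\eta)$, again equivalent to $s\ge r-\eta$ once $s\ge 1$. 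Therefore the three conditions are all equivalent to the single inequality $s\ge r-\eta$, which proves the lemma.
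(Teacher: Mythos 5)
Your proof is correct, but it follows a genuinely different route from the paper's. The paper first uses Euler's theorem to identify the conditions for $\eta$ and $\eta_0$, and then exploits the cyclicity of $U(\mathds{Z}_{p^r})$: writing $\ov{a}=s^l$ for a primitive root $s$, it translates $a^{p^{\eta_0}}\equiv 1\pmod{p^r}$ into the divisibility $(p-1)p^{r-1-\eta_0}\mid l$ and descends by induction on $r$ (reducing modulo $p^{r-1}$ and replacing $\eta_0$ by $\eta_0-1$) until the condition becomes $a\equiv 1\pmod{p^{r-\eta_0}}$. You instead bypass primitive roots entirely and run everything through the lifting-the-exponent identity $v_p\bigl(a^{p^m}-1\bigr)=v_p(a-1)+m$, reducing all three congruences to the single inequality $s\ge r-\eta$ on $s=v_p(a-1)$; your valuation estimate on the binomial expansion is carried out correctly, and the degenerate cases $s=0$ and $a=1$ are handled. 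Your argument is more elementary and self-contained, and it isolates exactly where the hypothesis that $p$ is odd is used (the $k=2$ term of the expansion would compete with the $k=1$ term when $p=2$ and $s=1$) --- which is precisely the obstruction that forces the paper to state a separate Lemma~\ref{para ejemploscon2} for $p=2$. The paper's discrete-logarithm argument, on the other hand, transfers with minimal changes to the $p=2$ case once one replaces ``cyclic'' by ``cyclic up to a factor $\{\pm 1\}$,'' which is how the companion lemma is proved there.
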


\begin{proof} If $a\notin U(\mathds{Z}_{p^r})$, none of the conditions are met. Else, by Euler Theorem, $a^{(p-1)p^{r-1}}\equiv 1\pmod{p^r}$. Hence, $a^{p^{\eta}}\equiv 1\pmod{p^r}$ if and only if $a^{p^{\eta_0}}\equiv 1\pmod{p^r}$. When $r=1$, this finishes the proof. So we assume that $r>1$. Let~$s$ be a generator of $U(\mathds{Z}_{p^r})$ and write $\ov{a} = s^l$, where $\ov{a}$ is the class of $a$ in $\mathds{Z}_{p^r}$. Note that $a^{p^{\eta_0}} \equiv 1\pmod{p^r}$ if and only if $(p-1)p^{r-1-\eta_0}\mid l$. Let $\hat{a}$ and $\hat{s}$ be the classes in $\mathds{Z}_{p^{r-1}}$, of $a$ and $s$, respectively. Since $\hat{s}$ is a generator of $U(\mathds{Z}_{p^{r-1}})$ and $\hat{a} = \hat{s}^l$, the same argument as above proves that $(p-1)p^{(r-2)-(\eta_0-1)}= (p-1)p^{r-1-\eta_0}\mid l$ if and only if $a^{p^{\eta_0-1}}\equiv 1\pmod{p^{r-1}}$. An inductive argument concludes the proof.
\end{proof}

\begin{remark}\label{complemento para ejemplos} The previous lemma and its proof hold for $p=2$ when $1\le r\le 2$ (because then $U(\mathds{Z}_{2^r})$ is cyclic).
\end{remark}

\begin{lemma}\label{para ejemploscon2} Let $r>2$, $0\le a<2^r$ and $\eta\in \mathds{N}$. Then
\begin{equation*}
a^{2^{\eta}}\equiv 1\pmod{2^r} \Longleftrightarrow a^{2^{\eta_0}}\equiv 1\pmod{2^r} \Longleftrightarrow  a^2\equiv 1\pmod{2^{r+1-\eta_0}},
\end{equation*}
where $\eta_0\coloneqq \min(r-2,\eta)$.
\end{lemma}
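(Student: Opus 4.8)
The plan is to avoid the cyclicity of $U(\mathds{Z}_{2^r})$ (which fails for $r>2$) and to argue instead through the $2$-adic valuation $v_2$. First I would dispose of the trivial cases. Since $r\ge 3$ and $\eta\ge 1$, we have $\eta_0=\min(r-2,\eta)\ge 1$, and the modulus $2^{r+1-\eta_0}$ satisfies $r+1-\eta_0\ge 3$. Hence if $a$ is even, then $a^{2^{\eta}}$, $a^{2^{\eta_0}}$ and $a^2$ are all even, so all three congruences fail and the two equivalences hold vacuously. Thus I may assume that $a$ is odd.

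The key computational input is the valuation formula
\begin{equation*}
v_2(a^{2^j}-1)=v_2(a^2-1)+j-1\qquad\text{for all odd $a$ and all $j\ge 1$.}
\end{equation*}
I would prove this by induction on $j$, using the factorization $a^{2^j}-1=(a^{2^{j-1}}-1)(a^{2^{j-1}}+1)$: for $j\ge 2$ the factor $a^{2^{j-1}}$ is the square of an odd integer, hence $\equiv 1\pmod 8$, so $a^{2^{j-1}}+1\equiv 2\pmod 8$ and $v_2(a^{2^{j-1}}+1)=1$; this yields the recursion $v_2(a^{2^j}-1)=v_2(a^{2^{j-1}}-1)+1$, and the base case $j=1$ is trivial. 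I would also record the elementary fact that $v_2(a^2-1)=v_2(a-1)+v_2(a+1)\ge 3$ for every odd $a$, since among the consecutive even numbers $a-1$ and $a+1$ one is divisible by $4$ and the other exactly by $2$.

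With these in hand the conclusion is bookkeeping on inequalities. Write $w\coloneqq v_2(a^2-1)\ge 3$. By the valuation formula the congruences $a^{2^{\eta}}\equiv 1\pmod{2^r}$ and $a^{2^{\eta_0}}\equiv 1\pmod{2^r}$ are equivalent to $w\ge r+1-\eta$ and $w\ge r+1-\eta_0$, respectively, while $a^2\equiv 1\pmod{2^{r+1-\eta_0}}$ is by definition the inequality $w\ge r+1-\eta_0$. Thus the second equivalence of the statement is immediate, both sides being literally $w\ge r+1-\eta_0$. For the first equivalence, if $\eta\le r-2$ then $\eta_0=\eta$ and there is nothing to prove; if $\eta\ge r-1$ then $\eta_0=r-2$, so $r+1-\eta\le 2$ and $r+1-\eta_0=3$, whence both inequalities $w\ge r+1-\eta$ and $w\ge r+1-\eta_0$ hold automatically because $w\ge 3$. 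In either case the $\eta$- and $\eta_0$-conditions coincide, which completes the argument.

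The proof is essentially routine once the valuation formula is set up; there is no single hard step, and the main obstacle is keeping the edge cases straight. The two delicate points are the bound $v_2(a^2-1)\ge 3$, which is exactly what makes the subcase $\eta\ge r-1$ collapse, and the observation that $\eta_0\ge 1$ (a consequence of $r\ge 3$ and $\eta\ge 1$): were $\eta_0=0$ allowed, the second equivalence would be false, as $a=2^r-1$ shows, since then $a^2\equiv 1\pmod{2^{r+1}}$ but $a\not\equiv 1\pmod{2^r}$.
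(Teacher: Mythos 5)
Your proof is correct, but it follows a genuinely different route from the paper's. The paper works inside the unit group, using the decomposition $U(\mathds{Z}_{2^r})=\{s^i,-s^i: 0\le i<2^{r-2}\}$ to write $\ov{a}=\pm s^l$, translating the congruence $a^{2^{\eta_0}}\equiv 1\pmod{2^r}$ into the divisibility $2^{r-2-\eta_0}\mid l$, and then descending inductively from modulus $2^r$ to $2^{r-1}$ while lowering the exponent by one, until the exponent reaches $2$; this mirrors the odd-prime argument of Lemma~\ref{para ejemplos}. You instead prove the lifting-the-exponent identity $v_2\bigl(a^{2^j}-1\bigr)=v_2(a^2-1)+j-1$ for odd $a$ and $j\ge 1$ via the factorization $a^{2^j}-1=(a^{2^{j-1}}-1)(a^{2^{j-1}}+1)$, and reduce all three congruences to inequalities on $w=v_2(a^2-1)\ge 3$. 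Your version buys self-containedness (no appeal to the structure of $U(\mathds{Z}_{2^r})$) and makes visible exactly why the statement must be phrased in terms of $a^2$ rather than $a$ — the identity fails at $j=0$, which is the same non-cyclicity obstruction the paper sidesteps with the $\pm s^l$ decomposition; the bound $w\ge 3$ plays the role that the order bound $s^{2^{r-2}}=1$ plays in the paper when collapsing $\eta$ to $\eta_0$. The paper's version has the virtue of running in visible parallel with the odd case and with Remark~\ref{A para ejemploscon2}. One cosmetic caveat: your valuation bookkeeping silently uses the convention $v_2(0)=\infty$ when $a\equiv\pm 1\pmod{2^{r}}$ (e.g.\ $a=1$); this is harmless but worth a word.
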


\begin{proof}  If $a\notin U(\mathds{Z}_{2^r})$, none of the conditions are met. Else $a^{2^{r-2}}\equiv 1\pmod{2^r}$, and so $a^{2^{\eta}}\equiv 1\pmod{2^r}$ if and only if $a^{2^{\eta_0}}\equiv 1\pmod{2^r}$. When $r=3$, this finishes the proof. So we assume that $r>3$. Let $s\in U(\mathds{Z}_{2^r})$ be such that $U(\mathds{Z}_{2^r}) = \{s^i,-s^i:0\le i<2^{r-2}\}$. Write $\ov{a} = \pm s^l$, where $\ov{a}$ is the class of $a$ in $\mathds{Z}_{2^r}$. Since $\eta_0\ge 1$, we have $a^{2^{\eta_0}} \equiv 1\pmod{2^r}$ if and only if $2^{r-2-\eta_0}\mid l$. Let $\hat{a}$ and $\hat{s}$ be the classes in $\mathds{Z}_{2^{r-1}}$, of $a$ and $s$, respectively. Since $U(\mathds{Z}_{2^{r-1}}) = \{\hat{s}^i,-\hat{s}^i:0\le i<2^{r-3}\}$ and $\hat{a} = \pm\hat{s}^l$, the same argument as above proves that $2^{(r-3)-(\eta_0-1)}= 2^{r-2-\eta_0}\mid l$ if and only if $a^{2^{\eta_0-1}}\equiv 1\pmod{2^{r-1}}$. An inductive argument concludes the proof.
\end{proof}

\begin{remark}\label{A para ejemploscon2} It is well known that $a^2\equiv 1\pmod{2^{r+1-\eta_0}}$ if and only if $a\equiv \pm 1\pmod{2^{r-\eta_0}}$
\end{remark}

\subsection[Cases with \texorpdfstring{$H$}{H]} trivial and \texorpdfstring{$\Yleft=0$}{-<=0}]{Cases with \texorpdfstring{$\pmb{H}$}{H} trivial and $\texorpdfstring{\pmb{\Yleft=0}}{-<=0}$}\label{seccion 4.1}

Let $H$ be as at the beginning of Section~\ref{seccion 2}. Here we assume that $\nu = \eta$ (i.e. that $H$ is trivial) and we compute all the classes of extensions $(\iota,I\times_{\beta,f}^{\blackdiamond,\Yleft} H,\pi)$ of $H$ by a cyclic $p$-group $I$, when $\Yleft=0$ (which, by Remark~\ref{condicion para I incluido en Soc}, is equivalent to say that $\iota(I) \subseteq \Soc (I\times_{\beta,f}^{\blackdiamond,\Yleft} H)$). By Proposition~\ref{condiciones sin potencia general} and Corollary~\ref{equiv caso general}, for this we must compute all the endomorphisms $A\colon I\to I$, such that $A^{p^{\eta}} = \Ide$ and then, for each one, the cohomology group $\ho_{\blackdiamond}^2(H,I)$. For the last task, we will use Corollary~\ref{calculo de la homo}. Thus, in each of the cases that we study, we must calculate the maps $F_1$, $F_2$, $G$ and $F$, introduced above that corollary, in order to obtain a family of pairs $(f_0,\gamma)\in \ker(F_1)\cap \ker(F_2)$, which parametrizes a complete family $F(f_0,\gamma) = (\alpha_1(\gamma),f_{f_0})$, of representatives of $2$-cocycles modulo coboundaries in ${\mathcal{C}}_{\blackdiamond}(H,I)$. In each one of the examples, the family of parameters $(f_0,\gamma)$ is in bijective correspondence with a set of pairs $(z_1,z_2)$ where $z_1$ and $z_2$ belong to subquotients of $\mathds{Z}_{p^r}$. For the sake of simplicity in all the cases we limit ourselves to writing the representative~$(f_0,\gamma)$ as a function of $z_1$ and $z_2$. For instance, the phrase ``In this case $(f_0,\gamma)\in\{(p^{r-\eta}z_1,z_2):0\le z_1,z_2<p^{\eta}\}$'' means ``In this case a complete family $(\alpha_1(\gamma),f_{f_0})$ of representatives of $2$-cocycles modulo coboundaries in ${\mathcal{C}}_{\blackdiamond}(H,I)$ is parameterized by the pairs $(f_0,\gamma)\in \{(p^{r-\eta}z_1,z_2):0\le z_1,z_2<p^{\eta}\}$''.

\smallskip

Since $F(f_0,\gamma)\coloneqq \bigl(\alpha_1(\gamma),f_{f_0,\gamma}\bigr)$, we next compute, for convenience, the map $f_{f_0,\gamma}$, under the condition that $\nu=\eta$. Since $1^{\times h} = h = j\cdot h$, for all $j,h\in H$, by Definition~\ref{definicion de breve f} and Proposition~\ref{previa a cociclos}, we have
\begin{equation}\label{f simplificado}
f_{f_0,\gamma}(h,c)=f_{f_0,\gamma}(1^{\times h},c)=\sum_{j=0}^{h-1} A^{h-1-j}\tilde {f}(c)=c\sum_{j=0}^{h-1} A^{h-1-j}\tilde {f}(1) = c\sum_{j=0}^{h-1} A^j f_0.
\end{equation}
Hence $f_{f_0,\gamma}$ does not depend on $\gamma$, and we will write $f_{f_0}$ instead of $f_{f_0,\gamma}$. Furthermore, due to the simplicity of formula~\eqref{sime}, we will determine only $\gamma$ in all cases, rather than $\alpha_1(\gamma)$. In many of these cases, there are simplifications of the formula for $f_{f_0}$ given in~\eqref{f simplificado}. In all the computations we will assume that $0\le h<p^{\eta}$, for all $h\in H$, and we will use freely the notations of Corollary~\ref{calculo de la homo}.

\subsubsection[Case \texorpdfstring{$H=\mathds{Z}_{p^{\eta}}$}{H=Zpn} and \texorpdfstring{$I=\mathds{Z}_{p^r}$}{I=Zpr} with \texorpdfstring{$r\le \eta$}{r<=n}, if \texorpdfstring{$p$}{p} is odd, and \texorpdfstring{$r\le \min(2,\eta)$}{r<= min(2,n)}, if \texorpdfstring{$p=2$}{p=2}]{Case \texorpdfstring{$\pmb{H=\mathds{Z}_{p^{\eta}}}$}{H=Zpn} and \texorpdfstring{$\pmb{I=\mathds{Z}_{p^r}}$}{I=Zpr} with \texorpdfstring{$\pmb{r\le \eta}$}{r<=n}, if \texorpdfstring{$\pmb{p}$}{p} is odd, and \texorpdfstring{$\pmb{r\le \min(2,\eta)}$}{r<= min(2,n)}, if \texorpdfstring{$\pmb{p=2}$}{p=2}}

\begin{proposition} Let $H\coloneqq\left(\mathds{Z}_{p^{\eta}},+\right)$, endowed with the trivial cycle set structure, and let $I\coloneqq \mathds{Z}_{p^r}$. Assume~$r\le \eta$, if $p$ is odd, and $r\le \min(2,\eta)$, if $p=2$. There exists $0\le k<p^{r-1}$, such that $h\blackdiamond y =  (1+pk)^h y = \sum_{i=0}^h \binom{h}{i} p^ik^iy$, and there are two cases:
\begin{description}

\item[$k=0$] In this case $(f_0,\gamma)\in\{(z_1,z_2):0\le z_1,z_2<p^r\}$. Moreover $f_{f_0}(h,h')=f_0 hh'$.

\item[$k\ne 0$]In this case $(f_0,\gamma)\in\{(z_1,p^{r-u-1}z_2):0\le z_1,z_2<p^{u+1}\}$, where $0\le u<r-1$ is such that $k=p^us$ with $p\nmid s$. Moreover, $f_{f_0}(h,h') = \sum_{l=0}^{h-1} \binom{h}{l+1} p^{(u+1)l}s^l f_0 h'$.
\end{description}

\end{proposition}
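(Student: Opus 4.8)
The plan is to first determine the endomorphism $A$ and the action $\blackdiamond$, and then to feed the resulting data into the cohomology formula of Proposition~\ref{ejemplo complicado}. By Proposition~\ref{condiciones sin potencia general} (with $B=0$, as $\Yleft=0$) the action has the form $h\blackdiamond y = A^{l(h)}y$ for an $A\in\End(I)$ with $A^{p^{\eta}}=\Ide$; since $I=\mathds{Z}_{p^r}$ we may write $A=a\Ide$, so that $a^{p^{\eta}}\equiv 1\pmod{p^r}$. As $\nu=\eta$ the cycle set $H$ is trivial, whence $1^{\times h}=h$ and $l(h)=h$ in $\mathds{Z}_{p^{\eta}}$, giving $h\blackdiamond y=a^hy$. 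I would then apply Lemma~\ref{para ejemplos} together with Remark~\ref{complemento para ejemplos} (the latter covering $p=2$, $r\le 2$) to the constraint $a^{p^{\eta}}\equiv 1$: because $r\le\eta$ forces $\eta_0=\min(r-1,\eta)=r-1$, this constraint reduces to $a\equiv 1\pmod{p}$, i.e.\ $a=1+pk$ with $0\le k<p^{r-1}$, and every such $k$ occurs. Expanding $a^h=(1+pk)^h$ by the binomial theorem gives the stated formula for $h\blackdiamond y$.

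Next I would compute $\ho_{\blackdiamond}^2(H,I)$ through Corollary~\ref{calculo de la homo}. The crucial point is that $\nu=\eta$ and $r\le\eta$ yield $p^{\nu}I=p^{\eta}I=0$, while $A-\Ide=pk\,\Ide$ is divisible by $p$, so $p\mid(A-\Ide)^{p-1}$ holds for both $k=0$ and $k\ne 0$. Hence Proposition~\ref{ejemplo complicado} applies, and substituting $p^{\nu}\Ide=0$ into its conclusion gives $\ho_{\blackdiamond}^2(H,I)\simeq \frac{I}{(A-\Ide)I}\oplus\ker(A-\Ide)$. Tracing the isomorphism $F(f_0,\gamma)=(\alpha_1(\gamma),f_{f_0})$ back through the proof of that proposition identifies the first summand with $f_0$ modulo $(A-\Ide)I$ and the second with $\gamma\in\ker(A-\Ide)$; it then remains only to make these two subquotients of $\mathds{Z}_{p^r}$ explicit.

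For $k=0$ we have $A=\Ide$, so $(A-\Ide)I=0$ and $\ker(A-\Ide)=I$; thus $(f_0,\gamma)$ runs over $\{(z_1,z_2):0\le z_1,z_2<p^r\}$, and by~\eqref{f simplificado} with $A=\Ide$ one finds $f_{f_0}(h,h')=h'\sum_{j=0}^{h-1}f_0=f_0hh'$. For $k\ne 0$, write $k=p^us$ with $p\nmid s$ and $0\le u<r-1$; then $A-\Ide=p^{u+1}s\,\Ide$ with $s$ a unit of $\mathds{Z}_{p^r}$, so $(A-\Ide)I=p^{u+1}I$ and $\ker(A-\Ide)=p^{r-u-1}I$, each of order $p^{u+1}$, and choosing representatives yields precisely $(f_0,\gamma)\in\{(z_1,p^{r-u-1}z_2):0\le z_1,z_2<p^{u+1}\}$. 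To obtain $f_{f_0}$ I would expand $\sum_{j=0}^{h-1}A^jf_0=\sum_{j=0}^{h-1}(1+p^{u+1}s)^jf_0$, interchange the two sums, and apply the hockey-stick identity $\sum_{j=0}^{h-1}\binom{j}{i}=\binom{h}{i+1}$ to get $\sum_{l=0}^{h-1}\binom{h}{l+1}p^{(u+1)l}s^lf_0$; multiplying by $h'$ through~\eqref{f simplificado} gives the claimed expression. Using the hockey-stick identity rather than the quotient $(a^h-1)/(a-1)$ avoids dividing by the non-invertible element $p^{u+1}$ in $\mathds{Z}_{p^r}$.

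The main obstacle is the bookkeeping in the last two paragraphs: transporting the abstract isomorphism of Corollary~\ref{calculo de la homo} and Proposition~\ref{ejemplo complicado} into the explicit parametrization by $(z_1,z_2)$, and in particular verifying the module identities $(A-\Ide)I=p^{u+1}I$ and $\ker(A-\Ide)=p^{r-u-1}I$ in $\mathds{Z}_{p^r}$ and checking that the stated representatives are the correct ones for the quotient $I/(A-\Ide)I$ and for $\ker(A-\Ide)$.
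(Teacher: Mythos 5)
Your proposal is correct, and it reaches the same parametrization as the paper, but it organizes the cohomology computation differently. The paper's proof works entirely inside the framework of Corollary~\ref{calculo de la homo}: it computes $P(a)$ and $Q(a)$ from scratch for $a=1+pk$ (via the valuation estimate $p^{\eta-t}\mid\binom{p^{\eta}}{l+1}$), concludes $F_2=0$, and reads off $F_1(y_1,y_2)=pky_2$ and $G(y)=(-pky,0)$ directly, so that $\frac{\ker F_1\cap\ker F_2}{\ima G}$ is computed by hand. You instead check the hypotheses $p\mid(A-\Ide)^{p-1}$ and $p^{\eta}I=0$ and invoke Proposition~\ref{ejemplo complicado}, which (after setting $p^{\nu}\Ide=0$ on $I$) collapses to $\frac{I}{(A-\Ide)I}\oplus\ker(A-\Ide)$; this is exactly the specialization the paper itself advertises in the remark following that proposition, although its Section~4 proof does not actually use it. Your identification of the two summands with $f_0\bmod (A-\Ide)I$ and $\gamma\in\ker(A-\Ide)$ is consistent with $\ker(F_1)\cap\ker(F_2)=I\oplus\ker(A-\Ide)$ and $\ima(G)=(A-\Ide)I\oplus 0$ in this case, and the module identities $(A-\Ide)I=p^{u+1}\mathds{Z}_{p^r}$ and $\ker(A-\Ide)=p^{r-u-1}\mathds{Z}_{p^r}$ are immediate since $s$ is a unit. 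The remaining ingredients --- the reduction $a=1+pk$ via Lemma~\ref{para ejemplos} and Remark~\ref{complemento para ejemplos}, and the evaluation of $f_{f_0}$ through \eqref{f simplificado} and the hockey-stick identity --- coincide with the paper's computation \eqref{cociclo1 ej 3}. What your route buys is the elimination of the explicit $P(a)$, $Q(a)$ computation; what it costs is that the reader must unwind Proposition~\ref{ejemplo complicado} to see where $f_0$ and $\gamma$ land, which the paper's direct computation makes transparent.
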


\begin{proof} Let $A$ be as in~Propo\-sition~\ref{condiciones sin potencia general}. Since~$A$ is the multiplication by an invertible element $a\in \mathds{Z}_{p^r}$ and $A^{p^{\eta}} = \Ide$, we know that $a = 1+pk$ with $0\le k< p^{r-1}$ (see Lemma~\ref{para ejemplos} and Remark~\ref{complemento para ejemplos}). Since $1^{\times h}=h$, from the first equality in~\eqref{def de diamante general}, we obtain the formula for $h \blackdiamond y$. Let $F_1$, $F_2$ and $G$ be as above Corollary~\ref{calculo de la homo} and let $\ov{(f_0,\gamma)}\in \frac{\ker F_1\cap \ker F_2}{\ima G}$. From equality~\eqref{f simplificado}, we obtain
\begin{equation}\label{cociclo1 ej 3}
f_{f_0}(h,h') =\sum_{j=0}^{h-1} (1+pk)^j f_0h'= \sum_{j=0}^{h-1} \sum_{l=0}^j\binom{j}{l} p^lk^lf_0h'= \sum_{l=0}^{h-1} \sum_{j=l}^{h-1} \binom{j}{l} p^lk^lf_0h' = \sum_{l=0}^{h-1} \binom{h}{l+1} p^lk^lf_0h'.
\end{equation}
We claim that
\begin{equation}\label{GFF en ejemplo 3}
G(y) = (-pky,0),\quad F_1(y_1,y_2) = pky_2\quad\text{and}\quad F_2(y_1,y_2) = 0.
\end{equation}
In fact, the formulas for $G$ and $F_1$ follow by a direct computation, using that $p^{\eta}I=0$, because $r\le\eta$. In order to obtain the formula for $F_2$, we first need to calculate $P(a)$ and $Q(a)$, where $P$ and $Q$ are as in~\eqref{los polinomios1}. Since $p^r\mid p^{\eta}$ and $a^j = (1+pk)^j = \sum_{l=0}^j \binom{j}{l}p^lk^l$, we have
\begin{equation*}
P(a) = \sum_{j=0}^{p^{\eta}-1} (jp^{\eta}+1)a^j = \sum_{j=0}^{p^{\eta}-1} a^j = \sum_{j=0}^{p^{\eta}-1}\sum_{l=0}^j \binom{j}{l}p^lk^l = \sum_{l=0}^{p^{\eta}-1}\sum_{j=l}^{p^{\eta}-1} \binom{j}{l}p^lk^l = \sum_{l=0}^{p^{\eta}-1}\binom{p^{\eta}}{l+1} p^lk^l.
\end{equation*}
Write $l+1 = p^tv$ with $p\nmid v$ and $t\ge 0$. It is well known that $p^{\eta-t}\mid \binom{p^{\eta}}{l+1}$. Hence, the exponent $w$, of $p$ in $\binom{p^{\eta}}{l+1} p^l$, is greater than or equal to $\eta-t+l$. Consequently $P(a) = 0$, because $\eta-t+l\ge \eta\ge r$, since $l\ge t$ and $r\le \eta$. We next compute $Q(a)$. By the very definition of $Q(a)$, we have
\begin{equation*}
Q(a) =\sum_{j=0}^{p^{\eta}-1}jp^{2\nu-\eta}a^j + p^{\nu}a =\sum_{j=0}^{p^{\eta}-1}jp^{\eta}a^j + p^{\eta} a =  0.
\end{equation*}
Thus, $F_2=0$, as we want. Hence~\eqref{GFF en ejemplo 3} holds, and so
\begin{equation*}
\frac{\ker F_1\cap \ker F_2}{\ima G} = \begin{cases} \mathds{Z}_{p^r}\oplus \mathds{Z}_{p^r} & \text{if $k=0$,}\\ \frac{\mathds{Z}_{p^r}}{p^{u+1}\mathds{Z}_{p^r}}\oplus p^{r-u-1}\mathds{Z}_{p^r} & \text{if $k=p^us$ with $0\le u< r-1$ and $p\nmid s$.}\end{cases}
\end{equation*}
The statement follows easily from this fact and formula~\eqref{cociclo1 ej 3}.
\end{proof}

\subsubsection[Case \texorpdfstring{$H=\mathds{Z}_{p^{\eta}}$}{H=Zpn} and \texorpdfstring{$I=\mathds{Z}_{p^r}$}{I=Zpr} with \texorpdfstring{$r>\eta$}{r>n}, if \texorpdfstring{$p$}{p} is odd, and \texorpdfstring{$(\eta,r)=(1,2)$}{(n,r)=(1,2)}, if \texorpdfstring{$p=2$}{p=2}]{Case \texorpdfstring{$\pmb{H=\mathds{Z}_{p^{\eta}}}$}{H=Zpn} and \texorpdfstring{$\pmb{I=\mathds{Z}_{p^r}}$}{I=Zpr} with \texorpdfstring{$\pmb{r>\eta}$}{r>n}, if \texorpdfstring{$\pmb{p}$}{p} is odd, and \texorpdfstring{$\pmb{(\eta,r)=(1,2)}$}{(n,r)=(1,2)}, if \texorpdfstring{$\pmb{p=2}$}{p=2}}

\begin{proposition}\label{prop 4.10} Let $H\coloneqq\left(\mathds{Z}_{p^{\eta}},+\right)$ endowed with the trivial cycle set structure and let $I\coloneqq \mathds{Z}_{p^r}$. Assume $\eta<r$, if $p$ is odd, and $(\eta,r) = (1,2)$, if $p=2$. Then, there exists $0\le k<p^{\eta}$, such that $h\blackdiamond y = (1+p^{r-\eta}k)^h y$. If $k\ne 0$, then we write $k=p^us$ with $0\le u<\eta$ and $p\nmid s$. There are three cases:
\begin{description}

\item[$k=0$] In this case $(f_0,\gamma)\in \{(p^{r-\eta}z_1,z_2):0\le z_1,z_2<p^{\eta}\}$. Moreover, $f_{f_0}(h,h') = f_0hh'$.

\item[$k\ne 0$ and $r+u\ge 2\eta$] If $p\ne 2$, then $(f_0,\gamma)\in\{(p^{r-\eta}(z_1-sz_2),p^{\eta-u}z_2):0\le z_1<p^{\eta}\text{ and }
0\le z_2<p^u\}$. More\-over,
\begin{equation*}
h \blackdiamond y =y+p^{r-\eta+u}shy\quad\text{and}\quad f_{f_0}(h,h') = f_0hh'.
\end{equation*}
If $p=2$, then $k=1$ and $(f_0,\gamma)\in \{(2z_1-z_2,z_2):0\le z_1, z_2<2\}$. More\-over,
$$
h \blackdiamond y = (1+2h)y\quad\text{and}\quad f_{f_0}(h,h') = f_0hh'.
$$

\item[$k\ne 0$ and $r+u< 2\eta$] Then $p\ne 2$ and $(f_0,\gamma)\in\{(p^{r-\eta}sz_1,p^{\eta-u}(z_2-z_1)): 0\le z_1<p^u \text{ and } 0\le z_2<p^{r-\eta+u}\}$. More\-over,
%%%%%%%%%%%%%%%%%%%%%%%%%%%%%%%%%%%%%%%%%%%%%%%%%%%%%%%%%%%%%%%%%%%%%%%%%%%%%%%%%%%%%%%%%%%%%%%%%%%%%%%%%%%%%%%%%%%%%%%%%%%%%%%%%%%%%%%%%%%%%%%%%%%%%%%%%%%%%
\begin{comment}
%%%%%%%%%%%%%%%%%%%%%%%%%%%%%%%%%%%%%%%%%%%%%%%%%%%%%%%%%%%%%%%%%%%%%%%%%%%%%%%%%%%%%%%%%%%%%%%%%%%%%%%%%%%%%%%%%%%%%%%%%%%%%%%%%%%%%%%%%%%%%%%%%%%%%%%%%%%%%
%En este comentario simplificamos la presentación de arriba
%%%%%%%%%%%%%%%%%%%%%%%%%%%%%%%%%%%%%%%%%%%%%%%%%%%%%%%%%%%%%%%%%%%%%%%%%%%%%%%%%%%%%%%%%%%%%%%%%%%%%%%%%%%%%%%%%%%%%%%%%%%%%%%%%%%%%%%%%%%%%%%%%%%%%%%%%%%%%
\begin{remark} We could take
$$
(f_0,\gamma)\!\in\!\{(p^{r-\eta}z_1,p^{\eta-u}(z_2-cz_1)):0\!\le\! z_1\!<\!2^u\text{ and } 0\!\le\! z_2\! <\!2^{r+u-\eta}\},
$$
where $c\in \mathds{Z}$ is a fixed representative of the inverse of $s$ in $\mathds{Z}_{p^{r-\eta}}$; instead of
$$
(f_0,\gamma)\!\in \!\{(p^{r-\eta}z_1,p^{\eta-u}(z_2-s^{-1}z_1)):0\!\le\! z_1\!<\!2^u\text{ and } 0\!\le\! z_2\! <\!2^{r+u-\eta}\},
$$
where $s^{-1}$ is the inverse of $s$ in $\mathds{Z}_{p^r}$.
\end{remark}
%%%%%%%%%%%%%%%%%%%%%%%%%%%%%%%%%%%%%%%%%%%%%%%%%%%%%%%%%%%%%%%%%%%%%%%%%%%%%%%%%%%%%%%%%%%%%%%%%%%%%%%%%%%%%%%%%%%%%%%%%%%%%%%%%%%%%%%%%%%%%%%%%%%%%%%%%%%%%
\end{comment}
%%%%%%%%%%%%%%%%%%%%%%%%%%%%%%%%%%%%%%%%%%%%%%%%%%%%%%%%%%%%%%%%%%%%%%%%%%%%%%%%%%%%%%%%%%%%%%%%%%%%%%%%%%%%%%%%%%%%%%%%%%%%%%%%%%%%%%%%%%%%%%%%%%%%%%%%%%%%%
\begin{equation*}
h \blackdiamond y =\sum_{i=0}^h \binom{h}{i} p^{(r-\eta+u)i}s^iy\quad\text{and}\quad f_{f_0}(h,h') =\sum_{l=0}^{h-1} p^{(r-\eta+u)l}s^l \binom{h}{l+1} f_0h'.
\end{equation*}
\end{description}
\end{proposition}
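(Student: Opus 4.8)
The plan is to apply Corollary~\ref{calculo de la homo}, which reduces the computation of $\ho^2_{\blackdiamond}(H,I)$ to describing the three maps $F_1$, $F_2$ and $G$ attached to $A$, and then to computing the quotient $\frac{\ker(F_1)\cap\ker(F_2)}{\ima(G)}$ together with a system of representatives $(f_0,\gamma)$. First I would pin down $A$. Since $A$ is multiplication by a unit $a\in\mathds{Z}_{p^r}$ with $A^{p^{\eta}}=\Ide$, i.e.\ $a^{p^{\eta}}\equiv 1\pmod{p^r}$, Lemma~\ref{para ejemplos} (and Remark~\ref{complemento para ejemplos} for the case $p=2$, $(\eta,r)=(1,2)$) forces $a\equiv 1\pmod{p^{r-\eta}}$, hence $a=1+p^{r-\eta}k$ with $0\le k<p^{\eta}$. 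As $1^{\times h}=h$, formula~\eqref{def de diamante general} gives $h\blackdiamond y=a^hy=(1+p^{r-\eta}k)^hy$, and~\eqref{f simplificado} together with the hockey-stick identity $\sum_{j=l}^{h-1}\binom{j}{l}=\binom{h}{l+1}$ yields
\begin{equation*}
f_{f_0}(h,h')=\sum_{j=0}^{h-1}a^jf_0h'=\sum_{l=0}^{h-1}\binom{h}{l+1}p^{(r-\eta+u)l}s^lf_0h',
\end{equation*}
where $k=p^us$ with $p\nmid s$ when $k\ne 0$.

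Next I would compute $F_1$, $F_2$ and $G$ explicitly. Because $\nu=\eta$, the operator $V=p^{\eta}\Ide-p^{\nu}\Ide+\Ide-A$ collapses to $V=\Ide-A$, i.e.\ multiplication by $-p^{r-\eta}k=-p^{r-\eta+u}s$; this makes $G(y)=(-p^{r-\eta+u}sy,p^{\eta}y)$ and $F_1(y_1,y_2)=p^{\eta}y_1+p^{r-\eta+u}sy_2$ immediate. For $F_2$ I need $P(a)$ and $Q(a)$ modulo $p^r$, where $P$ and $Q$ are as in~\eqref{los polinomios1}. For $k=0$ this is exactly Example~\ref{calculo con A=Ide}, which already produces $F_1(y_1,y_2)=p^{\eta}y_1$, $F_2=\frac{p^{2\eta}-p^{\eta}+2}{2}F_1$ and $G(y)=(0,p^{\eta}y)$; hence $\ker(F_1)\cap\ker(F_2)=\ker(F_1)=p^{r-\eta}\mathds{Z}_{p^r}\oplus\mathds{Z}_{p^r}$, and the quotient by $\ima(G)=0\oplus p^{\eta}\mathds{Z}_{p^r}$ gives the parametrization $(f_0,\gamma)=(p^{r-\eta}z_1,z_2)$ of the first case. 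For $k\ne 0$ I would expand $a^j=(1+p^{r-\eta}k)^j$ binomially and evaluate the geometric-type sums $\sum_j a^j$ and $\sum_j ja^j$ in closed form, using the lifting-the-exponent estimate $v_p(a^{p^{\eta}}-1)=r+u$ to decide which terms survive modulo $p^r$. This is the step where the dichotomy appears: whether the higher binomial terms of $a^h$ and of $f_{f_0}$ vanish modulo $p^r$ is governed by comparing $2(r-\eta)+u$ with $r$, equivalently by whether $r+u\ge 2\eta$ or $r+u<2\eta$.

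With $F_1$, $F_2$ and $G$ in hand, the final step is linear algebra over $\mathds{Z}_{p^r}$. I would describe $\ker(F_1)$ by rewriting $F_1(y_1,y_2)=0$ as a divisibility: when $r+u\ge 2\eta$ one has $r-\eta+u\ge\eta$, so $F_1=0$ reads $y_1\equiv-p^{r-2\eta+u}sy_2\pmod{p^{r-\eta}}$, whereas when $r+u<2\eta$ it reads $sy_2\equiv-p^{2\eta-r-u}y_1\pmod{p^{\eta-u}}$. Intersecting with $\ker(F_2)$ and then quotienting by $\ima(G)$, a change of variables (analogous to the map $\Phi$ used in the analysis of the simpler cyclic cases) diagonalizes the situation and produces the stated sets of representatives. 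In the case $r+u\ge 2\eta$ the inequality $2(r-\eta)+u\ge r$ forces every $l\ge 1$ term in the formula for $f_{f_0}$ to vanish modulo $p^r$, so $f_{f_0}(h,h')=f_0hh'$ and $h\blackdiamond y=y+p^{r-\eta+u}shy$, and one reads off $(f_0,\gamma)=(p^{r-\eta}(z_1-sz_2),p^{\eta-u}z_2)$ (with $k=1$, $u=0$ and $r+u=2\eta$ in the subcase $p=2$, whence $f_{f_0}(h,h')=f_0hh'$ and $h\blackdiamond y=(1+2h)y$); in the case $r+u<2\eta$ (which forces $p$ odd) no collapse occurs, the full sums for $h\blackdiamond y$ and $f_{f_0}$ remain, and one obtains $(f_0,\gamma)=(p^{r-\eta}sz_1,p^{\eta-u}(z_2-z_1))$.

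The main obstacle will be the bookkeeping of $p$-adic valuations in the evaluation of $P(a)$ and $Q(a)$ modulo $p^r$, and, just as delicately, the choice of the change of coordinates exhibiting $\frac{\ker(F_1)\cap\ker(F_2)}{\ima(G)}$ as a product of the two displayed cyclic subquotients of $\mathds{Z}_{p^r}$ with representatives expressed cleanly in $z_1$ and $z_2$; the special role of $p=2$, where only $(\eta,r)=(1,2)$ (hence $k\in\{0,1\}$ and $r+u=2\eta$) occurs, must be tracked consistently throughout.
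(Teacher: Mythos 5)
Your proposal follows essentially the same route as the paper's proof: Lemma~\ref{para ejemplos} (with Remark~\ref{complemento para ejemplos}) to pin down $a=1+p^{r-\eta}k$, formulas~\eqref{def de diamante general} and~\eqref{f simplificado} for $\blackdiamond$ and $f_{f_0}$, an explicit $p$-adic evaluation of $P(a)$ and $Q(a)$ to get $F_1$, $F_2$, $G$, and then a change of coordinates $\Phi$ splitting $\frac{\ker(F_1)\cap\ker(F_2)}{\ima(G)}$ into the two displayed cyclic subquotients, with the dichotomy $r+u\gtrless 2\eta$ arising exactly as you describe. The details you leave as sketches (the valuation bookkeeping for $P(a)$, $Q(a)$ when $k\ne 0$ and the precise choice of $\Phi$ in each case) are the ones the paper carries out, and your outline is consistent with them.
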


\begin{proof} Let $A$ be as in~Propo\-sition~\ref{condiciones sin potencia general}. Since~$A$ is the multiplication by an invertible element $a\in \mathds{Z}_{p^r}$ and $A^{p^{\eta}} = \Ide$, by Lemma~\ref{para ejemplos} and Remark~\ref{complemento para ejemplos}, we know that $a = 1+p^{r-\eta}k$ with $0\le k<p^{\eta}$. Since $1^{\times h}=h$, by the first equality in~\eqref{def de diamante general}, we have
\begin{equation}\label{accion1 ej 4}
h \blackdiamond y = (1+p^{r-\eta}k)^h y = \sum_{i=0}^h \binom{h}{i} p^{(r-\eta)i}k^iy.
\end{equation}
Let $F_1$, $F_2$ and $G$ be as above Corollary~\ref{calculo de la homo} and let $\ov{(f_0,\gamma)}\in \frac{\ker F_1\cap \ker F_2}{\ima G}$. By~equal\-ity~\eqref{f simplificado}, we have
\begin{equation}\label{cociclo1 ej 4}
f_{f_0}(h,h') = \sum_{j=0}^{h-1}a^j f_0h' = \sum_{j=0}^{h-1} \sum_{l=0}^j \binom{j}{l} p^{(r-\eta)l}k^lf_0h'= \sum_{l=0}^{h-1} \binom{h}{l+1} p^{(r-\eta)l}k^lf_0h'.
\end{equation}
We claim that
\begin{equation}\label{GFF ej 4}
G(y) = (-p^{r-\eta}ky,p^{\eta}y),\quad  F_1(y_1,y_2) = p^{\eta}y_1+p^{r-\eta}ky_2\quad\text{and}\quad F_2(y_1,y_2) = \begin{cases} \left(\frac{p^{3\eta}-p^{2\eta}}{2}+p^{\eta}\right)y_1 &\text{if $p\ne 2$,}\\ 2k(y_1-y_2) &\text{if $p = 2$.}\end{cases}
\end{equation}
In fact, the formulas for $G$ and $F_1$ follow by a direct computation. In order to obtain the formula for $F_2$, we first need to calculate $P(a)$ and $Q(a)$, where $P$ and $Q$ are as in~\eqref{los polinomios1}. Since $a^j = (1+p^{r-\eta}k)^j = \sum_{l=0}^j \binom{j}{l}p^{(r-\eta)l}k^l$,
\begin{equation*}
P(a) = \sum_{j=0}^{p^{\eta}-1} (jp^{\eta}+1)a^j = \sum_{j=0}^{p^{\eta}-1} jp^{\eta} + \sum_{j=0}^{p^{\eta}-1}\sum_{l=0}^j \binom{j}{l}p^{(r-\eta)l}k^l,
\end{equation*}
where in the last equality we have used that $p^r\mid p^{\eta}p^{(r-\eta)l}$, for all $l\ge 1$. Computing the expression at the right side of the second equality, we obtain
\begin{equation*}
P(a) = \frac{(p^{\eta}-1)p^{2\eta}}{2} +\sum_{l=0}^{p^{\eta}-1}\sum_{j=l}^{p^{\eta}-1} \binom{j}{l}p^{(r-\eta)l}k^l = \frac{(p^{\eta}-1)p^{2\eta}}{2} + \sum_{l=0}^{p^{\eta}-1}\binom{p^{\eta}}{l+1} p^{(r-\eta)l}k^l.
\end{equation*}
If $p=2$, then $\eta=1$, $r=2$, and a direct computation gives $P(a) = 2k$. Suppose $p$ odd and write $l+1 = p^tv$ with $p\nmid v$ and $t\ge 0$. It is well known that $p^{\eta-t}\mid \binom{p^{\eta}}{l+1}$. Hence, the exponent $w$, of $p$ in $\binom{p^{\eta}}{l+1} p^{(r-\eta)l}$, is greater than or equal to $\eta-t+(r-\eta)l$. Thus, if $l\ge 1$, then
\begin{equation*}
w\ge \eta-t+(r-\eta)l = r-t+(r-\eta)(l-1)\ge r-t + l-1\ge r,
\end{equation*}
where we have used that $l-1 = vp^t-2\ge t$, for $l\ge 1$. Consequently, in this case, $p^r\mid \binom{p^{\eta}}{l+1}p^{(r-\eta)l}$, and so
\begin{equation*}
P(a) = \frac{(p^{\eta}-1)p^{2\eta}}{2} + p^{\eta}.
\end{equation*}
We next compute $Q(a)$. By the very definition of $Q(a)$, we have
\begin{equation*}
Q(a) = p^{\eta}\sum_{j=0}^{p^{\eta}-1} j a^j  + p^{\eta} = p^{\eta}\sum_{j=0}^{p^{\eta}-1} j \sum_{l=0}^j \binom{j}{l}p^{(r-\eta)l}k^l + p^{\eta}.
\end{equation*}
If $p=2$, then $\eta=1$, $r=2$, and a direct computation gives $Q(a) = 0$. Suppose $p$ odd. Since $p^r\mid p^{\eta}p^{(r-\eta)l}$ for $l\ge 1$, the formula for $Q(a)$ reduces to
\begin{equation*}
Q(a) = p^{\eta}\sum_{j=1}^{p^{\eta}-1}j + p^{\eta} = \frac{(p^{\eta}-1)p^{2\eta}}{2} + p^{\eta} = P(a).
\end{equation*}
The formula for $F_2$ follows from these facts by an immediate computation, and so the claim~\eqref{GFF ej 4} holds. Assume that $k=0$. Then
\begin{equation*}
\frac{\ker F_1\cap \ker F_2}{\ima G} = p^{r-\eta}\mathds{Z}_{p^r}\oplus \frac{\mathds{Z}_{p^r}}{p^{\eta}\mathds{Z}_{p^r}}.
\end{equation*}
Consequently, we can take $(f_0,\gamma)\in\{(p^{r-\eta}z_1,z_2):0\le z_1,z_2<p^{\eta}\}$, and the formulas for $h\blackdiamond y$ and $f_{f_0}$ follow directly from~\eqref{accion1 ej 4} and~\eqref{cociclo1 ej 4}.

Assume now that $k\ne 0$. If $p=2$, then $k=1$, and so $G(y)=(2y,2y)$, $F_1(y_1,y_2)=F_2(y_1,y_2)=2(y_1+y_2)$ and a straightforward computation shows that
$$
\ker F_1\cap \ker F_2=\ker F_1=\{(y_1,y_2): y_1\equiv y_2 \pmod{2}\}\subseteq \mathds{Z}_4\oplus \mathds{Z}_4\quad\text{and}\quad\ima(G)= \{(0,0),(2,2)\}.
$$
Another straightforward computation shows that then we can take $(f_0,\gamma)\in\{(2z_1-z_2,z_2): 0\le z_1,z_2 <2 \}$.~More\-over, using that $h\in \mathds{Z}_2$, we obtain the formulas for $h\blackdiamond y$ and $f_{f_0}$.

For the rest of the proof we can and will assume that $p$ is odd and $k\ne 0$. Assume furthermore that $r+u\ge 2\eta$, and define the map $\Phi\colon I\oplus I\to I\oplus I$, by
$$
\Phi(y_1,y_2)\coloneqq (y_1-p^{r+u-2\eta}sy_2,y_2).
$$
Set $\wt{G}\coloneqq \Phi^{-1}\xcirc G$, $\wt{F}_1\coloneqq F_1\xcirc \Phi$ and  $\wt{F}_2\coloneqq F_2\xcirc \Phi$. Clearly $\Phi$ induces an iso\-morphism $\ov{\Phi}\colon \frac{\ker \wt{F}_1\cap \ker \wt{F}_2}{\ima \wt{G}}\to \frac{\ker F_1\cap \ker F_2}{\ima G}$. A direct computation, using~\eqref{GFF ej 4}, shows that
\begin{equation*}
\wt{G}(y) = (0,p^{\eta}y),\quad \wt{F}_1(y_1,y_2) = p^{\eta}y_1\quad\text{and}\quad \wt{F}_2(y_1,y_2) = \left(\frac{(p^{\eta}-1)p^{2\eta}}{2}+ p^{\eta}\right)y_1-p^{r+u-\eta}sy_2.
\end{equation*}
%%%%%%%%%%%%%%%%%%%%%%%%%%%%%%%%%%%%%%%%%%%%%%%%%%%%%%%%%%%%%%%%%%%%%%%%%%%%%%%%%%%%%%%%%%%%%%%%%%%%%%%%%%%%%%%%%%%%%%%%%%%%%%%%%%%%%%%%%%%%%%%%%%%%%%%%%
\begin{comment}
%%%%%%%%%%%%%%%%%%%%%%%%%%%%%%%%%%%%%%%%%%%%%%%%%%%%%%%%%%%%%%%%%%%%%%%%%%%%%%%%%%%%%%%%%%%%%%%%%%%%%%%%%%%%%%%%%%%%%%%%%%%%%%%%%%%%%%%%%%%%%%%%%%%%%%%%%
%Prueba de las cuentas de arriba
%%%%%%%%%%%%%%%%%%%%%%%%%%%%%%%%%%%%%%%%%%%%%%%%%%%%%%%%%%%%%%%%%%%%%%%%%%%%%%%%%%%%%%%%%%%%%%%%%%%%%%%%%%%%%%%%%%%%%%%%%%%%%%%%%%%%%%%%%%%%%%%%%%%%%%%%%

Since $r+u\ge 2\eta$ and $k=1$ if $p=2$, we have
\begin{align*}
F_1(\Phi(y_1,y_2)) & = F_1(y_1-p^{r+u-2\eta}sy_2,y_2)\\
%
& = p^{\eta}y_1-p^{r+u-\eta}sy_2+p^{r-\eta}ky_2\\
%
& = p^{\eta}y_1\\
%
F_2(\Phi(y_1,y_2)) & = F_2(y_1-p^{r+u-2\eta}sy_2,y_2)\\
%
& = \bigg(\frac{(p^{\eta}-1)p^{2\eta}}{2}+p^{\eta}\bigg)(y_1-p^{r+u-2\eta}sy_2)\\
%
& = \bigg(\frac{(p^{\eta}-1)p^{2\eta}}{2}+ p^{\eta}\bigg)y_1-p^{r+u-\eta}sy_2  && \text{si $p\ne 2$,}\\
%
F_2(\Phi(y_1,y_2)) & = F_2(y_1-y_2,y_2) = 2(y_1-2y_2) = 2y_1 && \text{si $p=2$,}\\
%
\shortintertext{and}
%
\Phi^{-1}(G(y_1,y_2)) & = \Phi^{-1}(-p^{r-\eta}ky,p^{\eta}y)\\
%
& = (-p^{r-\eta}ky+p^{r+u-\eta}sy,p^{\eta}ky)\\
%
& = (0,p^{\eta}ky).
\end{align*}
%%%%%%%%%%%%%%%%%%%%%%%%%%%%%%%%%%%%%%%%%%%%%%%%%%%%%%%%%%%%%%%%%%%%%%%%%%%%%%%%%%%%%%%%%%%%%%%%%%%%%%%%%%%%%%%%%%%%%%%%%%%%%%%%%%%%%%%%%%%%%%%%%%%%%%%%%
\end{comment}
%%%%%%%%%%%%%%%%%%%%%%%%%%%%%%%%%%%%%%%%%%%%%%%%%%%%%%%%%%%%%%%%%%%%%%%%%%%%%%%%%%%%%%%%%%%%%%%%%%%%%%%%%%%%%%%%%%%%%%%%%%%%%%%%%%%%%%%%%%%%%%%%%%%%%%%%%
Hence,
\begin{equation*}
\frac{\ker \wt{F}_1\cap \ker \wt{F}_2}{\ima \wt{G}} =  p^{r-\eta}\mathds{Z}_{p^r}\oplus \frac{p^{\eta-u}\mathds{Z}_{p^r}}{p^{\eta}\mathds{Z}_{p^r}}.
\end{equation*}
Take $(p^{r-\eta}z_1,\ov{p^{\eta-u}z_2})\in  p^{r-\eta}\mathds{Z}_{p^r}\oplus \frac{p^{\eta-u}\mathds{Z}_{p^r}}{p^{\eta} \mathds{Z}_{p^r}}$, where $0\le z_1<p^{\eta}$ and $0\le z_2<p^u$. Then
\begin{equation*}
\ov{\Phi}(p^{r-\eta}z_1,\ov{p^{\eta-u}z_2}) = \ov{(p^{r-\eta}z_1-p^{r-\eta}sz_2,p^{\eta-u}z_2)}.
\end{equation*}
So, if $k\ne 0$ and $r+u\ge 2\eta$, then we can take $(f_0,\gamma)\in \{(p^{r-\eta}(z_1-sz_2),p^{\eta-u}z_2):0\le z_1<p^{\eta}\text{ and }0\le z_2<p^u\}$. Moreover, since $2(r-\eta)+u\ge r$, equalities~\eqref{accion1 ej 4} and~\eqref{cociclo1 ej 4} become
\begin{align*}
& h \blackdiamond y = \sum_{i=0}^h \binom{h}{i} p^{(r-\eta+u)i}s^iy=y+hp^{r-\eta+u}sy
\shortintertext{and}
&f_{f_0}(h,h') = \sum_{l=0}^{h-1} p^{(r-\eta)(l+1)+ul}s^l(z_1-sz_2)\binom{h}{l+1}h' = p^{r-\eta}(z_1-sz_2)hh' =f_0hh'.
\end{align*}
Assume finally that $k\ne 0$ and $r+u< 2\eta$. We define the map $\Phi\colon I\oplus I\to I\oplus I$, by $\Phi(y_1,y_2)\coloneqq (sy_1, y_2-p^{2\eta-r-u}y_1)$, and we set $\wt{G}\coloneqq \Phi^{-1}\xcirc G$, $\wt{F}_1\coloneqq F_1\xcirc \Phi$ and $\wt{F}_2\coloneqq F_2\xcirc \Phi$. Clearly $\Phi$ induces an isomorphism
\begin{equation*}
\ov{\Phi}\colon \frac{\ker \wt{F}_1\cap \ker \wt{F}_2}{\ima \wt{G}}\to \frac{\ker F_1\cap \ker F_2}{\ima G}.
\end{equation*}
A direct computation, using~\eqref{GFF ej 4}, shows that
\begin{equation*}
\wt{G}(y) = (-p^{r-\eta+u}y,0),\quad \wt{F}_1(y_1,y_2) = p^{r-\eta+u}sy_2\quad\text{and}\quad \wt{F}_2(y_1,y_2) = p^{\eta}sy_1.
\end{equation*}
%%%%%%%%%%%%%%%%%%%%%%%%%%%%%%%%%%%%%%%%%%%%%%%%%%%%%%%%%%%%%%%%%%%%%%%%%%%%%%%%%%%%%%%%%%%%%%%%%%%%%%%%%%%%%%%%%%%%%%%%%%%%%%%%%%%%%%%%%%%%%%%%%%%%%%%%%
\begin{comment}
%%%%%%%%%%%%%%%%%%%%%%%%%%%%%%%%%%%%%%%%%%%%%%%%%%%%%%%%%%%%%%%%%%%%%%%%%%%%%%%%%%%%%%%%%%%%%%%%%%%%%%%%%%%%%%%%%%%%%%%%%%%%%%%%%%%%%%%%%%%%%%%%%%%%%%%%%
%Prueba de las cuentas de arriba
%%%%%%%%%%%%%%%%%%%%%%%%%%%%%%%%%%%%%%%%%%%%%%%%%%%%%%%%%%%%%%%%%%%%%%%%%%%%%%%%%%%%%%%%%%%%%%%%%%%%%%%%%%%%%%%%%%%%%%%%%%%%%%%%%%%%%%%%%%%%%%%%%%%%%%%%%

Since $r+u\le 2\eta$, we have
\begin{align*}
F_1(\Phi(y_1,y_2)) & = F_1(sy_1,y_2-p^{2\eta-r-u}y_1)\\
%
& = p^{\eta}sy_1+p^{r-\eta}ky_2-p^{\eta-u}ky_1\\
%
& = p^{\eta}sy_1+p^{r-\eta}ky_2-p^{\eta}sy_1\\
%
& = p^{r-\eta}ky_2\\
%
F_2(\Phi(y_1,y_2)) & = F_2(sy_1,y_2-p^{2\eta-r-u}y_1) = p^{\eta}sy_1\\
%
\shortintertext{and}
%
\Phi^{-1}(G(y_1,y_2)) & = \Phi^{-1}(-p^{r-\eta}ky,p^{\eta}y)\\
%
& = (-p^{r-\eta}s^{-1}ky,p^{\eta}y-p^{2\eta-r-u}p^{r-\eta}s^{-1}ky)\\
%
& = (-p^{r-\eta}s^{-1}ky,0)\\
%
& = (-p^{r-\eta+u}y,0).
\end{align*}
%%%%%%%%%%%%%%%%%%%%%%%%%%%%%%%%%%%%%%%%%%%%%%%%%%%%%%%%%%%%%%%%%%%%%%%%%%%%%%%%%%%%%%%%%%%%%%%%%%%%%%%%%%%%%%%%%%%%%%%%%%%%%%%%%%%%%%%%%%%%%%%%%%%%%%%%%
\end{comment}
%%%%%%%%%%%%%%%%%%%%%%%%%%%%%%%%%%%%%%%%%%%%%%%%%%%%%%%%%%%%%%%%%%%%%%%%%%%%%%%%%%%%%%%%%%%%%%%%%%%%%%%%%%%%%%%%%%%%%%%%%%%%%%%%%%%%%%%%%%%%%%%%%%%%%%%%%
Consequently,
\begin{equation*}
\frac{\ker \wt{F}_1\cap \ker \wt{F}_2}{\ima \wt{G}} = \frac{p^{r-\eta}\mathds{Z}_{p^r}}{p^{r-\eta+u}\mathds{Z}_{p^r}}\oplus p^{\eta-u}\mathds{Z}_{p^r}.
\end{equation*}
Take $(\ov{p^{r-\eta}z_1},p^{\eta-u}z_2)\in \frac{p^{r-\eta}\mathds{Z}_{p^r}}{p^{r-\eta+u}\mathds{Z}_{p^r}}\oplus p^{\eta-u}\mathds{Z}_{p^r}$, where $0\le z_1< p^u$ and $0\le z_2<p^{r-\eta+u}$. Then,
\begin{equation*}
\ov{\Phi}(\ov{p^{r-\eta}z_1},p^{\eta-u}z_2) = \ov{(p^{r-\eta}sz_1,p^{\eta-u}z_2-p^{\eta-u}z_1)}.
\end{equation*}
So, if $k\ne 0$ and $r+u< 2\eta$, then we can take $(f_0,\gamma)\in\{(p^{r-\eta}sz_1,p^{\eta-u}(z_2-z_1)):0\le z_1<p^u\text{ and }0\le z_2<p^{r-\eta+u}\}$. The formulas for $h\blackdiamond y$ and $f_{f_0}$ are not simplified and are given by~\eqref{accion1 ej 4} and~\eqref{cociclo1 ej 4}.
\end{proof}

\subsubsection[Case \texorpdfstring{$H=\mathds{Z}_{2^{\eta}}$}{H=Z2n} and \texorpdfstring{$I=\mathds{Z}_{2^r}$}{Z2r} with \texorpdfstring{$3\le r\le \eta+1$}{3<=r<=n+1}]{Case \texorpdfstring{$\pmb{H=\mathds{Z}_{2^{\eta}}}$}{H=Z2n} and \texorpdfstring{$\pmb{I=\mathds{Z}_{2^r}}$}{Z2r} with \texorpdfstring{$\pmb{3\le r\le \eta+1}$}{3<=r<=n+1}}

\begin{proposition} Let $H\coloneqq\left(\mathds{Z}_{2^{\eta}},+\right)$, endowed with the trivial cycle set structure, and let $I\coloneqq \mathds{Z}_{2^r}$, where we assume that $3\le r\le \eta+1$. There exists $a = \pm (1+4k)$ with $0\le k<2^{r-2}$, such that $h\blackdiamond y = a^h y $. When $k\ne 0$, we write $k=2^us$ with $0\le u<r-2$ and $2\nmid s$. There are four cases:
\begin{description}

\item[$k=0$ and $a=1$] If $r\le\eta$, then $(f_0,\gamma)\in\{(z_1,z_2) : 0\le z_1,z_2<2^r\}$. Moreover,
\begin{equation*}
h\blackdiamond y = y \quad\text{and}\quad f_{f_0}(h,h')=f_0hh'.
\end{equation*}
If $r=\eta+1$, then $(f_0,\gamma)\in\{(2z_1,z_2) : 0\le z_1,z_2<2^{r-1}\}$. Moreover,
\begin{equation*}
h\blackdiamond y = y\quad\text{and}\quad f_{f_0}(h,h') =f_0hh'.
\end{equation*}

\item[$k=0$ and $a=-1$] In this case $(f_0,\gamma)\in\{(z_1,2^{r-1}z_2+2^{\eta-1}z_1) :  0\le z_1,z_2<2\}$. Moreover,
\begin{equation*}
h\blackdiamond y = (-1)^h y \quad\text{and}\quad f_{f_0}(h,h') = \begin{cases} f_0h'  & \text{if $h$ is odd,}\\ 0 & \text{if $h$ is even.}\end{cases}
\end{equation*}

\item[$k\ne 0$ and $a=1+4k$] If $r\le \eta$, then $(f_0,\gamma)\in\{(z_1,2^{r-u-2}z_2):0\le z_1,z_2<2^{u+2}\}$. Moreover,
\begin{equation*}
h\blackdiamond y = \sum_{i=0}^h \binom{h}{i} 4^ik^iy \quad\text{and}\quad f_{f_0}(h,h') =\sum_{l=0}^{h-1} \binom{h}{l+1} 4^lk^lf_0h'.
\end{equation*}
If $r=\eta+1$, then $(f_0,\gamma)\in\{(2z_1,2^{r-u-2}z_2-2^{\eta-u-1}z_1) : 0\le z_1<2^{u+1} \text{ and } 0\le z_2<2^{u+2}\}$. Moreover,
\begin{equation*}
h\blackdiamond y = \sum_{i=0}^h \binom{h}{i} 4^ik^iy \quad\text{and}\quad f_{f_0}(h,h') =\sum_{l=0}^{h-1} \binom{h}{l+1}4^lk^lf_0h'.
\end{equation*}

\item[$k\ne 0$ and $a=-1-4k$] In this case $(f_0,\gamma)\in\{(z_1,2^{r-1}z_2+2^{\eta-1}z_1) : 0\le z_1, z_2<2\}$. Moreover,
\begin{equation*}
h\blackdiamond y = (-1)^h \sum_{i=0}^h \binom{h}{i} 4^ik^iy \quad\text{and}\quad f_{f_0}(h,h') =\sum_{l=0}^{h-1} A_{hl} 4^lk^lf_0h',
\end{equation*}
where $A_{hl}\coloneqq \sum_{j=l}^{h-1} (-1)^{j} \binom{j}{l}$.
\end{description}
\end{proposition}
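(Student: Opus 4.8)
The plan is to follow the template established in the proof of Proposition~\ref{prop 4.10}: via Corollary~\ref{calculo de la homo} the whole computation reduces to identifying the three maps $F_1,F_2,G$ and then, in each case, describing the quotient $\frac{\ker(F_1)\cap\ker(F_2)}{\ima(G)}$ explicitly, together with the associated cocycle $f_{f_0}$. First I would pin down the possible $A$. Since $A\in\End(\mathds{Z}_{2^r})$ is multiplication by a unit $a$ with $A^{2^{\eta}}=\Ide$, i.e. $a^{2^{\eta}}\equiv 1\pmod{2^r}$, and since $3\le r\le\eta+1$ forces $\eta_0\coloneqq\min(r-2,\eta)=r-2$, Lemma~\ref{para ejemploscon2} together with Remark~\ref{A para ejemploscon2} gives $a^2\equiv 1\pmod 8$ and hence $a\equiv\pm1\pmod 4$; that is, $a=\pm(1+4k)$ with $0\le k<2^{r-2}$. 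From $1^{\times h}=h$ and the first equality in~\eqref{def de diamante general} I then read off $h\blackdiamond y=a^h y$, and from~\eqref{f simplificado} the formula $f_{f_0}(h,h')=h'\sum_{j=0}^{h-1}a^j f_0$, whose specializations will produce the stated $f_{f_0}$ in each case (in the minus sign cases this is where the alternating coefficients $A_{hl}=\sum_{j=l}^{h-1}(-1)^j\binom{j}{l}$ arise).

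The computational core is evaluating $P(a)$ and $Q(a)$, where $P$ and $Q$ are the polynomials~\eqref{los polinomios1} specialized to $\nu=\eta$ (so their coefficients carry a factor $2^{\eta}$). Expanding $a^j=\sum_l\binom{j}{l}2^{(2+u)l}s^l$ for $a=1+4k$ with $k=2^u s$, $2\nmid s$ (and the corresponding alternating expansion for $a=-1-4k$), I would track the $2$-adic valuation of each term. The decisive point is that $2^{\eta}$ vanishes in $\mathds{Z}_{2^r}$ exactly when $r\le\eta$, whereas for $r=\eta+1$ it equals the $2$-torsion element $2^{r-1}$; this is precisely what splits the cases $a=1$ and $a=1+4k$ into their ``$r\le\eta$'' and ``$r=\eta+1$'' subcases. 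Once $P(a)$ and $Q(a)$ are known, $F_1(y_1,y_2)=2^{\eta}y_1-(1-a)y_2$, $F_2(y_1,y_2)=P(a)(y_1-y_2)+Q(a)y_2$ and $G(y)=\bigl((1-a)y,2^{\eta}y\bigr)$ are immediate from the definitions above Corollary~\ref{calculo de la homo}, using $V=\Ide-A$ when $\nu=\eta$.

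Finally, in each of the four cases I would compute the quotient group. When the resulting triple $(F_1,F_2,G)$ is not already in triangular form, I would introduce a linear change of variables $\Phi\colon I\oplus I\to I\oplus I$, exactly as in Proposition~\ref{prop 4.10}, which induces a linear isomorphism $\overline{\Phi}\colon \frac{\ker(\wt F_1)\cap\ker(\wt F_2)}{\ima(\wt G)}\to \frac{\ker(F_1)\cap\ker(F_2)}{\ima(G)}$ with $\wt F_i=F_i\circ\Phi$ and $\wt G=\Phi^{-1}\circ G$. This untangles the mixing of $f_0$ and $\gamma$ and yields the stated representatives, such as $(f_0,\gamma)=(z_1,2^{r-1}z_2+2^{\eta-1}z_1)$ in the case $a=-1$ and its analogue for $a=-1-4k$. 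Reading the ranges of $z_1,z_2$ off the cyclic summands of the quotient, and substituting the chosen $(f_0,\gamma)$ back into $h\blackdiamond y=a^h y$ and $f_{f_0}$, completes each case.

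The main obstacle will be exactly this $2$-adic bookkeeping: carrying the sign alternatives $a=\pm(1+4k)$, controlling the parity of the binomial sums $A_{hl}$, and handling the boundary value $r=\eta+1$ (where $2^{\eta}=2^{r-1}\neq 0$ is $2$-torsion) all simultaneously, while choosing in each case the change of variables $\Phi$ that diagonalizes the pair $(F_1,F_2)$ modulo $\ima(G)$. Unlike the odd-prime Proposition~\ref{prop 4.10}, here the contributions of $\binom{2^{\eta}}{2}$ and of alternating sums no longer vanish automatically, so the valuation estimates must be made sharp rather than merely ``$\ge r$''.
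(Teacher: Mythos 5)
Your proposal is correct and follows essentially the same route as the paper: classify $a=\pm(1+4k)$ via Lemma~\ref{para ejemploscon2} and Remark~\ref{A para ejemploscon2}, reduce to computing $P(a)$, $Q(a)$ (hence $F_1$, $F_2$, $G$ with $V=\Ide-A$), and then diagonalize each case by a change of variables $\Phi$ as in Proposition~\ref{prop 4.10}. You also correctly flag the one genuinely delicate point — the sharp $2$-adic estimates for the alternating sums in the $a=-1-4k$ case, which the paper settles via the recursion $2B_{l+1}=B_l-\binom{2^{\eta}}{l+1}$ giving $2^{2l-\eta-2}B_l\in\mathds{Z}$ for $l\ge 2$.
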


\begin{proof} Let $A$ be as in Propo\-sition~\ref{condiciones sin potencia general}. Since~$A$ is the multiplication by an invertible element $a\in \mathds{Z}_{2^r}$ and $A^{p^{\eta}} = \Ide$, by Lemma~\ref{para ejemploscon2} and Remark~\ref{A para ejemploscon2}, we know that $a = \pm (1+4k)$ with $0\le k<2^{r-2}$. Since $1^{\times h}=h$, by the first~con\-di\-tion~\eqref{def de diamante general}, we have:
\begin{equation}\label{calculo diamante p=2 ej 6}
h \blackdiamond y =\begin{cases} (1+4k)^h y = \sum_{i=0}^h \binom{h}{i} 4^ik^iy & \text{if $a = 1+4k$,}\\ (-1)^h(1+4k)^h y = (-1)^h\sum_{i=0}^h \binom{h}{i} 4^ik^iy & \text{if $a = -1-4k$.}\end{cases}
\end{equation}
Let $F_1$, $F_2$ and $G$ be as above Corollary~\ref{calculo de la homo}, and let $\ov{(f_0,\gamma)}\in \frac{\ker F_1\cap \ker F_2}{\ima G}$. From equality~\eqref{f simplificado}, we obtain
\begin{equation*}
f_{f_0}(h,h') = h'\sum_{j=0}^{h-1}a^j f_0,
\end{equation*}
Assume that $a=1+4k$. Then $a^j = \sum_{l=0}^j \binom{j}{l}4^lk^l$, and so
\begin{equation}\label{cociclo1caso1 ej 6}
f_{f_0}(h,h') = \sum_{j=0}^{h-1} \sum_{l=0}^j \binom{j}{l} 4^lk^lf_0h' = \sum_{l=0}^{h-1} \sum_{j=l}^{h-1} \binom{j}{l} 4^lk^lf_0h'= \sum_{l=0}^{h-1} \binom{h}{l+1} 4^lk^lf_0h'.
\end{equation}
Assume now that $a = -1-4k$. Then $a^j = \sum_{l=0}^j (-1)^j\binom{j}{l}4^lk^l$, and so
\begin{equation}\label{cociclo1caso2 ej 6}
f_{f_0}(h,h') = \sum_{j=0}^{h-1} (-1)^j\sum_{l=0}^{j} \binom{j}{l} 4^lk^lf_0h' = \sum_{l=0}^{h-1}\sum_{j=l}^{h-1} (-1)^j\binom{j}{l} 4^lk^lf_0h'= \sum_{l=0}^{h-1} A_{hl} 4^lk^lf_0h'.
\end{equation}
Note that
\begin{equation}\label{pepe1}
A_{h0}=\begin{cases} 0 & \text{if $h$ is even,}\\ 1 & \text{if $h$ is odd,}\end{cases}\qquad\quad\text{and}\qquad A_{h1}=\begin{cases} -\frac{h}{2} & \text{if $h$ is even,}\\ \phantom{-}\frac{h-1}{2} & \text{if $h$ is odd}.\end{cases}
\end{equation}
We claim that
\begin{equation}\label{GFF ej 6}
\begin{aligned}
& G(y) = \begin{cases} (-4ky,2^{\eta}y) &\text{if $a = 1+4k$,}\\ (2y+4ky,2^{\eta}y) &\text{if $a = -1-4k$,}\end{cases}\\
&  F_1(y_1,y_2) = \begin{cases} 2^{\eta}y_1+4ky_2 &\text{if $a = 1+4k$,}\\ 2^{\eta}y_1-(2+4k)y_2 &\text{if $a = -1-4k$,}\end{cases} \\
&  F_2(y_1,y_2) = \begin{cases} 2^{\eta}y_1  &\text{if $a = 1+4k$,}\\ 2^{\eta} y_2 &\text{if $a = -1-4k$.}\end{cases}
\end{aligned}
\end{equation}
The formulas for $G$ and $F_1$ are straightforward. In order to get the formula for $F_2$, we first compute $P(a)$ and~$Q(a)$, where $P$ and $Q$ are as in~\eqref{los polinomios1}. Assume first that $a=1+4k$. Since $r<2\eta$ and $2^r\mid 2^{\eta}4^l$, for all $l\ge 1$, we have
\begin{equation}\label{se anula la suma}
2^{\eta} \sum_{j=0}^{2^{\eta}-1} j a^j = \sum_{j=0}^{2^{\eta}-1} j \sum_{l=0}^j \binom{j}{l}(2^{\eta}4^l)k^l=\sum_{j=0}^{2^{\eta}-1} j2^{\eta} =2^{\eta}\binom{2^{\eta}}{2}=(2^{\eta}-1)2^{2\eta-1}=0.
\end{equation}
Consequently
\begin{equation*}
P(a) = \sum_{j=0}^{2^{\eta}-1} (j2^{\eta}+1)a^j = \sum_{j=0}^{2^{\eta}-1}a^j= \sum_{j=0}^{2^{\eta}-1}\sum_{l=0}^j \binom{j}{l}4^lk^l=
\sum_{l=0}^{2^{\eta}-1}\sum_{j=l}^{2^{\eta}-1} \binom{j}{l}4^lk^l =\sum_{l=0}^{2^{\eta}-1}\binom{2^{\eta}}{l+1} 4^lk^l.
\end{equation*}
Write $l+1 = 2^tv$ with $2\nmid v$ and $t\ge 0$. It is well known that $2^{\eta-t}\mid \binom{2^{\eta}}{l+1}$. Hence, the exponent $w$, of $2$ in $\binom{2^{\eta}}{l+1} 4^l$, is greater than or equal to $\eta-t+2l$. Thus, if $l\ge 1$, then
\begin{equation*}
w \ge \eta-t+2l = \eta-t+2(2^tv-1)=\eta -t+2^{t+1}v-2 \ge \eta+1\ge r,
\end{equation*}
where we have used that if $l\ge 1$, then $2^{t+1}v > t+2$. So, $2^r\mid \binom{2^{\eta}}{l+1}4^l$, for $l\ge 1$, and consequently, $P(a) = 2^{\eta}$.
We next compute $Q(a)$. By~\eqref{se anula la suma}, we have
\begin{equation*}
Q(a) = 2^{\eta}\sum_{j=1}^{2^{\eta}-1} j a^j  + 2^{\eta} =  2^{\eta}.
\end{equation*}
Assume now that $a=-1-4k$. Since $a^j = (-1-4k)^j = (-1)^j\sum_{l=0}^j \binom{j}{l}4^lk^l$ and $2^r\mid 2^{\eta}4^l$ for $l\ge 1$, we have
\begin{equation*}
P(a) = \sum_{j=0}^{2^{\eta}-1} (j2^{\eta}+1)a^j = \sum_{j=0}^{2^{\eta}-1} (-1)^j 2^{\eta}j + \sum_{j=0}^{2^{\eta}-1}(-1)^j\sum_{l=0}^j \binom{j}{l}4^lk^l.
\end{equation*}
Computing the expression at the right side of the second equality and using~\cite{Spivey}*{identity 217}, we obtain
\begin{equation*}
P(a) = -2^{2\eta-1} + \sum_{l=0}^{2^{\eta}-1} \left(\sum_{j=l}^{2^{\eta}-1} (-1)^j\binom{j}{l}\right)4^lk^l= -2^{2\eta-1}+\sum_{l=0}^{2^{\eta}-1} 4^lk^l B_l,
\end{equation*}
where $B_l\coloneqq \sum_{k=1}^l \binom{2^{\eta}}{k}\bigl(\frac{-1}{2}\bigr)^{l+1-k}$. Clearly $B_0=0$ and $B_1=-2^{\eta-1}$. Moreover, using that $2B_{l+1} =  B_l- \binom{2^{\eta}}{l+1}$, for all $l\ge 1$ (and an inductive argument), we obtain that
\begin{equation}\label{divisibilidad de B sub l}
2^{2l-\eta-2} B_l\in \mathds{Z}\qquad\text{for all $l\ge 2$.}
\end{equation}
Hence $4^lB_l\in \mathds{Z}$ and $2^r\mid 4^lB_l$, for $l\ge 1$, which implies that $P(a) = -2^{2\eta-1} =0$. Arguing as above we obtain that
\begin{equation*}
Q(a) = -2^{2\eta-1}  + 2^{\eta}=2^{\eta}.
\end{equation*}
The formula for $F_2$ follows from these facts by an immediate computation, which finishes the proof of~\eqref{GFF ej 6}. Assume now that $k=0$. We have two cases, $a=1$ and $a=-1$. If $a=1$, then
\begin{equation*}
\frac{\ker F_1\cap \ker F_2}{\ima G} = \begin{cases} \mathds{Z}_{2^r}\oplus \mathds{Z}_{2^r} &\text{if $r\le \eta$,}\\ 2\mathds{Z}_{2^r}\oplus \frac{\mathds{Z}_{2^r}}{2^{r-1}\mathds{Z}_{2^r}} &\text{if $r=\eta+1$.}\end{cases}
\end{equation*}
Hence, we can take $\{(f_0,\gamma)\in (z_1,z_2):0\le z_1,z_2<2^r\}$, when $r\le \eta$; and $(f_0,\gamma)\in\{(2 z_1,z_2):0\le z_1,z_2<2^{r-1}\}$, when $r=\eta+1$. For each one of these $2$-co\-cycles, equalities~\eqref{calculo diamante p=2 ej 6} and~\eqref{cociclo1caso1 ej 6} become
\begin{equation*}
h\blackdiamond y = y \quad\text{and}\quad  f_{f_0}(h,h') = \begin{cases}  z_1hh' & \text{if $r\le \eta$,}\\ 2z_1hh' & \text{if $r=\eta+1$.} \end{cases}
\end{equation*}
We now consider the case $a = -1$. We define the map $\Phi\colon I\oplus I\to I\oplus I$, by $\Phi(y_1,y_2)\coloneqq (y_1,y_2+2^{\eta-1}y_1)$, and we set $\wt{G}\coloneqq \Phi^{-1}\xcirc G$, $\wt{F}_1\coloneqq F_1\xcirc \Phi$ and  $\wt{F}_2\coloneqq F_2\xcirc \Phi$. Clearly $\Phi$ induces an isomorphism $\ov{\Phi}\colon \frac{\ker \wt{F}_1\cap \ker \wt{F}_2}{\ima \wt{G}}\to \frac{\ker F_1\cap \ker F_2}{\ima G}$. A direct computation, using~\eqref{GFF ej 6}, shows that
\begin{equation*}
\wt{G}(y) = (2y,0),\quad \wt{F}_1(y_1,y_2) = -2y_2 \quad \text{and}\quad \wt{F}_2(y_1,y_2) =  2^{\eta} y_2.
\end{equation*}
%%%%%%%%%%%%%%%%%%%%%%%%%%%%%%%%%%%%%%%%%%%%%%%%%%%%%%%%%%%%%%%%%%%%%%%%%%%%%%%%%%%%%%%%%%%%%%%%%%%%%%%%%%%%%%%%%%%%%%%%%%%%%%%%%%%%%%%%%%%%%%%%%%%%%%%%%
\begin{comment}
%%%%%%%%%%%%%%%%%%%%%%%%%%%%%%%%%%%%%%%%%%%%%%%%%%%%%%%%%%%%%%%%%%%%%%%%%%%%%%%%%%%%%%%%%%%%%%%%%%%%%%%%%%%%%%%%%%%%%%%%%%%%%%%%%%%%%%%%%%%%%%%%%%%%%%%%%
%Prueba de las cuentas de arriba
%%%%%%%%%%%%%%%%%%%%%%%%%%%%%%%%%%%%%%%%%%%%%%%%%%%%%%%%%%%%%%%%%%%%%%%%%%%%%%%%%%%%%%%%%%%%%%%%%%%%%%%%%%%%%%%%%%%%%%%%%%%%%%%%%%%%%%%%%%%%%%%%%%%%%%%%%

\noindent We have
\begin{align*}
F_1(\Phi(y_1,y_2)) & = F_1(y_1,y_2+2^{\eta-1}y_1) = 2^{\eta}y_1-2y_2-2^{\eta}y_1 = -2y_2\\
%
F_2(\Phi(y_1,y_2)) & = F_2(y_1,y_2+2^{\eta-1}y_1) = 2^{\eta}y_2
%
\shortintertext{and}
%
\Phi^{-1}(G(y_1,y_2)) & = \Phi^{-1}(2y,2^{\eta}y) = (2y,2^{\eta}y-2^{\eta}y) = (2y,0).
\end{align*}
%%%%%%%%%%%%%%%%%%%%%%%%%%%%%%%%%%%%%%%%%%%%%%%%%%%%%%%%%%%%%%%%%%%%%%%%%%%%%%%%%%%%%%%%%%%%%%%%%%%%%%%%%%%%%%%%%%%%%%%%%%%%%%%%%%%%%%%%%%%%%%%%%%%%%%%%%
\end{comment}
%%%%%%%%%%%%%%%%%%%%%%%%%%%%%%%%%%%%%%%%%%%%%%%%%%%%%%%%%%%%%%%%%%%%%%%%%%%%%%%%%%%%%%%%%%%%%%%%%%%%%%%%%%%%%%%%%%%%%%%%%%%%%%%%%%%%%%%%%%%%%%%%%%%%%%%%%
Consequently,
\begin{equation*}
\frac{\ker \wt{F}_1\cap \ker \wt{F}_2}{\ima \wt{G}} =  \frac{\mathds{Z}_{2^r}}{2\mathds{Z}_{2^r}}\oplus 2^{r-1}\mathds{Z}_{2^r}.
\end{equation*}
Take $(\ov{z_1},2^{r-1}z_2)\in \frac{\mathds{Z}_{2^r}}{2\mathds{Z}_{2^r}}\oplus 2^{r-1}\mathds{Z}_{2^r}$, where $0\le z_1,z_2< 2$. Since
\begin{equation*}
\ov{\Phi}(\ov{z_1},2^{r-1}z_2) = \ov{(z_1,2^{r-1}z_2+2^{\eta-1}z_1)},
\end{equation*}
we can take $(f_0,\gamma)\in(z_1,2^{r-1}z_2+2^{\eta-1}z_1):0\le z_1,z_2< 2\}$. For each $(f_0,\gamma)$, the formulas~\eqref{calculo diamante p=2 ej 6} and~\eqref{cociclo1caso2 ej 6} become
\begin{equation*}
h\blackdiamond y = (-1)^h y \quad \text{and} \quad f_{f_0}(h,h') = \begin{cases} z_1h' & \text{if $h$ is odd,}\\ 0 & \text{if $h$ is even.}\end{cases}
\end{equation*}
We now assume that $a=1+4k$ with $k\ne 0$, and that $r\le \eta$. In this case conditions~\eqref{GFF ej 6} becomes
$G(y)=(-4ky,0)$, $F_1(y_1,y_2)=4ky_2$ and $F_2(y_1,y_2)=0$. Hence
$$
\frac{\ker F_1\cap \ker F_2}{\ima G}=\frac{\mathds{Z}_{2^r}}{2^{u+2}\mathds{Z}_{2^r}} \oplus 2^{r-u-2}\mathds{Z}_{2^r},
$$
and we can take $(f_0,\gamma)\in \{(z_1,2^{r-u-2}z_2):0\le z_1, z_2<2^{u+2}\}$. For each $(f_0,\gamma)$, the~formu\-las~\eqref{calculo diamante p=2 ej 6} and~\eqref{cociclo1caso1 ej 6} remain the same.
Assume now that $a=1+4k$ with $k\ne 0$ and that $r=\eta+1$. We define the map $\Phi\colon I\oplus I\to I\oplus I$, by $\Phi(y_1,y_2)\coloneqq (y_1,y_2-2^{\eta-u-2}y_1)$, and we set $\wt{G}\coloneqq \Phi^{-1}\xcirc G$, $\wt{F}_1\coloneqq F_1\xcirc \Phi$ and $\wt{F}_2\coloneqq F_2\xcirc \Phi$. Clearly $\Phi$ induces an isomorphism
\begin{equation*}
\ov{\Phi}\colon \frac{\ker \wt{F}_1\cap \ker \wt{F}_2}{\ima \wt{G}}\to \frac{\ker F_1\cap \ker F_2}{\ima G}.
\end{equation*}
A direct computation, using~\eqref{GFF ej 6}, shows that
\begin{equation*}
\wt{G}(y) = (-4ky,0),\quad \wt{F}_1(y_1,y_2) = 4ky_2 \quad\text{and}\quad\wt{F}_2(y_1,y_2) =  2^{\eta} y_1.
\end{equation*}
Consequently,
%%%%%%%%%%%%%%%%%%%%%%%%%%%%%%%%%%%%%%%%%%%%%%%%%%%%%%%%%%%%%%%%%%%%%%%%%%%%%%%%%%%%%%%%%%%%%%%%%%%%%%%%%%%%%%%%%%%%%%%%%%%%%%%%%%%%%%%%%%%%%%%%%%%%%%%%%
\begin{comment}
%%%%%%%%%%%%%%%%%%%%%%%%%%%%%%%%%%%%%%%%%%%%%%%%%%%%%%%%%%%%%%%%%%%%%%%%%%%%%%%%%%%%%%%%%%%%%%%%%%%%%%%%%%%%%%%%%%%%%%%%%%%%%%%%%%%%%%%%%%%%%%%%%%%%%%%%%
%Prueba de las cuentas de arriba
%%%%%%%%%%%%%%%%%%%%%%%%%%%%%%%%%%%%%%%%%%%%%%%%%%%%%%%%%%%%%%%%%%%%%%%%%%%%%%%%%%%%%%%%%%%%%%%%%%%%%%%%%%%%%%%%%%%%%%%%%%%%%%%%%%%%%%%%%%%%%%%%%%%%%%%%%

\noindent We have
\begin{align*}
F_1(\Phi(y_1,y_2)) & = F_1(y_1,y_2-2^{\eta-u-2}y_1)\\
%
& = 2^{\eta}y_1+4ky_2-4k2^{\eta-u-2}y_1\\
%
& = 2^{\eta}y_1+4ky_2-s2^{\eta}y_1\\
%
& = 4ky_2 &&\text{since $2^{\eta+1}\mid 2^{\eta}(1-s)$}\\
%
F_2(\Phi(y_1,y_2)) & = F_2(y_1,y_2-2^{\eta-u-2}y_1) = 2^{2\eta}y_1
%
\shortintertext{and}
%
\Phi^{-1}(G(y_1,y_2)) & = \Phi^{-1}(-4ky,2^{\eta}y)\\
%
& = (-4ky,2^{\eta}y+2^{\eta-u-2}(-4)ky)\\
%
& = (-4y,2^{\eta}y-s2^{\eta}y)\\
%
& = (-4y,0).
\end{align*}
%%%%%%%%%%%%%%%%%%%%%%%%%%%%%%%%%%%%%%%%%%%%%%%%%%%%%%%%%%%%%%%%%%%%%%%%%%%%%%%%%%%%%%%%%%%%%%%%%%%%%%%%%%%%%%%%%%%%%%%%%%%%%%%%%%%%%%%%%%%%%%%%%%%%%%%%%
\end{comment}
%%%%%%%%%%%%%%%%%%%%%%%%%%%%%%%%%%%%%%%%%%%%%%%%%%%%%%%%%%%%%%%%%%%%%%%%%%%%%%%%%%%%%%%%%%%%%%%%%%%%%%%%%%%%%%%%%%%%%%%%%%%%%%%%%%%%%%%%%%%%%%%%%%%%%%%%%
\begin{equation*}
\frac{\ker \wt{F}_1\cap \ker \wt{F}_2}{\ima \wt{G}} =  \frac{2\mathds{Z}_{2^r}}{2^{u+2}\mathds{Z}_{2^r}}\oplus 2^{r-u-2}\mathds{Z}_{2^r},
\end{equation*}
and  we take $(\ov{2z_1},2^{r-u-2}z_2)\in \frac{2\mathds{Z}_{2^r}}{2^{u+2}\mathds{Z}_{2^r}}\oplus 2^{r-u-2}\mathds{Z}_{2^r}$,
with $0\le z_1<2^{u+1}$ and $0\le z_2<2^{u+2}$. Since
\begin{equation*}
\ov{\Phi}(\ov{2 z_1},2^{r-u-2}z_2) = \ov{(2z_1,2^{r-u-2}z_2-2^{\eta-u-1}z_1)},
\end{equation*}
we can take $(f_0,\gamma)\in \{(2z_1,2^{r-u-2}z_2-2^{\eta-u-1}z_1):0\le z_1<2^{u+1}\text{ and }0\le z_2<2^{u+2}\}$. For each $(f_0,\gamma)$, the~formu\-las~\eqref{calculo diamante p=2 ej 6} and~\eqref{cociclo1caso1 ej 6} remain the same. Finally, we assume that $a=-1-4k$ with $k\ne 0$. Similarly as before, we define the map $\Phi\colon I\oplus I\to I\oplus I$, by $\Phi(y_1,y_2) \coloneqq (y_1,y_2+2^{\eta-1}y_1)$, and we set $\wt{G}\coloneqq \Phi^{-1}\xcirc G$, $\wt{F}_1\coloneqq F_1\xcirc \Phi$ and~$\wt{F}_2\coloneqq F_2\xcirc \Phi$. Clearly $\Phi$ induces an isomorphism $\ov{\Phi}\colon \frac{\ker \wt{F}_1\cap \ker \wt{F}_2}{\ima \wt{G}}\to \frac{\ker F_1\cap \ker F_2}{\ima G}$. A direct computation, using equalities~\eqref{GFF ej 6} and that $r<2\eta$, shows that
\begin{equation*}
\wt{G}(y) = (2(1+2k)y,0),\quad \wt{F}_1(y_1,y_2) = -2(1+2k)y_2\quad\text{and} \quad\wt{F}_2(y_1,y_2) =  2^{\eta} y_2.
\end{equation*}
%%%%%%%%%%%%%%%%%%%%%%%%%%%%%%%%%%%%%%%%%%%%%%%%%%%%%%%%%%%%%%%%%%%%%%%%%%%%%%%%%%%%%%%%%%%%%%%%%%%%%%%%%%%%%%%%%%%%%%%%%%%%%%%%%%%%%%%%%%%%%%%%%%%%%%%%%
\begin{comment}
%%%%%%%%%%%%%%%%%%%%%%%%%%%%%%%%%%%%%%%%%%%%%%%%%%%%%%%%%%%%%%%%%%%%%%%%%%%%%%%%%%%%%%%%%%%%%%%%%%%%%%%%%%%%%%%%%%%%%%%%%%%%%%%%%%%%%%%%%%%%%%%%%%%%%%%%%
%Prueba de las cuentas de arriba
%%%%%%%%%%%%%%%%%%%%%%%%%%%%%%%%%%%%%%%%%%%%%%%%%%%%%%%%%%%%%%%%%%%%%%%%%%%%%%%%%%%%%%%%%%%%%%%%%%%%%%%%%%%%%%%%%%%%%%%%%%%%%%%%%%%%%%%%%%%%%%%%%%%%%%%%%

\noindent We have
\begin{align*}
F_1(\Phi(y_1,y_2)) & = F_1(y_1,y_2+2^{\eta-1}y_1)\\
%
& = 2^{\eta}y_1-(2+4k)y_2 - (2+4k)2^{\eta-1}y_1\\
%
& = 2^{\eta}y_1-(2+4k)y_2 - 2^{\eta}(1+2k)y_1\\
%
& = 2^{\eta}y_1-(2+4k)y_2 - 2^{\eta}y_1\\
%
& = -2(1+2k)y_2\\
%
F_2(\Phi(y_1,y_2)) & = F_2(y_1,y_2+2^{\eta-1}y_1) = 2^{\eta}y_2+2^{2\eta-1}y_1 = 2^{\eta}y_2\\
%
\shortintertext{and}
%
\Phi^{-1}(G(y_1,y_2)) & = \Phi^{-1}(2y+4ky,2^{\eta}y)\\
%
& = (2y+4ky,2^{\eta}y+2^{\eta-1} (2y+4ky))\\
%
& = (2y+4ky,2^{\eta}y+2^{\eta}(1+2k)y)\\
%
& = (2y+4ky,2^{\eta}y+2^{\eta}y)\\
%
& = (2(1+2k)y,0).
\end{align*}
%%%%%%%%%%%%%%%%%%%%%%%%%%%%%%%%%%%%%%%%%%%%%%%%%%%%%%%%%%%%%%%%%%%%%%%%%%%%%%%%%%%%%%%%%%%%%%%%%%%%%%%%%%%%%%%%%%%%%%%%%%%%%%%%%%%%%%%%%%%%%%%%%%%%%%%%%
\end{comment}
%%%%%%%%%%%%%%%%%%%%%%%%%%%%%%%%%%%%%%%%%%%%%%%%%%%%%%%%%%%%%%%%%%%%%%%%%%%%%%%%%%%%%%%%%%%%%%%%%%%%%%%%%%%%%%%%%%%%%%%%%%%%%%%%%%%%%%%%%%%%%%%%%%%%%%%%%
Consequently,
\begin{equation*}
\frac{\ker \wt{F}_1\cap \ker \wt{F}_2}{\ima \wt{G}} = \frac{\mathds{Z}_{2^r}}{2\mathds{Z}_{2^r}}\oplus 2^{r-1}\mathds{Z}_{2^r}.
\end{equation*}
Take $(\ov{z_1},2^{r-1}z_2)\in \frac{\mathds{Z}_{2^r}}{2\mathds{Z}_{2^r}}\oplus 2^{r-1}\mathds{Z}_{2^r}$, where $0\le z_1, z_2< 2$. Since
\begin{equation*}
\ov{\Phi}(\ov{z_1},2^{r-1}z_2) = \ov{(z_1,2^{r-1}z_2+2^{\eta-1}z_1)},
\end{equation*}
we can take $(f_0,\gamma)\in \{(z_1,2^{r-1}z_2+2^{\eta-1}z_1):0\le z_1,z_2<2\}$. For each $(f_0,\gamma)$, the formulas~\eqref{calculo diamante p=2 ej 6} and~\eqref{cociclo1caso2 ej 6} remain the same.
\end{proof}

\subsubsection[Case \texorpdfstring{$H=\mathds{Z}_{2^{\eta}}$}{H=Z2n} and \texorpdfstring{$I=\mathds{Z}_{2^r}$}{Z2r} with \texorpdfstring{$r>\eta+1$}{r>n+1}]{Case \texorpdfstring{$\pmb{H=\mathds{Z}_{2^{\eta}}}$}{H=Z2n} and \texorpdfstring{$\pmb{I=\mathds{Z}_{2^r}}$}{I=Z2r} with \texorpdfstring{$\pmb{r>\eta+1}$}{r>n+1}}

\begin{proposition} Let $H\coloneqq\left(\mathds{Z}_{2^{\eta}},+\right)$, endowed with the trivial cycle set structure, and let $I\coloneqq \mathds{Z}_{2^r}$, where we assume that $r>\eta+1$. There exists $a = \pm (1+2^{r-\eta}k)$ with $0\le k<2^{\eta}$, such that $h\blackdiamond y = a^h y$. Moreover, if $k\ne 0$, then we write $k=2^us$ with $0\le u<\eta$ and $2\nmid s$. There are four cases:
\begin{description}

\item[$k=0$ and $a=1$] In this case $(f_0,\gamma)\in\{(2^{r-\eta}z_1,z_2) : 0\le z_1,z_2<2^{\eta}\}$. Moreover, $h\blackdiamond y = y$ and $f_{f_0}(h,h') = f_0 hh'$.

\item[$k=0$ and $a=-1$] In this case $(f_0,\gamma)\in\{(z_1,2^{r-1}z_2+2^{\eta-1}z_1):0\le z_1,z_2<2\}$. Moreover,
\begin{equation*}
h\blackdiamond y = (-1)^h y \quad\text{and}\quad f_{f_0}(h,h') = \begin{cases} f_0h'  & \text{if $h$ is odd,}\\ 0 & \text{if $h$ is even.}\end{cases}
\end{equation*}

\item[$k\ne 0$ and $r+u> 2\eta$] If $a=1+2^{r-\eta}k$, then $(f_0,\gamma)\in\{(2^{r-\eta} (z_1-sz_2),2^{\eta-u}z_2): 0\le z_1<2^{\eta} \text{ and } 0\le z_2<2^u\}$. More\-over,
\begin{equation*}
h \blackdiamond y = (1+2^{r-\eta}kh)y\quad\text{and}\quad f_{f_0}(h,h') = f_0hh'.
\end{equation*}
If $a=-1-2^{r-\eta}k$ with $k$ even, then $(f_0,\gamma)\in\{(z_1+2^{r-\eta-1}kz_1,2^{r-1}z_2+2^{\eta-1}z_1):0\le z_1,z_2<2\}$. Moreover, for $f_0=z_1+2^{r-\eta-1}kz_1$,
\begin{equation*}
h \blackdiamond y =(-1)^h (1 + 2^{r-\eta}kh)y\quad\text{and}\quad f_{f_0}(h,h') = \begin{cases} z_1h'+2^{r-\eta-1}kz_1hh'  & \text{if $h$ is odd,}\\ -k2^{r-\eta-1}z_1hh' & \text{if $h$ is even.}\end{cases}
\end{equation*}
If $a=-1-2^{r-\eta}k$ with $k$ odd, then $(f_0,\gamma)\in\{(0,2^{r-1}z_2) : 0\le z_2<2\}$. Moreover,
\begin{equation*}
h \blackdiamond y =(-1)^h (1 + 2^{r-\eta}kh)y\quad\text{and}\quad f_{f_0}(h,h') = 0.
\end{equation*}

\item[$k\ne 0$ and $r+u\le 2\eta$] If $a=1+2^{r-\eta}k$, then $(f_0,\gamma)\!\in\!\{(2^{r-\eta}sz_1,2^{\eta-u}(z_2-z_1)):0\!\le\! z_1\!<\!2^u\text{ and } 0\!\le\! z_2\! <\!2^{r+u-\eta}\}$. Moreover,
\begin{equation*}
h \blackdiamond y =\sum_{i=0}^h \binom{h}{i} 2^{(r-\eta)i}k^iy\quad\text{and}\quad f_{f_0}(h,h') =  \sum_{l=0}^{h-1} \binom{h}{l+1}  2^{(r-\eta)l} k^lf_0h'.
\end{equation*}
If $a=-1-2^{r-\eta}k$ with $k$ odd, then $(f_0,\gamma)\in\{(0,2^{r-1}z_2): 0\le z_2<2\}$.~More\-over,
\begin{equation*}
h \blackdiamond y =(-1)^h\sum_{i=0}^h \binom{h}{i} 2^{(r-\eta)i}k^iy\quad\text{and}\quad f_{f_0}(h,h') = 0.
\end{equation*}
If $a=-1-2^{r-\eta}k$ with $k$ even, then $(f_0,\gamma)\in\{(z_1,2^{r-1}z_2+2^{\eta-1}cz_1) : 0\le z_1,z_2<2\}$, where $c\coloneqq 1-2^{r-\eta-1}k$. Moreover,
\begin{equation*}
\quad\qquad h\blackdiamond y =(-1)^h\sum_{i=0}^h \binom{h}{i} 2^{(r-\eta)i}k^iy\quad\text{and}\quad f_{f_0}(h,h') = \sum_{l=0}^{h-1}  A_{hl} 2^{(r-\eta)l}k^lf_0h',
\end{equation*}
where $A_{hl}\coloneqq \sum_{j=l}^{h-1} (-1)^{j} \binom{j}{l}$.
\end{description}
\end{proposition}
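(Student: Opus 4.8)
The plan is to follow the same template used in the preceding propositions of this subsection, adapting it to the regime $r>\eta+1$. First I would invoke Proposition~\ref{condiciones sin potencia general} to write $A$ as multiplication by a unit $a\in \mathds{Z}_{2^r}$ satisfying $a^{2^{\eta}}\equiv 1\pmod{2^r}$. Since $r>\eta+1>2$, Lemma~\ref{para ejemploscon2} applies with $\eta_0=\min(r-2,\eta)=\eta$ (because $r>\eta+1$ gives $r-2\ge \eta$), yielding $a^2\equiv 1\pmod{2^{r+1-\eta}}$, and Remark~\ref{A para ejemploscon2} then forces $a\equiv \pm 1\pmod{2^{r-\eta}}$, i.e. $a=\pm(1+2^{r-\eta}k)$ with $0\le k<2^{\eta}$. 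Because $H$ is trivial we have $1^{\times h}=h$, so the first equality in~\eqref{def de diamante general} yields $h\blackdiamond y=a^hy$, and~\eqref{f simplificado} gives $f_{f_0}(h,h')=h'\sum_{j=0}^{h-1}a^jf_0$; expanding $a^j$ by the binomial theorem produces the closed forms for $h\blackdiamond y$ and $f_{f_0}$ displayed in each case.

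The heart of the computation is the evaluation of the maps $F_1$, $F_2$ and $G$ introduced above Corollary~\ref{calculo de la homo}, which reduces to computing the polynomial values $P(a)$ and $Q(a)$ of~\eqref{los polinomios1}. For $a=1+2^{r-\eta}k$ I would expand $a^j=\sum_l \binom{j}{l}2^{(r-\eta)l}k^l$, swap the order of summation to reach $\sum_l \binom{2^{\eta}}{l+1}2^{(r-\eta)l}k^l$, and bound the $2$-adic valuation of $\binom{2^{\eta}}{l+1}2^{(r-\eta)l}$ using $2^{\eta-t}\mid \binom{2^{\eta}}{l+1}$ (where $l+1=2^tv$ with $2\nmid v$); a valuation count of the type already carried out in the cases $r\le \eta+1$ shows that only low-order terms survive, so that $P(a)$ and $Q(a)$ collapse to explicit multiples of $2^{\eta}$. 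The case $a=-1-2^{r-\eta}k$ requires the alternating sums $\sum_{j=l}^{2^{\eta}-1}(-1)^j\binom{j}{l}$, which I would handle exactly as in the previous proposition, via~\cite{Spivey}*{identity 217}, the recursion $2B_{l+1}=B_l-\binom{2^{\eta}}{l+1}$, and the divisibility~\eqref{divisibilidad de B sub l}; this is where the parity of $k$ first enters, since $1+2^{r-\eta-1}k$ is a unit precisely when $k$ is even.

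With $F_1,F_2,G$ in hand, I would apply Corollary~\ref{calculo de la homo} to identify $\ho_{\blackdiamond}^2(H,I)$ with $\ker(F_1)\cap \ker(F_2)/\ima(G)$ and split into the four stated cases according to $k=0$ or $k\ne 0$, and, when $k\ne 0$, according to whether $r+u>2\eta$ or $r+u\le 2\eta$ (the inequality governing which $2$-adic valuation dominates). In each subcase I would introduce an explicit linear automorphism $\Phi$ of $I\oplus I$, set $\wt{G}\coloneqq \Phi^{-1}\xcirc G$, $\wt{F}_1\coloneqq F_1\xcirc \Phi$ and $\wt{F}_2\coloneqq F_2\xcirc \Phi$, and choose $\Phi$ so that the pair $(\wt{F}_1,\wt{F}_2)$ becomes triangular, with one component a pure power of $2$ times a coordinate, making the quotient $\ker(\wt{F}_1)\cap \ker(\wt{F}_2)/\ima(\wt{G})$ a transparent direct sum of cyclic $2$-groups. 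Pulling back along the induced isomorphism $\ov{\Phi}$ then yields the advertised parametrization of $(f_0,\gamma)$ in terms of $(z_1,z_2)$.

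The main obstacle will be the negative branch $a=-1-2^{r-\eta}k$, and in particular the interaction between the parity of $k$ and the threshold $r+u\le 2\eta$. Unlike the positive case, the alternating binomial sums do not telescope cleanly, the surviving term of $P(a)$ depends delicately on $2$-adic cancellations, and the correct change of variables $\Phi$ (the shift by $2^{\eta-1}$, or the twist by $c\coloneqq 1-2^{r-\eta-1}k$) differs from subcase to subcase. Getting the bookkeeping of these valuations and of the representatives $0\le z_i<2^{\bullet}$ exactly right, so that $\ov{\Phi}$ sends the chosen generators to the stated pairs $(f_0,\gamma)$, is the delicate and error-prone part; everything else is a routine, if lengthy, repetition of the machinery already established.
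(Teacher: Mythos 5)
Your proposal matches the paper's proof essentially step for step: the same reduction via Lemma~\ref{para ejemploscon2} and Remark~\ref{A para ejemploscon2} to $a=\pm(1+2^{r-\eta}k)$, the same $2$-adic valuation count for $P(a)$ and $Q(a)$ (including \cite{Spivey}*{identity 217}, the recursion for the $B_l$ and the divisibility~\eqref{divisibilidad de B sub l} in the negative branch), and the same case split with triangularizing automorphisms $\Phi$ to compute $\ker(F_1)\cap\ker(F_2)/\ima(G)$ via Corollary~\ref{calculo de la homo}. One small correction to your narrative: since $r>\eta+1$ the element $1+2^{r-\eta-1}k$ is odd, hence a unit for \emph{every} $k$; the parity of $k$ actually enters because the coefficient $2^{r-1}k$ surviving in $F_2$ vanishes in $\mathds{Z}_{2^r}$ exactly when $k$ is even, and this is what separates the two subcases of the branch $a=-1-2^{r-\eta}k$.
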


\begin{proof} Note that $r>\eta+1$ implies that $r\ge 3$. Let $A$ be as in Propo\-sition~\ref{condiciones sin potencia general}. Since~$A$ is the multiplication by an invertible element $a\in \mathds{Z}_{2^r}$ and $A^{p^{\eta}} = \Ide$, by Lemma~\ref{para ejemploscon2} and Remark~\ref{A para ejemploscon2}, we have $a = \pm (1+2^{r-\eta}k)$ with $0\le k<2^{\eta}$. Moreover, since $1^{\times h}=h$, by the first equality in~\eqref{def de diamante general}, we have:
\begin{equation}\label{calculo diamante p=2}
h \blackdiamond y =\begin{cases} (1+2^{r-\eta}k)^h y = \sum_{i=0}^h \binom{h}{i} 2^{(r-\eta)i}k^iy & \text{if $a = 1+2^{r-\eta}k$,}\\ (-1)^h(1+2^{r-\eta}k)^h y = (-1)^h\sum_{i=0}^h \binom{h}{i} 2^{(r-\eta)i}k^iy & \text{if $a = -1-2^{r-\eta}k$.}\end{cases}
\end{equation}
Let $F_1$, $F_2$ and $G$ be as above Corollary~\ref{calculo de la homo}, and let $\ov{(f_0,\gamma)}\in \frac{\ker F_1\cap \ker F_2}{\ima G}$. From equality~\eqref{f simplificado}, we obtain
\begin{equation*}
f_{f_0}(h,h') = \sum_{j=0}^{h-1}a^j f_0h'.
\end{equation*}
Assume that $a=1+2^{r-\eta}k$. Then $a^j = \sum_{l=0}^j \binom{j}{l}2^{(r-\eta)l}k^l$, and so
\begin{equation}\label{cociclo1caso1}
f_{f_0}(h,h') = \sum_{j=0}^{h-1} \sum_{l=0}^j \binom{j}{l} 2^{(r-\eta)l}k^lf_0h' = \sum_{l=0}^{h-1} \sum_{j=l}^{h-1} \binom{j}{l} 2^{(r-\eta)l}k^lf_0h'= \sum_{l=0}^{h-1} \binom{h}{l+1} 2^{(r-\eta)l}k^lf_0h'.
\end{equation}
Assume now that $a = -1-2^{r-\eta}k$. Then $a^j = \sum_{l=0}^j (-1)^j\binom{j}{l}2^{(r-\eta)l}k^l$, and so
\begin{equation}\label{cociclo1caso2}
f_{f_0}(h,h') = \sum_{j=0}^{h-1} (-1)^j\sum_{l=0}^j \binom{j}{l} 2^{(r-\eta)l}k^lf_0h'= \sum_{l=0}^{h-1} \sum_{j=l}^{h-1} (-1)^j\binom{j}{l}  2^{(r-\eta)l} k^lf_0h'= \sum_{l=0}^{h-1} A_{hl} 2^{(r-\eta)l}k^l f_0h'.
\end{equation}
We claim that
\begin{equation}\label{GFF ej 5}
\begin{aligned}
& G(y) = \begin{cases} (-2^{r-\eta}ky,2^{\eta}y) &\text{if $a = 1+2^{r-\eta}k$,}\\ (2y+2^{r-\eta}ky,2^{\eta}y) &\text{if $a = -1-2^{r-\eta}k$,}\end{cases} \\
&  F_1(y_1,y_2) = \begin{cases} 2^{\eta}y_1+2^{r-\eta}ky_2 &\text{if $a = 1+2^{r-\eta}k$,}\\ 2^{\eta}y_1-(2+2^{r-\eta}k)y_2 &\text{if $a = -1-2^{r-\eta}k$,} \end{cases}\\
&  F_2(y_1,y_2) = \begin{cases}\phantom{-} \bigl(2^{3\eta-1}-2^{2\eta-1} + 2^{\eta} - 2^{r-1}k\bigr)y_1 + 2^{r-1}k y_2  &\text{if $a = 1+2^{r-\eta}k$,}\\ -\bigl(2^{2\eta-1} + 2^{r-1}k\bigr) y_1 + \bigl(2^{\eta} + 2^{r-1}k\bigr) y_2 &\text{if $a = -1-2^{r-\eta}k$.}\end{cases}
\end{aligned}
\end{equation}
In fact, the formulas for $G$ and $F_1$ are straightforward. In order to obtain the formula for $F_2$, we first compute $P(a)$ and $Q(a)$, where $P$ and $Q$ are as in~\eqref{los polinomios1}. Assume first that $a=1+2^{r-\eta}k$. Since $a^j = (1+2^{r-\eta}k)^j = \sum_{l=0}^j \binom{j}{l}2^{(r-\eta)l}k^l$ and $2^r\mid 2^{\eta}2^{(r-\eta)l}$ for all $l\ge 1$, we have
\begin{equation*}
P(a) = \sum_{j=0}^{2^{\eta}-1} (j2^{\eta}+1)a^j = \sum_{j=0}^{2^{\eta}-1} j2^{\eta} + \sum_{j=0}^{2^{\eta}-1}\sum_{l=0}^j \binom{j}{l}2^{(r-\eta)l}k^l.
\end{equation*}
Computing the expression at the right side of the second equality, we obtain
\begin{equation*}
P(a) = \frac{(2^{\eta}-1)2^{2\eta}}{2} +\sum_{l=0}^{2^{\eta}-1}\sum_{j=l}^{2^{\eta}-1} \binom{j}{l}2^{(r-\eta)l}k^l = (2^{\eta}-1)2^{2\eta-1} + \sum_{l=0}^{2^{\eta}-1}\binom{2^{\eta}}{l+1} 2^{(r-\eta)l}k^l.
\end{equation*}
Write $l+1 = 2^tv$ with $2\nmid v$ and $t\ge 0$. It is well known that $2^{\eta-t}\mid \binom{2^{\eta}}{l+1}$. Hence, the exponent $w$, of $2$ in $\binom{2^{\eta}}{l+1} 2^{(r-\eta)l}$, is greater than or equal to $\eta-t+(r-\eta)l$. Consequently, if $l\ge 2$, then
\begin{equation*}
w \ge \eta-t+(r-\eta)l = r-t+(r-\eta)(l-1)\ge r-t + 2(l-1)\ge r,
\end{equation*}
where we have used that if $l\ge 2$, then $2(l-1) = 2(2^tv-2)\ge t$. So, $2^r\mid \binom{2^{\eta}}{l+1}2^{(r-\eta)l}$, for $l\ge 2$, and consequently,
\begin{equation*}
P(a) = (2^{\eta}-1)2^{2\eta-1} + 2^{\eta} + (2^{\eta}-1)2^{r-1}k = (2^{\eta}-1)2^{2\eta-1} + 2^{\eta} -2^{r-1}k.
\end{equation*}
We next compute $Q(a)$. By the very definition of $Q(a)$, we have
\begin{equation*}
Q(a) = 2^{\eta}\sum_{j=1}^{2^{\eta}-1} j a^j  + 2^{\eta} = 2^{\eta}\sum_{j=1}^{2^{\eta}-1} j \sum_{l=0}^j \binom{j}{l}2^{(r-\eta)l}k^l + 2^{\eta}.
\end{equation*}
Since $2^r\mid 2^{\eta}2^{(r-\eta)l}$ for $l\ge 1$, the formula for $Q(a)$ reduces to
\begin{equation*}
Q(a) = 2^{\eta}\sum_{j=1}^{2^{\eta}-1}j  + 2^{\eta} = (2^{\eta}-1)2^{2\eta-1} + 2^{\eta}.
\end{equation*}
Assume now that $a=-1-2^{r-\eta}k$. Since $a^j = (-1-2^{r-\eta}k)^j = (-1)^j\sum_{l=0}^j \binom{j}{l}2^{(r-\eta)l}k^l$ and $2^r\mid 2^{\eta}2^{(r-\eta)l}$ for $l\ge 1$, we have
\begin{equation*}
P(a) = \sum_{j=0}^{2^{\eta}-1} (j2^{\eta}+1)a^j = \sum_{j=0}^{2^{\eta}-1} (-1)^j j2^{\eta} + \sum_{j=0}^{2^{\eta}-1}(-1)^j\sum_{l=0}^j \binom{j}{l}2^{(r-\eta)l}k^l.
\end{equation*}
Computing the expression at the right side of the second equality and using~\cite{Spivey}*{identity 217}, we obtain
\begin{equation*}
P(a) = -2^{2\eta-1} + \sum_{l=0}^{2^{\eta}-1} \left(\sum_{j=l}^{2^{\eta}-1} (-1)^j\binom{j}{l}\right) 2^{(r-\eta)l}k^l= -2^{2\eta-1} + \sum_{l=0}^{2^{\eta}-1} 2^{(r-\eta)l}k^lB_l,
\end{equation*}
where $B_l\coloneqq \sum_{k=1}^l \binom{2^{\eta}}{k}\bigl(-\frac{1}{2}\bigr)^{l+1-k}$. We know that $B_0=0$, $B_1=-2^{\eta-1}$ and $2^{2l-\eta-2}B_l\in \mathds{Z}$, for $l\ge 2$ (see~\eqref{divisibilidad de B sub l}).~Since
$$
(r-\eta)l = r-\eta+(r-\eta)(l-1)\ge r-\eta+2l-2,
$$
this implies that $2^{(r-\eta)l}k^lB_l\in \mathds{Z}$ and $2^r\mid 2^{(r-\eta)l}k^lB_l$, for $l\ge 2$. Hence,
\begin{equation*}
P(a) = -2^{2\eta-1} - 2^{r-1}k.
\end{equation*}
Arguing as above we obtain that
\begin{equation*}
Q(a) = -2^{2\eta-1}  + 2^{\eta}.
\end{equation*}
The formula for $F_2$ follows from these facts by an immediate computation, which finishes the proof of~\eqref{GFF ej 5}. Our next purpose is to calculate $\frac{\ker F_1\cap \ker F_2}{\ima G}$ and compute the map $f_{f_0}$ associated with a canonical representative $(f_0,\gamma)$ of each $\ov{(f_0,\gamma)}\in \frac{\ker F_1\cap \ker F_2}{\ima G}$. Assume first that $k=0$. We have two cases, $a=1$ and $a=-1$. If $a=1$, then
\begin{equation*}
\frac{\ker F_1\cap \ker F_2}{\ima G} = 2^{r-\eta}\mathds{Z}_{2^r}\oplus \frac{\mathds{Z}_{2^r}}{2^{\eta}\mathds{Z}_{2^r}},
\end{equation*}
and we can take $(f_0,\gamma)\in \{(2^{r-\eta}z_1,z_2):0\le z_1,z_2<2^{\eta}\}$. For each $(f_0,\gamma)$, equalities~\eqref{calculo diamante p=2} and~\eqref{cociclo1caso1} become
\begin{equation*}
h\blackdiamond y = y \quad\text{and}\quad f_{f_0} = f_0hh'.
\end{equation*}
If $a = -1$, then we define the map $\Phi\colon I\oplus I\to I\oplus I$, by $\Phi(y_1,y_2)\coloneqq (y_1,y_2+2^{\eta-1}y_1)$, and we set $\wt{G}\coloneqq \Phi^{-1}\xcirc G$, $\wt{F}_1\coloneqq F_1\xcirc \Phi$ and  $\wt{F}_2\coloneqq F_2\xcirc \Phi$. Clearly $\Phi$ induces an isomorphism $\ov{\Phi}\colon \frac{\ker \wt{F}_1\cap \ker \wt{F}_2}{\ima \wt{G}}\to \frac{\ker F_1\cap \ker F_2}{\ima G}$. A direct~computa\-tion, using~\eqref{GFF ej 5}, shows that
\begin{equation*}
\wt{G}(y) = (2y,0),\quad \wt{F}_1(y_1,y_2) = -2y_2 \quad \text{and}\quad  \wt{F}_2(y_1,y_2) =  2^{\eta} y_2.
\end{equation*}
Consequently
\begin{equation*}
\frac{\ker \wt{F}_1\cap \ker \wt{F}_2}{\ima \wt{G}} =  \frac{\mathds{Z}_{2^r}}{2\mathds{Z}_{2^r}}\oplus 2^{r-1}\mathds{Z}_{2^r}.
\end{equation*}
Take $(\ov{z_1},2^{r-1}z_2)\in \frac{\mathds{Z}_{2^r}}{2\mathds{Z}_{2^r}}\oplus 2^{r-1}\mathds{Z}_{2^r}$, where $0\le z_1,z_2< 2$. Since
\begin{equation*}
\ov{\Phi}(\ov{z_1},2^{r-1}z_2) = \ov{(z_1,2^{r-1}z_2+2^{\eta-1}z_1)},
\end{equation*}
we can take $(f_0,\gamma)\in\{(z_1,2^{r-1}z_2+2^{\eta-1}z_1):0\le z_1,z_2<2\}$. For each such $(f_0,\gamma)$, the formulas~\eqref{calculo diamante p=2} and~\eqref{cociclo1caso2}~be\-come
\begin{equation*}
h\blackdiamond y = (-1)^h y\quad\text{and}\quad f_{f_0}(h,h') = \begin{cases} f_0h'  & \text{if $h$ is odd,}\\ 0 & \text{if $h$ is even,}\end{cases}
\end{equation*}
as desired. Assume now that $k\ne 0$ and $r+u>2\eta$. Then, the formulas~\eqref{calculo diamante p=2}, \eqref{cociclo1caso1} and~\eqref{cociclo1caso2} become
\begin{align}
& h \blackdiamond y = \begin{cases} (1+2^{r-\eta}kh)y & \text{if $a = 1+2^{r-\eta}k$,}\\ (-1)^h (1 + 2^{r-\eta}kh)y & \text{if $a = -1-2^{r-\eta}k$.}\end{cases}\label{accion2p2}\\
\shortintertext{and}
& f_{f_0}(h,h') = \begin{cases} f_0hh' + 2^{r-\eta-1}k (h-1)f_0hh' & \text{if $a = 1+2^{r-\eta}k$,}\\ f_0h' + 2^{r-\eta-1}k f_0(h-1)h' & \text{if $a = -1-2^{r-\eta}k$ and $h$ is odd,}\\ -2^{r-\eta-1}kf_0hh' & \text{if $a = -1-2^{r-\eta}k$ and $h$ is even.}\end{cases}\label{cociclo2p2}
\end{align}
Our next purpose is to calculate $\frac{\ker F_1\cap \ker F_2}{\ima G}$ and compute the map $f_{f_0}$ associated with a canonical representative $(f_0,\gamma)$ of each $\ov{(f_0,\gamma)}\in \frac{\ker F_1\cap \ker F_2}{\ima G}$. We first consider the case $a = 1+2^{r-\eta}k$. We define the map $\Phi\colon I\oplus I\to I\oplus I$, by $\Phi(y_1,y_2)\coloneqq (y_1-2^{r-2\eta}ky_2,y_2)$, and we set $\wt{G}\coloneqq \Phi^{-1}\xcirc G$, $\wt{F}_1\coloneqq F_1\xcirc \Phi$ and  $\wt{F}_2\coloneqq F_2\xcirc \Phi$. Clearly $\Phi$ induces an isomorphism $\ov{\Phi}\colon \frac{\ker \wt{F}_1\cap \ker \wt{F}_2}{\ima \wt{G}}\to \frac{\ker F_1\cap \ker F_2}{\ima G}$. A direct com\-putation, using~\eqref{GFF ej 5}, shows that
\begin{equation*}
\wt{G}(y) = (0,2^{\eta}y),\quad \wt{F}_1(y_1,y_2) = 2^{\eta}y_1\quad\text{and}\quad  \wt{F}_2(y_1,y_2) = \bigl(2^{3\eta-1}+2^{2\eta-1} + 2^{\eta} - 2^{r-1}k\bigr)y_1 - 2^{r-\eta}ky_2.
\end{equation*}
Consequently,
\begin{equation*}
\frac{\ker \wt{F}_1\cap \ker \wt{F}_2}{\ima \wt{G}} =  2^{r-\eta}\mathds{Z}_{2^r}\oplus \frac{2^{\eta-u}\mathds{Z}_{2^r}}{2^{\eta}\mathds{Z}_{2^r}}.
\end{equation*}
Take $(2^{r-\eta}z_1,\ov{2^{\eta-u}z_2})\in 2^{r-\eta}\mathds{Z}_{2^r}\oplus \frac{2^{\eta-u}\mathds{Z}_{2^r}}{2^{\eta}\mathds{Z}_{2^r}}$, where $0\le z_1<2^{\eta}$ and $0\le z_2<2^u$. Since
\begin{equation*}
\ov{\Phi}(2^{r-\eta} z_1,\ov{2^{\eta-u}z_2}) = \ov{(2^{r-\eta} z_1-2^{r-\eta}sz_2,2^{\eta-u}z_2)},
\end{equation*}
we can take $(f_0,\gamma)\in\{(2^{r-\eta} (z_1-sz_2),2^{\eta-u}z_2):0\le z_1<2^{\eta}\text{ and }0\le z_2<2^u\}$. For each $(f_0,\gamma)$, the formulas~\eqref{accion2p2} and~\eqref{cociclo2p2} become
\begin{align*}
h\blackdiamond y = (1 + 2^{r-\eta}kh)y \quad \text{and}\quad f_{f_0}(h,h') = f_0hh'.
\end{align*}
We next consider the case $a = -1-2^{r-\eta}k$. We define maps $\Phi_1,\Phi_2\colon I\oplus I\to I\oplus I$, by $\Phi_1(y_1,y_2)\coloneqq (y_1+2^{r-2\eta}ky_2,y_2)$ and $\Phi_2(y_1,y_2)\coloneqq (y_1,y_2+2^{\eta-1}y_1)$, and we set $\wt{G}\coloneqq \Phi^{-1}\xcirc G$, $\wt{F}_1\coloneqq F_1\xcirc \Phi$ and  $\wt{F}_2\coloneqq F_2\xcirc \Phi$, where $\Phi\coloneqq \Phi_1\xcirc \Phi_2$. Clearly $\Phi$ induces an isomorphism $\ov{\Phi}\colon \frac{\ker \wt{F}_1\cap \ker \wt{F}_2}{\ima \wt{G}}\to \frac{\ker F_1\cap \ker F_2}{\ima G}$. Using~\eqref{GFF ej 5}, we obtain
\begin{equation*}
\wt{G}(y) = (2y,0),\quad \wt{F}_1(y_1,y_2) = -2y_2 \quad\text{and}\quad \wt{F}_2(y_1,y_2) =  -2^{r-1}ky_1+2^{\eta} y_2.
\end{equation*}
Consequently
\begin{equation*}
\frac{\ker \wt{F}_1\cap \ker \wt{F}_2}{\ima \wt{G}} = \begin{cases} \frac{\mathds{Z}_{2^r}}{2\mathds{Z}_{2^r}}\oplus 2^{r-1}\mathds{Z}_{2^r} & \text{if $k$ is even,}\\ 0 \oplus 2^{r-1}\mathds{Z}_{2^r} & \text{if $k$ is odd.}\end{cases}
\end{equation*}
Assume that $k$ even and take $(\ov{z_1},2^{r-1}z_2)\in \frac{\mathds{Z}_{2^r}}{2\mathds{Z}_{2^r}}\oplus 2^{r-1}\mathds{Z}_{2^r}$, where $0\le z_1,z_2< 2$. Since
\begin{equation*}
\ov{\Phi}(\ov{z_1},2^{r-1}z_2) = \ov{(z_1+2^{r-\eta-1}kz_1,2^{r-1}z_2+2^{\eta-1}z_1)},
\end{equation*}
we can take $(f_0,\gamma)\in \{(z_1+2^{r-\eta-1}kz_1,2^{r-1}z_2+2^{\eta-1}z_1): 0\le z_1,z_2< 2\}$. For each one of these $(f_0,\gamma)$, formulas~\eqref{accion2p2} and~\eqref{cociclo2p2}~be\-come
\begin{equation*}
h\blackdiamond y = (-1)^h (1 + 2^{r-\eta}kh)y \quad\text{and}\quad f_{f_0}(h,h') = \begin{cases} z_1h'+2^{r-\eta-1}kz_1hh'  & \text{if $h$ is odd,}\\ -k2^{r-\eta-1}z_1hh' & \text{if $h$ is even.}\end{cases}
\end{equation*}
On the other hand, when $k$ is odd, we take $(0,2^{r-1}z_2)\in 0\oplus 2^{r-1}\mathds{Z}_{2^r}$, where $0\le z_2< 2$. Since
\begin{equation*}
\ov{\Phi}(0,2^{r-1}z_2) = \ov{(0,2^{r-1}z_2)},
\end{equation*}
we can take $(f_0,\gamma)\in\{(0,2^{r-1}z_2):0\le z_2< 2\}$. For each one of these $(f_0,\gamma)$, the formulas~\eqref{accion2p2} and~\eqref{cociclo2p2} become
\begin{equation*}
h\blackdiamond y = (-1)^h (1 + 2^{r-\eta}kh)y \quad\text{and} \quad  f_{f_0}(h,h') = 0,
\end{equation*}
as desired. It remains to consider the case $k\ne0$ and $r+u\le 2\eta$. Assume first that $a=1+2^{r-\eta}k$. We define the map $\Phi\colon I\oplus I\to I\oplus I$, by $\Phi(y_1,y_2)\coloneqq (sy_1,y_2-2^{2\eta-r-u}y_1)$, and we set $\wt{G}\coloneqq \Phi^{-1}\xcirc G$, $\wt{F}_1\coloneqq F_1\xcirc \Phi$ and~$\wt{F}_2\coloneqq F_2\xcirc\Phi$. Clearly $\Phi$ induces an isomorphism $\ov{\Phi}\colon \frac{\ker \wt{F}_1\cap \ker \wt{F}_2}{\ima \wt{G}}\to \frac{\ker F_1\cap \ker F_2}{\ima G}$. A direct computation, using~\eqref{GFF ej 5} and the fact that $r\le 2\eta$, shows that
\begin{equation*}
\wt{G}(y)=(-2^{r-\eta+u} y,0),\quad \wt{F}_1(y_1,y_2)=2^{r-\eta}k y_2\quad\text{and}\quad \wt{F}_2(y_1,y_2)=2^{\eta}(1-2^{r-\eta-1}k)sy_1+2^{r-1}ky_2.
\end{equation*}
Using that $1-2^{r-\eta-1}k\in \mathds{Z}_{2^r}$ is invertible (because $r>\eta+1$), we obtain
\begin{equation*}
\frac{\ker \wt{F}_1\cap \ker \wt{F}_2}{\ima \wt{G}} = \frac{2^{r-\eta}\mathds{Z}_{2^r}}{2^{r+u-\eta}\mathds{Z}_{2^r}}\oplus 2^{\eta-u}\mathds{Z}_{2^r}.
\end{equation*}
Take $(\ov{2^{r-\eta}z_1},2^{\eta-u} z_2)\in  \frac{2^{r-\eta}\mathds{Z}_{2^r}}{2^{r+u-\eta}\mathds{Z}_{2^r}}\oplus 2^{\eta-u}\mathds{Z}_{2^r}$, where $0\le z_1<2^u$ and $0\le z_2 <2^{r+u-\eta}$. Since
\begin{equation*}
\ov{\Phi}(\ov{2^{r-\eta}z_1},2^{\eta-u} z_2) = \ov{(2^{r-\eta}sz_1,2^{\eta-u}z_2-2^{\eta-u} z_1)},
\end{equation*}
we can take $(f_0,\gamma)\in \{(2^{r-\eta}sz_1,2^{\eta-u}(z_2-z_1)):0\le z_1<2^u\text{ and }0\le z_2 <2^{r+u-\eta}\}$. In this case formu\-las~\eqref{calculo diamante p=2} and~\eqref{cociclo1caso1} remain the same. Assume now that $a=-1-2^{r-\eta}k$. Then, we define the morphism $\Phi\colon I\oplus I\to I\oplus I$, by $\Phi(y_1,y_2)\coloneqq (y_1,y_2+2^{\eta-1}cy_1)$, and we set $\wt{G}\coloneqq \Phi^{-1}\xcirc G$, $\wt{F}_1\coloneqq F_1\xcirc \Phi$ and  $\wt{F}_2\coloneqq F_2\xcirc \Phi$. Clearly $\Phi$ induces an isomorphism $\ov{\Phi}\colon \frac{\ker \wt{F}_1\cap \ker \wt{F}_2}{\ima \wt{G}}\to \frac{\ker F_1\cap \ker F_2}{\ima G}$. Let $b\coloneqq 1+2^{r-\eta-1}k$. A direct computation, using~\eqref{GFF ej 5} and the fact that $2^{\eta}bc=2^{\eta}$, shows that
\begin{equation*}
\wt{G}(y)=(2by,0),\quad \wt{F}_1(y_1,y_2)=-2b y_2\quad\text{and}\quad \wt{F}_2(y_1,y_2)=-2^{r-1}ky_1+2^{\eta}by_2.
\end{equation*}
Consequently
%%%%%%%%%%%%%%%%%%%%%%%%%%%%%%%%%%%%%%%%%%%%%%%%%%%%%%%%%%%%%%%%%%%%%%%%%%%%%%%%%%%%%%%%%%%%%%%%%%%%%%%%%%%%%%%%%%%%%%%%%%%%%%%%%%%%%%%%%%%%%%%%%%%%%%%%%
\begin{comment}
%%%%%%%%%%%%%%%%%%%%%%%%%%%%%%%%%%%%%%%%%%%%%%%%%%%%%%%%%%%%%%%%%%%%%%%%%%%%%%%%%%%%%%%%%%%%%%%%%%%%%%%%%%%%%%%%%%%%%%%%%%%%%%%%%%%%%%%%%%%%%%%%%%%%%%%%%
%Prueba de las cuentas de arriba
%%%%%%%%%%%%%%%%%%%%%%%%%%%%%%%%%%%%%%%%%%%%%%%%%%%%%%%%%%%%%%%%%%%%%%%%%%%%%%%%%%%%%%%%%%%%%%%%%%%%%%%%%%%%%%%%%%%%%%%%%%%%%%%%%%%%%%%%%%%%%%%%%%%%%%%%%

\noindent Since
$$
bc = (1+2^{r-\eta-1}k)(1-2^{r-\eta-1}k) = 1-2^{2r-2\eta-2}k^2 = 1-2^{r-\eta}k',
$$
we have
\begin{align*}
F_1(\Phi(y_1,y_2)) & = F_1(y_1,y_2+2^{\eta-1}cy_1)\\
%
& = 2^{\eta}y_1-(2+2^{r-\eta}k)y_2-(2+2^{r-\eta}k)2^{\eta-1}cy_1\\
%
& = 2^{\eta}y_1-2by_2-2bc2^{\eta-1}y_1\\
%
& = 2^{\eta}y_1-2by_2-2(1-2^{r-\eta}k')2^{\eta-1}y_1\\
%
& = -2by_2,\\
%
F_2(\Phi(y_1,y_2)) & = F_2(y_1,y_2+2^{\eta-1}cy_1)\\
%
& = - (2^{2\eta-1}+2^{r-1}k)y_1+(2^{\eta}+2^{r-1}k)y_2+(2^{\eta}+2^{r-1}k)2^{\eta-1}cy_1\\
%
& = - (2^{2\eta-1}+2^{r-1}k)y_1+2^{\eta}by_2+2^{\eta}bc2^{\eta-1}y_1\\
%
& = - (2^{2\eta-1}+2^{r-1}k)y_1+2^{\eta}by_2+2^{\eta}(1-2^{r-\eta}2^{r-\eta-2}k^2)2^{\eta-1}y_1\\
%
& = - (2^{2\eta-1}+2^{r-1}k)y_1+2^{\eta}by_2+2^{2\eta-1}y_1\\
%
& = -2^{r-1}ky_1+2^{\eta}by_2\\
%
\shortintertext{and}
%
\Phi^{-1}(G(y_1,y_2)) & = \Phi^{-1}(2y+2^{r-\eta}ky,2^{\eta}y)\\
%
& = (2y+2^{r-\eta}ky,2^{\eta}y-2^{\eta-1}c(2y+2^{r-\eta}ky))\\
%
& = (2by,2^{\eta}y-2^{\eta}cbk^2)y)\\
%
& = (2by,2^{\eta}y-2^{\eta}(1-2^{r-\eta}2^{r-\eta-2}k^2)y)\\
%
& = (2by,0).
\end{align*}
%%%%%%%%%%%%%%%%%%%%%%%%%%%%%%%%%%%%%%%%%%%%%%%%%%%%%%%%%%%%%%%%%%%%%%%%%%%%%%%%%%%%%%%%%%%%%%%%%%%%%%%%%%%%%%%%%%%%%%%%%%%%%%%%%%%%%%%%%%%%%%%%%%%%%%%%%
\end{comment}
%%%%%%%%%%%%%%%%%%%%%%%%%%%%%%%%%%%%%%%%%%%%%%%%%%%%%%%%%%%%%%%%%%%%%%%%%%%%%%%%%%%%%%%%%%%%%%%%%%%%%%%%%%%%%%%%%%%%%%%%%%%%%%%%%%%%%%%%%%%%%%%%%%%%%%%%%
\begin{equation*}
\frac{\ker \wt{F}_1\cap \ker \wt{F}_2}{\ima \wt{G}}=\begin{cases} 0\oplus 2^{r-1}\mathds{Z}_{2^r} & \text{if $k$ is odd,}\\ \frac{\mathds{Z}_{2^r}}{2\mathds{Z}_{2^r}}\oplus 2^{r-1}\mathds{Z}_{2^r} & \text{if $k$ is even.}\end{cases}
\end{equation*}
When $k$ is odd, we take $(0,2^{r-1} z_2)\in  0\oplus 2^{r-1}\mathds{Z}_{2^r}$, where $0\le z_2<2$, and we check that
\begin{equation*}
\ov{\Phi}(0,2^{r-1}z_2) = \ov{(0,2^{r-1}z_2)}.
\end{equation*}
So, we can take $(f_0,\gamma)\in\{(0,2^{r-1}z_2):0\le z_2<2\}$. For these $(f_0,\gamma)$, formulas~\eqref{calculo diamante p=2} and~\eqref{cociclo1caso2} become
\begin{equation*}
h\blackdiamond y =(-1)^h\sum_{i=0}^h \binom{h}{i} 2^{(r-\eta)i}k^iy \qquad\text{and}\qquad f_{f_0}(h,h') =0.
\end{equation*}
Finally, when $k$ is even we take $(\ov{z_1},2^{r-1} z_2)\in\frac{\mathds{Z}_{2^r}}{2\mathds{Z}_{2^r}}\oplus 2^{r-1}\mathds{Z}_{2^r}$, where $0\le z_1,z_2<2$, and we check that
\begin{equation*}
\ov{\Phi}(\ov{z_1},2^{r-1}z_2) = \ov{(z_1,2^{r-1}z_2+2^{\eta-1}cz_1)}.
\end{equation*}
Hence, we can take $(f_0,\gamma)\in\{(z_1,2^{r-1}z_2+2^{\eta-1}cz_1):0\le z_1,z_2<2\}$. For each $(f_0,\gamma)$, formulas~\eqref{calculo diamante p=2} and~\eqref{cociclo1caso2} remain the same.
\end{proof}

\subsection[Examples with \texorpdfstring{$H$}{H} nontrivial or \texorpdfstring{$\Yleft\ne 0$}{-<==}]{Examples with \texorpdfstring{$\pmb H$}{H} nontrivial or \texorpdfstring{$\pmb{\Yleft\ne 0}$}{< ne 0}}\label{seccion 4.2}

Let $p$ be an odd prime number, let $0<\nu\le\eta \le 2\nu$ and let $H$ be as at the beginning of Section~\ref{seccion 2}. In this subsection we consider two examples of abelian groups $I$ endowed with endomorphisms $A,B\colon I\to I$ satisfying conditions~\eqref{igualdad exponencial general} and, for each one of these examples we compute $\ho_{\blackdiamond,\Yleft}^2(H,I)$, where $\blackdiamond\colon H\times I\to I$ and $\Yleft \colon I\times H\to I$ are as in~\eqref{def de diamante general}. For this we will use Corollary~\ref{calculo de la homo B'}. Thus, in each of these examples, we must calculate the maps $F_1$, $F_2$, $G$ and $F$, introduced above that corollary, in order to obtain a family of pairs $(f_0,\gamma)\in \ker(F_1)\cap \ker(F_2)$, which parametrizes a complete family $F(f_0,\gamma) = (\alpha_1(\gamma),f_{f_0})$, of representatives of $2$-cocycles modulo coboundaries in ${\mathcal{C}}_{\blackdiamond,\Yleft}(H,I)$. Let $P$, $Q$, $R$ and $S$ be as in~\eqref{los polinomios1} and~\eqref{mas polinomios}. Assume that~$p^{\nu}I=0$, which clearly implies that $S=0$. Arguing as in Example~\ref{ejemplo 1}, we obtain that $\ker(F_1) = I\oplus \ker(A-\Ide)$, and
\begin{equation*}
Ay_2=y_2,\quad P(A)y_2=0\quad \text{and}\quad Q(A)y_2=0\qquad\text{for all $(y_1,y_2)\in \ker (F_1)$,}
\end{equation*}
which implies that,
$$
G(y)=(y-Ay-BAy,0)\qquad\text{and}\qquad F_2(y_1,y_2)= P(A) y_1+ABR(A) y_1+ABy_2\qquad\text{for $(y_1,y_2)\in \ker(F_1)$}.
$$
Consequently, if $P(A)=0$ and $B R(A)=0$, then
\begin{equation}\label{cohomologia con Yleft}
\ho_{\blackdiamond,\Yleft}^2(H,I)\simeq \frac{\ker(F_1)\cap \ker(F_2)}{\ima(G)}=\frac{I}{\ima(\Ide-A-BA)}\oplus (\ker(\Ide-A)\cap \ker(B)).
\end{equation}

\subsubsection[Example with \texorpdfstring{$I=\mathds{Z}_{p^r}$}{I=Zpr} where \texorpdfstring{$r\le \nu$}{r<= v}]{Example with \texorpdfstring{$\pmb{I=\mathds{Z}_{p^r}}$}{Zpr} where \texorpdfstring{$\pmb{r\le \nu}$}{r<=v}}
Let $I\coloneqq \mathds{Z}_{p^r}$, with $r\le \nu$. Let $A,B\in \End(I)$ satisfying $A^{p^{\eta}}=\Ide$ and $B^2=0$. Note that if $B=0$, then the conditions of item~(1) of Proposition~\ref{general conditions'} are satisfied; while, if $B\ne 0$, then the conditions of item~(2) of Proposition~\ref{general conditions'} are~satis\-fied. Write $A=a \Ide$ and $B=b \Ide$. If $b\ne 0$, write $b=p^s d$ with $p\nmid d$. By Remark~\ref{para ejemplo con I ciclico}, we know that \hbox{$a^{p^{\eta}}\equiv 1\pmod{p^r}$} and $b=0$ or $s<r\le 2s$. Actually, by Lemma~\ref{para ejemplos}, we know that $a^{p^{r-1}}= 1$ and that $a = 1+kp$ with $0\le k< p^{r-1}$. Since $r-1\le \nu$, the first equality implies $a^{p^{\nu}}=1$ and so, by~\eqref{formula para l}, we also have $a^{l(h)} = a^h$, for all $h\in H$. Hence, by the definition of $l(h)$ and~\eqref{def de diamante general}, we have
\begin{equation}\label{accionn3}
h \blackdiamond y = a^{l(h)} y = a^h y \quad\text{and}\quad y\Yleft h\coloneqq hby,\qquad\text{for $h\in H$ and $y\in I$.}
\end{equation}
Consequently,
\begin{equation}\label{accion3}
h \blackdiamond y = (1+kp)^h y = \sum_{i=0}^{\min(h,r-1)} \binom{h}{i} k^ip^iy\qquad\text{for $0\le h < p^{\eta}$ and $y\in I$.}
\end{equation}
For $f_0,\gamma\in I$, we next compute $f=f_{f_0,\gamma}$. According to Definition~\ref{definicion de breve f con B},
$$
f(1^{\times k},c)\coloneqq \sum_{j=0}^{k-1}A^{k-1-j}\tilde{f}(j\cdot c) + (k\cdot c)B\sum_{l=0}^{k-1} A^l\tilde{f}(b_l),\qquad\text{for $0\le k<p^{\eta}$ and $c\in H$,}
$$
where $\tilde{f}$ is given by Proposition~\ref{formula mas simple de tilde f} and the $b_l$'s are as in Remark~\ref{b_k=1 si y...}.

\begin{proposition} Let $H$, $I$, $a$, $b$ and $k$ be as above and write $k=p^u v$ with $p\nmid v$. Define
$$
t_1\coloneqq \begin{cases} u+1 & \text{if $b=0$,}\\ \min\{u+1,s\} & \text{if $b\ne 0$,}\end{cases}\qquad \text{and} \qquad t_2\coloneqq \begin{cases} u+1 & \text{if $b=0$,}\\ t_1 & \text{if $b\ne 0$, $s\ne u+1$,}\\ s+\max\{j:p^j|(v+d+p^svd)\}, & \text{if $b\ne 0$, $s=u+1$.}\end{cases}
$$
Then
\begin{equation}\label{cohomologia ejemplo}
\ho_{\blackdiamond,\Yleft}^2(H,I) \simeq \frac{\ker(F_1)\cap \ker(F_2)}{\ima(G)} = \frac{\mathds{Z}_{p^r}}{p^{t_2}\mathds{Z}_{p^r}}\oplus p^{r-t_1}\mathds{Z}_{p^r},
\end{equation}
and a complete family $(\alpha_1(\gamma),f_{f_0})$ of representatives of $2$-cocycles modulo coboundaries in ${\mathcal{C}}_{\blackdiamond,\Yleft}(H,I)$ is parameterized by the pairs $(f_0,\gamma)\in\{(z_1,p^{r-t_1}z_2):0\le z_1<p^{t_2}\text{ and }0\le z_2<p^{t_1}\}$.
\end{proposition}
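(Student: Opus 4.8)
The plan is to reduce everything to the formula \eqref{cohomologia con Yleft} established in the preamble of this subsection under the standing hypothesis $p^{\nu}I=0$ (which holds here since $r\le\nu$), and then to carry out the two explicit module computations that formula requires. Recall that $A=a\,\Ide$ and $B=b\,\Ide$, where, by Lemma~\ref{para ejemplos}, $a=1+kp=1+p^{u+1}v$ with $p\nmid v$, and $b=p^sd$ with $p\nmid d$ when $b\ne0$; moreover $a$ has order $e\coloneqq p^{r-u-1}$ in $\mathds{Z}_{p^r}^{\times}$ and $a^{p^{\nu}}=1$, as noted above \eqref{accion3}. Since \eqref{cohomologia con Yleft} applies as soon as $P(A)=0$ and $BR(A)=0$, the first task is to verify these two vanishings in $\End(I)=\mathds{Z}_{p^r}$.

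Because $p^{\nu}I=0$, we have $P(A)=\sum_{j=0}^{p^{\eta}-1}a^{j}$ and, by \eqref{R simple}, $R(A)=\sum_{j=1}^{p^{\eta}-1}ja^{j}$. I would compute both sums by grouping the index $j$ into $m\coloneqq p^{\eta-r+u+1}$ consecutive blocks of length $e$ and using $a^{e}=1$; this expresses $P(A)$ and $R(A)$ through $m$ and the block sums $T_0\coloneqq\sum_{j=0}^{e-1}a^{j}$ and $T_1\coloneqq\sum_{j=0}^{e-1}ja^{j}$. The key estimate is that $v_p(T_0)\ge r-u-1$ and $v_p(T_1)\ge r-u-1$; for $T_1$ this follows from the hockey-stick identity $\sum_{j}j\binom{j}{i}=(i+1)\binom{e}{i+2}+i\binom{e}{i+1}$ together with Kummer's formula $v_p\!\bigl(\binom{e}{l}\bigr)=(r-u-1)-v_p(l)$. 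Since $v_p(m)=\eta-r+u+1$, each contribution to $P(A)$ and $R(A)$ then has $p$-adic valuation at least $\eta\ge r$, so both vanish in $\mathds{Z}_{p^r}$; in particular $BR(A)=0$. This valuation bookkeeping is the main technical obstacle of the proof.

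With $P(A)=0=BR(A)$ in hand, formula \eqref{cohomologia con Yleft} gives
\begin{equation*}
\ho_{\blackdiamond,\Yleft}^2(H,I)\simeq\frac{I}{\ima(\Ide-A-BA)}\oplus\bigl(\ker(\Ide-A)\cap\ker(B)\bigr).
\end{equation*}
I would then identify the two summands by reading off $p$-adic valuations of scalars. Since $\Ide-A$ is multiplication by $-p^{u+1}v$ and $B$ by $p^sd$, one gets $\ker(\Ide-A)=p^{r-u-1}\mathds{Z}_{p^r}$ and $\ker(B)=p^{r-s}\mathds{Z}_{p^r}$ (with $\ker(B)=I$ when $b=0$), so their intersection is $p^{r-t_1}\mathds{Z}_{p^r}$ with $t_1=\min\{u+1,s\}$ (and $t_1=u+1$ when $b=0$). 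For the first summand, $\Ide-A-BA$ is multiplication by $-(p^{u+1}v+p^sd+p^{s+u+1}vd)$, whose valuation is $\min\{u+1,s\}$ when $s\ne u+1$, is $u+1$ when $b=0$, and equals $s+v_p(v+d+p^svd)$ in the delicate boundary case $s=u+1$, where the two lowest-order terms coincide and the exact cancellation must be measured. This yields $\ima(\Ide-A-BA)=p^{t_2}\mathds{Z}_{p^r}$ with $t_2$ as in the statement, and hence $\frac{I}{\ima(\Ide-A-BA)}=\mathds{Z}_{p^r}/p^{t_2}\mathds{Z}_{p^r}$. Assembling the two factors proves \eqref{cohomologia ejemplo}.

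Finally, the explicit parametrization follows from the isomorphism $F$ of Corollary~\ref{calculo de la homo B'}, under which a class is represented by a pair $(f_0,\gamma)=(y_1,y_2)$. The decoupling $\ker(F_1)\cap\ker(F_2)=I\oplus\bigl(\ker(\Ide-A)\cap\ker(B)\bigr)$ (valid because $BR(A)=0$) shows that $f_0=y_1$ runs over a set of coset representatives of $\mathds{Z}_{p^r}/p^{t_2}\mathds{Z}_{p^r}$, so $0\le z_1<p^{t_2}$, while $\gamma=y_2$ runs over $p^{r-t_1}\mathds{Z}_{p^r}$, so $\gamma=p^{r-t_1}z_2$ with $0\le z_2<p^{t_1}$, giving the asserted family of pairs $(f_0,\gamma)$.
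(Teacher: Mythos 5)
Your argument is correct and its overall architecture coincides with the paper's: reduce to formula \eqref{cohomologia con Yleft}, verify the two vanishing hypotheses, and then read off $\ker(\Ide-A)$, $\ker(B)$ and $\ima(\Ide-A-BA)$ from $p$-adic valuations of the scalars $-p^{u+1}v$, $p^sd$ and $-(p^{u+1}v+p^sd+p^{s+u+1}vd)$, including the delicate cancellation when $s=u+1$; that second half is essentially identical to the paper's proof, down to the identification of $t_1$ and $t_2$ and the final parametrization via Corollary~\ref{calculo de la homo B'}. Where you genuinely diverge is in proving $P(A)=0$ and $R(A)=0$. The paper expands the full sums $\sum_{j=0}^{p^{\eta}-1}(1+pk)^j$ and $\sum_{j}j(1+pk)^j$ by the binomial theorem, invokes $\sum_{j=l}^{p^{\eta}-1}j\binom{j}{l}=\binom{p^{\eta}}{l+1}(p^{\eta}-1)-\binom{p^{\eta}}{l+2}$, and bounds each term using $p^{\eta-t}\mid\binom{p^{\eta}}{l+1}$. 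You instead exploit that $a$ has multiplicative order $e=p^{r-u-1}$, split the index range into $m=p^{\eta-r+u+1}$ blocks, and reduce everything to the valuations of the block sums $T_0=(a^e-1)/(a-1)$ and $T_1=\sum_{j<e}ja^j$, the former handled by lifting-the-exponent and the latter by the same hockey-stick identity applied at scale $e$ rather than $p^{\eta}$. Both routes are sound (your estimates $v_p(T_0)=v_p(T_1')\ge r-u-1$ do combine with $v_p(m)=\eta-r+u+1$ to give valuation at least $\eta\ge r$, and the cross term $\binom{m}{2}eT_0$ is also killed); yours isolates more clearly why the vanishing happens (order of $a$ times depth of the block sum exceeds $r$), while the paper's is more uniform with the analogous computations it performs elsewhere in Section~4. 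One cosmetic point: you announce $BR(A)=0$ as the goal, but your block argument actually yields the stronger statement $R(A)=0$, exactly as in the paper; stating it that way would be cleaner.
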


\begin{proof} We first prove that $P(A)=0$ and $R(A)=0$. Since $p^{\nu}I=0$, we have
\begin{equation*}
P(a) = \sum_{j=0}^{p^{\eta}-1} (jp^{\nu}+1)a^j = \sum_{j=0}^{p^{\eta}-1} (1+kp)^j = \sum_{j=0}^{p^{\eta}-1}\sum_{l=0}^{j} \binom{j}{l} p^lk^l=\sum_{l=0}^{p^{\eta}-1}\sum_{j=l}^{p^{\eta}-1} \binom{j}{l} p^lk^l =\sum_{l=0}^{p^{\eta}-1}\binom {p^{\eta}}{l+1} p^lk^l.
\end{equation*}
Write $l+1 = p^tv$ with $p\nmid v$ and $t\ge 0$. Since $p^{\eta-t}\mid \binom{p^{\eta}}{l+1}$ and $l\ge t$, for all $l$, we conclude that $p^{\eta}| \binom{p^{\eta}}{l+1}p^l$, for all $l$. Consequently, $P(a)=0$. Moreover, using again that $p^{\nu}I=0$, it follows from~\eqref{R simple}, that
$$
R(a)=\sum_{j=0}^{p^{\eta}-1} j(1+pk)^j = \sum_{j=0}^{p^{\eta}-1}\sum_{l=0}^{j} j\binom{j}{l} p^lk^l=\sum_{l=0}^{p^{\eta}-1}\sum_{j=l}^{p^{\eta}-1} j\binom{j}{l} p^lk^l.
$$
By~\cite{Spivey}*{Identity~216}, we have
$$
\sum_{j=l}^{p^{\eta}-1} j\binom{j}{l}=\binom{p^{\eta}}{l+1}(p^{\eta}-1)-\binom{p^{\eta}}{l+2},
$$
Write $l+2=p^tw$  with $p\nmid w$ and $t\ge 0$. Note that $p>2$ implies that $l\ge t$, for all $l$. Since $p^{\eta-t}\mid \binom{p^{\eta}}{l+2}$, we conclude that $p^{\eta}\mid \binom{p^{\eta}}{l+2}p^l$, for all $l$. Since, moreover $p^{\eta}\mid p^l\binom{p^{\eta}}{l+1}$, we have $R(a)=0$, as desired. Thus, we can~use~\eqref{cohomologia con Yleft}, in order to compute $\ho_{\blackdiamond,\Yleft}^2(H,I)$. Hence, we are reduced to compute $\ima(\Ide-A-BA)$, $\ker(\Ide-A)$ and $\ker(B)$. Assume that~$b\ne 0$. From $A=a\Ide$ and $a=1+p^{u+1}v$ it follows that $\ker(\Ide-A)=p^{r-u-1}\mathds{Z}_{p^r}$, and from $b=p^s d$ it follows that $\ker(B)=p^{r-s}\mathds{Z}_{p^r}$. We also obtain $\Ide-A-BA=(-p^{u+1}v-p^sd-p^{u+s+1}vd)\Ide$, and so $\ima(\Ide-A-AB)=p^{t_2} \mathds{Z}_{p^r}$. Thus, equality~\eqref{cohomologia ejemplo} holds in this case, and so a complete family $(\alpha_1(\gamma),f_{f_0})$ of representatives of $2$-cocycles modulo coboundaries in ${\mathcal{C}}_{\blackdiamond,\Yleft}(H,I)$, is parameterized by the pairs $(f_0,\gamma)\in\{(z_1,p^{r-t_1}z_2):0\le z_1<p^{t_2}\text{ and }0\le z_2<p^{t_1}\}$. The case $b=0$ is left to the reader.
\end{proof}

\subsubsection[Example with \texorpdfstring{$I=\mathds{Z}_{p^r}\oplus \mathds{Z}_{p^r}$}{I=Zpr+Zpr} where \texorpdfstring{$r\le \nu$}{r<=v}]{Example with \texorpdfstring{$\pmb{I=\mathds{Z}_{p^r}\oplus \mathds{Z}_{p^r}}$}{I=Zpr+Zpr} where \texorpdfstring{$\pmb{r\le \nu}$}{r<=v}}

Let $I\coloneqq \mathds{Z}_{p^r}\oplus \mathds{Z}_{p^r}$, with $r\le \nu$. Let $a,b\in \mathds{Z}_{p^r}$ and define $A,B\in \End(I)$ by $A\coloneqq \begin{psmallmatrix} 1&a\\ 0&1 \end{psmallmatrix}$ and $B=\begin{psmallmatrix} 0&b\\ 0&0 \end{psmallmatrix}$. It is clear that $A^j=\begin{psmallmatrix} 1&ja\\ 0&1 \end{psmallmatrix}$, and so $A^{p^{\eta}}=\Ide$. Moreover, $B^2=0$ and $AB=BA$. Note that if $B=0$, then the conditions of item~(1) of Proposition~\ref{general conditions'} are satisfied; while, if $B\ne 0$, then the conditions of item~(2) of Proposition~\ref{general conditions'} are satisfied. Note also that $p^{\nu}I=0$ implies that $A^{p^{\nu}}=\Ide$, and so $A^{l(h)}=A^h$, for all $h\in H$, where $l(h)$ is as in~\eqref{formula para l}. Let $\blackdiamond\colon H\times I\to I$ and $\Yleft \colon I\times H\to I$ be as in~\eqref{def de diamante general}, so that
\begin{align*}
& h \blackdiamond y = A^{h} y =\begin{pmatrix} 1&ah\\ 0&1 \end{pmatrix}\binom{y_1}{y_2}=\binom{y_1+ahy_2}{y_2} && \text{for $h\in H$ and $y=\binom{y_1}{y_2}\in I$}
\shortintertext{and}
& y\Yleft h= hBy=h\begin{pmatrix} 0&b\\ 0&0 \end{pmatrix}\binom{y_1}{y_2}=\binom{hby_2}{0} && \text{for $h\in H$ and $y=\binom{y_1}{y_2}\in I$.}
\end{align*}
For $f_0,\gamma\in I$, we next compute $f\coloneqq f_{f_0,\gamma}$. According to Definition~\ref{definicion de breve f con B},
$$
f(1^{\times k},c)\coloneqq \sum_{j=0}^{k-1}A^{k-1-j}\tilde{f}(j\cdot c) + (k\cdot c)B\sum_{l=0}^{k-1} A^l\tilde{f}(b_l)\qquad\text{for $0\le k<p^{\eta}$ and $c\in H$,}
$$
where $\tilde{f}$ is given by Proposition~\ref{formula mas simple de tilde f} and the $b_l$'s are as in Remark~\ref{b_k=1 si y...}. By the second equality in~\eqref{def de diamante general}, we also~have

\begin{proposition} Let $H$, $I$, $A$, $B$, $\blackdiamond$ and $\Yleft$ be as above. Assume that $a$, $b$ and $c\coloneqq a+b$ are non-zero and write $a=a_1p^{a_2}$, $b=b_1p^{b_2}$ and $c=c_1p^{c_2}$, where $p\nmid a_1$, $p\nmid b_1$ and $p\nmid c_1$. Set $t\coloneqq \min\{a_2,b_2\}$. Then
\begin{equation*}
\ho_{\blackdiamond,\Yleft}^2(H,I) \simeq \frac{\ker(F_1)\cap \ker(F_2)}{\ima(G)} = \left(\frac{\mathds{Z}_{p^r}}{p^{c_2}\mathds{Z}_{p^r}}\oplus \mathds{Z}_{p^r}\right) \oplus \left(\mathds{Z}_{p^r}\oplus p^{r-t}\mathds{Z}_{p^r}\right),
\end{equation*}
and a complete family $(\alpha_1(\gamma),f_{f_0})$, of representatives of $2$-cocycles modulo coboundaries in ${\mathcal{C}}_{\blackdiamond,\Yleft}(H,I)$, is parameterized by the pairs $(f_0,\gamma)\in \bigl\{\bigl(\binom{z_1}{z_2}, \binom{z_3}{p^{r-t}z_4}\bigr) :0\le z_1<p^{c_2},\text{ }0\le z_2,z_3 < p^r\text{ and }0\le z_4<p^t\bigr\}$.
\end{proposition}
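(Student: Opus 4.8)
The plan is to reduce everything to formula~\eqref{cohomologia con Yleft}, which was established in the preamble of this subsection under the standing hypotheses $p^{\nu}I=0$, $P(A)=0$ and $BR(A)=0$. Since $r\le\nu$ forces $p^{\nu}I=0$ (hence $p^{\eta}I=0$, as $\nu\le\eta$) and $S=0$, and since the conditions in~\eqref{igualdad exponencial general} hold by Proposition~\ref{general conditions'} (item~(1) when $B=0$, item~(2) when $B\ne 0$), the only genuine input I must supply is the vanishing of $P(A)$ and of $BR(A)$; after that the computation of $\ho_{\blackdiamond,\Yleft}^2(H,I)$ becomes the determination of three explicit submodules of $I=\mathds{Z}_{p^r}\oplus\mathds{Z}_{p^r}$.

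To verify the two vanishing statements I would use $A^j=\begin{psmallmatrix}1&ja\\0&1\end{psmallmatrix}$ together with $p^{\nu}\Ide=0$ on $I$. This collapses $P(A)=\sum_{j=0}^{p^{\eta}-1}(jp^{\nu}+1)A^j$ to $\sum_{j=0}^{p^{\eta}-1}A^j=\begin{psmallmatrix}p^{\eta}&a\binom{p^{\eta}}{2}\\0&p^{\eta}\end{psmallmatrix}$; since $p$ is odd, $p^{\eta}\mid\binom{p^{\eta}}{2}$, so every entry is $0$ in $\mathds{Z}_{p^r}$ and $P(A)=0$. For $R$ I would invoke~\eqref{R simple}, valid here because $p$ is odd and $p^{\nu}I=0$, to write $R(A)=\sum_{t=1}^{p^{\eta}-1}tA^t$; left multiplication by $B=\begin{psmallmatrix}0&b\\0&0\end{psmallmatrix}$ keeps only the $(1,2)$ contribution $b\sum_{t=1}^{p^{\eta}-1}t=b\binom{p^{\eta}}{2}$, which again vanishes in $\mathds{Z}_{p^r}$, so $BR(A)=0$. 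It is worth noting that one does \emph{not} prove $R(A)=0$ outright: its $(1,2)$-entry carries $a\sum_{t}t^{2}$, which can fail to vanish when $p=3$, but that term is annihilated by $B$, so only $BR(A)=0$ is needed and available.

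With the hypotheses checked, \eqref{cohomologia con Yleft} gives
\[
\ho_{\blackdiamond,\Yleft}^2(H,I)\simeq\frac{I}{\ima(\Ide-A-BA)}\oplus\bigl(\ker(\Ide-A)\cap\ker(B)\bigr).
\]
Next I would compute each piece by direct matrix algebra over $\mathds{Z}_{p^r}$, using $AB=BA=B$. Then $\Ide-A-BA=\Ide-A-B=\begin{psmallmatrix}0&-c\\0&0\end{psmallmatrix}$ with $c=a+b$, so its image is $c\mathds{Z}_{p^r}\oplus 0=p^{c_2}\mathds{Z}_{p^r}\oplus 0$ and the first summand is $\frac{\mathds{Z}_{p^r}}{p^{c_2}\mathds{Z}_{p^r}}\oplus\mathds{Z}_{p^r}$. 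Similarly $\ker(\Ide-A)=\mathds{Z}_{p^r}\oplus p^{r-a_2}\mathds{Z}_{p^r}$ and $\ker(B)=\mathds{Z}_{p^r}\oplus p^{r-b_2}\mathds{Z}_{p^r}$, whose intersection is $\mathds{Z}_{p^r}\oplus p^{r-t}\mathds{Z}_{p^r}$ with $t=\min\{a_2,b_2\}$. Combining these yields the asserted decomposition.

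For the parametrization I would trace the isomorphism of Corollary~\ref{calculo de la homo B'} back to the coordinates $(f_0,\gamma)$. Since $\ker(F_1)=I\oplus\ker(A-\Ide)$ (here $f_0$ is free and $\gamma\in\ker(A-\Ide)$) and $\ima(G)=\ima(\Ide-A-BA)\oplus 0$, intersecting with $\ker(F_2)$ adds exactly the condition $B\gamma=0$. Choosing transversals of the two quotients computed above then gives $f_0=\binom{z_1}{z_2}$ with $0\le z_1<p^{c_2}$, $0\le z_2<p^r$, and $\gamma=\binom{z_3}{p^{r-t}z_4}$ with $0\le z_3<p^r$, $0\le z_4<p^t$, which is precisely the stated family; the associated $f_{f_0}$ is read off from Definition~\ref{definicion de breve f con B} and Proposition~\ref{formula mas simple de tilde f}. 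The main difficulty is purely organizational, namely keeping the chosen generators of the image and kernel modules consistent with the ranges of the $z_i$, rather than any conceptual obstacle.
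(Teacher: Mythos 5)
Your proposal is correct and follows essentially the same route as the paper: verify $P(A)=0$ and $BR(A)=0$ using $p^{\nu}I=0$ and the explicit form $A^j=\begin{psmallmatrix}1&ja\\0&1\end{psmallmatrix}$, then apply~\eqref{cohomologia con Yleft} and compute $\ima(\Ide-A-BA)$, $\ker(\Ide-A)$ and $\ker(B)$ by direct matrix algebra. Your side remark that only $BR(A)=0$ (not $R(A)=0$) is needed, since the $(1,2)$-entry $a\sum_t t^2$ of $R(A)$ can survive when $p=3$, is exactly the distinction the paper also makes.
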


\begin{proof} We first prove that $P(A)=0$ and $BR(A)=0$. Since $p^{\nu}I=0$, we have
\begin{equation*}
P(A) = \sum_{j=0}^{p^{\eta}-1} (jp^{\nu}+1) A^j = \sum_{j=0}^{p^{\eta}-1} \begin{pmatrix} 1&ja\\ 0&1 \end{pmatrix} = p^{\eta}\begin{pmatrix} 1&0\\ 0&1 \end{pmatrix}+\binom{p^{\eta}}{2}\begin{pmatrix} 0&a\\ 0&0 \end{pmatrix}=0.
\end{equation*}
Moreover, by~\eqref{R simple},
\begin{equation*}
R(A) = \sum_{j=0}^{p^{\eta}-1} j A^j=\sum_{j=0}^{p^{\eta}-1} \begin{pmatrix} j&j^2 a\\ 0&j \end{pmatrix}=\binom{p^{\eta}}{2}\begin{pmatrix} 1&0\\ 0&1 \end{pmatrix}+\sum_{j=0}^{p^{\eta}-1} j^2 \begin{pmatrix} 0&a\\ 0&0 \end{pmatrix}.
\end{equation*}
Hence, $BR(A)=0$, because $B\begin{psmallmatrix} 0&a\\ 0&0 \end{psmallmatrix}=0$ and $\binom{p^{\eta}}{2}I=0$. So, in order to obtain $\ho_{\blackdiamond,\Yleft}^2(H,I)$, we can use~\eqref{cohomologia con Yleft}. Since
\begin{align*}
&\ker(\Ide-A)=\ker \begin{pmatrix} 0&-a\\ 0&0 \end{pmatrix}=\mathds{Z}_{p^r}\oplus p^{r-a_2}\mathds{Z}_{p^r},\\
&\ker(B)=\ker \begin{pmatrix} 0&b\\ 0&0 \end{pmatrix}=\mathds{Z}_{p^r}\oplus p^{r-b_2}\mathds{Z}_{p^r}
\shortintertext{and}
& \ima(\Ide-A-AB)=\ima \begin{pmatrix} 0&-a-b\\ 0&0 \end{pmatrix}=p^{c_2}\mathds{Z}_{p^r}\oplus \{0\},
\end{align*}
we have
\begin{equation*}
\ho_{\blackdiamond,\Yleft}^2(H,I) = \frac{I}{\ima(\Ide-A-AB)}\oplus (\ker(\Ide-A)\cap \ker(B))=\left(\frac{\mathds{Z}_{p^r}}{p^{c_2} \mathds{Z}_{p^r}}\oplus \mathds{Z}_{p^r}\right) \oplus \left(\mathds{Z}_{p^r}\oplus p^{r-t}\mathds{Z}_{p^r}\right).
\end{equation*}
Consequently, a complete family of $2$-cocycles modulo coboundaries $(\alpha_1(\gamma),f_{f_0})$ of ${\mathcal{C}}_{\blackdiamond,\Yleft}(H,I)$, is parameterized by the pairs $(f_0,\gamma)\in \bigl\{\bigl(\binom{z_1}{z_2}, \binom{z_3}{p^{r-t}z_4}\bigr) :0\le z_1<p^{c_2},\text{ }0\le z_2,z_3 < p^r\text{ and }0\le z_4<p^t\bigr\}$.
\end{proof}

\begin{remark} If one or more of $a$, $b$, $c$ are zero, then the computations are slightly easier.
\end{remark}

\begin{bibdiv}
	\begin{biblist}

\bib{B}{article}{
	title={Extensions, matched products, and simple braces},
	author={Bachiller, David},
	journal={Journal of Pure and Applied Algebra},
	volume={222},
	number={7},
	pages={1670--1691},
	year={2018},
	publisher={Elsevier}
}

\bib{BG}{article}{
	title={On groups of I-type and involutive Yang--Baxter groups},
	author={David, Nir Ben},
	author={Ginosar, Yuval},
	journal={Journal of Algebra},
	volume={458},
	pages={197--206},
	year={2016},
	publisher={Elsevier}
}				

\bib{CR}{article}{
  title={Regular subgroups of the affine group and radical circle algebras},
  author={Catino, Francesco},
  author={Rizzo, Roberto},
  journal={Bulletin of the Australian Mathematical Society},
  volume={79},
  number={1},
  pages={103--107},
  year={2009},
  publisher={Cambridge University Press}
}

\bib{CJO}{article}{
	author={Ced{\'o}, Ferran},
	author={Jespers, Eric},
	author={Okni{\'n}ski, Jan},
	title={Retractability of set theoretic solutions of the Yang--Baxter equation},
	journal={Advances in Mathematics},
	volume={224},
	number={6},
	pages={2472--2484},
	year={2010},
	publisher={Elsevier}
}

\bib{CJO1}{article}{
  title={Braces and the Yang--Baxter equation},
  author={Ced{\'o}, Ferran},
  author={Jespers, Eric},
  author={Okni{\'n}ski, Jan},
  journal={Communications in Mathematical Physics},
  volume={327},
  number={1},
  pages={101--116},
  year={2014},
  publisher={Springer}
}

\bib{CJR}{article}{
title={Involutive Yang-Baxter groups},
  author={Ced{\'o}, Ferran},
  author={Jespers, Eric},
  author={Del Rio, Angel},
  journal={Transactions of the American Mathematical Society},
  volume={362},
  number={5},
  pages={2541--2558},
  year={2010}
}

\bib{Ch}{article}{
	title={Fixed-point free endomorphisms and Hopf Galois structures},
	author={Childs, Lindsay},
	journal={Proceedings of the American Mathematical Society},
	volume={141},
	number={4},
	pages={1255--1265},
	year={2013},
	review={\MR{3008873}} 	
}

\bib{DG}{article}{
	title={On groups of I-type and involutive Yang--Baxter groups},
	author={David, Nir Ben},
	author={Ginosar, Yuval},
	journal={Journal of Algebra},
	volume={458},
	pages={197--206},
	year={2016},
	publisher={Elsevier},
	review={\MR{3500774}}
}	

\bib{De1}{article}{
	title={Set-theoretic solutions of the Yang--Baxter equation, RC-calculus, and Garside germs},
	author={Dehornoy, Patrick},
	journal={Advances in Mathematics},
	volume={282},
	pages={93--127},
	year={2015},
	publisher={Elsevier},
	review={\MR{3374524}}
}

\bib{DDM}{article}{
	title={Garside families and Garside germs},
	author={Dehornoy, Patrick},
	author={Digne, Fran{\c{c}}ois},
	author={Michel, Jean},
	journal={Journal of Algebra},
	volume={380},
	pages={109--145},
	year={2013},
	publisher={Elsevier},
	review={\MR{3023229}}
}		

\bib{Dr}{article}{
   author={Drinfeld, Vladimir G.},
   title={On some unsolved problems in quantum group theory},
   conference={
      title={Quantum groups},
      address={Leningrad},
      date={1990},
   },
   book={
      series={Lecture Notes in Math.},
      volume={1510},
      publisher={Springer, Berlin},
   },
   date={1992},
   pages={1--8},
   doi={10.1007/BFb0101175},
}

\bib{ESS}{article}{
	author={Etingof, Pavel},
	author={Schedler, Travis},
	author={Soloviev, Alexandre},
	title={Set-theoretical solutions to the quantum Yang-Baxter equation},
	journal={Duke Math. J.},
	volume={100},
	date={1999},
	number={2},
	pages={169--209},
	issn={0012-7094},
	review={\MR{1722951}},
	doi={10.1215/S0012-7094-99-10007-X},
}

\bib{GI2}{article}{
	title={Set-theoretic solutions of the Yang--Baxter equation, braces and symmetric groups},
	author={Gateva-Ivanova, Tatiana},
	journal={Advances in Mathematics},
	volume={338},
	pages={649--701},
	year={2018},
	publisher={Elsevier},
	review={\MR{3861714}}
}

\bib{GI3}{article}{
	title={Skew polynomial rings with binomial relations},
	author={Gateva-Ivanova, Tatiana},
	journal={Journal of Algebra},
	volume={185},
	number={3},
	pages={710--753},
	year={1996},
	publisher={Elsevier},
	review={\MR{1419721}}	
}

\bib{GI4}{article}{
	title={Quadratic algebras, Yang--Baxter equation, and Artin--Schelter regularity},
	author={Gateva-Ivanova, Tatiana},
	journal={Advances in Mathematics},
	volume={230},
	number={4-6},
	pages={2152--2175},
	year={2012},
	publisher={Elsevier},
	review={\MR{2927367}}
}

\bib{GIVB}{article}{
	title={Semigroups of I-Type},
	author={Gateva-Ivanova, Tatiana},
	author={Van den Bergh, Michel},
	journal={Journal of Algebra},
	volume={206},
	number={1},
	pages={97--112},
	year={1998},
	publisher={Elsevier},
	review={\MR{1637256}}
}

\bib{GGV}{article}{
   author={Guccione, Jorge A.},
   author={Guccione, Juan J.},
   author={Valqui, Christian},
   title={Extensions of linear cycle sets and cohomology},
   journal={Eur. J. Math.},
   volume={9},
   date={2023},
   number={1},
   pages={Paper No. 15, 29},
   issn={2199-675X},
   review={\MR{4551665}},
   doi={10.1007/s40879-023-00592-6},
}

\bib{JO}{article}{
	title={Monoids and groups of I-type},
	author={Jespers, Eric},
	author={Okni{\'n}ski, Jan},
	journal={Algebras and representation theory},
	volume={8},
	number={5},
	pages={709--729},
	year={2005},
	publisher={Springer},
	review={\MR{2189580}}	
}

\bib{LV}{article}{
	author={Lebed, Victoria},
	author={Vendramin, Leandro},
	title={Cohomology and extensions of braces},
	journal={Pacific Journal of Mathematics},
	volume={284},
	number={1},
	pages={191--212},
	year={2016},
	publisher={Mathematical Sciences Publishers}
}

\bib{R}{article}{
  title={Braces, radical rings, and the quantum Yang--Baxter equation},
  author={Rump, Wolfgang},
  journal={Journal of Algebra},
  volume={307},
  number={1},
  pages={153--170},
  year={2007},
  publisher={Elsevier}
}

\bib{R3}{article}{
	title={The brace of a classical group},
	author={Rump, Wolfgang},
	journal={Note di Matematica},
	volume={34},
	number={1},
	pages={115--145},
	year={2014},
}

\bib{S}{article}{
   author={Soloviev, Alexander},
   title={Non-unitary set-theoretical solutions to the quantum Yang-Baxter
   equation},
   journal={Math. Res. Lett.},
   volume={7},
   date={2000},
   number={5-6},
   pages={577--596},
   doi={10.4310/MRL.2000.v7.n5.a4},
}

\bib{Spivey}{book}{
   author={Spivey, Michael Z.},
   title={The art of proving binomial identities},
   series={Discrete Mathematics and its Applications (Boca Raton)},
   publisher={CRC Press, Boca Raton, FL},
   date={2019},
   pages={xiv+368},
   isbn={978-0-8153-7942-3},
   review={\MR{3931743}},
   doi={10.1201/9781351215824},
}

\end{biblist}
\end{bibdiv}

\end{document}